\documentclass[11pt]{amsart}

\usepackage[a4paper,margin=30mm]{geometry}
\usepackage{amsmath,amssymb,amsthm,mathtools}
\usepackage{mathrsfs}
\usepackage{enumitem}
\usepackage{tabularx}
\newcolumntype{Y}{>{\raggedright\arraybackslash}X}
\usepackage{tikz}
\usepackage{graphicx}
\usetikzlibrary{arrows.meta,calc,positioning,decorations.pathreplacing}
\usepackage[hidelinks]{hyperref}

\usepackage{aliascnt}

\theoremstyle{plain}
\newtheorem{theorem}{Theorem}[section]
\newaliascnt{principle}{theorem}
\newtheorem{principle}[principle]{Principle}
\aliascntresetthe{principle}
\newaliascnt{conjecture}{theorem}

\aliascntresetthe{conjecture}
\newaliascnt{problem}{theorem}
\newtheorem{problem}[problem]{Problem}
\aliascntresetthe{problem}
\newaliascnt{proposition}{theorem}
\newtheorem{proposition}[proposition]{Proposition}
\aliascntresetthe{proposition}
\newaliascnt{lemma}{theorem}
\newtheorem{lemma}[lemma]{Lemma}
\aliascntresetthe{lemma}
\newaliascnt{corollary}{theorem}
\newtheorem{corollary}[corollary]{Corollary}
\aliascntresetthe{corollary}

\theoremstyle{definition}
\newaliascnt{definition}{theorem}
\newtheorem{definition}[definition]{Definition}
\aliascntresetthe{definition}
\newaliascnt{construction}{theorem}
\newtheorem{construction}[construction]{Construction}
\aliascntresetthe{construction}
\newaliascnt{example}{theorem}
\newtheorem{example}[example]{Example}
\aliascntresetthe{example}
\newaliascnt{notation}{theorem}
\newtheorem{notation}[notation]{Notation}
\aliascntresetthe{notation}
\newaliascnt{convention}{theorem}
\newtheorem{convention}[convention]{Convention}
\aliascntresetthe{convention}

\theoremstyle{remark}
\newaliascnt{remark}{theorem}
\newtheorem{remark}[remark]{Remark}
\aliascntresetthe{remark}

\usepackage[nameinlink,capitalize]{cleveref}
\crefname{convention}{Convention}{Conventions}
\Crefname{convention}{Convention}{Conventions}
\crefname{theorem}{Theorem}{Theorems}
\Crefname{theorem}{Theorem}{Theorems}
\crefname{proposition}{Proposition}{Propositions}
\Crefname{proposition}{Proposition}{Propositions}
\crefname{principle}{Principle}{Principles}
\Crefname{principle}{Principle}{Principles}
\crefname{problem}{Problem}{Problems}
\Crefname{problem}{Problem}{Problems}
\crefname{definition}{Definition}{Definitions}
\Crefname{definition}{Definition}{Definitions}
\crefname{construction}{Construction}{Constructions}
\Crefname{construction}{Construction}{Constructions}
\crefname{example}{Example}{Examples}
\Crefname{example}{Example}{Examples}
\crefname{corollary}{Corollary}{Corollaries}
\Crefname{corollary}{Corollary}{Corollaries}
\crefname{lemma}{Lemma}{Lemmas}
\Crefname{lemma}{Lemma}{Lemmas}
\crefname{remark}{Remark}{Remarks}
\Crefname{remark}{Remark}{Remarks}
\crefname{notation}{Notation}{Notations}
\Crefname{notation}{Notation}{Notations}
\crefname{conjecture}{Conjecture}{Conjectures}
\Crefname{conjecture}{Conjecture}{Conjectures}

\newcommand{\R}{\mathbb R}
\newcommand{\Z}{\mathbb Z}

\newcommand{\calS}{\mathcal S}
\newcommand{\calT}{\mathcal T}
\newcommand{\calN}{\mathcal N}
\newcommand{\calZ}{\mathcal Z}
\newcommand{\calX}{\mathcal X}
\newcommand{\calG}{\mathcal G}
\newcommand{\calH}{\mathcal H}

\newcommand{\Thi}{\operatorname{Thi}}
\newcommand{\Len}{\operatorname{Len}}
\newcommand{\Area}{\operatorname{Area}}
\newcommand{\Rop}{\operatorname{Rop}}
\newcommand{\Isom}{\operatorname{Isom}}
\newcommand{\Sim}{\operatorname{Sim}}
\newcommand{\DoF}{\operatorname{DoF}}

\newcommand{\CRad}{\operatorname{CRad}}
\newcommand{\slope}{\operatorname{slope}}
\newcommand{\into}{\hookrightarrow}
\newcommand{\type}{\operatorname{type}}
\newcommand{\ES}{\mathcal{ES}}
\newcommand{\Aut}{\operatorname{Aut}}
\newcommand{\Mod}{\operatorname{Mod}}
\newcommand{\code}{\operatorname{code}}

\title[Geometric Finiteness for Essential Surfaces]
{A Geometric Finiteness Theory for Essential Surfaces in Knot Exteriors}

\author{Makoto Ozawa}

\address{Department of Natural Sciences, Faculty of Arts and Sciences, Komazawa University, Tokyo, Japan}
\email{w3c@komazawa-u.ac.jp}

\subjclass[2020]{Primary 57K10; Secondary 57K20, 57K30, 49Q10, 49Q20, 53A04}
\keywords{Essential surface, knot exterior, geometric finiteness, positive reach,
ropelength, finite geometric encoding, boundary slope, essential-surface complex,
Kakimizu complex}

\hypersetup{
  pdftitle={A Geometric Finiteness Theory for Essential Surfaces in Knot Exteriors},
  pdfauthor={Makoto Ozawa},
  pdfsubject={Geometric finiteness for essential surfaces in knot exteriors},
  pdfkeywords={essential surface, knot exterior, geometric finiteness,
    positive reach, ropelength, finite geometric encoding, boundary slope,
    essential-surface complex, Kakimizu complex}
}

\begin{document}

\begin{abstract}
We develop a relative, filtered geometric framework for essential surfaces in
knot exteriors.  The framework is organized by three principles.  First, bounded
geometric complexity forces finite topological complexity.  Second, at a
sufficiently fine scale, equality of finite geometric codes determines the exact
topological type.  Third, in a fixed geometric exterior, the bounded-geometry
surface classes assemble into finite visible essential-surface complexes whose
directed union is the full essential-surface complex; these finite stages
organize the image, kernel, and reconstruction questions for exterior
symmetries.

More precisely, let \(\gamma\) be a unit-thickness representative of a knot
type \(K\), with \(\Len(\gamma)\leq\Lambda\), and let
\(F\subset E(\gamma)\) be a properly embedded essential surface with
\(\Area(F)\leq\Delta\) and relative thickness at least \(\tau\), formulated
through Federer reach together with controlled boundary collars.  We prove
that the resulting bounded-geometry pair space contains only finitely many
pair-isotopy classes.  We then construct an explicitly bounded finite family
of canonical layered codes on a fixed ambient lattice at resolution
\(\varepsilon\), and prove that, whenever
\(\varepsilon\leq c\min\{1,\tau\}\) and the angular quantization is fixed
sufficiently fine, equality of codes implies ambient pair-isotopy.  Thus the
topology in every bounded slice is not only finite but recoverable from finite
geometric data.  The same compactness mechanism gives attainment results for
fixed-exterior, ideal, and thickness-compactified visibility problems.

For a fixed exterior \(E=E(\gamma)\), we equip the existing
essential-surface complex with a filtration by finite subcomplexes
\(\ES_{\Delta,\tau}(E)\) whose simplices are simultaneously disjoint essential
surface systems admitting representatives of total area at most \(\Delta\) and
relative thickness at least \(\tau\).  These complexes are finite, monotone in
the geometric window, and exhaust the full essential-surface complex as
\(\Delta\to\infty\) and \(\tau\downarrow0\).  Isometries act levelwise, while
meridian-preserving \(C^{1,1}\) self-diffeomorphisms act with quantitatively
controlled reindexing.  This separates finite-code faithfulness from the
distinct automorphism-rigidity problem and supplies a finite-scale framework
for geometric image, invisible kernel, intrinsic reconstruction, and minimal
decoration.

The framework is complementary to the principal classical ways of organizing
surfaces in three-manifolds: normal coordinates and efficient triangulations,
branched-surface carriers and spines, sutured hierarchies, generalized
Heegaard splittings and curve-complex distance, intersection graphs, and
Rubinstein--Scharlemann graphics.  The results proved here do not subsume
these theories.  Rather, bounded geometry supplies a common visibility and
finite-recovery filtration on the topological objects that they produce; the
extensions requiring intersecting surfaces, sweepouts, or compressible thick
levels are stated separately as open problems.

The peripheral geometry of the fixed-radius tube gives a complementary
rigidity theorem.  Every visible numerical boundary slope lies in an explicit
writhe window, and hence satisfies
\[
  |r|\leq C_{\mathrm{BS}}\Lambda^{4/3}+w(\Delta,\tau).
\]
This yields invisibility gaps and a linear joint-area lower bound for a
Seifert surface and cabling annulus of a torus knot.  Together with finite
Reidemeister certificates, the faithful pair codes also provide two distinct
finite recognition mechanisms, one for the knot and one for the carried
essential-surface type.  The resulting framework is a smooth,
triangulation-free analogue of the finiteness philosophy underlying normal
surface theory; it does not assert that a fixed knot exterior has only
finitely many essential surfaces without geometric bounds.
\end{abstract}

\maketitle

\section{Introduction}
\label{sec:introduction}

Essential surfaces are topological objects, whereas ropelength, area, and
reach are geometric quantities.  In the absence of geometric bounds, these
worlds are far apart: a fixed knot exterior may carry infinitely many
essential surfaces, and the classical finite descriptions of such surfaces
are obtained by imposing auxiliary combinatorial structure, most notably a
triangulation and normal coordinates, within the classical Haken--Waldhausen
framework \cite{Haken,Waldhausen}.  The purpose of this paper is to show
that a different source of finiteness is available.  Suitable bounds in smooth
geometry constrain the topology directly, and sufficiently fine finite
geometric data recover that topology exactly.

This point of view leads to a geometric finiteness theory for essential
surfaces in knot exteriors.  Its basic objects are pairs \((\gamma,F)\), where
\(\gamma\) is a thick representative of a knot and \(F\) is an essential
surface in the exterior of a fixed-radius tube about \(\gamma\).  Its central
message is not merely that a finite approximation exists.  Rather, within
every bounded geometric window, only finitely many pair-isotopy types occur,
a sufficiently fine finite code separates all of them, and in a fixed
exterior these bounded slices form finite stages of the full
essential-surface complex.  The last observation permits the symmetry and
rigidity questions for essential surfaces to be studied one finite geometric
scale at a time.

\subsection*{The geometric window}
Let \(K\) be a knot type in \(S^3\).  After normalizing knot thickness to one,
we consider representatives
\[
  \gamma\in Y_\Lambda(K)
  =
  \{\gamma\in K\mid \Thi(\gamma)=1,\ \Len(\gamma)\leq\Lambda\}
  /\Isom^+(\R^3).
\]
For a fixed radius \(\rho_0\in(0,1)\), taken throughout to be
\(\rho_0=\tfrac12\), set
\[
  E(\gamma)=S^3\setminus\operatorname{int}N_{\rho_0}(\gamma).
\]
The strict inequality \(\rho_0<1\) ensures that the tube is embedded even
when the knot has unit thickness.  In \(E(\gamma)\) we study properly embedded
surfaces satisfying
\[
  \Area(F)\leq\Delta,
  \qquad
  \Thi(F)\geq\tau.
\]
Here relative surface thickness is expressed by positive reach, together with
uniform graphical boundary charts, an embedded boundary collar, and
peripheral clearance.  Thus, throughout this paper, \emph{bounded geometry}
means the simultaneous control of
\[
  \Len(\gamma),\qquad \Area(F),\qquad \Thi(F),
\]
with the knot thickness normalized to one.  These conditions define the
three-parameter pair space
\(\calZ_{\Lambda,\Delta,\tau}(K)\), and its essential subspace
\(\calZ^{\mathrm{ess}}_{\Lambda,\Delta,\tau}(K)\).

The roles of the three parameters are complementary.  The length bound
controls the positional freedom of the knot, the area bound controls the
amount of surface, and the positive lower thickness bound controls curvature,
sheet separation, boundary collars, and compactness.  The last condition is
the mechanism that turns metric bounds into topological finiteness.

\subsection*{Three guiding principles}
The theory is organized by three complementary principles.

\begin{principle}[Geometry constrains topology]
\label{prin:geometry-constrains-topology}
Bounded geometric complexity forces finite topological complexity.  At fixed
\((\Lambda,\Delta,\tau)\), only finitely many pair-isotopy types of essential
knot--surface pairs can occur, and the same geometric bounds impose
quantitative restrictions on topological data such as boundary slopes.
\end{principle}

\begin{principle}[Finite geometry determines topology]
\label{prin:finite-geometry-determines-topology}
For resolution sufficiently fine relative to the knot and surface thickness
scales, finite geometric data determine the ambient pair-isotopy type.  Thus
all topological types occurring in a bounded geometric window can be recovered
from a finite collection of finite codes.
\end{principle}

\begin{principle}[Finite geometry organizes rigidity]
\label{prin:finite-geometry-organizes-rigidity}
For a fixed geometric exterior, the essential surfaces visible in a bounded
area--thickness window form a finite marked complex.  These finite complexes
exhaust the full essential-surface complex, and exterior symmetries act on the
resulting filtered system, levelwise for isometries and with controlled
reindexing for general smooth mapping classes.  Thus geometric image,
invisible kernel, intrinsic reconstruction, and the economy of decorations can
be investigated at finite stages.
\end{principle}

The first principle is realized through compactness and peripheral rigidity;
the second through sampling and reconstruction; and the third through finite
visible complexes and controlled functoriality.  Together they give the
architecture in \Cref{fig:theory-architecture}.

\begin{figure}[t]
\centering
\begin{tikzpicture}[x=1cm,y=1cm,
  box/.style={draw, rounded corners=2pt, align=center, inner sep=5pt,
              text width=42mm, font=\small},
  result/.style={draw, double, rounded corners=2pt, align=center, inner sep=5pt,
                 text width=33mm, font=\small},
  tag/.style={font=\scriptsize\itshape, text=black!60, align=center},
  arrow/.style={-{Latex[length=2.2mm]}, thick}]
  \node[box, text width=30mm] (budgets) at (1.55,2.85)
    {bounded geometry\\[1mm]
     \(\Len\leq\Lambda\), \(\Area\leq\Delta\),\\ \(\Thi\geq\tau\)};
  \node[box] (compact) at (6.45,5.7)
    {positive-reach compactness
     {\scriptsize(\S\ref{sec:bounded-complexity})}};
  \node[result] (finite) at (11.25,5.7) {finitely many\\pair-isotopy types};
  \node[tag] at (11.25,4.88) {\Cref{prin:geometry-constrains-topology}};
  \node[box] (codes) at (6.45,3.8)
    {finite layered codes at resolution
     \(\varepsilon\leq c\min\{1,\tau\}\)
     {\scriptsize(\S\ref{sec:dof})}};
  \node[result] (faithful) at (11.25,3.8) {faithful recovery\\of topology};
  \node[tag] at (11.25,2.98) {\Cref{prin:finite-geometry-determines-topology}};
  \node[box] (visible) at (6.45,1.9)
    {finite visible complexes, controlled symmetry action
     {\scriptsize(\S\ref{sec:filtered-rigidity})}};
  \node[result] (rigidity) at (11.25,1.9) {image--kernel--\\reconstruction};
  \node[tag] at (11.25,1.08) {\Cref{prin:finite-geometry-organizes-rigidity}};
  \node[box] (writhe) at (6.45,0.0)
    {peripheral tube geometry: the writhe window
     {\scriptsize(\S\ref{sec:ideal-merge-pair-persistence})}};
  \node[result] (slopes) at (11.25,0.0)
    {quantitative boundary-slope constraints};
  \node[tag] at (11.25,-0.90) {peripheral rigidity};
  \foreach \m in {compact,codes,visible,writhe}{
    \draw[arrow] (budgets.east) -- ++(0.45,0) |- (\m.west);
  }
  \draw[arrow] (compact) -- (finite);
  \draw[arrow] (codes) -- (faithful);
  \draw[arrow] (visible) -- (rigidity);
  \draw[arrow] (writhe) -- (slopes);
\end{tikzpicture}
\caption{The architecture of the theory.  One set of geometric budgets feeds
four parallel mechanisms; each row names the mechanism, the section where it
is developed, and its output.  Bounded geometry produces smooth finiteness,
finite encodability, finite visible essential-surface complexes, and
quantitative peripheral constraints.  The first two rows implement
\Cref{prin:geometry-constrains-topology,prin:finite-geometry-determines-topology};
the third implements \Cref{prin:finite-geometry-organizes-rigidity}; and the
writhe window makes the passage from geometry to peripheral topology
quantitative.}
\label{fig:theory-architecture}
\end{figure}

\subsection*{Relation with classical three-manifold theories}
\label{subsec:classical-theories}

The present framework is best viewed as a bounded-geometry filtration placed
on top of several classical theories of surfaces and decompositions in
three-manifolds.  It does not replace those theories.  Instead, it asks a
common geometric question about the objects they produce:
\begin{quote}
\emph{At what length--area--thickness cost does a topological object become
visible, and when is it recoverable from finite geometric data?}
\end{quote}
The relation has three logically different levels.

\smallskip
\noindent
\emph{Direct combinatorial interfaces.}
Normal surface theory and the Jaco--Oertel algorithm encode essential surfaces
in a fixed triangulated manifold by integral normal coordinates
\cite{Haken,JacoOertel}.  Jaco--Rubinstein efficient triangulations and
crushing simplify the ambient triangulation
\cite{JacoRubinstein0Efficient}.  Floyd--Oertel and Oertel branched surfaces
provide finite carriers for incompressible surfaces
\cite{FloydOertel,OertelBranched}, while special spines give dual
combinatorial models for three-manifolds \cite{MatveevBook}.  The present
source of finiteness is different: positive-reach compactness in a moving
smooth exterior.  After a bounded-geometry triangulation is chosen, however,
the metric window can be compared with a finite region of normal-coordinate
or branch-weight space, and the layered code can be dualized to a decorated
cell structure or spine.  These are implementation and verification
interfaces; the smooth finiteness and reconstruction proofs themselves remain
triangulation-free.

\smallskip
\noindent
\emph{Filtered hierarchy and splitting theories.}
Gabai's sutured-manifold theory and Scharlemann's generalized Thurston norm
organize taut decompositions and hierarchies
\cite{GabaiFoliationsI,ScharlemannSuturedNorms}.  Scharlemann--Thompson
generalized Heegaard splittings alternate incompressible thin levels with
compressible thick levels \cite{ScharlemannThompson}, and Hempel distance
measures the separation of the two disk sets in the curve complex
\cite{HempelCurveComplex}.  The essential surfaces treated here are natural
candidates for geometrically filtered decomposing surfaces and thin levels.
A complete bounded-geometry theory of generalized Heegaard splittings would
also have to retain thick levels, compression bodies, and disk sets; these are
not silently included in the present essential-surface pair space.

\smallskip
\noindent
\emph{Transverse and Cerf-theoretic extensions.}
The Gordon--Luecke graph method extracts information from transverse
intersection graphs of punctured surfaces, especially in Dehn filling
problems \cite{GordonCombinatorial,GordonLueckeToroidal}.  The
Rubinstein--Scharlemann graphic is the Cerf discriminant associated to a pair
of sweepouts \cite{RubinsteinScharlemann,RubinsteinScharlemannBounded}.  The
writhe window below constrains an individual visible boundary slope, and the
transition complexes below record finite-resolution changes of geometric
codes, but neither construction is an intersection graph or a
Rubinstein--Scharlemann graphic.  To reach those theories one must enlarge the
framework to controlled transverse systems and generic one- and two-parameter
families.

The comparison is summarized in
\Cref{tab:classical-geometric-comparison}.

\begin{table}[t]
\centering
\renewcommand{\arraystretch}{1.14}
\begin{tabularx}{\textwidth}{@{}>{\raggedright\arraybackslash}p{0.24\textwidth}YY@{}}
\hline
Classical framework & Classical organizing data & Role of bounded geometry here \\
\hline
Normal surfaces and efficient triangulations
& normal coordinates, matching equations, crushing
& conditional finite coordinate windows and algorithmic verification \\
Branched surfaces
& finite carriers with integral branch weights
& a proposed geometric cutoff on the carried weight semigroup \\
Special spines
& dual polyhedra and local spine moves
& a proposed decorated dual form of layered pair codes \\
Sutured manifolds
& taut decompositions and ordered hierarchies
& area--thickness costs and visibility levels for decomposing surfaces \\
Generalized Heegaard splittings and Hempel distance
& thin and thick levels, compression bodies, disk-set distance
& the present theory controls essential thin-level candidates; thick levels
  require an enlarged space \\
Gordon--Luecke intersection graphs
& graphs of transverse intersection and Scharlemann-cycle combinatorics
& the writhe window gives individual slope constraints; a joint theory needs
  stratified transverse thickness \\
Rubinstein--Scharlemann graphics
& Cerf discriminants for pairs of sweepouts
& transition complexes are finite-resolution shadows, not replacements for
  the graphic \\
\hline
\end{tabularx}
\caption{Classical theories supply coordinates, carriers, hierarchies,
splittings, or transverse-intersection structures.  The present framework
supplies geometric visibility and finite recovery.  The branched-surface,
spine, hierarchy, splitting, intersection-graph, and graphic extensions are
programmatic unless explicitly proved later.}
\label{tab:classical-geometric-comparison}
\end{table}

Thus the phrase \emph{triangulation-free} refers to the analytic proofs of
smooth finiteness and faithful reconstruction.  It is not a claim that normal
surfaces, efficient triangulations, branched surfaces, or spines are
unnecessary for computation.  On the contrary, these classical structures
provide natural discrete verification layers for the finite geometric search
spaces constructed here.

\subsection*{Direct precedents and scope of the contribution}
The compactness--isotopy mechanism used below has a direct predecessor in
Durumeric's \(C^1\)-compactness and isotopy-finiteness theorem for closed
\(C^{1,1}\) submanifolds of uniformly positive normal injectivity radius
\cite{DurumericCompactness}; related isotopy-finiteness results follow from
geometric curvature-energy bounds \cite{KolasinskiStrzeleckiVonDerMosel}.
The contribution of the present finiteness theorem is the relative formulation
for properly embedded surfaces with quantitative boundary collars and its
uniform extension to a jointly moving knot--surface pair.  Isotopic
reconstruction, including reconstruction of surfaces with boundary, also has
substantial precedents \cite{AbeEtAl}; here the additional point is the uniform
reconstruction of the knot, the proper surface, and their relative peripheral
position from one layered code.  Likewise, surface complexes built from
isotopy classes of essential or incompressible surfaces are already established
objects \cite{SchultensSurfaceComplex,ZhangGuo,CharitosPapadoperakisTsapogas};
the construction below places an area--relative-thickness filtration on this
existing type of complex.  For comparison with other finiteness and counting
results for essential surfaces, see
\cite{DunfieldGaroufalidisRubinstein,PurcellTsvietkova,LackenbyTsvietkova}.
Finally, the boundary twisting diameter below is a determinant-distance variant
of the established boundary-slope diameter viewpoint
\cite{MattmanMaybrunRobinson,IchiharaSlopeLengths}.  Thus the claims of novelty
are confined to the relative moving-pair formulation, its filtered organization,
and the quantitative writhe-centered slope estimate, not to the underlying
compactness principle, reconstruction paradigm, or surface complex by itself.

\subsection*{Main results}
The main results are six manifestations of the three guiding principles.

\begin{enumerate}[label=\textbf{\Alph*.},leftmargin=2.2em]
\item \emph{Smooth finiteness}
(\Cref{thm:smooth-finiteness-fixed-exterior,thm:smooth-finiteness-pair-space}).
As a relative extension of the positive-thickness compactness and
isotopy-finiteness mechanism of \cite{DurumericCompactness}, in a fixed thick
exterior, essential surfaces with
\(\Area(F)\leq\Delta\) and \(\Thi(F)\geq\tau\) occupy only finitely many
isotopy classes.  Allowing the unit-thickness knot representative to vary with
\(\Len(\gamma)\leq\Lambda\) still gives only finitely many ambient
pair-isotopy classes.  The same compactness gives attainment in the fixed,
ideal, and thickness-compactified visibility problems
(\Cref{cor:attainment-visibility,cor:unconditional-ideal-attainment}).

\item \emph{Explicitly bounded finite encoding}
(\Cref{thm:finite-resolution-finiteness-filtered-pairs,cor:explicit-encoded-bound}).
For every \(\varepsilon\leq c\min\{1,\tau\}\), all pairs in the filtered slice
are represented by an explicitly bounded finite collection of canonical
layered codes on a fixed ambient lattice.  The bound on the code space is
explicit; the canonical code of a given analytic pair exists but is not
claimed to be computable from arbitrary analytic input
(\Cref{rem:least-word-not-algorithmic}).

\item \emph{Faithful reconstruction} (\Cref{thm:faithfulness}).
At sufficiently fine resolution, equal codes imply ambient pair-isotopy.
Consequently, finite geometric information gives a separating certificate for
the pair-isotopy type within a bounded slice.  The code is not asserted to be
constant on a pair-isotopy class; rather, equal codes cannot occur in distinct
classes.

\item \emph{Finite recognition}
(\Cref{thm:surface-recognition,thm:two-unconditional-layers}).
Across knot types, every realized sufficiently fine pair code is
characteristic for its exact essential-surface type under the
orientation-preserving canonical-code convention; identifying codes under
ambient reflections yields recognition up to mirror.  Combined with the
finite Reidemeister certificates of
\cite{OzawaFiniteRecognition}, this gives two logically distinct finite
witnesses: one recognizes the underlying knot and the other the carried
surface type.

\item \emph{Filtered essential-surface rigidity}
(\Cref{thm:visible-complex-finiteness,thm:visible-complex-exhaustion,prop:controlled-visible-functoriality}).
For a fixed geometric exterior \(E\), an area--relative-thickness filtration
of the essential-surface complex gives finite visible subcomplexes
\(\ES_{\Delta,\tau}(E)\).  The
complexes are monotone and exhaust the full essential-surface complex.
Isometries act at each level, and meridian-preserving \(C^{1,1}\)
self-diffeomorphisms act with controlled changes of \((\Delta,\tau)\).  This
provides a finite-stage image--kernel--reconstruction framework while keeping
finite-code faithfulness distinct from automorphism realization.

\item \emph{The writhe window}
(\Cref{thm:writhe-window,cor:unconditional-slope-height}).
Every connected essential surface with nonempty non-meridional boundary,
visible at level \((\Lambda,\Delta,\tau)\), has numerical boundary slope within
\[
  \frac{\Delta}{2\pi\rho_0 c_{\mathrm{col}}(\tau)}
\]
of \(\operatorname{Wr}(\gamma)\).  Together with the Buck--Simon estimate,
this gives the unconditional bound
\[
  |r|\leq C_{\mathrm{BS}}\Lambda^{4/3}+w(\Delta,\tau).
\]
It also yields invisibility gaps and, for the torus knot \(T(p,q)\)
(\(\gcd(p,q)=1\)), a linear-in-\(pq\)
lower bound for the joint area of a Seifert surface and a cabling annulus
(\Cref{cor:unconditional-gap,cor:torus-joint-cost}).
\end{enumerate}

Results A and F implement \Cref{prin:geometry-constrains-topology}; Results B
and C implement \Cref{prin:finite-geometry-determines-topology}; Result D
shows how the second principle enters finite recognition; and Result E
implements \Cref{prin:finite-geometry-organizes-rigidity}.  Results A--C are
compactness, sampling, and reconstruction statements.  Result E is a filtered
functoriality statement.  Result F is a peripheral rigidity statement and is
the only one whose proof uses the connection one-form of the fixed-radius
tube.

\subsection*{Scope and logical status}
Several distinctions are essential.  First, the word \emph{finite} refers to
bounded geometric slices and finite witnesses.  It does not assert that all
essential surfaces in a fixed exterior form a finite set; Haken sums and
infinite Kakimizu complexes remain possible after the bounds are removed.
Second, pair isotopy, admissible thick connectivity, equality of
finite-resolution codes, and geometric realization of automorphisms are
different relations and are kept separate.  In particular, the faithfulness
of an individual pair code does not imply that every automorphism of a finite
visible complex is induced by a homeomorphism of the exterior.  Third, the
transition complexes introduced later give finite-resolution
quotient persistence; they are not claimed to recover smooth admissible
components without the lifted component data, and they are not identified
with Rubinstein--Scharlemann graphics.  Fourth, the current relative-thickness
condition is a compactness condition for disjoint systems; transverse
intersection graphs require a different stratified condition.  Finally, the
algorithmic constructions identify finite search spaces and faithful
certificates, but do not by themselves supply sharp complexity bounds for
enumerating them.

The knot-recognition input from \cite{OzawaFiniteRecognition} is used only in
\Cref{sec:finite-recognition}.  None of the compactness, smooth finiteness,
finite encoding, reconstruction, or writhe-window theorems depends on that
input.  Persistence, trade-off functions, boundary-twisting diameters, and
Kakimizu filtrations are proved only to the extent explicitly stated; the
remaining claims are formulated as invariants or open problems.

\subsection*{Relation with companion work}
The framework belongs to a broader program on geometric filtrations of knot
space.  The knot-only ideal stratum and its admissible-component persistence
are developed in \cite{OzawaIdealStratum}; density and compression
factorizations in \cite{OzawaDensityCompression}; and finite Reidemeister
recognition in
\cite{OzawaFiniteRecognition}.  The image--kernel--reconstruction problem
for essential-surface complexes, scale-free density invariants, and
swept-area pseudometrics are natural continuations of the program; they are
formulated here only to the extent that
\Cref{sec:filtered-rigidity} and the open problems state them, and the
present paper depends on no unpublished manuscript.  The paper is
self-contained in
its surface-side core.  Its new feature is the simultaneous treatment of a
moving thick knot and bounded-geometry essential surfaces in its exterior,
together with a finite geometric exhaustion of the topological object studied
in the rigidity problem.

\begin{table}[t]
\centering
\renewcommand{\arraystretch}{1.15}
\begin{tabularx}{\textwidth}{@{}lX@{}}
\hline
Symbol & Meaning \\
\hline
\(Y_\Lambda(K)\) & unit-thickness representatives of \(K\) with length \(\leq\Lambda\), modulo rigid motions \\
\(\calX_{\Lambda,u}(K)\) & projection-framed ropelength sublevel in direction \(u\) \\
\(\calG^{\mathrm{lift}}_{\Lambda,u}(K)\), \(\calH_{\Lambda,u}(K)\) & lifted Reidemeister multigraph and monotone diagram image \\
\(I(K)\) & ideal stratum \(Y_{\Rop(K)}(K)\) \\
\(\rho_0\), \(N_{\rho_0}(\gamma)\) & fixed tube radius \(\rho_0=\tfrac12\) and the closed normal \(\rho_0\)-tube \\
\(E(\gamma)\) & exterior \(S^3\setminus\operatorname{int}N_{\rho_0}(\gamma)\) \\
\(\calZ_{\Lambda,\Delta,\tau}(K)\) & length--area--thickness filtered pair space \\
\(\calZ^{\mathrm{ess}}_{\Lambda,\Delta,\tau}(K)\) & essential subspace of the pair space \\
\(\beta_{\mathfrak S}(K;\Delta,\tau)\) & visibility level of a surface type \(\mathfrak S\) \\
\(\operatorname{Tw}_{\partial}(K)\) & boundary twisting diameter \\
\(\Theta^{\partial}_{p,\ell}\), \(\operatorname{Tw}^{\mathrm{rate}}_{\partial}\) & finite-length peripheral twist and slope-rate diameter \\
\(\mathrm{MS}_{\Lambda,\Delta,\tau}(K)\) & visible taut Seifert/Kakimizu layer \\
\(\ES(E)\), \(\ES_{\Delta,\tau}(E)\) & full and bounded-geometry visible essential-surface complexes of a fixed exterior \\
\(\mathbf{ES}^{\mathrm{geom}}(E,\mu)\) & filtered system of finite visible complexes with meridian marking \\
\(\DoF_\varepsilon\) & effective finite-resolution degree of freedom \\
\(\operatorname{Wr}(\gamma)\), \(w(\Delta,\tau)\) & writhe and writhe-window half-width \\
\(\rho_D\), \(\CRad_D\) & density and compression-radius factors \\
\hline
\end{tabularx}
\caption{Principal notation.  The parameters \(\Lambda\), \(\Delta\),
\(\tau\), and \(\varepsilon\) denote length, area, surface-thickness, and
resolution scales.}
\label{tab:notation}
\end{table}

\subsection*{Organization of the paper}
\Cref{sec:ropelength-sublevel} recalls ropelength sublevel spaces.
\Cref{sec:pair-spaces} defines relative surface thickness and the filtered pair
space.  \Cref{sec:dof} proves explicitly bounded finite encoding, while
\Cref{sec:bounded-complexity} establishes bounded topological complexity,
positive-reach compactness, smooth finiteness, and faithful reconstruction.
\Cref{sec:ideal} treats ideal knots, ideal surfaces, ideal pairs, and the limit
\(\tau\to0\).  \Cref{sec:ideal-merge-pair-persistence} develops
admissible-component persistence and boundary-slope visibility, culminating in
the writhe window.  \Cref{sec:characteristic} treats characteristic systems and
normal-surface implementation; \Cref{sec:filtered-rigidity} constructs the
finite visible essential-surface complexes and their controlled symmetry
actions; \Cref{sec:essential-intersections-compression-order} studies
compression and tubing; \Cref{sec:kakimizu} develops taut, crosscap,
and Kakimizu layers, including the trefoil visibility window;
\Cref{sec:density-compression} develops density--compression factorizations;
and \Cref{sec:finite-recognition} establishes the two finite recognition
layers.  \Cref{sec:further} collects open problems and the wider scope of
the theory, and the final conclusion summarizes the passage from bounded
geometry to finite topology.

\section{Ropelength sublevel spaces}
\label{sec:ropelength-sublevel}

\subsection{Thick knot representatives}

Let \(\gamma:S^1\to \R^3\) be a \(C^{1,1}\) embedded curve.  We denote its
length by \(\Len(\gamma)\) and its thickness by \(\Thi(\gamma)\).  Thickness is
the radius of the largest embedded normal tube around \(\gamma\).  Equivalently,
it is controlled by curvature and doubly critical self-distance.

For a knot type \(K\), define its ropelength by
\[
  \Rop(K)
  =
  \inf_{\gamma\in K}
  \frac{\Len(\gamma)}{\Thi(\gamma)}.
\]
After scaling, we may impose \(\Thi(\gamma)=1\).  Then the ropelength
sublevel space is
\[
  Y_\Lambda(K)
  =
  \{\gamma\in K\mid \Thi(\gamma)=1,\ \Len(\gamma)\leq\Lambda\}/\Isom^+(\R^3).
\]
The ideal stratum is
\[
  I(K)=Y_{\Rop(K)}(K).
\]
By the existence theorem for ropelength minimizers \cite{CantarellaKusnerSullivan}, \(I(K)\) is nonempty.
Moreover, because \(\Rop(K)\) is the infimum of length among unit-thickness
representatives, \(Y_{\Rop(K)}(K)\) is precisely the set of unit-thickness
ropelength minimizers of \(K\).

\begin{remark}[Radius normalization of ropelength]
\label{rem:ropelength-normalization}
In this paper thickness is the \emph{radius} of the largest embedded normal
tube, following \cite{GonzalezMaddocks,CantarellaKusnerSullivan,LitherlandSimonDurumericRawdon}.
Part of the ropelength literature instead normalizes by the tube
\emph{diameter}; in particular Denne--Diao--Sullivan
\cite{DenneDiaoSullivan} take thickness to be the diameter, so their
numerical values are exactly half of the values in the present convention.
Every imported constant is converted accordingly: the
Denne--Diao--Sullivan universal bound \(15.66\) in diameter units reads
\(31.32\) here, while the numerically tightened trefoil length
\(32.7433864\) of \cite{BaranskaPieranskiPrzybylRawdon} is reported there
for unit-radius rope and is used unchanged.  All numerical ropelength
constants appearing in this paper are stated in the radius convention.
\end{remark}

\begin{remark}[Rigid-motion normalization]
The quotient by orientation-preserving Euclidean isometries is used only to
remove irrelevant rigid motions.  Equivalently, one may fix a concrete
normalization, for example barycenter zero together with a choice of principal
frame whenever the frame is non-degenerate.  A symmetric representative may
have a nontrivial stabilizer in the quotient, which can even be a continuous
group (the round circle has stabilizer containing \(SO(2)\)); this affects
none of the finite-resolution estimates below, and the ambiguity it creates
for encodings is resolved once and for all by the minimal-code convention of
\Cref{rem:code-well-defined}.  A normalized model may be used throughout if
one wants to avoid this quotient language completely.
\end{remark}

\subsection{Effective finite-resolution degree of freedom of \texorpdfstring{\(Y_\Lambda(K)\)}{Y Lambda(K)}}

Although \(Y_\Lambda(K)\) is not a finite-dimensional space in any naive
smooth sense, it has finite-resolution degree-of-freedom control.  The thickness bound prevents
arbitrarily sharp bending and arbitrarily close self-approach, while the
length bound limits the total amount of curve.

We formalize this by the effective degree of freedom introduced in
\Cref{sec:dof}.  At scale \(\varepsilon>0\), a representative
\(\gamma\in Y_\Lambda(K)\) can be approximated by a thickness-adapted
polygonal curve with
\[
  O(\Lambda/\varepsilon)
\]
edges.  Thus \(\Lambda\) restricts the positional freedom of \(\gamma\) at
finite resolution.

This control is the geometric reason why coarse projection data, coarse bridge
data, and coarse trunk data should be bounded at a fixed ropelength level.
Both structures are shown in \Cref{fig:ropelength-sublevel}.

\begin{figure}[t]
\centering
\begin{tikzpicture}[x=1cm,y=1cm,>=Latex,
  panel/.style={draw, rounded corners=2pt},
  title/.style={font=\footnotesize\bfseries},
  note/.style={font=\scriptsize, align=center}]
  \draw[panel] (0,-0.5) rectangle (6.8,4.35);
  \node[title] at (3.4,4.08) {the ropelength filtration};
  \draw[thin] (1.05,2.2) ellipse (0.62 and 0.42);
  \draw[thin] (1.85,2.2) ellipse (1.55 and 0.78);
  \draw[thin] (2.75,2.2) ellipse (2.55 and 1.18);
  \fill (0.62,2.2) circle (1.5pt);
  \node[note, above, fill=white, inner sep=1pt] at (0.98,2.72) {$I(K)$};
  \draw[->, thin] (0.90,2.60) -- (0.68,2.30);
  \node[note, anchor=west, fill=white, inner sep=1pt]
    at (1.42,2.20) {$Y_{\Lambda_1}$};
  \node[note, anchor=west, fill=white, inner sep=1pt]
    at (2.62,2.20) {$Y_{\Lambda_2}$};
  \node[note, anchor=west, fill=white, inner sep=1pt]
    at (4.32,2.20) {$Y_{\Lambda_3}$};
  \draw[->] (0.35,0.35) -- (6.35,0.35)
    node[note, below left, xshift=2mm] {$\Lambda$};
  \foreach \x/\lab in {0.62/{$\Rop(K)$},1.67/{$\Lambda_1$},
                       3.40/{$\Lambda_2$},5.30/{$\Lambda_3$}}{
    \draw[thin] (\x,0.28) -- (\x,0.42);
    \node[note, below] at (\x,0.24) {\lab};
  }
  \draw[dashed, thin] (0.62,0.42) -- (0.62,2.16);
  \draw[dashed, thin] (1.67,0.42) -- (1.67,1.80);
  \draw[dashed, thin] (3.40,0.42) -- (3.40,1.44);
  \draw[dashed, thin] (5.30,0.42) -- (5.30,1.06);
  \draw[panel] (7.4,-0.5) rectangle (13.5,4.35);
  \node[title] at (10.45,4.08) {finite-resolution freedom};
  \draw[black!12, line width=9pt]
    (8.0,1.3) .. controls (9.0,2.9) and (10.2,0.6) .. (11.3,2.1)
    .. controls (12.0,3.05) and (12.6,2.6) .. (12.9,1.9);
  \draw[thin]
    (8.0,1.3) .. controls (9.0,2.9) and (10.2,0.6) .. (11.3,2.1)
    .. controls (12.0,3.05) and (12.6,2.6) .. (12.9,1.9);
  \draw[very thick]
    (8.0,1.3) -- (8.62,2.02) -- (9.35,2.05) -- (10.05,1.62)
    -- (10.78,1.55) -- (11.45,2.22) -- (12.12,2.62) -- (12.9,1.9);
  \foreach \p in {(8.0,1.3),(8.62,2.02),(9.35,2.05),(10.05,1.62),
                  (10.78,1.55),(11.45,2.22),(12.12,2.62),(12.9,1.9)}
    \fill \p circle (1.2pt);
  \draw[decorate, decoration={brace, amplitude=3.5pt}]
    (8.55,2.14) -- (9.32,2.17);
  \node[note, above] at (8.95,2.38) {$\asymp\varepsilon$};
  \node[note] at (10.45,3.55)
    {unit thickness forbids sharp bends\\and close returns};
  \node[note] at (10.45,0.15)
    {thickness-adapted polygon: $O(\Lambda/\varepsilon)$ edges};
\end{tikzpicture}
\caption{Ropelength sublevel spaces.  Left: the spaces
\(Y_{\Lambda}(K)\) form a nested filtration of the space of unit-thickness
representatives, beginning at the ideal stratum
\(I(K)=Y_{\Rop(K)}(K)\), which is nonempty by the existence of ropelength
minimizers.  Right: the reason \(Y_\Lambda(K)\) has finite-resolution
degree-of-freedom control: unit thickness excludes sharp bending and close
self-approach, so at scale \(\varepsilon\) a representative is captured by a
thickness-adapted polygon with \(O(\Lambda/\varepsilon)\) edges.}
\label{fig:ropelength-sublevel}
\end{figure}

\section{Essential surface pair spaces}
\label{sec:pair-spaces}

\subsection{Ambient metric convention and surface thickness}

\begin{notation}[Ambient metric and tube-radius convention]
\label{not:ambient-metric}
Throughout the paper we identify \(S^3=\R^3\cup\{\infty\}\) and endow
\(\R^3\) with the Euclidean metric.  Every representative \(\gamma\) lies in
\(\R^3\), and every compact properly embedded surface in the exterior
\(E(\gamma)=S^3\setminus\operatorname{int}N_{\rho_0}(\gamma)\) is isotoped off
the point at infinity, so that it lies in \(\R^3\setminus\operatorname{int}
N_{\rho_0}(\gamma)\).  All lengths, areas, curvatures, reaches, and thicknesses
are measured in this Euclidean metric; the topology of \(S^3\) is used only to
speak of exteriors and of essential surfaces, never to measure a geometric
quantity.  Since \(E(\gamma)\setminus\{\infty\}\) is an unbounded subset of
\(\R^3\), compactness assertions for families of surfaces in \(E(\gamma)\) are
never taken for granted; they are proved in
\Cref{subsec:c11-compactness} from an anchoring lemma and a diameter estimate.

Here \(\rho_0\in(0,1)\) is a tube radius fixed once and for all; for
definiteness we take \(\rho_0=\tfrac12\), and every constant below that depends
on the tube geometry is allowed to depend on this fixed \(\rho_0\).  For a
unit-thickness \(C^{1,1}\) curve \(\gamma\), the normal exponential map
\((s,v)\mapsto\gamma(s)+v\), \(v\perp\gamma'(s)\), \(|v|\leq\rho\), is a
bi-Lipschitz homeomorphism onto the closed tube \(N_\rho(\gamma)\) for every
\(0<\rho<1=\Thi(\gamma)\), and \(\partial N_\rho(\gamma)\) is an embedded
\(C^{1,1}\) torus.  At the critical radius \(\rho=1\) this may fail: normal
disks of nearby strands may touch, and focal points may lie on the boundary,
so \(\partial N_1(\gamma)\) need not be an embedded surface.  For this reason
the exterior, the peripheral torus \(\partial E(\gamma)=\partial
N_{\rho_0}(\gamma)\), meridian--longitude representatives, boundary collars,
and all peripheral length computations are taken on the \(\rho_0\)-tube, never
on the critical radius-one tube.  This convention loses no generality for
compact surfaces and makes the quantities \(\Len\), \(\Area\), and \(\Thi\)
unambiguous.
\end{notation}

Let \(F\subset E(\gamma)\) be a properly embedded surface.  For the purposes of
this paper, a thickness condition for \(F\) should include both interior and
boundary control; see \Cref{fig:relative-thickness}.

\begin{definition}[Relative surface thickness]
\label{def:relative-surface-thickness}
Let \(F\subset E(\gamma)\) be a compact properly embedded \(C^{1,1}\)
surface, possibly with boundary.  We say that \(F\) has relative thickness at
least \(r\), and write \(\Thi(F)\geq r\), if the following bounded-geometry
conditions hold.  The constants
\[
  0<c_0<1,\qquad L_0\geq1
\]
are fixed once and for all and do not depend on \(F\), \(\gamma\), or \(r\).
\begin{enumerate}[label=(\roman*),leftmargin=2em]
\item \emph{Positive reach of the closed set.}  The closed subset
\(\overline F\subset\R^3\), including its boundary, has Federer reach at
least \(r\) in the Euclidean metric of \Cref{not:ambient-metric}: the
nearest-point projection to \(\overline F\) is single-valued on the open
\(r\)-neighbourhood of \(\overline F\) in \(\R^3\)
\cite{FedererReach}.  This metric-projection property is the definition;
see \Cref{rem:two-point-criterion} for the tangent-cone reformulation and for
a warning about surfaces with boundary.
\item \emph{Interior graphical control.}  For every interior point
\(x\in F\setminus\partial F\), after a rigid change of coordinates taking
\(x\) to the origin and \(T_xF\) to the horizontal plane, there is a
\(C^{1,1}\) function \(u\) on the disk of radius \(c_0r\) in \(T_xF\), with
\(u(0)=0\) and \(Du(0)=0\), whose \(C^1\)-norm and whose Lipschitz norm of
the first derivative are bounded by \(L_0\) after scaling by \(r^{-1}\),
such that
\[
  F\cap B(x,c_0r)\ \subseteq\ \operatorname{graph}(u),
  \qquad
  \operatorname{graph}\bigl(u|_{\text{disk of radius }c_0r/2}\bigr)
  \ \subseteq\ F .
\]
The clause is stated through these two inclusions, rather than through an
equality of \(F\cap B(x,c_0r)\) with a graph over a disk, so that it
restricts verbatim to smaller balls and scales.
\item \emph{Boundary half-chart control.}  For every boundary point
\(x\in\partial F\), the same statement holds with the disk replaced by a
half-disk in \(T_xF\), with \(\partial F\) corresponding to the boundary
diameter of the half-disk, and with the same uniform \(C^{1,1}\) bounds and
the same pair of inclusions at radii \(c_0r\) and \(c_0r/2\).
\item \emph{Quantitatively embedded boundary collar.}  Let
\[
  C_F:[0,c_0r]\times\partial F\longrightarrow F,
  \qquad C_F(0,p)=p,
\]
be the inward conormal collar map.  For points of the same boundary component
let \(d_{\partial F}\) be intrinsic arclength distance.  The map \(C_F\) is an
embedding and satisfies the following three estimates.
\begin{itemize}[leftmargin=2em]
\item[(iv-a)] \emph{Local upper bound.}  If \(p,q\) lie on the same component
of \(\partial F\), then
\[
  |C_F(s,p)-C_F(t,q)|
  \leq L_0\bigl(|s-t|+d_{\partial F}(p,q)\bigr).
\]
\item[(iv-b)] \emph{Truncated lower bound.}  If \(p,q\) lie on the same
component of \(\partial F\), then
\[
  |C_F(s,p)-C_F(t,q)|
  \geq L_0^{-1}\min\bigl\{r,\ |s-t|+d_{\partial F}(p,q)\bigr\}.
\]
\item[(iv-c)] \emph{Separation of distinct components.}  If \(p,q\) lie on
different components of \(\partial F\), then
\[
  |C_F(s,p)-C_F(t,q)|\geq c_0r .
\]
\end{itemize}
After scaling the collar parameter and boundary arclength by \(r^{-1}\), the
first derivative of \(C_F\) is bounded by \(L_0\) and is \(L_0\)-Lipschitz.
Thus the collar controls local bi-Lipschitz regularity through
(iv-a) and (iv-b) at small distances, forbids long-range returns of a
boundary component at scale \(r\) through the truncated lower bound
(iv-b), and keeps distinct boundary components a definite distance apart
through (iv-c).  Only the lower bounds are truncated; the upper bound
(iv-a) is a local, non-truncated estimate.  A truncated \emph{upper}
bound would force every boundary component to have extrinsic diameter at
most \(L_0r\), an unnatural restriction that moreover fails to be monotone
as \(r\) decreases; the present form imposes no such diameter bound and is
scale monotone by \Cref{lem:thickness-scale-monotonicity} below.
\item \emph{Peripheral clearance in collar coordinates.}  Writing
\(T=\partial E(\gamma)\), one has
\[
  \operatorname{dist}(C_F(s,p),T)\geq c_0s
  \qquad(0\leq s\leq c_0r),
\]
and
\[
  \operatorname{dist}\!\left(
    F\setminus C_F([0,c_0r/2]\times\partial F),T
  \right)\geq \frac{c_0^2}{4}r.
\]
For a closed surface the second inequality is required with \(F\) in place
of the displayed complement.  In particular
\(F\cap T=\partial F\), and no interior sheet of \(F\) can accumulate on the
peripheral torus.
\end{enumerate}
The graphical clauses control curvature, the reach clause controls global
sheet separation, the collar estimates (iv-a)--(iv-c) make the boundary
stratum compact in a fixed parameter domain, and the two clearance
inequalities separate the collar and deep-interior regimes.  Near the boundary, the first
clearance inequality is a quantitative transversality condition: the inward
surface direction has normal component at least \(c_0\) relative to the
peripheral torus.  A compact properly embedded \(C^{1,1}\) surface meeting the
torus transversely can be changed by an arbitrarily small collar isotopy so
that these conditions hold at some positive scale.

For a disconnected surface, all five clauses are imposed on the union
\(\overline F\).  In particular, clause~(i) implies that distinct components
have mutual distance at least \(2r\), so the normal \(r\)-tube of the union
is embedded.

The numerical relative thickness is
\[
  \Thi(F)
  :=
  \max\{r>0\mid \text{the five clauses above hold at scale }r\},
\]
with value \(0\) if no positive scale is admissible.  The maximum exists:
the clauses are monotone and closed under increasing limits of the scale for a
fixed compact \(C^{1,1}\) surface.  Thus the notation \(\Thi(F)\geq r\)
used above agrees with the ordinary numerical inequality.

The convention is monotone in the scale: this is not built into the
wording of the clauses but is proved once and for all in
\Cref{lem:thickness-scale-monotonicity} below, and it is used whenever a
compactness argument is first carried out at an arbitrary scale \(r'<r\)
and then \(r'\nearrow r\), and whenever the filtration parameter \(\tau\)
is decreased.
\end{definition}

\begin{lemma}[Scale monotonicity of relative thickness]
\label{lem:thickness-scale-monotonicity}
The convention of
\Cref{def:relative-surface-thickness} is monotone in the scale: if the five
clauses hold at scale \(r\), then they hold, with the same universal
constants \(c_0\) and \(L_0\), at every scale \(0<r'\leq r\).  In particular
\(\Thi(F)\geq r\)
implies \(\Thi(F)\geq r'\) for every \(0<r'\leq r\).
\end{lemma}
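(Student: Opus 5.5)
The plan is to verify the five clauses of \Cref{def:relative-surface-thickness} one at a time at a scale \(0<r'\leq r\). In each case the object witnessing the clause at scale \(r'\) is the \emph{same} object used at scale \(r\), either restricted to a smaller domain or left unchanged. Then we check that every inequality survives, using only \(r'\leq r\) and \(c_0<1\). The constants \(c_0,L_0\) are never touched.

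Clause~(i) is immediate. If the nearest-point projection onto \(\overline F\) is single-valued on the open \(r\)-neighbourhood, it is single-valued on the smaller \(r'\)-neighbourhood.

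For clauses~(ii) and~(iii), I keep the chart \(u\) from scale \(r\) and restrict it to the (half-)disk of radius \(c_0r'\).
\begin{itemize}
\item First inclusion. We have \(F\cap B(x,c_0r')\subseteq F\cap B(x,c_0r)\subseteq\operatorname{graph}(u)\). Any point of this graph lying in \(B(x,c_0r')\) has horizontal coordinate of size less than \(c_0r'\), so it lies over the smaller disk.
\item Second inclusion. We have \(\operatorname{graph}(u|_{c_0r'/2})\subseteq\operatorname{graph}(u|_{c_0r/2})\subseteq F\).
\item Scaled norms. The derivative bound \(\sup|Du|\) is dimensionless and unchanged. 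The rescaled Lipschitz constant of \(Du\) becomes \(r'\operatorname{Lip}(Du)\leq r\operatorname{Lip}(Du)\leq L_0\). For the zeroth-order part, \(u(0)=0\) and \(Du(0)=0\) give \(|u(y)|\leq\tfrac12\operatorname{Lip}(Du)|y|^2\). Hence on \(|y|\leq c_0r'\) we get \(|u|/r'\leq \tfrac12 (L_0/r)c_0^2r'\leq L_0\).
\end{itemize}
So the scaled \(C^{1,1}\) bounds hold at \(r'\).

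For clause~(iv), I restrict the inward conormal collar to \([0,c_0r']\times\partial F\).
\begin{itemize}
\item The collar map is the same conormal map, and a restriction of an embedding is an embedding.
\item Estimate (iv-a) is unchanged.
\item Estimate (iv-b) follows from \(\min\{r',a\}\leq\min\{r,a\}\).
\item Estimate (iv-c) follows from \(c_0r\geq c_0r'\).
\item Scaled derivatives. Rescaling inputs and outputs by \(r'^{-1}\) leaves the first derivative unchanged. It multiplies the Lipschitz constant of the derivative by \(r'/r\leq1\).
\end{itemize}

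The only step requiring an argument rather than a restriction is the second inequality of clause~(v). The region \(F\setminus C_F([0,c_0r'/2]\times\partial F)\) is \emph{larger} than the corresponding region at scale \(r\), so we must show that the extra points still satisfy the bound \(\tfrac{c_0^2}{4}r'\). I split a point \(y\) of this region into two cases.
\begin{itemize}
\item If \(y\notin C_F([0,c_0r/2]\times\partial F)\), the scale-\(r\) inequality gives \(\operatorname{dist}(y,T)\geq \tfrac{c_0^2}{4}r\geq\tfrac{c_0^2}{4}r'\).
\item Otherwise \(y=C_F(s,p)\) with \(c_0r'/2< s\leq c_0r/2\leq c_0r\). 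The parameter \(s\) is well defined because \(C_F\) is an embedding. The first clearance inequality at scale \(r\) then gives \(\operatorname{dist}(y,T)\geq c_0s\geq c_0^2r'/2\), which exceeds \(\tfrac{c_0^2}{4}r'\).
\end{itemize}
The first clearance inequality at scale \(r'\) is just the scale-\(r\) one restricted to \(s\leq c_0r'\). For closed surfaces the second inequality is trivially monotone.

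For disconnected surfaces nothing more is needed, since every clause is imposed on the union \(\overline F\). The final assertion that \(\Thi(F)\geq r\) implies \(\Thi(F)\geq r'\) follows because \(\Thi(F)\) is the maximal admissible scale.
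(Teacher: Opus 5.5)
Your proposal is correct and follows essentially the same route as the paper. You restrict the scale-\(r\) charts and collar to the smaller domains, and you use the quadratic Taylor bound coming from \(u(0)=0\), \(Du(0)=0\) for the rescaled \(C^{1,1}\) norms. The one step that is not a restriction, the second clearance inequality, you handle as the paper does: the enlarged region splits into the part already controlled at scale \(r\) and the collar band \(C_F([c_0r'/2,c_0r/2]\times\partial F)\), where the first clearance inequality gives \(c_0^2r'/2\geq c_0^2r'/4\).
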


\begin{proof}
Fix \(0<r'\leq r\) and verify the clauses one by one.

\emph{Clause (i).}  Federer reach is a single number attached to
\(\overline F\); \(\operatorname{reach}(\overline F)\geq r\geq r'\) is
immediate.

\emph{Clauses (ii) and (iii).}  Let \(x\in F\setminus\partial F\) and let
\(u\) be the \(C^{1,1}\) function on the disk of radius \(c_0r\) in
\(T_xF\), with \(u(0)=0\) and \(Du(0)=0\), provided by clause (ii) at scale
\(r\); its unscaled bounds are \(\operatorname{Lip}(Du)\leq L_0/r\) and
\(|Du|\leq L_0\).  Take, at scale \(r'\), the restriction
\(u'=u|_{\text{disk of radius }c_0r'}\).  The two inclusions restrict
verbatim: a point of \(F\cap B(x,c_0r')\) lies in \(F\cap B(x,c_0r)\), hence
on \(\operatorname{graph}(u)\), and its horizontal projection has norm at
most \(c_0r'\), so it lies on \(\operatorname{graph}(u')\); and
\(\operatorname{graph}(u'|_{\text{disk of radius }c_0r'/2})\subseteq
\operatorname{graph}(u|_{\text{disk of radius }c_0r/2})\subseteq F\).
The Taylor estimates \(|Du(y)|\leq (L_0/r)|y|\leq L_0c_0r'/r\leq L_0\) and
\(|u(y)|\leq (L_0/2r)|y|^2\leq (L_0c_0^2/2)r'\) on the disk of radius
\(c_0r'\) show that, after scaling by
\((r')^{-1}\), the \(C^1\)-norm of \(u'\) is bounded
by \(L_0\), and its first derivative is
\((L_0r'/r)\)-Lipschitz, hence \(L_0\)-Lipschitz, in the scaled variables.
Thus clause (ii) holds at scale \(r'\) with the same constants.
The boundary half-chart clause (iii) is verified by the same restriction
argument applied to half-disks.

\emph{Clause (iv).}  The collar map at scale \(r'\) is the restriction of
\(C_F\) to \([0,c_0r']\times\partial F\), hence an embedding.  The local
upper bound (iv-a) contains no \(r\) and restricts verbatim.  For the
truncated lower bound (iv-b), monotonicity of the truncation gives
\[
  |C_F(s,p)-C_F(t,q)|
  \geq L_0^{-1}\min\{r,\ |s-t|+d_{\partial F}(p,q)\}
  \geq L_0^{-1}\min\{r',\ |s-t|+d_{\partial F}(p,q)\},
\]
which is the required estimate at scale \(r'\).  For (iv-c),
\(|C_F(s,p)-C_F(t,q)|\geq c_0r\geq c_0r'\).  The scaled derivative bounds
follow as in clauses (ii)--(iii).

\emph{Clause (v).}  The first clearance inequality
\(\operatorname{dist}(C_F(s,p),T)\geq c_0s\) holds on the smaller parameter
domain \(0\leq s\leq c_0r'\) by restriction.  For the second, decompose
\[
  F\setminus C_F\bigl([0,c_0r'/2]\times\partial F\bigr)
  =
  \Bigl(F\setminus C_F\bigl([0,c_0r/2]\times\partial F\bigr)\Bigr)
  \cup
  C_F\bigl([c_0r'/2,c_0r/2]\times\partial F\bigr).
\]
On the first piece the clause at scale \(r\) gives distance at least
\(c_0^2r/4\geq c_0^2r'/4\) from \(T\); on the second piece the first
clearance inequality gives distance at least
\(c_0\cdot c_0r'/2=c_0^2r'/2\geq c_0^2r'/4\).  Hence the second clearance
inequality holds at scale \(r'\).  For a closed surface the same estimate is
required with \(F\) in place of the complement and is inherited directly
from the scale-\(r\) clause.
\end{proof}

\begin{remark}[Why the truncation is one-sided]
\label{rem:one-sided-truncation}
The asymmetry between (iv-a) and (iv-b) is essential.  Taking \(s=t=0\) in a
truncated two-sided estimate would give
\(|p-q|\leq L_0\min\{r,d_{\partial F}(p,q)\}\leq L_0r\) for all boundary
points \(p,q\) of one component, forcing each boundary component to have
extrinsic diameter at most \(L_0r\); moreover the resulting condition would
become \emph{stronger}, not weaker, as \(r\) decreases, so no scale
monotonicity of the form of \Cref{lem:thickness-scale-monotonicity} could
hold.  With the truncation applied only to the lower bound, large boundary
components are allowed, distant returns of one component and distinct
components are still separated at a definite scale, and the convention is
scale monotone.
\end{remark}

\begin{figure}[t]
\centering
\begin{tikzpicture}[>=Latex, line join=round]
  \fill[black!7]
    (0,1.55) .. controls (2.7,2.15) and (5.3,1.05) .. (8.1,1.55)
    -- (8.1,0.45)
    .. controls (5.3,-0.05) and (2.7,1.05) .. (0,0.45) -- cycle;
  \draw[thick] (8.1,-1.7) -- (8.1,2.5);
  \node[above right, font=\scriptsize] at (8.05,2.3) {$\partial E(\gamma)$};
  \draw[very thick]
    (0,1.0) .. controls (2.7,1.6) and (5.3,0.5) .. (8.1,1.0);
  \node[font=\small] at (1.05,1.95) {$F$};
  \draw[<->] (4.05,1.36) -- (4.05,0.62);
  \node[right, font=\scriptsize] at (4.12,0.99) {$\tau$};
  \coordinate (xpoint) at (2.01094,1.18281);
  \fill (xpoint) circle (1.3pt);
  \node[above, font=\scriptsize] at ($(xpoint)+(0,0.08)$) {$x$};
  \draw[thin]
    ($(xpoint)+(-0.85,0.014)$) -- ($(xpoint)+(0.85,-0.014)$);
  \node[above right, font=\scriptsize] at ($(xpoint)+(0.72,0.02)$)
    {$T_xF\cap P$};
  \draw[dashed, thin] (xpoint) circle (0.60);
  \node[font=\scriptsize, align=center] at (0.95,0.08)
    {chart ball\\[-1mm]$B(x,c_0\tau)$};
  \draw[->, thin] (1.32,0.34) -- (1.60,0.72);
  \fill (8.1,1.0) circle (1.5pt);
  \node[right, font=\scriptsize] at (8.16,1.02) {$\partial F$};
  \draw[decorate, decoration={brace, amplitude=4pt, mirror}, thick]
    (6.7,0.83) -- (8.05,0.95);
  \node[below, font=\scriptsize, align=center] at (6.9,0.58)
    {collar\\[-1mm]$[0,c_0\tau]\times\partial F$};
  \draw[very thick] (0.5,-1.15) .. controls (3.0,-0.75) and (5.2,-1.2) .. (7.0,-0.95);
  \node[font=\scriptsize] at (5.0,-1.5) {another sheet of $F$};
  \draw[<->] (5.1,0.90) -- (5.1,-1.05);
  \node[fill=white, inner sep=1pt, font=\scriptsize] at (5.1,-0.05) {$\ge 2\tau$};
  \draw[<->, thin] (7.08,-0.95) -- (8.1,-0.95);
  \node[fill=white, inner sep=1pt, font=\scriptsize] at (7.58,-0.68)
    {$\ge\tfrac{c_0^2}{4}\tau$};
  \node[font=\scriptsize, align=center] at (7.42,-1.42)
    {peripheral\\[-1mm]clearance};
\end{tikzpicture}
\caption{Relative surface thickness
(\Cref{def:relative-surface-thickness}), in cross-section.  Reach at least
\(\tau\) means the normal tube of radius \(\tau\) about \(\overline F\) is
embedded (shaded).  This simultaneously bounds the curvature of each sheet, so
that \(F\) is locally a graph over \(T_xF\) within the dashed chart ball
\(B(x,c_0\tau)\).  In the drawing,
\(P\) denotes the cross-sectional plane and the thin line is
\(T_xF\cap P\).  Positive reach also keeps distinct sheets separated;
and, through the embedded collar \([0,c_0\tau]\times\partial F\) along
\(\partial F\subset\partial E(\gamma)\), controls the boundary stratum.  The
peripheral clearance clause additionally keeps interior sheets a definite
distance from \(\partial E(\gamma)\).  These
are the only consequences of relative thickness used in the compactness,
finiteness, and reconstruction theorems.}
\label{fig:relative-thickness}
\end{figure}

\begin{remark}[Bounded-geometry hypothesis]
Relative surface thickness is a bounded-geometry hypothesis, not a topological
property of the surface class.  Stating it through positive reach incorporates
both local curvature control and sheet separation, including the boundary
collars, and avoids treating the interior, boundary, and separation conditions
as separate pieces of terminology.
\end{remark}

\begin{remark}[Surface thickness scale]
Unlike the knot representative, the surface is not naturally normalized to
have thickness one after the normalization \(\Thi(\gamma)=1\).  The same
ambient scaling fixes the knot thickness and simultaneously scales the
surface.  Thus a surface in \(E(\gamma)\) should be assigned its own positive
thickness scale \(\tau\).

For this reason, the basic filtered pair space below uses the condition
\(\Thi(F)\geq\tau\), where \(\tau>0\) is an additional parameter.  The case
\(\tau=1\) is a convenient bounded-geometry subcase, but it should not be
interpreted as automatic for all essential surface representatives.  For
finite-resolution arguments one works at scales
\[
  0<\varepsilon\ll \tau.
\]
\end{remark}

\begin{remark}[Non-emptiness and richness]
The condition \(\Thi(F)\geq\tau\) selects bounded-geometry representatives of
surface classes.  It should not be interpreted as saying that every
essential surface automatically satisfies a prescribed lower bound
\(\tau\) in a fixed unit-thickness knot exterior.  For a fixed smooth compact
properly embedded representative, after smoothing and putting it in general
bounded-geometry position, one often obtains positive relative thickness for
some \(\tau>0\).  The value of \(\tau\), however, may depend on the chosen
representative and on the ambient normalization.
\end{remark}

\subsection{The filtered pair space}

\begin{definition}[Three equivalence relations for pairs]
Let \((\gamma_0,F_0)\) and \((\gamma_1,F_1)\) be pairs of the same type.
We use three related, but distinct, equivalence relations.

\begin{enumerate}[label=(\roman*),leftmargin=2em]
\item \emph{Pair isotopy.}  A pair isotopy is an ambient isotopy
\(h_t:S^3\to S^3\), \(t\in[0,1]\), such that
\[
  h_0=\operatorname{id},
  \qquad h_t(\gamma_0)=\gamma_t,
  \qquad h_t(F_0)=F_t\subset E(\gamma_t),
\]
and whose endpoint satisfies \(h_1(\gamma_0)=\gamma_1\) and
\(h_1(F_0)=F_1\).  This is the equivalence relation used for topological classification and for
the quotient defining the filtered pair space, unless otherwise stated.

\item \emph{Admissible thick deformation.}  At a fixed level
\((\Lambda,\Delta,\tau)\), an admissible thick deformation is a pair isotopy
through pairs \((\gamma_t,F_t)\), \(t\in[0,1]\), such that
\[
  \Thi(\gamma_t)=1,
  \qquad \Len(\gamma_t)\leq \Lambda,
  \qquad \Thi(F_t)\geq \tau,
  \qquad \Area(F_t)\leq \Delta
\]
for all \(t\), with the required topological condition, such as essentiality,
preserved throughout.  These deformations define the admissible components used
in persistence.

\item \emph{Finite-resolution equivalence.}  After a scale \(\varepsilon\) and
a finite encoding scheme have been fixed, two pairs are
\(\varepsilon\)-equivalent if their layered finite-resolution codes are
combinatorially isomorphic.  This relation is coarser than smooth data and is
used only for finite-resolution finiteness.
\end{enumerate}

Thus classification, filtered connectivity, and finite-resolution recognition
are deliberately separated.  The space \(\calZ_{\Lambda,\Delta,\tau}(K)\) is
first taken modulo pair isotopy, and admissible components are then defined
inside this quotient by admissible thick deformations.
\end{definition}

\begin{definition}[Ropelength--area--thickness filtered surface pair space]
For a knot type \(K\) and parameters \(\Lambda,\Delta,\tau>0\), define
\[
  \calZ_{\Lambda,\Delta,\tau}(K)
  =
  \left\{
  (\gamma,F)
  \ \middle|
  \begin{array}{l}
  \gamma\in Y_\Lambda(K),\quad \Thi(\gamma)=1,\quad \Len(\gamma)\leq\Lambda,\\
  F\subset E(\gamma)\ \text{a compact properly embedded surface}\\
  \quad\text{or finite surface system},\quad
  \Thi(F)\geq \tau,\quad \Area(F)\leq \Delta
  \end{array}
  \right\}/\!\sim .
\]
Unless otherwise stated, the equivalence relation \(\sim\) is pair isotopy.
Admissible thick deformations are used to define paths and connected
components inside a fixed filtered level.  Thus the quotient space is
well-defined independently of later persistence conventions.  The parameter
\(\Lambda\) controls the position of the knot representative, \(\Delta\)
controls the area level of the surface, and \(\tau\) records the
surface-side thickness scale.  When \(F\) is disconnected, \(\Area(F)\) denotes
the total area and \(\Thi(F)\) is the relative thickness of the union;
equivalently, the componentwise bounds hold and distinct components are
separated by at least twice the chosen scale.
The basic geometric configuration is illustrated in \Cref{fig:pair-window}.
\end{definition}

\begin{figure}[t]
\centering
\begin{tikzpicture}[x=1cm,y=1cm,>=Latex,
  panel/.style={draw, rounded corners=2pt},
  title/.style={font=\footnotesize\bfseries},
  note/.style={font=\scriptsize, align=center}]
  \draw[panel] (0,-0.45) rectangle (6.2,4.7);
  \node[title] at (3.1,4.42) {thick knot and fixed-radius tube};
  \begin{scope}[shift={(3.1,2.15)}, scale=0.50]
    \foreach \a in {90,210,330}{
      \draw[white, line width=4.8pt]
        plot[variable=\t, domain=\a:\a+140, samples=70, smooth]
        ({sin(\t)+2*sin(2*\t)}, {cos(\t)-2*cos(2*\t)});
      \draw[black, line width=3.4pt]
        plot[variable=\t, domain=\a:\a+140, samples=70, smooth]
        ({sin(\t)+2*sin(2*\t)}, {cos(\t)-2*cos(2*\t)});
      \draw[black!15, line width=2.2pt]
        plot[variable=\t, domain=\a:\a+140, samples=70, smooth]
        ({sin(\t)+2*sin(2*\t)}, {cos(\t)-2*cos(2*\t)});
    }
  \end{scope}
  \node[note, anchor=west] at (4.42,3.85) {$N_{\rho_0}(\gamma)$};
  \draw[->, thin] (4.55,3.70) -- (3.72,3.22);
  \node[note] at (1.0,4.05) {$E(\gamma)$};
  \node[note] at (3.1,-0.13)
    {$\Thi(\gamma)=1$,\quad $\Len(\gamma)\leq\Lambda$};
  \draw[panel] (6.7,-0.45) rectangle (13.4,4.7);
  \node[title] at (10.05,4.42) {cross-section of the pair $(\gamma,F)$};
  \draw[black!8, line width=10pt]
    (8.921,2.717) .. controls (10.3,2.5) and (10.3,1.6) .. (8.921,1.383);
  \foreach \yc in {3.1,1.0}{
    \draw[dashed, thin] (8.6,\yc) circle (1.02);
    \fill[black!15] (8.6,\yc) circle (0.5);
    \draw[thick] (8.6,\yc) circle (0.5);
    \fill (8.6,\yc) circle (1.2pt);
  }
  \node[note, anchor=east] at (8.45,3.1) {$\gamma$};
  \draw[->, thin] (8.6,3.1) -- (8.25,3.46);
  \node[note] at (7.85,3.62) {$\rho_0$};
  \draw[->, thin] (8.6,1.0) -- (7.70,0.60);
  \node[note] at (8.02,0.62) {$1$};
  \node[note, anchor=west] at (9.10,3.62) {$\partial E(\gamma)$};
  \draw[->, thin] (9.22,3.50) -- (8.97,3.32);
  \draw[very thick]
    (8.921,2.717) .. controls (10.3,2.5) and (10.3,1.6) .. (8.921,1.383);
  \fill (8.921,2.717) circle (1.2pt);
  \fill (8.921,1.383) circle (1.2pt);
  \node[note, anchor=west] at (10.12,2.62) {$F$};
  \node[note, anchor=west] at (9.05,1.05) {$\partial F$};
  \draw[<->, thin] (9.955,2.05) -- (10.30,2.05);
  \node[note, anchor=west] at (10.30,2.05) {$\tau$};
  \node[note] at (12.35,3.90) {$E(\gamma)$};
  \node[note, fill=white, inner sep=1.5pt] at (10.05,-0.20)
    {$\Area(F)\leq\Delta$,\quad $\Thi(F)\geq\tau$};
\end{tikzpicture}
\caption{The objects of the filtered pair space
\(\calZ_{\Lambda,\Delta,\tau}(K)\), schematically.  Left: a unit-thickness
representative \(\gamma\) with \(\Len(\gamma)\leq\Lambda\) is replaced by the
embedded tube \(N_{\rho_0}(\gamma)\) of fixed radius \(\rho_0=\tfrac12\); the
exterior is \(E(\gamma)=S^3\setminus\operatorname{int}N_{\rho_0}(\gamma)\).
Right: in a cross-sectional plane, each strand of \(\gamma\) contributes a
solid disk of radius \(\rho_0\) (shaded) sitting inside the embedded normal
disk of radius \(1=\Thi(\gamma)\) (dashed); the essential surface \(F\) is
properly embedded in the exterior, with \(\partial F\) on the peripheral torus
\(\partial E(\gamma)\), total area at most \(\Delta\), and relative thickness
at least \(\tau\) (light band).  The three budgets
\(\Lambda,\Delta,\tau\) respectively control the position of the knot, the
amount of surface, and the curvature and sheet separation of the surface.}
\label{fig:pair-window}
\end{figure}

\begin{remark}[Relative thickness of a disconnected surface]
\label{rem:disconnected-thickness}
For a disconnected surface \(F=F_1\sqcup\cdots\sqcup F_m\), the condition
\(\Thi(F)\geq\tau\) is understood as a single positive-reach condition on the
union: \(\operatorname{reach}(\overline F)\geq\tau\) in the sense of
\Cref{def:relative-surface-thickness}.  A single reach bound on the union
encodes at once that each component has reach at least \(\tau\) and that distinct
components stay at mutual distance at least \(2\tau\); in particular the
boundary collars of the components are pairwise disjoint.  This is the reading
used in \Cref{lem:compactness-thick-surfaces} and throughout, and it is why the
component count and the compactness argument apply verbatim to finite surface
systems such as JSJ frontiers.
\end{remark}

\begin{definition}[Essential subspace]
The essential subspace
\[
  \calZ_{\Lambda,\Delta,\tau}^{\mathrm{ess}}(K)
  \subset
  \calZ_{\Lambda,\Delta,\tau}(K)
\]
is the subspace consisting of pairs \((\gamma,F)\) such that every component of
\(F\) is essential in \(E(\gamma)\), that is, incompressible (for two-sided
surfaces that are not spheres, equivalently \(\pi_1\)-injective, by the loop
theorem; one-sided surfaces are treated by the usual geometric-disk
convention),
boundary-incompressible, and not boundary-parallel.  For \(2\)-spheres we use
the standard convention: a \(2\)-sphere is essential only if it bounds no ball
in \(E(\gamma)\); since knot exteriors are irreducible, no component of an
essential system is ever a sphere.  Parallel duplicate components are not
allowed unless the chosen surface type explicitly keeps multiplicity.
Allowing finite surface systems is necessary for JSJ tori
and other characteristic systems; statements that require connectedness will
say so explicitly.
\end{definition}

If \(\Lambda\leq \Lambda'\), \(\Delta\leq \Delta'\), and \(\tau\geq \tau'\),
then, by the scale monotonicity of relative thickness
(\Cref{lem:thickness-scale-monotonicity}), which guarantees that
\(\Thi(F)\geq\tau\geq\tau'\) implies \(\Thi(F)\geq\tau'\), there are natural
inclusions
\[
  \calZ_{\Lambda,\Delta,\tau}(K)
  \into
  \calZ_{\Lambda',\Delta',\tau'}(K),
  \qquad
  \calZ_{\Lambda,\Delta,\tau}^{\mathrm{ess}}(K)
  \into
  \calZ_{\Lambda',\Delta',\tau'}^{\mathrm{ess}}(K).
\]
Thus \(\{\calZ_{\Lambda,\Delta,\tau}(K)\}\) and
\(\{\calZ_{\Lambda,\Delta,\tau}^{\mathrm{ess}}(K)\}\) form
three-parameter filtrations.  The direction of the \(\tau\)-parameter is
opposite to that of \(\Lambda\) and \(\Delta\): allowing smaller surface
thickness enlarges the space.

\begin{remark}[One-sided and coupled filtrations]
The coupled space \(\calZ_{\Lambda,\Delta,\tau}(K)\) contains useful
one-sided filtrations.  If a representative \(\gamma\in Y_\Lambda(K)\) is
fixed, one obtains the fixed-exterior surface filtration
\[
  \calZ_{\Delta,\tau}^{\mathrm{ess}}(\gamma)
  =
  \{F\subset E(\gamma)\mid
  F\text{ is essential},\ \Thi(F)\geq\tau,\ \Area(F)\leq\Delta\}/\!\sim .
\]
This studies the essential surface theory of one fixed geometric knot
exterior.  Conversely, one may fix a surface class and vary the knot
representative, obtaining a position filtration constrained by that surface
class.  The full space \(\calZ_{\Lambda,\Delta,\tau}(K)\) is the coupled object
in which both sides vary; it is the natural setting for the knot--surface
interaction studied here.
\end{remark}

\subsection{Surface types and visibility levels}

The pair space becomes more useful once one restricts the surface coordinate to
a specified topological or geometric type.  The following definition isolates
this common construction.

\begin{definition}[Filtered surface type]
A filtered surface type \(\mathfrak S\) for the knot type \(K\) is a rule which,
for each representative \(\gamma\in Y_\Lambda(K)\), selects a collection of
properly embedded surfaces in \(E(\gamma)\), invariant under pair isotopy.  We
write
\[
  \calZ_{\Lambda,\Delta,\tau}^{\mathfrak S}(K)
  \subset
  \calZ_{\Lambda,\Delta,\tau}^{\mathrm{ess}}(K)
\]
for the corresponding subspace.  Examples include essential annuli, essential
tori in the characteristic submanifold, surfaces of a fixed boundary slope,
minimal genus Seifert surfaces, fiber surfaces, and a fixed
isotopy class of an essential surface transported through pair isotopy.
\end{definition}

\begin{definition}[Surface visibility level]
\label{def:surface-visibility-level}
Let \(\mathfrak S\) be a filtered surface type and fix \(\Delta,\tau>0\).  The
\((\Delta,\tau)\)-visibility level of \(\mathfrak S\) is
\[
  \beta_{\mathfrak S}(K;\Delta,\tau)
  =
  \inf\left\{
  \Lambda
  \ \middle|
  \calZ_{\Lambda,\Delta,\tau}^{\mathfrak S}(K)\neq\varnothing
  \right\},
\]
with the convention that the infimum is \(\infty\) if the set is empty.  Dually,
for fixed \(\Lambda,\tau\), the area visibility level is
\[
  a_{\mathfrak S}(K;\Lambda,\tau)
  =
  \inf\left\{
  \Delta
  \ \middle|
  \calZ_{\Lambda,\Delta,\tau}^{\mathfrak S}(K)\neq\varnothing
  \right\}.
\]
The collection of functions
\[
  (\Delta,\tau)\longmapsto \beta_{\mathfrak S}(K;\Delta,\tau)
\]
is called the visibility spectrum of \(\mathfrak S\).
\end{definition}

\Cref{fig:visibility-diagram} displays the two visibility levels in one
picture: for fixed \(\tau\), the parameter pairs \((\Lambda,\Delta)\) at which
the type \(\mathfrak S\) is visible form a monotone region of the plane, and
\(\beta_{\mathfrak S}\) and \(a_{\mathfrak S}\) are the horizontal and
vertical entry coordinates of this region.

\begin{figure}[t]
\centering
\begin{tikzpicture}[x=1cm,y=1cm,>=Latex,
  note/.style={font=\scriptsize, align=center}]
  \fill[black!8]
    (2.6,4.4) .. controls (3.0,2.2) and (4.4,1.35) .. (8.6,1.05)
    -- (8.6,4.4) -- cycle;
  \draw[thick]
    (2.6,4.4) .. controls (3.0,2.2) and (4.4,1.35) .. (8.6,1.05);
  \draw[->] (0,0) -- (9.3,0) node[note, below left, xshift=2mm] {$\Lambda$};
  \draw[->] (0,0) -- (0,4.9) node[note, left] {$\Delta$};
  \draw[dashed, thin] (1.6,0) -- (1.6,4.4);
  \node[note, below] at (1.6,-0.05) {$\Rop(K)$};
  \node[note] at (6.6,3.15)
    {$\calZ^{\mathfrak S}_{\Lambda,\Delta,\tau}(K)\neq\varnothing$};
  \node[note, fill=white, inner sep=1.5pt] at (2.7,0.62)
    {$\calZ^{\mathfrak S}_{\Lambda,\Delta,\tau}(K)=\varnothing$};
  \node[note, anchor=west] at (4.75,4.05) {trade-off frontier};
  \draw[->, thin] (4.72,3.95) -- (3.30,3.35);
  \draw[dashed, thin] (0,2.0) -- (4.18,2.0);
  \draw[dashed, thin] (4.18,2.0) -- (4.18,0);
  \fill (4.18,2.0) circle (1.3pt);
  \node[note, left] at (0,2.0) {$\Delta^{*}$};
  \node[note, below] at (4.18,-0.05) {$\beta_{\mathfrak S}(K;\Delta^{*},\tau)$};
  \draw[dashed, thin] (6.4,0) -- (6.4,1.30);
  \draw[dashed, thin] (6.4,1.30) -- (0,1.30);
  \fill (6.4,1.30) circle (1.3pt);
  \node[note, below] at (6.4,-0.05) {$\Lambda^{*}$};
  \node[note, left] at (0,1.30) {$a_{\mathfrak S}(K;\Lambda^{*},\tau)$};
  \node[note, anchor=east] at (9.2,4.6) {(fixed $\tau$)};
\end{tikzpicture}
\caption{Visibility levels of a filtered surface type \(\mathfrak S\)
(\Cref{def:surface-visibility-level}), for a fixed thickness scale \(\tau\).
The shaded region consists of the budgets \((\Lambda,\Delta)\) at which
\(\mathfrak S\) admits a representative; by the filtration inclusions it is
closed under increasing \(\Lambda\) and \(\Delta\).  Entering the region
horizontally at area budget \(\Delta^{*}\) gives the ropelength visibility
level \(\beta_{\mathfrak S}\); entering vertically at length budget
\(\Lambda^{*}\) gives the area visibility level \(a_{\mathfrak S}\).  The
boundary curve is the knot--surface trade-off frontier studied in
\Cref{sec:ideal}: its height above \(\Lambda=\Rop(K)\) measures the extra
ropelength room needed before the surface type becomes visible at a given
area budget.}
\label{fig:visibility-diagram}
\end{figure}

\begin{proposition}[Basic monotonicity of visibility]
\label{prop:visibility-monotonicity}
For every filtered surface type \(\mathfrak S\), the function
\(\beta_{\mathfrak S}(K;\Delta,\tau)\) is nonincreasing in \(\Delta\) and
nondecreasing in \(\tau\).  More precisely, if
\(\Delta\leq\Delta'\) and \(\tau\geq\tau'\), then
\[
  \beta_{\mathfrak S}(K;\Delta',\tau')
  \leq
  \beta_{\mathfrak S}(K;\Delta,\tau).
\]
The same monotonicity holds for the corresponding admissible-component birth
levels and finite-resolution visibility levels.
\end{proposition}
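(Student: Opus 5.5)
The monotonicity of \(\beta_{\mathfrak S}\) follows directly from the filtration inclusions stated after the definition of the essential subspace. The only extra observation needed is that these inclusions restrict to every filtered surface type \(\mathfrak S\).

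The plan is as follows. Fix \(\Delta\leq\Delta'\) and \(\tau\geq\tau'\), and let \(\Lambda\) be any length budget with \(\calZ^{\mathfrak S}_{\Lambda,\Delta,\tau}(K)\neq\varnothing\). Choose a pair \((\gamma,F)\) representing an element of this set. Then \(\Area(F)\leq\Delta\leq\Delta'\) trivially. By \Cref{lem:thickness-scale-monotonicity}, \(\Thi(F)\geq\tau\) implies \(\Thi(F)\geq\tau'\). The knot conditions \(\Thi(\gamma)=1\) and \(\Len(\gamma)\leq\Lambda\) are unchanged. Essentiality of each component and membership of \(F\) in the collection selected by \(\mathfrak S\) for \(\gamma\) depend only on \(\gamma\) and \(F\), not on \((\Delta,\tau)\). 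Hence the same pair lies in \(\calZ^{\mathfrak S}_{\Lambda,\Delta',\tau'}(K)\), which is therefore nonempty.

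This shows the inclusion of index sets
\[
  \{\Lambda\mid \calZ^{\mathfrak S}_{\Lambda,\Delta,\tau}(K)\neq\varnothing\}
  \subseteq
  \{\Lambda\mid \calZ^{\mathfrak S}_{\Lambda,\Delta',\tau'}(K)\neq\varnothing\}.
\]
Taking infima reverses nothing, so \(\beta_{\mathfrak S}(K;\Delta',\tau')\leq\beta_{\mathfrak S}(K;\Delta,\tau)\). The convention \(\inf\varnothing=\infty\) covers the empty case automatically. Specializing to \(\tau=\tau'\) gives nonincreasing dependence on \(\Delta\), and specializing to \(\Delta=\Delta'\) gives nondecreasing dependence on \(\tau\).

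The admissible-component birth levels and the finite-resolution visibility levels are defined later in the paper; I would treat them with the same scheme. For admissible components, an admissible thick deformation at level \((\Lambda,\Delta,\tau)\) satisfies the pointwise bounds \(\Area(F_t)\leq\Delta\) and \(\Thi(F_t)\geq\tau\) along the whole path. By \Cref{lem:thickness-scale-monotonicity} applied at each time \(t\), it is also admissible at level \((\Lambda,\Delta',\tau')\). So a component present at the smaller window maps into a component of the larger one, and the birth level can only decrease. For finite-resolution levels, a realized code at the smaller window is realized by the same pair in the larger window; this holds provided the encoding scale \(\varepsilon\) is held fixed and the code depends only on the pair.

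The main point to check, and the only potential obstacle, is this independence for the later notions. The essentiality condition and the type \(\mathfrak S\) are independent of the window by definition. I would verify that the same holds for the later codes and component notions when they are defined, that is, that they do not depend on \(\tau\) except through admissibility.
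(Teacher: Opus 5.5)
Your proposal is correct and follows the paper's route: the paper obtains the inclusion \(\calZ^{\mathfrak S}_{\Lambda,\Delta,\tau}(K)\subset\calZ^{\mathfrak S}_{\Lambda,\Delta',\tau'}(K)\) from the filtration inclusions (which rest on \Cref{lem:thickness-scale-monotonicity}), takes the infimum over \(\Lambda\), and says only that the component and finite-resolution versions follow from the same inclusion. Your caveat about the finite-resolution case goes beyond what the paper makes explicit: \(\varepsilon\) must be held fixed with \(\varepsilon\leq c\min\{1,\tau'\}\), so that it is admissible in both windows, and the code must not depend on the window.
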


\begin{proof}
The inclusions of the filtered pair spaces give
\[
  \calZ_{\Lambda,\Delta,\tau}^{\mathfrak S}(K)
  \subset
  \calZ_{\Lambda,\Delta',\tau'}^{\mathfrak S}(K)
\]
whenever \(\Delta\leq\Delta'\) and \(\tau\geq\tau'\).  Taking the infimum over
all \(\Lambda\) for which the source is nonempty gives the stated inequality.
The component and finite-resolution versions follow from the same inclusion.
\end{proof}

\begin{remark}[Why visibility is not purely topological]
The exterior \(E(K)\) is fixed up to homeomorphism, so a topological class of
essential surfaces exists or does not exist independently of \(\Lambda\).  The
visibility level is therefore not a new existence invariant of the topological
exterior alone.  It becomes meaningful only after one imposes geometric
constraints on the representative \(\gamma\) and on the surface \(F\), namely
ropelength, area, and relative thickness.
\end{remark}

\subsection{Seifert and taut subfiltrations}

\begin{definition}[Seifert-surface subspace]
Let
\[
  \calS_{\Lambda,\Delta,\tau}^{\mathrm{ess}}(K)
  \subset
  \calZ_{\Lambda,\Delta,\tau}^{\mathrm{ess}}(K)
\]
be the subspace consisting of pairs \((\gamma,F)\) such that \(F\) is a
connected orientable Seifert surface for \(\gamma\).  Equivalently, the
boundary of \(F\) is the preferred longitude on \(\partial E(\gamma)\).
\end{definition}

For Seifert surfaces, the boundary slope is fixed.  The primary topological
quantity is genus.

\begin{definition}[Taut subspace]
Let
\[
  \calS_{\Lambda,\Delta,\tau}^{\mathrm{taut}}(K)
  \subset
  \calS_{\Lambda,\Delta,\tau}^{\mathrm{ess}}(K)
\]
be the subspace consisting of pairs \((\gamma,F)\) such that \(F\) is
Thurston norm minimizing in its relative homology class \cite{ThurstonNorm}.  For a knot in
\(S^3\), this is equivalent to \(F\) being a minimal genus Seifert surface.
\end{definition}

The taut subspace is the natural geometric refinement of the Kakimizu complex:
it records not only isotopy classes of minimal genus Seifert surfaces but also
their realization by thick representatives and bounded-area surfaces.

\section{Effective degrees of freedom}
\label{sec:dof}

The previous discussion suggests a unified definition of freedom.  Instead of
separating geometric freedom, positional freedom, and topological freedom at
the level of definitions, we define one effective finite-resolution degree of
freedom.  The usual invariants, such as crossing number, bridge number, trunk,
genus, boundary slope, and representativity, are then treated as observable
quantities controlled by this finite data.

\subsection{Classical combinatorial shadows}
\label{subsec:classical-combinatorial-shadows}

The geometric code is defined before any triangulation of the exterior is
chosen.  Nevertheless, a uniform subdivision of the ambient lattice into
tetrahedra turns a sufficiently transverse reconstructed pair into a
decorated triangulation.  The occupancy and tangent labels bound the number
of tetrahedra met by the curve and surface, and a further controlled local
subdivision permits normalization of the reconstructed surface.  Dually, the
same occupied cells determine a decorated polyhedral spine.  Thus normal
coordinates and spine data are classical combinatorial shadows of the finite
geometric code \cite{JacoRubinstein0Efficient,MatveevBook}.

No equivalence theorem with Pachner moves or Matveev--Piergallini moves is
needed for the faithfulness result.  The positive-reach proof compares pairs
directly in the moving smooth exterior.  The triangulation and spine shadows
instead provide possible verification layers after reconstruction: they can
be used to normalize the surface, apply classical essentiality algorithms,
and certify equivalence by familiar finite combinatorial moves.  This
separation between geometric reconstruction and combinatorial verification
will be used again in \Cref{sec:finite-recognition}.

\subsection{Finite-resolution freedom}

The term ``degree of freedom'' is used here in an effective, finite-resolution
sense.  A smooth knot or a smooth surface is an infinite-dimensional object.
Thus it would be misleading to claim that the thickness and length or area
bounds make the corresponding smooth space finite-dimensional.  What these
bounds do is different: they make the object finite at every fixed geometric
resolution.

Fix \(\varepsilon>0\).  The parameter \(\varepsilon\) is an observational
scale, or finite resolution.  Two features whose size or separation is much
smaller than \(\varepsilon\) are not resolved at this level.  If a curve has
thickness at least one, then it cannot oscillate meaningfully below the
thickness scale.  If, in addition, its length is at most \(\Lambda\), then at
resolution \(\varepsilon\) it can be encoded by a controlled number of
segments.  Similarly, if a surface has thickness at least \(\tau\) and area at
most \(\Delta\), then for \(0<\varepsilon\ll\tau\) it can be encoded by a
controlled number of two-dimensional cells.

Thus the effective degree of freedom is not an intrinsic dimension of the
smooth moduli space.  It is the minimal amount of finite data required to
describe the object at scale \(\varepsilon\).  This viewpoint is essential for
the finite recognition program: finite recognition should not be interpreted
as replacing smooth knot theory by a finite-dimensional theory, but as saying
that bounded-thickness, bounded-size representatives admit finite
scale-dependent encodings from which the relevant topological and positional
data can be read.

The same definition applies uniformly to curves, surfaces, and pairs.  For a
curve, the data are essentially the cells of a polygonal approximation.  For a
surface, the data are the triangles of a thickness-adapted triangulation.  For
a pair \((\gamma,F)\), the data must encode both objects simultaneously, including
their relative position in the exterior.  In this way, geometric data,
positional data, and topological data are not separated at the level of the
definition.  Instead, they are treated as observables extracted from a single
finite-resolution model.

\begin{definition}[Effective \(\varepsilon\)-degree of freedom]
Let \(X\) be a thick curve, a thick surface, or a thick pair.  The effective
\(\varepsilon\)-degree of freedom \(\DoF_\varepsilon(X)\) is defined using
finite-resolution covering data.

For a curve \(\gamma\), \(\DoF_\varepsilon(\gamma)\) is the minimum number of
arcs of length at most \(\varepsilon\) needed to cover \(\gamma\), equivalently
the minimum number of edges in an \(\varepsilon\)-polygonal model.

For a surface \(F\), \(\DoF_\varepsilon(F)\) is the minimum number of
bounded-geometry surface patches of intrinsic diameter at most
\(\varepsilon\), including half-patches near \(\partial F\), needed to cover
\(F\).  Equivalently, after choosing a thickness-adapted triangulation, it is
the number of triangles up to uniform multiplicative constants.

For a pair \((\gamma,F)\), \(\DoF_\varepsilon(\gamma,F)\) is computed using a
layered model consisting of an \(\varepsilon\)-model for \(\gamma\), an
\(\varepsilon\)-model for \(F\), and finite incidence data recording their
relative position.  A common simplicial subdivision is not required.
\end{definition}

\begin{remark}[Finite resolution versus thickness scale]
\label{rem:dof-not-dimension}
The resolution parameter \(\varepsilon\) is not an additional geometric
constraint on the pair.  It is the scale at which we sample or encode the
pair.  The thickness scale \(\tau\) is part of the geometric input, whereas
\(\varepsilon\) is chosen afterwards, usually with \(0<\varepsilon\leq c\min\{1,\tau\}\).
Thus \(\DoF_\varepsilon\) measures the amount of finite data needed to see a
bounded-geometry pair at scale \(\varepsilon\), not the dimension of the
smooth moduli space.
\end{remark}

\begin{remark}[Relation to covering numbers and \(\varepsilon\)-entropy]
\label{rem:dof-entropy}
The quantity \(\DoF_\varepsilon\) is not a new invariant but a bounded-geometry
covering number.  The minimal number of \(\varepsilon\)-cells needed to cover a
set is the covering number whose logarithm is the Kolmogorov--Tikhomirov
\(\varepsilon\)-entropy of the set \cite{KolmogorovTikhomirov}, and the growth
laws used here --- \(O(\Len(\gamma)/\varepsilon)\) for a curve and
\(O_\tau(\Area(F)/\varepsilon^2)\) for a surface --- are the standard covering
or box-counting estimates for a rectifiable curve and a bounded-area surface.
The role of positive reach is only to make these estimates uniform and to
bound the incidence layer, exactly as reach controls the passage from an
\(\varepsilon\)-sample to the topology of a submanifold in the reconstruction
theorem \cite{NiyogiSmaleWeinberger}.  Accordingly \(\DoF_\varepsilon(\gamma,F)\)
should be read as the size of a thickness-adapted \(\varepsilon\)-net for the
pair; the term ``degree of freedom'' refers to the amount of finite data
resolved at scale \(\varepsilon\), not to a dimension, as stressed in
\Cref{rem:dof-not-dimension}.
\end{remark}

\paragraph{Layered model for pairs.}
For a pair \((\gamma,F)\), the finite-resolution model is not required to be
a common simplicial subdivision of the curve and the surface.  Instead, we
adopt a \emph{layered cellular model} consisting of:
\begin{itemize}[leftmargin=2em]
\item a polygonal model for \(\gamma\), with edges of length at most
\(\varepsilon\);
\item a triangulated model for \(F\), with triangles of diameter at most
\(\varepsilon\);
\item finite incidence data recording their relative position in the exterior,
for example which curve edges and surface triangles lie in the same or
neighbouring ambient \(\varepsilon\)-cells, and which local separation, boundary,
or crossing type is visible at that resolution.
\end{itemize}
No common refinement at intersection points is required; the model keeps the
curve and the surface as separate layers.  The geometric part of the degree of
freedom is additive in the number of curve edges and surface triangles.  The
relative-position layer is bookkeeping over a finite alphabet.  If one records
all possible pairwise incidence slots, this bookkeeping has at most quadratic
size in the number of geometric cells; this affects the explicit number of
codes but not the linear-plus-area estimate for the geometric degrees of
freedom.

\begin{definition}[Layered finite-resolution encoding scheme]
A finite-resolution encoding scheme at scale \(\varepsilon\) is a fixed finite
alphabet and a fixed set of rules for encoding the following layered data.
\begin{enumerate}[label=(\roman*),leftmargin=2em]
\item \emph{Curve layer.}  The curve \(\gamma\) is sampled by arclength at
spacing at most \(\varepsilon/2\) and replaced by a polygonal model
\(\widehat\gamma\).  Its code records the ordered edge list, quantized tangent
or direction data, and local thickness-adapted type symbols.  For
\(\Len(\gamma)\leq\Lambda\), the number of edges is
\(O(\Lambda/\varepsilon)\).
\item \emph{Surface layer.}  The surface \(F\) is covered or triangulated by
bounded-geometry patches of intrinsic diameter at most \(\varepsilon\), with
half-patches near \(\partial F\).  The code records the cell adjacency,
boundary/collar type, and local sheet type.  For \(\Area(F)\leq\Delta\) and
\(\Thi(F)\geq\tau\), the number of surface cells is
\(O_\tau(\Delta/\varepsilon^2)\).
\item \emph{Relative-position layer.}  The code records coarse incidence data
between curve cells and surface cells, together with local crossing or
separation symbols visible at scale \(\varepsilon\).  The curve and surface
layers are not required to have a common subdivision.
\end{enumerate}
Optional labels, such as boundary slope data, may be added when a slope-height
cutoff or boundary-torus metric control has been imposed.  They are not part of
the basic finiteness theorem.

The scheme is fixed once and for all before applying the finiteness theorem.
The theorem below asserts finiteness of the resulting encoded
\(\varepsilon\)-types; it does not assert that the underlying smooth isotopy
classes are finite.
\end{definition}

\begin{remark}[Discrete admissible deformations]
An admissible thick deformation can also be sampled in time.  At scale
\(\varepsilon\), one obtains a finite sequence of layered codes and admissible
transitions between them.  Thus, after a finite encoding scheme is fixed,
admissible-component persistence may be approximated by a finite-state
transition system at each bounded level.  This is an implementation viewpoint,
not an additional equivalence relation in the smooth theory; the transition
complexes of \Cref{def:admissible-transition-complex} are defined by a fixed
sampling-free elementary transition rule, and
\Cref{lem:sampling-independence} shows that every sampled description
refines to it.
\end{remark}

\begin{construction}[A concrete cubical--triangulated encoding]
\label{con:concrete-encoding}
Fix \(0<\varepsilon\leq c\min\{1,\tau\}\), a curve-direction
quantization scale \(\delta_{\gamma}>0\), and a surface-direction
quantization scale \(\delta_{F}>0\).  The code space and the encoding map are
specified as follows.  All spatial labels are taken from one fixed lattice in \(\R^3\),
so the code of a pair does not change when the same pair is viewed at a larger
budget and codes arising at different ropelength levels can be compared
literally.

\emph{(a) Global lattice and bounded window.}  Fix once and for all the
half-open cubical lattice
\[
  \mathcal Q_\varepsilon
  =\bigl\{\varepsilon(k+[-\tfrac12,\tfrac12)^3)\mid k\in\Z^3\bigr\},
\]
anchored at the origin.  After rigid-motion normalization (the barycenter of
\(\gamma\) is at the origin), the essential pair lies inside the axis-parallel
cube \(Q\) of side
\[
  S(\Lambda,\Delta,\tau)
  =
  2\Bigl(\tfrac\Lambda2+1+C_{\mathrm{diam}}\tfrac\Delta\tau\Bigr),
\]
by the anchoring and diameter lemmas
(\Cref{lem:anchoring,lem:diameter-bound}; they are proved in
\Cref{sec:bounded-complexity} from the thickness convention alone and do not
depend on the present section).  Note that the window necessarily
depends on \((\Lambda,\Delta,\tau)\), not on \(\Lambda\) alone: without the
diameter control provided by \(\Thi(F)\geq\tau\), an essential surface of
large area could extend far outside any cube whose size depends only on the
curve.  Let \(\mathcal I_\varepsilon(\Lambda,\Delta,\tau)\subset\Z^3\) be the
finite set of indices of lattice cubes meeting \(Q\).  Its cardinality is at
most \(C(1+S/\varepsilon)^3\).  The ambient lattice is global; only the finite
set of labels that can occur is budget-dependent.

\emph{(b) Finite alphabets.}  Fix a \(\delta_{\gamma}\)-net
\(\mathcal D_{\gamma}\subset S^2\) for oriented knot-tangent directions, a
\(\delta_F\)-net \(\mathcal D_{\partial}\subset S^2\) for oriented boundary
tangents, and a \(\delta_F\)-net
\(\widehat{\mathcal D}_{F}\subset\mathbb{RP}^2\) of \emph{unoriented}
directions for the normal lines of the surface.  The latter convention is
necessary because the surfaces of this paper may be non-orientable: no
globally consistent unit normal need exist, whereas the unoriented normal line
is always well defined.  Also fix the finite
set \(\mathcal B\) of boundary-collar and local-sheet type symbols specified by
the bounded-geometry convention, and the finite set \(\mathcal J\) of coarse
incidence symbols.

\emph{(c) Curve layer.}  For an oriented and based arclength
parametrization, encode \(\gamma\) by the ordered cyclic list
\((q_1,u_1),\dots,(q_{N_\gamma},u_{N_\gamma})\), where
\(q_i\in\mathcal I_\varepsilon(\Lambda,\Delta,\tau)\) is the global lattice
cube containing the \(i\)-th point of an arclength \(\varepsilon/2\)-net and
\(u_i\in\mathcal D_{\gamma}\) is the quantized tangent direction there.  The final
canonicalization in part~(f) removes the choices of orientation, basepoint,
and admissible net.

\emph{(d) Surface and boundary layers.}  Encode \(F\) by an admissible
bounded-geometry triangulation with mesh at most \(\varepsilon\), fixed
shape-regularity constants, and the uniform cell bound of
\Cref{prop:basic-dof}, recording
for each triangle \(t\) the tuple
\[
  (q_t,n_t,b_t)\in
  \mathcal I_\varepsilon(\Lambda,\Delta,\tau)
  \times\widehat{\mathcal D}_{F}\times\mathcal B
\]
of the global lattice cube containing its barycenter, its quantized
\emph{unoriented} normal line, and its boundary-collar label, together with the
adjacency relation on the triangle set.  In addition, record each boundary
component as a cyclic list of boundary edges (or an arclength
\(\varepsilon/2\)-net on that component), including for every listed boundary
sample its global cube and its quantized tangent in
\(\mathcal D_{\partial}\).  This explicit boundary layer is
what supplies the boundary-closeness clause in \Cref{lem:code-proximity}.
Recording normal lines rather than normal vectors makes the layer well defined
for non-orientable surfaces; all later uses of the normal data depend only on
the tangent plane, hence only on the unoriented line.

\emph{(e) Relative-position layer.}  Record, for each ordered pair of a curve
edge and a surface triangle lying in equal or neighbouring global lattice
cubes, an incidence symbol from \(\mathcal J\).

\emph{(f) Canonicalization.}  Here an admissible discretization is required to
satisfy the uniform curve- and surface-cell bounds of
\Cref{prop:basic-dof}; arbitrary further refinements are not admissible.  The
raw construction involves harmless finite or compact choices: rigid-motion
normalization, the two orientations and cyclic
basepoints of the knot parametrization, admissible arclength nets, admissible
bounded-geometry triangulations, orderings of components and cells, and
boundary basepoints.  For every such choice the output is a word in the same
finite alphabet.  Fix once and for all a numbering of the countable global alphabet and
the induced shortlex order on finite words, and define the encoded
\(\varepsilon\)-type to be the least raw word over \emph{all} admissible
choices.  Thus the encoding is a function of the geometric pair, not of an
auxiliary triangulation or parametrization.  Equality of canonical codes
means that both pairs admit raw realizations with one and the same word, which
is exactly the form used in \Cref{lem:code-proximity}.

A code is therefore a word in the finite alphabet
\(\mathcal I_\varepsilon(\Lambda,\Delta,\tau)
\times\mathcal D_{\gamma}\times\mathcal D_{\partial}
\times\widehat{\mathcal D}_{F}\times\mathcal B\times\mathcal J\)
of explicitly bounded length, and the set
\(\mathcal C(\Lambda,\Delta,\tau,\varepsilon)\) of all possible codes is an
explicit finite set.  The alphabet size is \emph{not} a function of \(\tau\)
alone: the grid-index set has at most \(C(1+S/\varepsilon)^3\) elements.
However, for
\(\varepsilon\leq c\min\{1,\tau\}\) one has
\(S/\varepsilon\leq C(\Lambda/\varepsilon+\Delta/(\tau\varepsilon))
\leq C'(\Lambda/\varepsilon+\Delta/\varepsilon^2)\leq C'N\), where
\(N=N_{\Lambda,\Delta,\tau,\varepsilon}\) is the cell bound of
\Cref{prop:basic-dof}.  In the faithful regime used below we take
\(\delta_{\gamma}\leq\varepsilon\), so
\(|\mathcal D_{\gamma}|=O(\varepsilon^{-2})\).  For a nonempty slice the
unit-thickness curve has a universal positive length lower bound, hence
\(N\geq c/\varepsilon\), and this extra factor is polynomial in \(N\).
Thus the alphabet has at most \(C''N^5\) letters, with \(C''\) depending only
on the conventions and the fixed surface-direction quantization.  Consequently
a pair with at most \(N\) curve/surface cells has at most
\[
  \bigl(C''N^5\bigr)^{BN^2}
  \ \leq\
  (CN)^{BN^2}
\]
possible codes, with universal \(B,C\) after absorbing constants.  The
quadratic exponent is only a crude allowance for adjacency and incidence
relations; no sharpness is intended.  The three layers and the mechanism by
which the code recovers topology are summarized in
\Cref{fig:layered-geometric-code}.
\end{construction}

\begin{figure}[t]
\centering
\resizebox{0.98\textwidth}{!}{%
\begin{tikzpicture}[
  x=1cm,y=1cm,>=Latex,
  panel/.style={draw, rounded corners=2pt},
  title/.style={font=\footnotesize\bfseries, align=center},
  note/.style={font=\scriptsize, align=center},
  token/.style={draw, rounded corners=1.5pt, fill=white, inner sep=3pt,
                font=\scriptsize},
  codebox/.style={draw, rounded corners=2pt, align=center, inner sep=4pt,
                  font=\scriptsize},
  arrow/.style={-{Latex[length=2mm]}, thick}
]
  \draw[panel] (0,2.65) rectangle (4.05,5.55);
  \node[title] at (2.025,5.30) {curve layer};
  \draw[step=0.62, black!25, very thin] (0.30,3.08) grid (3.75,4.92);
  \node[note, anchor=north west] at (0.38,4.86)
    {global lattice $\mathcal Q_\varepsilon$};
  \draw[very thick]
    (0.50,3.38) -- (1.18,3.66) -- (1.90,3.50) --
    (2.58,4.08) -- (3.48,3.78);
  \foreach \pnt in {(0.50,3.38),(1.18,3.66),(1.90,3.50),(2.58,4.08),(3.48,3.78)}
    \fill \pnt circle (1.25pt);
  \node[note, anchor=north, fill=white, inner sep=1pt] at (1.18,3.56) {$q_i$};
  \draw[->, thin] (2.58,4.08) -- (3.18,4.25);
  \node[note, anchor=west] at (3.20,4.25) {$u_i$};
  \node[token] at (2.025,2.90) {$\{(q_i,u_i)\}_i$};

  \draw[panel] (4.55,2.65) rectangle (8.60,5.55);
  \node[title] at (6.575,5.24) {surface and boundary\\layer};
  \draw[step=0.62, black!25, very thin] (4.85,3.08) grid (8.30,4.92);
  \fill[black!8]
    (5.08,3.25) -- (5.50,4.72) -- (7.90,4.52) -- (8.08,3.28) -- cycle;
  \draw[thick]
    (5.08,3.25) -- (5.50,4.72) -- (7.90,4.52) -- (8.08,3.28) -- cycle;
  \draw[thin] (5.08,3.25) -- (6.42,3.98) -- (5.50,4.72);
  \draw[thin] (6.42,3.98) -- (7.90,4.52);
  \draw[thin] (6.42,3.98) -- (8.08,3.28);
  \draw[very thick] (5.08,3.25) -- (8.08,3.28);
  \fill (6.42,3.98) circle (1.15pt);
  \draw[<->, thin] (6.42,3.60) -- (6.42,4.38);
  \node[note, anchor=west] at (6.50,4.30) {$n_t$};
  \node[note, fill=white, inner sep=1pt] at (6.90,3.47) {boundary $b_t$};
  \node[token, font=\tiny] at (6.50,2.90)
    {$\{(q_t,n_t,b_t)\}_t$ + adjacency};

  \draw[panel] (9.10,2.65) rectangle (13.15,5.55);
  \node[title] at (11.125,5.30) {incidence layer};
  \draw[step=0.62, black!25, very thin] (9.40,3.08) grid (12.85,4.92);
  \fill[black!8] (10.30,3.25) -- (11.38,4.02) -- (12.16,3.28) -- cycle;
  \draw[thick] (10.30,3.25) -- (11.38,4.02) -- (12.16,3.28) -- cycle;
  \draw[very thick] (9.62,4.60) -- (10.53,4.24) -- (11.10,4.55);
  \fill (10.53,4.24) circle (1.15pt);
  \node[note, anchor=south east] at (10.02,4.44) {$e$};
  \node[note] at (11.28,3.48) {$t$};
  \draw[dashed, <->] (10.58,4.12) -- (11.10,3.77);
  \node[token] at (11.96,4.28) {$j(e,t)\in\mathcal J$};
  \node[token] at (11.125,2.90) {incidence symbols $\{j(e,t)\}$};

  \node[codebox, text width=106mm] (word) at (6.575,1.93)
    {same canonical layered code\\[-1mm]
     curve data $\mid$ surface/boundary data $\mid$ incidence data};
  \draw[arrow] (2.025,2.65) -- (2.025,2.38) -- (4.25,2.38) -- (4.25,2.16);
  \draw[arrow] (6.575,2.65) -- (6.575,2.16);
  \draw[arrow] (11.125,2.65) -- (11.125,2.38) -- (8.90,2.38) -- (8.90,2.16);

  \node[codebox, text width=43mm] (close) at (3.65,0.62)
    {positional and angular proximity\\
     $d_H=O(\varepsilon)$,\quad
     $\angle=O(\delta+\varepsilon/\tau)$};
  \node[codebox, double, text width=34mm] (isotopy) at (10.9,0.62)
    {ambient pair-isotopy};
  \draw[arrow] (word.south) -- ++(0,-0.20) -| (close.north);
  \draw[arrow] (close) -- node[above, note] {pair stability} (isotopy);
\end{tikzpicture}%
}
\caption{The layered finite-resolution encoding of a knot--surface pair.
The curve, surface with its explicit boundary layer, and relative-incidence
information are recorded separately in one fixed global cubical lattice.
Equality of sufficiently fine canonical codes forces positional and angular
proximity, and the pair-stability lemma then recovers the ambient
pair-isotopy type.  The drawing is schematic: the curve and surface layers do
not form a common subdivision.}
\label{fig:layered-geometric-code}
\end{figure}

\begin{remark}[Finite code space versus an enumeration algorithm]
\label{rem:least-word-not-algorithmic}
The construction gives a finite, explicitly bounded code space and a
well-defined canonical code.  It does not by itself claim that the least word
can be computed --- efficiently or at all --- from arbitrary analytic input,
nor that every
formal word is realizable by an essential pair.  The least word is defined by
a minimization over all admissible normalizations and discretizations; this
determines it set-theoretically, but a terminating procedure computing it
would require a fixed computable input model for the pair (for example,
piecewise-polynomial or algebraic representatives with rational data)
together with certified geometric predicates, none of which is fixed here.
Accordingly, throughout the paper the words \emph{effective} and
\emph{explicit} refer to explicitly bounded finite search spaces and to the
existence of finite canonical certificates, never to an implemented or
asserted algorithm; where an algorithmic statement is intended, an input
model is stated.  The finiteness and
faithfulness theorems require only the existence of the canonical word; the
algorithmic realization problem is separated in \Cref{sec:finite-recognition}
and in the open problems.
\end{remark}

\begin{lemma}[Equal codes force geometric proximity]
\label{lem:code-proximity}
Let \((\gamma_0,F_0)\) and \((\gamma_1,F_1)\) be pairs with
\(\gamma_j\in Y_\Lambda(K)\), \(F_j\subset E(\gamma_j)\) essential,
\(\Thi(F_j)\geq\tau\), and \(\Area(F_j)\leq\Delta\), encoded by
\Cref{con:concrete-encoding} with the same code.  Then
\[
  d_H(\gamma_0,\gamma_1)\leq 4\,\varepsilon,
  \qquad
  d_H(\overline{F_0},\overline{F_1})\leq 4\,\varepsilon,
\]
corresponding boundary curves are \(4\varepsilon\)-close in \(\R^3\)
(the two pairs have their own peripheral tori, themselves
\(4\varepsilon\)-close).  Moreover, there is a universal encoding constant
\(C_{\mathrm{enc}}\geq1\) such that corresponding knot tangents differ in
angle by at most
\(C_{\mathrm{enc}}(\delta_{\gamma}+\varepsilon)\), while corresponding
surface tangent planes and boundary tangents differ in angle by at most
\[
  C_{\mathrm{enc}}\bigl(\delta_F+\varepsilon/\tau\bigr).
\]
\end{lemma}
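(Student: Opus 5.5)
The plan is to unpack what equality of canonical codes gives and convert each layer into a metric estimate. By part~(f) of \Cref{con:concrete-encoding}, equal codes mean that both pairs admit raw realizations producing one and the same word. Fix such realizations and compare them letter by letter.

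In the curve layer, the two curves are then described by the same cyclic list \((q_i,u_i)\). The \(i\)-th net points \(x_i^0\in\gamma_0\) and \(x_i^1\in\gamma_1\) lie in the same half-open lattice cube, so \(|x_i^0-x_i^1|\leq\sqrt3\,\varepsilon\). Every point of \(\gamma_j\) is within arclength, hence Euclidean distance, \(\varepsilon/4\) of a net point, since the spacing is at most \(\varepsilon/2\). These two facts give \(d_H(\gamma_0,\gamma_1)\leq\sqrt3\,\varepsilon+\varepsilon/2\leq 4\varepsilon\).

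The surface layer is handled the same way. Corresponding triangles have barycenters in the same cube, and triangles have diameter at most \(\varepsilon\). So each point of \(\overline{F_j}\) lies within \(\varepsilon\) of a barycenter, and barycenters correspond within \(\sqrt3\,\varepsilon\); this gives the Hausdorff bound \(\leq(\sqrt3+2)\varepsilon\leq4\varepsilon\). The explicit boundary layer, an arclength \(\varepsilon/2\)-net with cube labels, yields the boundary-curve statement exactly as for \(\gamma\). The peripheral tori are \(\rho_0\)-level sets of distance to \(\gamma_j\), so they inherit the \(4\varepsilon\) bound from \(d_H(\gamma_0,\gamma_1)\).

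For the angular estimates, at corresponding net points the quantized tangents agree, so the true tangents differ by at most \(2\delta_\gamma\). To pass from net points to arbitrary corresponding points I will use that the tangent of a unit-thickness curve is \(1\)-Lipschitz in arclength: moving by \(O(\varepsilon)\) costs at most \(O(\varepsilon)\) in angle. Together these give \(C_{\mathrm{enc}}(\delta_\gamma+\varepsilon)\). For surfaces I would argue similarly, using clauses (ii)--(iii) of \Cref{def:relative-surface-thickness}. The unit normal line is \(L_0/\tau\)-Lipschitz on charts of radius \(c_0\tau\), and \(\varepsilon\leq c\tau\) keeps us inside a chart. Moving between a triangle's barycenter and a point of it therefore changes the tangent plane by \(O(\varepsilon/\tau)\), and equal labels \(n_t\) contribute \(2\delta_F\). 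Boundary tangents are treated via the half-charts and the collar derivative bounds in (iv).

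The main obstacle is making ``corresponding'' points well defined and consistent. The triangles must have shape-regularity so that a triangle's normal genuinely approximates the tangent plane, rather than being a sliver that misrepresents it. This is the reason admissible triangulations are required in (d). I would first try to define the correspondence by matching the vertices of identically-indexed triangles, then use the tangent-plane approximation by secant triangles under curvature bound \(L_0/\tau\), which has error \(O(\varepsilon/\tau)\). All constants are absorbed into a single \(C_{\mathrm{enc}}\).
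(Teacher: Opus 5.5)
Your proposal is correct and follows the paper's proof essentially step for step. Matching distinguished points (net points, barycenters, boundary samples) lie in the same lattice cube and hence within $\sqrt3\,\varepsilon$ of each other, net spacing and triangle diameter supply the remaining slack in the Hausdorff bounds, and equal quantized labels combined with the $O(\varepsilon)$ curve variation and the $O(\varepsilon/\tau)$ chart variation give the angle bounds. Your level-set argument for the closeness of the peripheral tori, and your direct half-chart treatment of boundary tangents, are slightly more explicit than the paper, which derives the boundary tangent from the surface plane and the torus plane meeting at angle at least $\arcsin c_0$.
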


\begin{proof}
The scheme records, for each object, one distinguished point per cell ---
the sample point of a curve edge, the \emph{barycenter} of a surface
triangle --- and the grid cube containing that distinguished point; this
convention matters, since a triangle of diameter \(\varepsilon\) may meet up
to eight cubes, and recording an unspecified cube would weaken the estimate.
By the canonicalization convention, equal encoded types admit raw
realizations with exactly the same word.  In those realizations the
\(\varepsilon/2\)-nets of \(\gamma_0\) and \(\gamma_1\) traverse the same
ordered list of global \(\varepsilon\)-cubes: the
\(i\)-th sample points lie in one cube, hence within \(\sqrt3\,\varepsilon\)
of each other, and every point of \(\gamma_j\) is within \(\varepsilon/2\) of
a sample point; so
\(d_H(\gamma_0,\gamma_1)\leq(1+\sqrt3)\varepsilon\leq4\varepsilon\).  For the
surfaces, every point of \(F_j\) lies in a triangle of diameter at most
\(\varepsilon\), hence within \(\varepsilon\) of that triangle's barycenter;
matched barycenters lie in one cube, within \(\sqrt3\,\varepsilon\) of each
other; so
\(d_H(\overline{F_0},\overline{F_1})\leq(2+\sqrt3)\varepsilon\leq4\varepsilon\).
The explicit boundary layer gives the same argument on every matched
boundary component: corresponding boundary samples lie in the same global
cube, every boundary point is within \(\varepsilon/2\) of a sample, and the
cyclic labels match, giving the stated Hausdorff bound and correspondence.
The angle statements hold because equal labels force equal quantized
directions in the nets \(\mathcal D_{\gamma}\),
\(\mathcal D_{\partial}\), and \(\widehat{\mathcal D}_{F}\).  The true knot
tangent differs from its quantization by at most \(\delta_{\gamma}\), with an
additional \(O(\varepsilon)\) variation across one curve cell.  Surface normal
lines and boundary tangents differ from their quantizations by at most
\(\delta_F\), with the uniform chart distortion
\(O(\varepsilon/\tau)\) from
\Cref{def:relative-surface-thickness}.  For the boundary tangents: a boundary curve is the intersection of the
surface with its peripheral torus, met at angle at least \(\arcsin c_0\) by
the clearance clause, so its tangent direction is determined, up to an error
controlled by that angle bound, by the tangent plane of the surface and the
tangent plane of the torus; both are matched to within
\(C_{\mathrm{enc}}(\delta_F+\varepsilon/\tau)\) by the explicit boundary
layer, the collar labels, and the curve layer.  Enlarging
\(C_{\mathrm{enc}}\) once gives all three stated angle estimates.
\end{proof}

\begin{remark}[Well-definedness and consistency of the code]
\label{rem:code-well-defined}
The canonicalization in \Cref{con:concrete-encoding} is essential.  A symmetric
representative may admit continuously many rigid normalizations (the round
circle has a stabilizer containing \(SO(2)\)); an unoriented closed curve has
no preferred orientation or basepoint; and a surface has no preferred
thickness-adapted triangulation or ordering of its components.  For a pair
\((\gamma,F)\), let \(\mathscr C_\varepsilon(\gamma,F)\) be the set of all raw
words obtained from every admissible normalization and every admissible
discretization choice listed in part~(f) of the construction.  This is a
nonempty subset of the finite set
\(\mathcal C(\Lambda,\Delta,\tau,\varepsilon)\).  Define
\[
  c_\varepsilon(\gamma,F)=\min\mathscr C_\varepsilon(\gamma,F)
\]
with respect to the fixed global shortlex order.  The minimum exists, is
invariant under orientation-preserving rigid motions, reparametrization, cell
renumbering, and changes of admissible triangulation, and is one of the raw
codes; hence all counting estimates are unchanged.

Because the ambient lattice is fixed globally, the canonical code is also
independent of the budget used to contain the pair.  Enlarging
\((\Lambda,\Delta)\) only enlarges the finite set of labels that are
\emph{available}; it does not relabel a pair already present.  This makes the
cross-level and cross-knot comparisons in \Cref{sec:finite-recognition}
literal.  Equivalently, one may retain the finite set
\(\mathscr C_\varepsilon(\gamma,F)\) itself as the code, but the least-word
convention is more economical.
\end{remark}

\begin{proposition}[Finite-resolution degree-of-freedom bound]
\label{prop:basic-dof}
Fix \(\tau>0\).  Assume that thickness is understood in the sense of positive
reach, or equivalently that the objects admit uniformly controlled local
graphical charts at the relevant thickness scale, including boundary-collar
charts near \(\partial F\).  Then, for \(0<\varepsilon\leq c\min\{1,\tau\}\), there
exists a universal constant \(C>0\), depending only on the chosen
finite-resolution model and the bounded-geometry conventions, such that
\[
  \DoF_\varepsilon(\gamma)
  \leq
  C\,\frac{\Len(\gamma)}{\varepsilon}
\]
for every unit-thickness curve \(\gamma\), and
\[
  \DoF_\varepsilon(F)
  \leq
  C\,\frac{\Area(F)}{\varepsilon^2}
\]
for every surface \(F\) with \(\Thi(F)\geq \tau\).  Consequently, if
\[
  (\gamma,F)\in \calZ_{\Lambda,\Delta,\tau}(K),
\]
then
\[
  \DoF_\varepsilon(\gamma,F)
  \leq
  C\left(
  \frac{\Lambda}{\varepsilon}
  +
  \frac{\Delta}{\varepsilon^2}
  \right).
\]
\end{proposition}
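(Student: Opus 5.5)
The argument has three parts: a direct arclength subdivision for the curve, a packing (Vitali) argument in graphical charts for the surface, and additivity for the pair. The constant \(C\) will depend only on \(c_0\), \(L_0\) and the conventions. The hypothesis \(\varepsilon\leq c\min\{1,\tau\}\) is used only to ensure that every \(\varepsilon\)-patch lies inside a single chart of \Cref{def:relative-surface-thickness}.

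\emph{Curve.} For a unit-thickness \(\gamma\), cut the parametrization into \(\lceil\Len(\gamma)/\varepsilon\rceil\) consecutive arcs of arclength at most \(\varepsilon\). This gives
\[
  \DoF_\varepsilon(\gamma)\leq \Len(\gamma)/\varepsilon+1 .
\]
Every closed unit-thickness curve has length at least \(2\pi\) (its curvature is at most \(1\) and its total curvature is at least \(2\pi\)). Since \(\varepsilon\leq c\leq 1\), we get \(1\leq \Len(\gamma)/\varepsilon\), so the additive \(1\) is absorbed and \(\DoF_\varepsilon(\gamma)\leq 2\Len(\gamma)/\varepsilon\). Because \(\varepsilon<\Thi(\gamma)=1\), each arc is a \(C^{1,1}\) graph with small turning, so it is also an admissible edge of an \(\varepsilon\)-polygonal model.

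\emph{Surface.} Choose a maximal \(\varepsilon/4\)-separated set \(\{x_i\}\subset F\) for the intrinsic distance.

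\begin{enumerate}[label=(\arabic*),leftmargin=2em]
\item \emph{Covering.} By maximality, the intrinsic balls \(B_F(x_i,\varepsilon/2)\) cover \(F\).
\item \emph{Patches.} Each such ball is a bounded-geometry patch of intrinsic diameter at most \(\varepsilon\): since \(\varepsilon/2\leq c_0\tau/2\), it lies in the graph chart of clause~(ii), or in the half-chart of clause~(iii) when \(x_i\) is within \(\varepsilon\) of \(\partial F\).
\item \emph{Area lower bound.} The balls \(B_F(x_i,\varepsilon/8)\) are pairwise disjoint. Each has area at least \(c_1\varepsilon^2\), with \(c_1=c_1(c_0,L_0)>0\). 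In an interior chart, the projection to \(T_{x_i}F\) is \((1+L_0)\)-bi-Lipschitz, so the ball contains a graph over a disk of radius comparable to \(\varepsilon\). In a boundary half-chart, a half-disk suffices.
\item \emph{Count.} Summing areas gives \(\#\{x_i\}\leq \Area(F)/(c_1\varepsilon^2)\).
\end{enumerate}

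The main obstacle is the uniform area lower bound in step~(3) near \(\partial F\). I would handle it with the boundary half-chart of clause~(iii) together with the collar estimates (iv-a)–(iv-b), which keep the half-disk of radius \(\sim\varepsilon\) inside \(F\). Clause~(i) (reach) guarantees that intrinsic and extrinsic distances agree up to a factor at scales below \(\tau\), so distinct sheets do not make extrinsic balls count area twice.

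The claim that this is equivalent, up to constants, to counting triangles of an admissible triangulation follows by triangulating each chart patch with shape-regular triangles of diameter between \(c\varepsilon\) and \(\varepsilon\). Each patch then contains a uniformly bounded number of triangles.

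\emph{Pair.} The layered model needs no common subdivision, so its geometric count is the sum of the curve count and the surface count. Using \(\Len(\gamma)\leq\Lambda\) and \(\Area(F)\leq\Delta\),
\[
  \DoF_\varepsilon(\gamma,F)\leq C\left(\frac{\Lambda}{\varepsilon}+\frac{\Delta}{\varepsilon^2}\right).
\]
The incidence layer is bookkeeping over the finite alphabet \(\mathcal J\) and contributes no further geometric cells, as stated in the paragraph on layered models.
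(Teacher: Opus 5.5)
Your proposal is correct and follows the paper's proof essentially step for step. Both arguments subdivide the curve by arclength, take a maximal intrinsically separated net on \(F\) whose shrunken balls (half-balls near \(\partial F\)) are disjoint with area \(\gtrsim\varepsilon^2\), and add the curve and surface counts in the layered model. Your use of Fenchel's bound \(\Len(\gamma)\geq 2\pi\) to absorb the ceiling, and your \(\tau\)-independent area constant \(c_1(c_0,L_0)\), make the universality of \(C\) more explicit than the paper's sketch does.

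The one loose point is your aside on triangulations: triangulating overlapping chart patches separately does not glue into a triangulation of \(F\), whereas the paper triangulates the net itself, Delaunay-style. That aside is not needed, however, for the covering-number bound that defines \(\DoF_\varepsilon(F)\).
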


\begin{proof}
For curves, unit thickness gives curvature control at scale one.  Parametrize
\(\gamma\) by arclength and subdivide it into intervals of length at most
\(\varepsilon/2\).  Connecting consecutive subdivision points gives a
polygonal model with \(O(\Len(\gamma)/\varepsilon)\) edges, each of length at
most \(\varepsilon\).

For surfaces, the condition \(\Thi(F)\geq\tau\) gives uniformly controlled
graphical charts at scale \(\tau\).  Near the boundary we use the
boundary-collar charts included in the thickness convention.  For
\(0<\varepsilon\leq c\min\{1,\tau\}\), choose a maximal set of points on \(F\) whose
pairwise intrinsic distances are at least \(\varepsilon/2\).  The intrinsic
balls of radius \(\varepsilon/4\), or the corresponding half-balls near
\(\partial F\), have pairwise disjoint interiors.  Bounded geometry at scale
\(\tau\) gives a uniform lower area bound comparable to \(\varepsilon^2\) for
each such ball or half-ball.  Hence the number of points is bounded by
\[
  C'\frac{\Area(F)}{\varepsilon^2}.
\]
A greedy or Delaunay-type triangulation of this bounded-overlap net gives a
thickness-adapted triangulation with
\[
  C\frac{\Area(F)}{\varepsilon^2}
\]
triangles, after increasing \(C\) if necessary.

For the pair \((\gamma,F)\), we use the layered cellular model described
above.  A common simplicial refinement at intersection points is not required.
The geometric support of the model consists only of the curve edges and the
surface triangles, so its size is bounded by the sum of the curve and surface
contributions.  The relative-position layer is then recorded over these cells.
Because \(\varepsilon\leq c\tau\), positive reach and sheet separation imply a
uniform occupancy bound: only uniformly many curve edges or surface patches can
occur in a single ambient \(\varepsilon\)-cell or in its bounded neighbourhood.
Hence the local incidence symbols visible at resolution \(\varepsilon\) are
finite and uniformly controlled.  If desired, one may instead reserve a slot
for every ordered pair of geometric cells; this gives at most quadratic
bookkeeping, which is used below for the explicit counting of codes.  It does
not change the geometric degree-of-freedom estimate.  Substituting
\(\Len(\gamma)\leq\Lambda\) and \(\Area(F)\leq\Delta\) gives the desired
estimate.
\end{proof}

\begin{remark}
The constants are not meant to be sharp; they depend on the chosen
finite-resolution model and on the bounded-geometry conventions.  The
essential point is that \(\DoF_\varepsilon\) grows linearly in
\(\Len(\gamma)/\varepsilon\) for the curve part and linearly in
\(\Area(F)/\varepsilon^2\) for the surface part.  For background on positive
reach and curvature measures, see Federer \cite{FedererReach}; for
ropelength and thickness of knots, see
\cite{CantarellaKusnerSullivan,LitherlandSimonDurumericRawdon}.
\end{remark}

\begin{remark}[Dimensional homogeneity: the constants are universal]
\label{rem:dimensional-homogeneity}
Although intermediate covering arguments may introduce constants denoted
\(C_\tau\) or \(a_\tau\), \Cref{prop:basic-dof} is correctly stated with a
universal constant: the surface estimate is scale invariant, and the constants
of the encoded-type count may be taken independent of \(\tau\).  Indeed, rescaling the ambient
space by \(\tau^{-1}\) replaces a surface of reach at least \(\tau\) by one of
reach at least \(1\), replaces \(\Area(F)\) by \(\Area(F)/\tau^2\), and replaces
the resolution \(\varepsilon\) by \(\widetilde\varepsilon=\varepsilon/\tau\),
under which the surface-side admissibility constraint \(\varepsilon\leq c\tau\)
becomes the scale-free constraint \(\widetilde\varepsilon\leq c\).  The number of
\(\varepsilon\)-patches of \(F\) equals the number of
\(\widetilde\varepsilon\)-patches of the rescaled surface, namely
\[
  C\,\frac{\Area(F)/\tau^2}{(\varepsilon/\tau)^2}
  =
  C\,\frac{\Area(F)}{\varepsilon^2},
\]
with \(C\) depending only on the reach-\(1\) bounded-geometry convention and on
the fixed ratio bound \(c\), not on \(\tau\).  The curve estimate is already at
the unit-thickness scale, so its constant is universal for
\(\varepsilon\leq c\); this is the curve-side half of the combined constraint
\(\varepsilon\leq c\min\{1,\tau\}\), which cannot be dropped: for \(\tau\gg1\)
the condition \(\varepsilon\leq c\tau\) alone would allow \(\varepsilon>1\),
coarser than the unit thickness scale of the curve, and no faithful curve
sampling could be expected.  Consequently the degree-of-freedom bound may be
written in the scale-free form
\[
  \DoF_\varepsilon(\gamma,F)
  \leq
  C\left(\frac{\Lambda}{\varepsilon}+\frac{\Delta}{\varepsilon^2}\right),
  \qquad 0<\varepsilon\leq c\min\{1,\tau\},
\]
with \(C\) a universal constant, and the encoded-type count of
\Cref{cor:explicit-encoded-bound} may be written \((CN)^{BN^2}\) with
universal \(B,C\), the polynomial base reflecting the grid-cell labels.  The only role of \(\tau\) is to fix, through
\(\varepsilon\leq c\min\{1,\tau\}\),
the largest resolution at which the surface is faithfully sampled; once the
dimensionless budgets \(\Lambda/\varepsilon\) and \(\Delta/\varepsilon^2\) are
prescribed, no further \(\tau\)-dependence remains.
\end{remark}

\paragraph{Position of the finiteness theorem.}
The following theorem is the main proved bounded-geometry statement of the
paper.  It is not meant to replace Haken finiteness, normal surface theory, or
least-area theory.  Rather, it says that after a metric window and a resolution
are fixed, essential surface data carried by thick knot representatives can be
stored in a finite set of layered codes.  The proof is deliberately elementary
once positive reach and area bounds are assumed; the subsequent sections use
this finite code space as a bookkeeping device for surface-theoretic
filtrations.

\begin{theorem}[Finite-resolution finiteness of filtered knot--surface pairs]
\label{thm:finite-resolution-finiteness-filtered-pairs}
Let \(K\) be a knot type, and fix
\[
  \Lambda>0,\qquad \Delta>0,\qquad \tau>0.
\]
Let \(0<\varepsilon\leq c\min\{1,\tau\}\), where \(c>0\) is a fixed small universal
constant.  Consider pairs \((\gamma,F)\) such that
\[
  \gamma\in Y_\Lambda(K),
  \qquad
  F\subset E(\gamma)\text{ is essential},
\]
and
\[
  \Thi(F)\geq\tau,
  \qquad
  \Area(F)\leq\Delta.
\]
Fix once and for all a finite-resolution encoding scheme at scale
\(\varepsilon\).  Then only finitely many encoded \(\varepsilon\)-types of
such pairs occur.  More precisely, there exists a universal constant \(C>0\) such
that
\[
  \DoF_\varepsilon(\gamma,F)
  \leq
  C
  \left(
  \frac{\Lambda}{\varepsilon}
  +
  \frac{\Delta}{\varepsilon^2}
  \right).
\]
Consequently, the number of possible encoded pair types is bounded above by a
function depending only on
\[
  \Lambda,\Delta,\tau,\varepsilon
\]
and on the chosen finite-resolution encoding scheme.
\end{theorem}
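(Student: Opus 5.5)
The plan is to reduce the theorem to two ingredients already in place: the degree-of-freedom estimate of \Cref{prop:basic-dof}, and the explicit code space of \Cref{con:concrete-encoding}. The displayed bound on \(\DoF_\varepsilon(\gamma,F)\) is exactly the pair estimate of \Cref{prop:basic-dof}, applied under the hypotheses \(\Len(\gamma)\leq\Lambda\), \(\Area(F)\leq\Delta\), \(\Thi(F)\geq\tau\) and \(\varepsilon\leq c\min\{1,\tau\}\). Essentiality of \(F\) is not needed for this bound. The constant \(C\) is universal by \Cref{rem:dimensional-homogeneity}. So the first step is just to cite that proposition and set
\[
  N=N_{\Lambda,\Delta,\tau,\varepsilon}=C\bigl(\Lambda/\varepsilon+\Delta/\varepsilon^2\bigr).
\]

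The second step bounds the number of encoded types. For each pair in the slice, I would check that every admissible discretization has at most \(N\) curve and surface cells; this holds because admissibility in part~(f) of \Cref{con:concrete-encoding} is defined by the uniform cell bounds of \Cref{prop:basic-dof}. All spatial labels then lie in the finite index set \(\mathcal I_\varepsilon(\Lambda,\Delta,\tau)\). This uses the confinement of the normalized pair to the cube \(Q\) of side \(S(\Lambda,\Delta,\tau)\), which follows from \Cref{lem:anchoring,lem:diameter-bound}. Those lemmas are proved in \Cref{sec:bounded-complexity} from the thickness convention alone, so there is no circularity. The direction nets \(\mathcal D_\gamma,\mathcal D_\partial,\widehat{\mathcal D}_F\) and the symbol sets \(\mathcal B,\mathcal J\) are finite and fixed with the scheme.

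The canonical code \(c_\varepsilon(\gamma,F)\) of \Cref{rem:code-well-defined} is one of the raw words. It is therefore a word of length at most \(BN^2\), counting the adjacency and incidence slots, over an alphabet of size at most \(C''N^5\). The number of possible codes is then at most \((C''N^5)^{BN^2}\le(CN)^{BN^2}\). This depends only on \(\Lambda,\Delta,\tau,\varepsilon\) and the scheme, which gives both the finiteness of encoded types and the explicit bound.

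I expect the main obstacle to be the confinement step: ensuring that the grid-label alphabet is finite. Without a diameter bound on \(F\), the labels \(q_t\) could range over all of \(\Z^3\). The plan is to rely entirely on \Cref{lem:diameter-bound}, which says that a surface of reach at least \(\tau\) and area at most \(\Delta\) has each component of diameter \(O(\Delta/\tau)\), from disjoint chart balls of area \(\gtrsim\tau^2\) along a chain. It is combined with the anchoring lemma, which says that every essential component meets, or comes within bounded distance of, the peripheral torus. That torus lies within \(\Lambda/2+1\) of the barycenter of \(\gamma\).

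A secondary point to check is that the count respects the theorem's wording. The count is of encoded types, not isotopy classes, so faithfulness is not invoked. The minimization in part~(f) is over a nonempty set of words bounded in the shortlex order, so the minimum exists.
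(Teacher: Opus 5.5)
Your proposal is correct and follows essentially the paper's argument: a cell count of order \(\Lambda/\varepsilon+\Delta/\varepsilon^2\), confinement of all grid labels to the window of side \(S(\Lambda,\Delta,\tau)\) via \Cref{lem:anchoring,lem:diameter-bound}, and a count of words of length \(O(N^2)\) over a finite, budget-dependent alphabet. The differences are presentational. The paper re-derives the curve and surface cell counts inline, precisely so that the theorem does not depend logically on \Cref{prop:basic-dof}, and it counts codes for an arbitrary fixed scheme via \(\sum_{k\le N}A^{Bk^2}\). You instead cite the proposition and specialize to \Cref{con:concrete-encoding}, which yields the explicit bound of \Cref{cor:explicit-encoded-bound} directly; the same count works verbatim for any scheme with finite alphabet size \(A\).
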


\begin{proof}
We give a direct proof, including the finite-resolution degree-of-freedom
estimate, so that the finiteness statement does not depend logically on the
preceding proposition.

First fix the geometric setting.  Since \(\gamma\in Y_\Lambda(K)\), the curve
is a \(C^{1,1}\) embedded representative with
\[
  \Thi(\gamma)=1,
  \qquad
  \Len(\gamma)\leq \Lambda .
\]
The thickness condition gives the standard reach consequences: the curvature
of \(\gamma\) is uniformly bounded at scale one and the normal tube
\(N_{\rho_0}(\gamma)\) of the fixed radius \(\rho_0<1\) is embedded with
\(C^{1,1}\) boundary torus (\Cref{not:ambient-metric}).  The surface \(F\subset E(\gamma)\) is a compact properly embedded
essential surface with
\[
  \Thi(F)\geq \tau,
  \qquad
  \Area(F)\leq \Delta .
\]
Here the surface thickness is understood in the relative positive-reach sense
fixed earlier: both the interior sheets and boundary collars have controlled
\(C^{1,1}\) graphical charts at scale \(\tau\), with uniform sheet separation.
Only these bounded-geometry consequences are used below.

\smallskip
\noindent
\emph{Step 1: the curve layer.}
Parametrize \(\gamma\) by arclength and subdivide \([0,\Len(\gamma)]\) into
intervals of length at most \(\varepsilon/2\).  The number of subintervals is
at most
\[
  N_\gamma
  \leq
  \left\lceil \frac{2\Len(\gamma)}{\varepsilon}\right\rceil
  \leq
  \left\lceil \frac{2\Lambda}{\varepsilon}\right\rceil .
\]
Joining consecutive sample points gives a polygonal model
\(\widehat\gamma\).  Since the curvature is uniformly bounded, the deviation
between an arclength segment and its chord is bounded by a universal constant
times the square of the segment length.  For \(\varepsilon\) smaller than a
fixed universal scale this is certainly below the resolution level.  Thus the
curve layer of the code uses at most
\[
  C_1 \frac{\Lambda}{\varepsilon}
\]
curve cells, after increasing \(C_1\) to absorb ceilings and the chosen
quantization convention.

\smallskip
\noindent
\emph{Step 2: the surface layer.}
Choose a maximal subset \(\mathcal P\subset F\) whose points have pairwise
intrinsic distance at least \(\varepsilon/2\).  Near \(\partial F\) we use the
boundary-collar charts and intrinsic half-balls.  Since
\(0<\varepsilon\leq c\min\{1,\tau\}\), the intrinsic balls of radius \(\varepsilon/4\)
centered at the interior points of \(\mathcal P\), and the corresponding
half-balls near the boundary, have pairwise disjoint interiors.  The positive
reach and bounded-geometry assumptions give a uniform lower area bound
\[
  \Area\bigl(B_F(p,\varepsilon/4)\bigr)
  \geq
  a_\tau\varepsilon^2
\]
for every such ball or half-ball, where \(a_\tau>0\) depends only on the
bounded-geometry convention and on \(\tau\).  Hence
\[
  |\mathcal P|
  \leq
  \frac{\Area(F)}{a_\tau\varepsilon^2}
  \leq
  \frac{\Delta}{a_\tau\varepsilon^2}.
\]
By maximality, the intrinsic balls of radius \(\varepsilon/2\) centered at
\(\mathcal P\) cover \(F\).  A standard bounded-overlap Voronoi--Delaunay or
greedy triangulation construction, using the controlled local graphical charts
and boundary collars, produces a triangulated surface layer whose triangles
have intrinsic diameter at most a fixed multiple of \(\varepsilon\).  After
subdividing once more, if necessary, the mesh is at most \(\varepsilon\), and
the number of triangles is at most
\[
  C_2(\tau)\frac{\Delta}{\varepsilon^2} .
\]
The constant includes the bounded valence of the net, the boundary-collar
half-patch convention, and the chosen local quantization rule.

\smallskip
\noindent
\emph{Step 3: the layered pair model.}
The pair code is the combination of the polygonal curve layer, the triangulated
surface layer, and a relative-position layer.  We do not require a common
subdivision of \(\widehat\gamma\) and the triangulation of \(F\).  Instead the
relative-position layer records, at scale \(\varepsilon\), which curve edges
and surface triangles lie in neighbouring ambient cells, together with the
finite local symbols specified by the encoding scheme.

We use a fixed ambient finite-resolution atlas.  After the
rigid-motion normalization used in the definition of \(Y_\Lambda(K)\), the
essential pair lies in the cubical window of side
\(S(\Lambda,\Delta,\tau)\) of \Cref{con:concrete-encoding}, supplied by the
anchoring and diameter lemmas
(\Cref{lem:anchoring,lem:diameter-bound}); the window size depends on all
three budgets, not on \(\Lambda\) alone.  At scale \(\varepsilon\), each curve
edge and each bounded-geometry
surface triangle meets only uniformly boundedly many ambient cells.  The
relative-position layer records only the local configuration of cells that are
equal or adjacent in this atlas.  Since \(\varepsilon\leq c\tau\), the
positive-reach assumption prevents arbitrarily many independent sheets from
being packed into one such local cell cluster.  Thus the local incidence
alphabet is finite with uniformly bounded multiplicity.  If the approximation
of a disjoint smooth pair produces a coarse cell intersection, the code records
only the corresponding \(\varepsilon\)-visible relation; it is not meant to be a
common refinement of the smooth objects.  Consequently the number of geometric
curve/surface cells in the layered model is bounded by a constant multiple of
the sum of the curve and surface contributions.  Thus, for some \(C_\tau>0\),
\[
  \DoF_\varepsilon(\gamma,F)
  \leq
  C_\tau
  \left(
    \frac{\Lambda}{\varepsilon}
    +
    \frac{\Delta}{\varepsilon^2}
  \right).
\]
This is the asserted effective finite-resolution degree-of-freedom estimate.

\smallskip
\noindent
\emph{Step 4: finiteness of codes.}
Now fix the encoding scheme once and for all.  Its local data are drawn from a
finite alphabet: for example, grid-cell or chart labels, ordered curve-edge
labels, quantized tangent directions, triangle labels, quantized unoriented
normal lines, boundary-collar labels, adjacency symbols, and coarse
curve--surface incidence symbols.  Let
\[
  N
  =
  \left\lceil
  C_\tau
  \left(
    \frac{\Lambda}{\varepsilon}
    +
    \frac{\Delta}{\varepsilon^2}
  \right)
  \right\rceil .
\]
Every admissible pair in the theorem is represented by a layered model with at
most \(N\) curve/surface cells.  For a fixed number \(k\leq N\) of cells, there
are only finitely many choices of local symbols.  There are also only finitely
many adjacency and incidence patterns, since these can be encoded by entries
in a finite set of slots indexed by at most \(k^2\) ordered pairs of cells.
Thus, if \(A\) is the maximum number of choices allowed by the finite alphabet
for each local or incidence slot, the number of codes with \(k\) cells is
bounded above by a quantity of the form \(A^{B k^2}\), for some constant
\(B\) depending only on the scheme.  The quantity \(A\) may itself depend on
\((\Lambda,\Delta,\tau,\varepsilon)\) through the size of the ambient grid
--- for the concrete scheme of \Cref{con:concrete-encoding} one has
\(A\leq CN^5\) --- but it is finite once the budgets and the resolution are
fixed.  Consequently the total number of possible
codes is bounded by
\[
  \sum_{k=0}^{N} A^{B k^2},
\]
which is finite and depends only on
\(\Lambda,\Delta,\tau,\varepsilon\) and on the chosen encoding scheme.

The set of encoded \(\varepsilon\)-types arising from the pairs under
consideration is a subset of this finite set.  Hence only finitely many encoded
\(\varepsilon\)-types occur.
\end{proof}

\begin{corollary}[Explicit encoded-type bound for the concrete scheme]
\label{cor:explicit-encoded-bound}
For the encoding scheme of Construction~\ref{con:concrete-encoding}, set
\[
  N_{\Lambda,\Delta,\tau,\varepsilon}
  =
  \left\lceil
  C\left(
  \frac{\Lambda}{\varepsilon}+\frac{\Delta}{\varepsilon^2}
  \right)
  \right\rceil .
\]
Then the number of encoded \(\varepsilon\)-types of pairs in
\(\calZ_{\Lambda,\Delta,\tau}^{\mathrm{ess}}(K)\) is at most
\[
  \bigl(C\,N_{\Lambda,\Delta,\tau,\varepsilon}\bigr)^{\,B\,
  N_{\Lambda,\Delta,\tau,\varepsilon}^2},
\]
with \(B,C\) universal constants of the scheme.  We stress that the base of
the exponential cannot be taken to depend on \(\tau\) alone: the alphabet
contains the grid-cell labels
\(\mathcal I_\varepsilon=\{1,\dots,\lceil S/\varepsilon\rceil\}^3\), whose
number depends on \((\Lambda,\Delta,\tau,\varepsilon)\); the displayed form
absorbs this via \(\lceil S/\varepsilon\rceil\leq CN\)
(\Cref{con:concrete-encoding}).
In particular, after the geometric budget and the resolution are fixed, the
search space is not only finite abstractly but bounded by an explicit function
of \((\Lambda,\Delta,\tau,\varepsilon)\) and the chosen quantization constants.
\end{corollary}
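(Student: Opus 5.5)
The plan is to combine the degree-of-freedom bound of \Cref{prop:basic-dof} (equivalently Steps 1--3 in the proof of \Cref{thm:finite-resolution-finiteness-filtered-pairs}) with a direct count of words in the finite alphabet of \Cref{con:concrete-encoding}. First I will fix the universal constant \(C\) in \(N=N_{\Lambda,\Delta,\tau,\varepsilon}\) so that every pair in \(\calZ^{\mathrm{ess}}_{\Lambda,\Delta,\tau}(K)\) admits an admissible discretization with at most \(N\) curve and surface cells. The constant must be \(\tau\)-free. This is exactly the content of \Cref{rem:dimensional-homogeneity}: rescaling by \(\tau^{-1}\) reduces the surface count to the reach-one case, and the curve count is at unit scale. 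Since the canonical code of \Cref{rem:code-well-defined} is one of the raw words, and admissible discretizations obey the cell bound, every encoded type is a word built from at most \(N\) cells. The boundary samples are also counted among the at most \(N\) cells: each boundary component has length controlled by the collar area, so boundary samples are \(O(\Delta/(\tau\varepsilon))\leq O(N)\) after enlarging \(C\).

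Second, I will bound the alphabet size \(A\). The grid labels \(\mathcal I_\varepsilon\) number at most \(C(1+S/\varepsilon)^3\), where \(S\) is the window side from \Cref{lem:anchoring,lem:diameter-bound}. Using \(\varepsilon\leq c\min\{1,\tau\}\), I get
\[
  S/\varepsilon\leq C(\Lambda/\varepsilon+\Delta/(\tau\varepsilon))\leq C'(\Lambda/\varepsilon+\Delta/\varepsilon^2)\leq C'N,
\]
so the grid labels contribute \(O(N^3)\). The direction net \(\mathcal D_\gamma\) has \(O(\varepsilon^{-2})\) elements when \(\delta_\gamma\leq\varepsilon\). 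A nonempty slice forces \(\Lambda\geq\Rop(K)\geq\) a universal positive constant, so \(N\geq c/\varepsilon\) and this factor is \(O(N^2)\). The remaining nets \(\mathcal D_\partial,\widehat{\mathcal D}_F\) and the symbol sets \(\mathcal B,\mathcal J\) have size depending only on the fixed surface quantization and the conventions. Altogether \(A\leq C''N^5\).

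Third, I will count. A code with \(k\leq N\) cells consists of \(O(k)\) local labels plus adjacency and incidence data, indexed by at most \(k^2\) ordered pairs of cells. Each slot takes one of at most \(A\) values, so there are at most \(A^{Bk^2}\) codes with \(k\) cells. Summing gives
\[
  \sum_{k\leq N}A^{Bk^2}\leq (N+1)A^{BN^2}\leq (C''N^5)^{B'N^2}\leq (CN)^{B''N^2},
\]
after absorbing \(N+1\) and the exponent \(5\) into the constants. This yields the displayed bound.

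The main obstacle is keeping \(B\) and \(C\) genuinely universal rather than \(\tau\)- or budget-dependent. There are two issues. One is that the cell count of the surface layer, including the boundary layer, must be uniform after rescaling. The other is that the grid-label count, which does depend on the budgets, must be absorbed polynomially into \(N\). Both rest on the constraint \(\varepsilon\leq c\min\{1,\tau\}\) and on the diameter lemma, so I will check those two absorptions carefully.
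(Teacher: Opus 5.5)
Your proposal is correct and is essentially the paper's argument: the cell bound \(N\) from \Cref{prop:basic-dof}, an alphabet of at most \(CN^5\) letters from \Cref{con:concrete-encoding}, \(O(N^2)\) slots, and absorption of the exponent \(5\) (and of your factor \(N+1\)) into the constants; your extra bookkeeping on boundary samples and on \(\tau\)-free constants only makes explicit what the paper leaves implicit. One small correction, which also appears in the construction's wording: a \(\delta_{\gamma}\)-net of \(S^2\) has on the order of \(\delta_{\gamma}^{-2}\) points, so \(|\mathcal D_{\gamma}|=O(\varepsilon^{-2})\) requires \(\delta_{\gamma}\) comparable to \(\varepsilon\) (for example \(\delta_{\gamma}=\varepsilon\)), not merely \(\delta_{\gamma}\leq\varepsilon\).
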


\begin{proof}
By Proposition~\ref{prop:basic-dof}, every pair in the filtered space has a layered model
with at most \(N_{\Lambda,\Delta,\tau,\varepsilon}\) cells, after increasing the
constant in the definition of \(N\) if necessary.  The concrete scheme uses an
alphabet of at most \(CN^5\) letters
(\Cref{con:concrete-encoding}) and at most \(BN^2\) slots for cell labels,
adjacencies,
and coarse incidences.  Hence the number of possible words is bounded by
\((CN^5)^{BN^2}\leq(CN)^{5BN^2}\), and renaming \(5B\) as \(B\) gives the
stated form.
\end{proof}

\begin{corollary}[Finite-resolution finiteness over the ideal-knot stratum]
\label{cor:finite-resolution-finiteness-ideal-pairs}
Fix \(K\), \(\Delta>0\), \(\tau>0\), and \(0<\varepsilon\leq c\min\{1,\tau\}\).
Consider pairs \((\gamma,F)\) over ideal knot representatives satisfying
\[
  \gamma\in I(K),
  \qquad
  \Thi(F)\geq\tau,
  \qquad
  \Area(F)\leq\Delta.
\]
Then only finitely many encoded \(\varepsilon\)-types of such ideal pairs
occur.  More precisely,
\[
  \DoF_\varepsilon(\gamma,F)
  \leq
  C
  \left(
  \frac{\Rop(K)}{\varepsilon}
  +
  \frac{\Delta}{\varepsilon^2}
  \right).
\]
\end{corollary}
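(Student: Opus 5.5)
The corollary is a specialization of \Cref{thm:finite-resolution-finiteness-filtered-pairs} with \(\Lambda=\Rop(K)\). I would verify the hypotheses of that theorem for ideal pairs, and then read off both the count and the degree-of-freedom bound.

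\emph{Step 1: ideal knots lie in a ropelength sublevel.}
By definition \(I(K)=Y_{\Rop(K)}(K)\). So every \(\gamma\in I(K)\) is a unit-thickness \(C^{1,1}\) representative with \(\Len(\gamma)=\Rop(K)\leq\Rop(K)\). Hence \(\gamma\in Y_\Lambda(K)\) with \(\Lambda:=\Rop(K)\), which is finite and positive. Nonemptiness of \(I(K)\) from \cite{CantarellaKusnerSullivan} is not needed for the upper bound; it only ensures the statement is not vacuous.

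\emph{Step 2: the surface hypotheses transfer unchanged.}
The surface conditions \(\Thi(F)\geq\tau\) and \(\Area(F)\leq\Delta\) are exactly those of the theorem. The essentiality of \(F\subset E(\gamma)\) is implicit in the corollary, since it speaks of pairs in the essential setting. The resolution constraint \(0<\varepsilon\leq c\min\{1,\tau\}\) is the same, with the same universal \(c\).

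\emph{Step 3: apply the theorem.}
Since every ideal pair is a pair admissible for \Cref{thm:finite-resolution-finiteness-filtered-pairs} at level \((\Rop(K),\Delta,\tau)\), two conclusions follow.
\begin{itemize}
\item The set of encoded \(\varepsilon\)-types of ideal pairs is a subset of the finite code set for that level. For the concrete scheme it is explicitly bounded by \Cref{cor:explicit-encoded-bound} with \(N=N_{\Rop(K),\Delta,\tau,\varepsilon}\).
\item The degree-of-freedom estimate \(\DoF_\varepsilon(\gamma,F)\leq C(\Lambda/\varepsilon+\Delta/\varepsilon^2)\) specializes at \(\Lambda=\Rop(K)\) to the displayed inequality.
\end{itemize}
The constant \(C\) is universal by \Cref{rem:dimensional-homogeneity}, so it does not depend on \(K\) or \(\tau\).

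\emph{The only point needing care.}
The code window \(S(\Lambda,\Delta,\tau)\) of \Cref{con:concrete-encoding} must be well defined at \(\Lambda=\Rop(K)\). It is, because the anchoring and diameter lemmas apply to any unit-thickness curve of length at most \(\Lambda\). Beyond this check, the argument is purely formal.
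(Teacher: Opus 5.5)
Your proposal is correct and is exactly the paper's argument: an ideal representative has \(\Thi(\gamma)=1\) and \(\Len(\gamma)=\Rop(K)\), so every such pair lies at level \((\Rop(K),\Delta,\tau)\), and \Cref{thm:finite-resolution-finiteness-filtered-pairs} applies with \(\Lambda=\Rop(K)\). You are also right that essentiality of \(F\) must be read into the hypotheses: the finite code window depends on the anchoring lemma (\Cref{lem:anchoring}), which fails for inessential closed components.
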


\begin{proof}
If \(\gamma\in I(K)\), then \(\Thi(\gamma)=1\) and
\[
  \Len(\gamma)=\Rop(K).
\]
The claim follows from
\Cref{thm:finite-resolution-finiteness-filtered-pairs} with
\(\Lambda=\Rop(K)\).
\end{proof}

\begin{remark}[A smooth finiteness lies behind the encoded count]
\label{rem:forward-smooth}
\Cref{thm:finite-resolution-finiteness-filtered-pairs} bounds the number
of encoded \(\varepsilon\)-types.  In \Cref{sec:bounded-complexity} we prove that
the underlying smooth object is already finite: the filtered pair space
\(\calZ^{\mathrm{ess}}_{\Lambda,\Delta,\tau}(K)\) has only finitely many
pair-isotopy classes (\Cref{thm:smooth-finiteness-pair-space}), and at fine
enough resolution the encoded type is a separating certificate for those
classes (\Cref{thm:faithfulness}).  Thus the explicit bound
\((CN)^{BN^2}\) of \Cref{cor:explicit-encoded-bound} is an explicit
set-theoretic upper bound for a genuinely finite count.  The compactness underlying this smooth
finiteness also makes the infima defining ideal surfaces and ideal pairs
attained rather than merely conditional, and makes the compactified
visibility level \(\overline\beta_{\mathfrak S}\) attained, with exact-slice
attainment over the ideal stratum; see
\Cref{lem:compactness-thick-surfaces,cor:attainment-visibility,cor:unconditional-ideal-attainment}.
\end{remark}

\section{Bounded topological complexity of thick surfaces}
\label{sec:bounded-complexity}

This short section supplies the topological input used throughout the rest of
the paper.  The finite-resolution theorem controls the number of encoded
geometric models; the estimates below explain why, under the same bounded
geometry hypotheses, those models cannot hide arbitrarily complicated surface
topology.  In this sense the section is the bridge from the metric parameters
\((\Delta,\tau)\) to the Haken-theoretic complexity of the visible surfaces.

We now record the elementary boundedness principle behind the filtered
surface theory.

\paragraph{Boundary curvature and collar estimates from relative thickness.}
Fix \(\tau>0\).  Under Definition~\ref{def:relative-surface-thickness}, there
are constants
\[
  c_{\mathrm{col}}(\tau)>0,
  \qquad
  \kappa_{\partial}(\tau)<\infty,
  \qquad
  K_{\mathrm{curv}}(\tau)<\infty,
  \qquad
  k_0(\tau)<\infty
\]
depending only on the relative-thickness convention and on \(\tau\), such that
any surface \(F\) with \(\Thi(F)\geq\tau\) has an embedded boundary collar with
\[
  \Area(\operatorname{Col}(\partial F))
  \geq
  c_{\mathrm{col}}(\tau)\,\Len(\partial F),
\]
each boundary component has ambient space-curve curvature at most
\(\kappa_{\partial}(\tau)\), and
\[
  |K_F|\leq K_{\mathrm{curv}}(\tau),
  \qquad
  |k_g|\leq k_0(\tau).
\]
Indeed, the interior and boundary half-chart clauses in
Definition~\ref{def:relative-surface-thickness} give a uniform bound for the
second fundamental form of \(F\) up to the boundary, of the form
\(\|A_F\|\leq C/\tau\).  They also give a uniform bound for the curvature of the
boundary curve inside the surface, namely \(|k_g|\leq C/\tau\).  For an
arclength-parametrized boundary component \(\alpha\), its ambient curvature
vector decomposes as
\[
  \nabla_s \alpha'(s)
  =
  k_g\,\nu_{\partial F} + II_F(\alpha'(s),\alpha'(s))\,n_F,
\]
where \(\nu_{\partial F}\) is the inward conormal in \(F\) and \(n_F\) is a
unit normal to \(F\).  Therefore
\[
  \kappa_{\mathrm{amb}}(\alpha)
  \leq
  |k_g|+\|A_F\|
  \leq
  C_\partial/\tau .
\]
This supplies the asserted space-curve curvature bound.  The embedded collar
clause gives a collar map \([0,c_0\tau]\times\partial F\to F\) with uniformly
controlled Jacobian.  After possibly reducing \(c_0\), its Jacobian is bounded
below by a positive constant independent of \(F\).  Hence the collar of width
comparable to \(\tau\) has area at least
\(c_{\mathrm{col}}(\tau)\Len(\partial F)\).  Finally, the bound on the second
fundamental form gives \(|K_F|\leq C/\tau^2\), and the boundary half-chart gives
the stated geodesic-curvature bound.

\begin{lemma}[Uniform reach of the boundary system]
\label{lem:boundary-system-reach}
There is a constant \(c_{\partial}>0\), depending only on the fixed
bounded-geometry convention, such that every surface with
\(\Thi(F)\geq r\) satisfies
\[
  \operatorname{reach}_{\R^3}(\partial F)
  \geq c_{\partial}\min\{r,\rho_0\}.
\]
Here \(\partial F\) denotes the union of all boundary components.  The same
conclusion, with a possibly smaller constant, holds for the normal injectivity
radius of \(\partial F\) as a curve system in the peripheral torus
\(\partial E(\gamma)\).
\end{lemma}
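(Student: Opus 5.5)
The plan is to verify Federer's reach criterion for the closed curve system \(\partial F\) through its two standard ingredients: a curvature bound for each component, and a lower bound on the doubly critical self-distance. For a compact \(C^{1,1}\) curve system, \(\operatorname{reach}(\partial F)\geq\min\{\kappa_{\max}^{-1},\tfrac12 d_{\mathrm{crit}}\}\). Here \(\kappa_{\max}\) is the supremum of the ambient curvature. The quantity \(d_{\mathrm{crit}}\) is the infimum of \(|p-q|\) over pairs \(p\neq q\) with \(p-q\) orthogonal to both tangent lines; if \(p,q\) lie on one component, we require them to be at least a fixed multiple of the curvature radius apart along it. This is the same criterion that identifies the thickness of a knot in \Cref{not:ambient-metric}, and it applies verbatim to a finite disjoint union of curves. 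It therefore suffices to bound \(\kappa_{\max}\leq C/r\) and \(d_{\mathrm{crit}}\geq c\,r\).

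The curvature bound has essentially been proved in the paragraph preceding the lemma. The interior and boundary half-chart clauses (ii)--(iii) give \(\|A_F\|\leq C/r\) and \(|k_g|\leq C/r\) up to the boundary. The decomposition \(\nabla_s\alpha'=k_g\nu_{\partial F}+II_F(\alpha',\alpha')n_F\) then gives \(\kappa_{\mathrm{amb}}\leq C_\partial/r\) on every component. This uses only the clauses at scale \(r\), so by \Cref{lem:thickness-scale-monotonicity} nothing changes when \(r\) is replaced by \(\min\{r,\rho_0\}\).

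For the distance bound I split into two cases. If \(p,q\) lie on different components, clause (iv-c) with \(s=t=0\) gives \(|p-q|\geq c_0r\) directly. If they lie on the same component and \(d_{\partial F}(p,q)\geq \pi r/C_\partial\) (the critical-pair threshold), then clause (iv-b) with \(s=t=0\) gives \(|p-q|\geq L_0^{-1}\min\{r,d_{\partial F}(p,q)\}\geq c\,r\). Short arcs are handled by the curvature bound alone, since an arc of curvature \(\leq C_\partial/r\) and length below \(\pi r/C_\partial\) cannot be doubly critical. Altogether \(\operatorname{reach}(\partial F)\geq c_\partial r\geq c_\partial\min\{r,\rho_0\}\). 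The \(\rho_0\) in the statement matters only for the second assertion, where the ambient surface \(\partial E(\gamma)\) has its own curvature scale \(\rho_0^{-1}\). The torus is a \(C^{1,1}\) tube of radius \(\rho_0<1\) about a unit-thickness curve, so its second fundamental form is bounded by \(C/\rho_0\).

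The main obstacle is the passage to the normal injectivity radius of \(\partial F\) inside \(T=\partial E(\gamma)\). I would first obtain a geodesic curvature bound in \(T\): \(|k_g^T|\leq\kappa_{\mathrm{amb}}\leq C/r\), since the intrinsic geodesic curvature is the tangential component of the ambient curvature vector. Next, on scales \(\lesssim\rho_0\), the nearest-point projection onto \(T\) is bi-Lipschitz with universal constants, so intrinsic distances in \(T\) between points of \(\partial F\) are comparable to Euclidean ones. I then take the normal exponential map of \(\partial F\) in \(T\) up to radius \(c\min\{r,\rho_0\}\). Its injectivity follows from the same two-case argument, now combined with the ambient reach bound: two normal geodesics in \(T\) of length \(\ell\ll\min\{r,\rho_0\}\) stay within \(O(\ell^2/\rho_0)\) of the corresponding ambient normal segments tangent to \(T\). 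A collision would therefore produce an ambient point within distance \(<\operatorname{reach}(\partial F)\) having two nearest points, a contradiction. Keeping this \(O(\ell^2/\rho_0)\) deviation error below the reach margin is what forces the minimum with \(\rho_0\) and the smaller constant.
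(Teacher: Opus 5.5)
Your proof is essentially the paper's. The half-chart bound \(\kappa\leq C/r\) rules out doubly critical pairs on short arcs, and clauses (iv-b) and (iv-c) at collar depth zero bound the doubly critical distance for long arcs and for distinct components. The thickness characterization of \(C^{1,1}\) curve systems then gives the ambient reach; your observation that this part already yields \(c\,r\), with \(\rho_0\) entering only through the torus, is correct. The torus statement comes from comparing intrinsic and ambient geometry at scale \(\min\{r,\rho_0\}\) using \(\|II_T\|\leq C/\rho_0\).

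Phrase the last step more carefully. The \(T\)-normal geodesics stay only within \(O(\ell^2/\rho_0)\) of the ambient normal segments, so a collision does not literally produce a point with two nearest points. What it gives, through the Lipschitz bound on nearest-point projection, is basepoints with \(|p-q|=O(\ell^2/\rho_0)\). Such basepoints are then excluded by local injectivity of the normal exponential map, which follows from \(|k_g^T|\leq C/r\) and the curvature bound on \(T\).
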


\begin{proof}
Set \(r_*=\min\{r,\rho_0\}\).  The boundary half-charts give
\[
  |\kappa_{\partial F}|\leq C/r_*
\]
almost everywhere.  Hence, if two points of one boundary component have
arclength separation at most \(a r_*\), with \(a>0\) universal and small,
the chord--tangent estimate for a \(C^{1,1}\) curve of curvature at most
\(C/r_*\) shows that the chord has a nonzero component in the initial tangent
direction.  Such a pair is therefore not doubly critical.

For pairs on one boundary component whose arclength separation is at least
\(a r_*\), the truncated lower bound (iv-b) of
\Cref{def:relative-surface-thickness}, evaluated at collar depth zero, gives
\[
  |p-q|\geq L_0^{-1}\min\{r,d_{\partial F}(p,q)\}
  \geq L_0^{-1}a r_*.
\]
For points on distinct boundary components the separation clause (iv-c)
gives directly \(|p-q|\geq c_0r\geq c_0r_*\).  Thus the doubly critical
self-distance of the entire boundary system is bounded below by a universal
multiple of \(r_*\), while its curvature radius is bounded below by another
universal multiple of \(r_*\).  The thickness characterization for closed
\(C^{1,1}\) curves, applied componentwise together with the mutual component
separation, proves the ambient reach estimate.

The peripheral torus has second fundamental form bounded in terms of the
fixed radius \(\rho_0\).  On the scale \(r_*\), ambient and intrinsic torus
chords and angles are uniformly comparable.  Decreasing
\(c_{\partial}\) therefore gives the asserted normal injectivity radius in the torus as well.
\end{proof}

\begin{proposition}[Geometric-topological boundedness]
\label{prop:geometric-topological-boundedness}
Fix \((\Lambda,\Delta,\tau)\).  Assume that \(F\) is connected, orientable or
not, with \(\Thi(F)\geq\tau\) in the sense of
Definition~\ref{def:relative-surface-thickness} and
\(\Area(F)\leq\Delta\).  Then the number of boundary components
\(|\partial F|\) and the absolute Euler characteristic \(|\chi(F)|\) are
bounded above by constants depending
only on \(\Delta\), \(\tau\), and the fixed relative-thickness convention.
Consequently the genus is bounded when \(F\) is orientable
(\(\chi=2-2g-|\partial F|\)) and the crosscap number is bounded when \(F\) is
non-orientable (\(\chi=2-k-|\partial F|\)).
\end{proposition}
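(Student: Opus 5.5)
The plan is to bound the number of boundary components through the collar estimate, and $|\chi(F)|$ through Gauss--Bonnet with the curvature bounds just established.

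\emph{Boundary components.} Each boundary component is a closed $C^{1,1}$ curve with ambient curvature at most $\kappa_\partial(\tau)$. By Fenchel's theorem it therefore has length at least $2\pi/\kappa_\partial(\tau)$. A cleaner alternative uses the collar directly: by clause (iv-b) with $s=t=0$, a component of length $\ell$ has two points at intrinsic distance $\ell/2$, so it cannot be shorter than a definite multiple of $\tau$. Either way the collar area estimate
\[
  c_{\mathrm{col}}(\tau)\Len(\partial F)\leq\Area(\operatorname{Col}(\partial F))\leq\Area(F)\leq\Delta
\]
bounds the total boundary length. Dividing by the lower bound for the length of one component gives
\[
  |\partial F|\leq \frac{\Delta\,\kappa_\partial(\tau)}{2\pi\,c_{\mathrm{col}}(\tau)}.
\]
The only point to check here is that the collars of distinct components are disjoint; this follows from (iv-c), so the collar area really is additive.

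\emph{Euler characteristic.} The route is Gauss--Bonnet for a compact $C^{1,1}$ surface with boundary:
\[
  2\pi\chi(F)=\int_F K_F\,dA+\int_{\partial F}k_g\,ds.
\]
Gauss--Bonnet holds for $C^{1,1}$ metrics by approximating with smooth surfaces, since the curvatures are $L^\infty$ and converge weakly. It is valid for non-orientable $F$ as well, because the integrand is intrinsic; alternatively one can pass to the orientation double cover, which doubles both sides. The bounds $|K_F|\leq K_{\mathrm{curv}}(\tau)$ and $|k_g|\leq k_0(\tau)$ then give
\[
  2\pi|\chi(F)|\leq K_{\mathrm{curv}}(\tau)\,\Delta+k_0(\tau)\,\Len(\partial F)\leq K_{\mathrm{curv}}(\tau)\,\Delta+k_0(\tau)\,\Delta/c_{\mathrm{col}}(\tau).
\]
The genus and crosscap bounds follow immediately from $\chi=2-2g-|\partial F|$ and $\chi=2-k-|\partial F|$, using the bounds on $|\chi|$ and $|\partial F|$.

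\emph{Main obstacle.} I expect the only delicate step to be justifying Gauss--Bonnet at $C^{1,1}$ regularity, with the curvature bounds holding only almost everywhere. If the approximation argument is awkward, I would use a combinatorial fallback instead. Take the $\varepsilon$-net triangulation from the proof of \Cref{prop:basic-dof}, with $\varepsilon=c\tau$. It has at most $C\Delta/\tau^2$ triangles, and the number of vertices, edges and faces is each bounded by a constant multiple of the number of triangles. Hence $|\chi|=|V-E+F|$ is bounded by $C'\Delta/\tau^2$ directly, with no curvature integration needed. This fallback even avoids the $k_g$ term, so I would present it as the primary argument if the triangulation step is already accepted from \Cref{sec:dof}.
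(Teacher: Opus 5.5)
Your main argument is the paper's proof. The collar estimate bounds $\Len(\partial F)\le\Delta/c_{\mathrm{col}}(\tau)$. Fenchel's theorem with the curvature bound $\kappa_\partial(\tau)$ gives each boundary component length at least $2\pi/\kappa_\partial(\tau)$. Gauss--Bonnet, justified at $C^{1,1}$ regularity by smoothing and combined with $|K_F|\le K_{\mathrm{curv}}(\tau)$ and $|k_g|\le k_0(\tau)$, then bounds $|\chi(F)|$ without using orientability.

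One side claim is false, although your displayed bound does not depend on it.
\begin{itemize}
\item Clause (iv-b) at $s=t=0$ says only $|p-q|\ge L_0^{-1}\min\{\tau,d_{\partial F}(p,q)\}$.
\item On a component of length $\ell<2\tau$, every pair has $d_{\partial F}(p,q)\le\ell/2<\tau$. The clause therefore reduces to the scale-invariant chord--arc comparison $|p-q|\ge L_0^{-1}d_{\partial F}(p,q)$.
\item A round circle of any radius satisfies this once $L_0\ge\pi/2$, since its chords are at least $2/\pi$ times the shorter arc. The convention only requires $L_0\ge1$.
\item The trivial bound $|p-q|\le\ell/2$ that your sentence implicitly uses yields nothing.
\end{itemize}
So (iv-b) cannot exclude short components. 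What excludes them is second-order control, namely the curvature bound you feed into Fenchel. Alternatively, the half-chart clause (iii) works: its inclusion of the graph over the half-disk of radius $c_0\tau/2$ places a boundary arc of length at least $c_0\tau$ in the component through each boundary point.

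Your triangulation fallback is a genuinely different route. It replaces curvature integration by the identity $\chi=V-E+F$ for a complex with $O(\Delta/\tau^2)$ cells. It could also bound $|\partial F|$ combinatorially, since each boundary component contains a boundary edge and each boundary edge lies in exactly one triangle. There is no circularity, because \Cref{prop:basic-dof} uses only the thickness convention.

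However, it relocates the delicate step rather than removing it. The identity needs the greedy or Delaunay-type complex on the $\varepsilon$-net to be a genuine triangulation homeomorphic to $F$, up to the boundary. \Cref{prop:basic-dof} only sketches this, and proving it amounts to a reconstruction theorem for bounded-geometry surfaces with boundary. That is arguably harder than regularizing Gauss--Bonnet.

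The paper's route costs only a standard approximation, or Federer's curvature-measure Gauss--Bonnet for sets of positive reach. It also produces explicit constants in terms of $c_{\mathrm{col}}$, $\kappa_\partial$, $K_{\mathrm{curv}}$, $k_0$. Yours gives the same order $\Delta/\tau^2$ with an inexplicit triangulation constant. It is therefore better kept as a secondary argument than promoted to the primary one.
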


\begin{proof}
By the boundary-curvature and collar estimates above, the relative-thickness
hypothesis supplies constants \(c_{\mathrm{col}}\), \(\kappa_{\partial}\),
\(K_{\mathrm{curv}}\), and \(k_0\), depending only on \(\tau\) and on the convention.  The
collar estimate gives
\[
  c_{\mathrm{col}}\Len(\partial F)
  \leq
  \Area(\operatorname{Col}(\partial F))
  \leq
  \Area(F)
  \leq
  \Delta .
\]
Hence
\[
  \Len(\partial F)
  \leq
  c_{\mathrm{col}}^{-1}\Delta .
\]

Each boundary component \(\alpha\) is a closed \(C^{1,1}\) space curve with
curvature bounded above by \(\kappa_{\partial}\).  By Fenchel's theorem for
closed space curves \cite{Fenchel1929,Fenchel1951}, applied to \(C^{1,1}\)
curves by smooth approximation and lower semicontinuity of total curvature,
\[
  \int_\alpha \kappa\,ds\geq 2\pi.
\]
Together with \(\kappa\leq\kappa_{\partial}\), this gives
\[
  \Len(\alpha)
  \geq
  \frac{2\pi}{\kappa_{\partial}}.
\]
Consequently
\[
  |\partial F|
  \leq
  \frac{\Len(\partial F)}{2\pi/\kappa_{\partial}}
  \leq
  \frac{\kappa_{\partial}}{2\pi c_{\mathrm{col}}}\Delta .
\]

Gauss--Bonnet gives
\[
  2\pi\chi(F)
  =
  \int_F K_F\,dA+
  \int_{\partial F} k_g\,ds .
\]
For the \(C^{1,1}\) surfaces of this paper, \(K_F\) and \(k_g\) are defined
almost everywhere with the stated \(L^\infty\) bounds, and the identity holds
by smooth approximation: mollifying the local graph functions preserves the
curvature bounds up to a factor, leaves \(\chi\) unchanged (the mollified
surface is diffeomorphic to \(F\)), and passes area and boundary length to the
limit; alternatively one may invoke the curvature-measure form of
Gauss--Bonnet for sets of positive reach \cite{FedererReach,RatajZahle}.
Using the curvature bounds and the boundary length estimate, we obtain
\[
  |\chi(F)|
  \leq
  \frac{1}{2\pi}\left(
  K_{\mathrm{curv}}\Area(F)+k_0\Len(\partial F)
  \right)
  \leq
  C(\tau)\Delta .
\]
Neither this estimate nor the collar and Fenchel steps used orientability.
The final statements follow from the surface classification:
\(\chi=2-2g-|\partial F|\) in the orientable case and
\(\chi=2-k-|\partial F|\) in the non-orientable case.
\end{proof}

The chain of estimates in the proof is summarized in
\Cref{fig:boundedness-chain}.

\begin{figure}[t]
\centering
\begin{tikzpicture}[x=1cm,y=1cm,
  box/.style={draw, rounded corners=2pt, align=center, inner sep=4pt,
              text width=46mm, font=\scriptsize},
  input/.style={draw, rounded corners=2pt, align=center, inner sep=5pt,
                text width=50mm, font=\small},
  result/.style={draw, double, rounded corners=2pt, align=center,
                 inner sep=4pt, text width=40mm, font=\small},
  arrow/.style={-{Latex[length=2.2mm]}, thick}]
  \node[input] (in) at (6.6,5.6)
    {$\Thi(F)\geq\tau$ \quad and \quad $\Area(F)\leq\Delta$};
  \node[box] (collar) at (3.3,4.0)
    {collar estimate:\\
     $c_{\mathrm{col}}(\tau)\Len(\partial F)
       \leq\Area(\operatorname{Col}(\partial F))\leq\Delta$};
  \node[box] (fenchel) at (3.3,2.5)
    {Fenchel: each boundary curve of curvature
     $\leq\kappa_{\partial}(\tau)$ has
     $\Len(\alpha)\geq 2\pi/\kappa_{\partial}$};
  \node[result] (bcount) at (3.3,1.0)
    {$|\partial F|\leq
      \dfrac{\kappa_{\partial}}{2\pi c_{\mathrm{col}}}\Delta$};
  \node[box] (curv) at (9.9,4.0)
    {curvature bounds from the graphical charts:\\
     $|K_F|\leq K_{\mathrm{curv}}(\tau)$,\quad $|k_g|\leq k_0(\tau)$};
  \node[box] (gb) at (9.9,2.5)
    {Gauss--Bonnet:\\
     $2\pi\chi(F)=\displaystyle\int_F K_F\,dA
       +\int_{\partial F}k_g\,ds$};
  \node[result] (chi) at (9.9,1.0)
    {$|\chi(F)|\leq C(\tau)\Delta$\\[-0.5mm]
     {\scriptsize hence bounded genus or crosscap number}};
  \draw[arrow] (in.south) -- ++(0,-0.25) -| (collar.north);
  \draw[arrow] (in.south) -- ++(0,-0.25) -| (curv.north);
  \draw[arrow] (collar) -- (fenchel);
  \draw[arrow] (fenchel) -- (bcount);
  \draw[arrow] (curv) -- (gb);
  \draw[arrow] (gb) -- (chi);
  \draw[arrow, dashed] (collar.east) -- (gb.west);
\end{tikzpicture}
\caption{How bounded geometry bounds topology
(\Cref{prop:geometric-topological-boundedness}).  Relative thickness turns
the area budget into a boundary-length budget through the collar estimate;
Fenchel's theorem prices each boundary component, bounding
\(|\partial F|\); and the curvature bounds feed Gauss--Bonnet, bounding
\(|\chi(F)|\).  The dashed arrow records that the boundary-length estimate
also enters the Gauss--Bonnet step through the geodesic-curvature term.
Orientability is never used, so genus and crosscap number are bounded alike.}
\label{fig:boundedness-chain}
\end{figure}

\begin{remark}
The proposition should be viewed as a boundedness statement, not as a
classification theorem.  It is stated for connected surfaces because the genus
formula is then clean.  For a finite surface system, the same estimate applies
componentwise; if the thickness convention gives a positive lower area bound
for each essential component at scale \(\tau\), then the number of components
is at most \(\Delta/a_{\min}(\tau)\).  Thus disconnected JSJ systems are
compatible with the framework after adding this component-count convention.
\end{remark}

\begin{remark}[Boundary slopes require boundary-torus geometry]
The proposition deliberately does not include a general bound on boundary
slope height.  Such a bound requires additional control of the meridian--longitude lattice in the induced metric on \(\partial E(\gamma)\).  Length
control of the core curve alone does not automatically control the longitude
geometry of the boundary torus.  Boundary slope estimates will therefore be
stated below only under explicit boundary-torus metric hypotheses.
\end{remark}

\begin{remark}[Role of essentiality]
Essentiality is not used in the proof of
Proposition~\ref{prop:geometric-topological-boundedness}.  The estimate is a geometric
consequence of thickness, area, boundary-collar control, and curvature control.  Essentiality enters later, when the controlled surfaces
are required to carry topological information about the knot exterior, such
as boundary slopes, JSJ pieces, taut Seifert surfaces, or characteristic
decomposition data.
\end{remark}

\begin{lemma}[Uniform area lower bound per component]
\label{lem:area-lower-bound}
Fix \(\tau>0\).  There is a constant \(a_{\min}(\tau)>0\), depending only on
\(\tau\) and the relative-thickness convention, such that every connected
component \(F'\) of a compact properly embedded surface with \(\Thi\geq\tau\)
satisfies \(\Area(F')\geq a_{\min}(\tau)\).
\end{lemma}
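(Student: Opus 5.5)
The plan is to obtain the lower bound from a single chart.  Choose any point \(x\in F'\) and split into the interior case and the boundary case.

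If \(x\in F'\setminus\partial F'\), clause~(ii) of \Cref{def:relative-surface-thickness} at scale \(\tau\) gives a \(C^{1,1}\) function \(u\) on the disk \(D\) of radius \(c_0\tau/2\) in \(T_xF'\).  It satisfies \(u(0)=0\), \(Du(0)=0\) and \(|Du|\leq L_0\), and its graph is contained in \(F\).

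The step I expect to need care is showing that this graph lies in the component \(F'\) and not in some other component of the system.  The graph is connected and contains \(x\in F'\), so it lies in the single component of \(F\) containing \(x\), namely \(F'\).  The area formula for graphs then gives
\[
  \Area(F')\geq\int_D\sqrt{1+|Du|^2}\,dA\geq \Area(D)=\frac{\pi c_0^2\tau^2}{4}.
\]

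If \(x\in\partial F'\), clause~(iii) gives the same statement over a half-disk of radius \(c_0\tau/2\), so \(\Area(F')\geq \pi c_0^2\tau^2/8\).

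There is a small subtlety at the boundary.  One should check that the half-chart inclusion \(\operatorname{graph}(u|_{\text{half-disk}})\subseteq F\) is part of clause~(iii); the definition says ``the same pair of inclusions'', so it is.  Alternatively, one can avoid boundary charts entirely.  Pick an interior point \(C_F(s,p)\) with \(s>0\) small and apply clause~(ii) there.  Since interior charts exist at every interior point, this yields the interior bound directly.  Every component contains interior points, because clause~(iv) gives a collar whose image includes points with \(s>0\), and closed components have no boundary at all.

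Thus one may take \(a_{\min}(\tau)=\pi c_0^2\tau^2/8\), which depends only on \(\tau\) and the fixed constant \(c_0\).  Essentiality, \(\Lambda\), and \(\Delta\) are not used.
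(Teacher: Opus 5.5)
Your proof is correct, but it follows a different route from the paper's. The paper splits by whether \(F'\) has boundary. For a component with boundary, it uses the ambient curvature bound \(\kappa_\partial(\tau)\) on the boundary curves and Fenchel's theorem to get length at least \(2\pi/\kappa_\partial(\tau)\) for one boundary component. It then applies the collar estimate \(\Area(\operatorname{Col}(\partial F))\geq c_{\mathrm{col}}(\tau)\Len(\partial F)\) to conclude \(\Area(F')\geq 2\pi c_{\mathrm{col}}(\tau)/\kappa_\partial(\tau)\). Only closed components are handled by a graphical disk.

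You instead use a single graphical chart in every case, taking at face value the second inclusion of clause~(ii), or of clause~(iii) at a boundary point. This route has several advantages:
\begin{itemize}
\item it gives the explicit constant \(\pi c_0^2\tau^2/8\), which visibly depends only on \(c_0\) and \(\tau\);
\item it avoids Fenchel, the collar-Jacobian bound, and the derived constants \(c_{\mathrm{col}}\) and \(\kappa_\partial\);
\item it makes explicit the connectedness step that places the chart inside \(F'\) rather than in another component, which the paper leaves tacit.
\end{itemize}
The paper's route reuses the machinery of \Cref{prop:geometric-topological-boundedness}. It yields an area bound per boundary component, matching the boundary-count estimate there, but the lemma as stated does not need that extra information.

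One caution concerns your optional shortcut through an interior point \(C_F(s,p)\) with \(s\) small. Clause~(ii) cannot literally hold at an interior point \(x\) with \(\operatorname{dist}(x,\partial F)<c_0\tau/2\). The nearest boundary point lies in \(F\cap B(x,c_0\tau)\subseteq\operatorname{graph}(u)\), over a point of norm less than \(c_0\tau/2\). It would therefore lie in an embedded open graphical disk contained in \(F\). By invariance of domain, such a disk contains no boundary point of \(F\), a contradiction. So that shortcut leans on wording that is too strong near \(\partial F\).

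The robust version of your argument is the one you state as the main case. Use a half-chart at a boundary point when \(\partial F'\neq\varnothing\), and an interior chart when \(F'\) is closed.
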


\begin{proof}
Suppose first that \(F'\) has nonempty boundary.  By the boundary curvature and
collar estimates preceding \Cref{prop:geometric-topological-boundedness}, each
boundary component is a closed \(C^{1,1}\) space curve of curvature at most
\(\kappa_\partial(\tau)\), hence of length at least
\(2\pi/\kappa_\partial(\tau)\) by Fenchel's theorem, and it carries an embedded
collar of area at least \(c_{\mathrm{col}}(\tau)\cdot 2\pi/\kappa_\partial(\tau)\).
If instead \(F'\) is closed, then reach at least \(\tau\) provides an embedded
two-sided normal \(\tau\)-tube, so \(F'\) contains a graphical disk of radius
\(\tfrac{\tau}{2}\) and \(\Area(F')\geq c\,\tau^2\) for a universal \(c>0\).  Set
\[
  a_{\min}(\tau)=\min\Bigl\{c_{\mathrm{col}}(\tau)\tfrac{2\pi}{\kappa_\partial(\tau)},\ c\,\tau^2\Bigr\}>0. \qedhere
\]
\end{proof}

\subsection{\texorpdfstring{\(C^{1,1}\)-compactness}{C1,1-compactness} of thick surface families}
\label{subsec:c11-compactness}

The relative-thickness hypothesis of
Definition~\ref{def:relative-surface-thickness} is not merely a bounded-geometry
normalization; it is a genuine compactness hypothesis.  This is the geometric
mechanism behind the finiteness statements of the paper.  It also upgrades
several infima below from conditional to attained, because positive reach, together with the uniform boundary charts and collars, is
precisely what makes the family precompact in locally graphical \(C^1\),
with a \(C^{1,1}\) limit.

\begin{definition}[Locally graphical convergence up to the boundary]
\label{def:graphical-convergence}
A sequence of compact \(C^{1,1}\) surfaces \(F_n\) converges
\emph{locally graphically in \(C^1\), uniformly up to the boundary}, to a
compact surface \(F\) if:
\begin{enumerate}[label=(\alph*),leftmargin=2em]
\item \(\overline F_n\to\overline F\) in Hausdorff distance;
\item on every sufficiently small interior chart of \(F\), the corresponding
pieces of \(F_n\) are graphs over the same fixed disk and their graph
functions converge in \(C^1\);
\item the analogous statement holds on fixed boundary half-disks; and
\item after relabelling the finitely many boundary components and using fixed
circle parameters, the boundary parametrizations and collar maps converge in
\(C^1\) on their fixed parameter domains.
\end{enumerate}
This is the meaning of \(C^1\)-convergence of surfaces used below.  It does
not presuppose a single global normal graph at the boundary.
\end{definition}

Throughout this subsection we fix a unit-thickness representative \(\gamma\)
of \(K\), normalized as in \Cref{sec:ropelength-sublevel} so that its
barycenter is the origin, and we work in \(\R^3\) with the Euclidean metric of
\Cref{not:ambient-metric}; surfaces in \(E(\gamma)\) are isotoped off
\(\infty\) and regarded as subsets of
\(E^\circ(\gamma)=\R^3\setminus\operatorname{int}N_{\rho_0}(\gamma)\).  Two
points require care and are treated by separate lemmas before the compactness
statement.

First, \(E^\circ(\gamma)\) is \emph{not} compact: the area and reach bounds
alone do not prevent a sequence of surfaces from escaping to infinity.  A
fixed round sphere of reach \(1\) and fixed area, translated farther and
farther from the knot, satisfies every metric bound yet has no convergent
subsequence.  Compactness therefore requires an anchoring hypothesis; for
essential surfaces anchoring is automatic
(\Cref{lem:anchoring}), and combined with a diameter bound
(\Cref{lem:diameter-bound}) it confines all competitors to one fixed compact
region.

Second, the passage of the reach bound to the limit must be handled with
Federer's theory for general closed sets, not with a tangent-plane
inequality.  We record the correct criterion and a warning.

\begin{remark}[Tangent cones, not tangent planes, characterize reach]
\label{rem:two-point-criterion}
For a closed set \(S\subset\R^n\) and \(a\in S\), let
\(\operatorname{Tan}(S,a)\) denote Federer's tangent cone.  Federer's two-point
criterion \cite[Theorem~4.18]{FedererReach} states that
\(\operatorname{reach}(S)\geq r\) if and only if
\[
  \operatorname{dist}\bigl(b-a,\ \operatorname{Tan}(S,a)\bigr)
  \leq
  \frac{|b-a|^2}{2r}
  \qquad\text{for all } a,b\in S .
\]
For a compact \(C^{1,1}\) surface \(F\) \emph{with boundary}, the tangent cone
at an interior point is the tangent plane \(T_xF\), but at a boundary point it
is only the tangent \emph{half-plane}.  Replacing the cone by the full tangent
plane destroys the criterion.  A flat example makes this vivid: for a planar
annulus \(A\subset\R^2\times\{0\}\) with small inner radius \(r_{\mathrm{in}}\),
every difference \(y-x\) of points of \(A\) lies in the plane, so
\(\operatorname{dist}(y-x,T_xA)=0\) for all \(x,y\), and the full-tangent-plane
inequality holds for every \(r\); yet
\(\operatorname{reach}(A)\leq r_{\mathrm{in}}\), as points on the axis near the
center of the hole have non-unique nearest points.  With the tangent
\emph{half}-plane at boundary points the criterion detects the hole, since
differences pointing across the hole leave the half-plane.  Accordingly, in
this paper the reach bound of \Cref{def:relative-surface-thickness} is always
understood through the metric-projection definition, and limits of reach
bounds are taken with \Cref{lem:reach-hausdorff-closed} below, never with a
tangent-plane two-point inequality.  For the general theory of sets and
manifolds of positive reach, including boundary behaviour, see
\cite{FedererReach,RatajZahle,LieutierWintraecken}.
\end{remark}

\begin{lemma}[Hausdorff closedness of the reach bound]
\label{lem:reach-hausdorff-closed}
Let \(r>0\) and let \(S_k\subset\R^n\) be closed sets with
\(\operatorname{reach}(S_k)\geq r\), converging to a closed set \(S\) locally
in the Hausdorff sense, meaning that for every \(R>0\) and \(\varepsilon>0\)
there is \(k_0\) with
\(S_k\cap \overline B(0,R)\subset N_\varepsilon(S)\) and
\(S\cap \overline B(0,R)\subset N_\varepsilon(S_k)\) for all \(k\geq k_0\),
where \(N_\varepsilon\) denotes the open \(\varepsilon\)-neighbourhood.  This
two-inclusion form is what uniform \(C^1\) convergence of compact surfaces
supplies.  Then \(\operatorname{reach}(S)\geq r\).
\end{lemma}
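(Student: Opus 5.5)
The plan is to go through Federer's metric-projection definition of reach rather than any tangent-cone inequality, following \Cref{rem:two-point-criterion}. I will use the standard characterization that \(\operatorname{reach}(S)\geq r\) if and only if every point \(x\) with \(\operatorname{dist}(x,S)<r\) has a unique nearest point in \(S\). The argument is by contradiction. Suppose \(\operatorname{reach}(S)<r\). Then there is a point \(x\) with \(d:=\operatorname{dist}(x,S)<r\) and two distinct nearest points \(a\neq b\) in \(S\), both with \(|x-a|=|x-b|=d\).

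The contradiction will come from a quantitative form of uniqueness for the approximants. For a closed set \(T\) with \(\operatorname{reach}(T)\geq r\), the projection \(\pi_T\) is Lipschitz on \(\{\operatorname{dist}(\cdot,T)\leq s\}\) for every \(s<r\), with constant \(r/(r-s)\) \cite[Theorem~4.8]{FedererReach}. I will also use its ``almost-nearest point'' refinement. If \(y\in T\) satisfies \(|x-y|^2\leq\operatorname{dist}(x,T)^2+\eta\), then
\[
  |y-\pi_T(x)|\leq \omega(\eta),
\]
where \(\omega(\eta)\to0\) as \(\eta\to0\), uniformly for \(\operatorname{dist}(x,T)\leq s<r\). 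I would derive this from Federer's inequality \(\langle x-\pi_T(x),\,y-\pi_T(x)\rangle\leq |y-\pi_T(x)|^2|x-\pi_T(x)|/(2r)\). Expanding \(|x-y|^2=|x-\pi_T(x)|^2+|y-\pi_T(x)|^2-2\langle x-\pi_T(x),y-\pi_T(x)\rangle\) and inserting that inequality gives
\[
  (1-s/r)\,|y-\pi_T(x)|^2\leq\eta ,
\]
so one may take \(\omega(\eta)=\sqrt{\eta r/(r-s)}\).

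Now apply this to \(T=S_k\), fixing \(s\) with \(d<s<r\) and a radius \(R\) with \(\overline B(0,R)\) containing \(B(x,2r)\). The second inclusion of local Hausdorff convergence gives points \(a_k,b_k\in S_k\) with \(a_k\to a\) and \(b_k\to b\). The first inclusion shows \(\operatorname{dist}(x,S_k)\to d\): points of \(S_k\) inside \(B(x,2r)\) lie within \(o(1)\) of \(S\), so they are at distance at least \(d-o(1)\) from \(x\), and \(a_k\) gives the matching upper bound. Hence for large \(k\) we have \(\operatorname{dist}(x,S_k)\leq s\), and both \(a_k\) and \(b_k\) are \(\eta_k\)-almost nearest points with \(\eta_k\to0\). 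The estimate then gives \(|a_k-b_k|\leq2\omega(\eta_k)\to0\). Passing to the limit yields \(a=b\), a contradiction. Therefore \(\operatorname{reach}(S)\geq r\).

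The main obstacle is the almost-nearest-point estimate, and specifically making sure it is derived from Federer's projection inequality, which is valid for arbitrary closed sets including surfaces with boundary. It must not come from the tangent-plane surrogate that \Cref{rem:two-point-criterion} warns against. The only other point needing care is the locality issue: the convergence is only local, so every step has to take place inside the fixed ball \(B(x,2r)\), which the two-inclusion hypothesis controls.
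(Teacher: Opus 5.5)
Your proof is correct, but it takes a different route from the paper's.

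\emph{The paper's route.} The paper does not work at the bad point \(x\) itself. It slides \(x\) a distance \(\delta\) toward each nearest point, producing \(x_a,x_b\) whose nearest points in \(S\) are \emph{unique}, namely \(a\) and \(b\). It then uses only the Lipschitz bound for \(\pi_{S_k}\) (Federer's Theorem~4.8(8)) together with an accumulation-point argument to show \(\pi_{S_k}(x_a)\to a\) and \(\pi_{S_k}(x_b)\to b\). This gives \(|a-b|\leq 2\delta\,r/(r-d)\), which is absurd once \(\delta\) is small.

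\emph{Your route.} You go one level deeper into Federer's theory. You use the inequality behind that Lipschitz bound: the normal ball \(B(p+r\nu,r)\) at \(p=\pi_{S_k}(x)\) meets no point of \(S_k\). From it you prove a quantitative stability estimate for near-minimizers of \(\operatorname{dist}(x,\cdot)\) on \(S_k\), namely \((1-s/r)\,|y-\pi_{S_k}(x)|^2\leq\eta\). This lets you stay at \(x\), take recovery sequences \(a_k\to a\) and \(b_k\to b\) straight from the second inclusion, and squeeze them together. The first inclusion is needed only to get \(\operatorname{dist}(x,S_k)\to d\), and you handle that correctly.

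\emph{Comparison.} Your argument needs no perturbation of the base point, no uniqueness of nearest points in the limit set, and no compactness step for the projected points. It also yields an explicit modulus \(\sqrt{\eta\,r/(r-s)}\). The paper's argument relies only on the more commonly quoted Lipschitz consequence of positive reach, at the cost of the extra perturbation and limit step.

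The Lipschitz statement you quote at the start is never actually used in your proof, so it can be dropped.
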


\begin{proof}
Suppose not.  Then there are \(x\notin S\) with
\(d=\operatorname{dist}(x,S)<r\) and two distinct nearest points
\(a,b\in S\), \(|x-a|=|x-b|=d\).  Fix
\(0<\delta<\min\bigl\{d,\tfrac12(r-d)\bigr\}\) and set
\[
  x_a=x+\delta\,\frac{a-x}{d},
  \qquad
  x_b=x+\delta\,\frac{b-x}{d}.
\]
We claim \(a\) is the unique nearest point of \(x_a\) in \(S\).  Indeed
\(\operatorname{dist}(x_a,S)\leq|x_a-a|=d-\delta\); and if \(q\in S\) satisfies
\(|x_a-q|\leq d-\delta\), then
\(|x-q|\leq|x-x_a|+|x_a-q|\leq d\), forcing \(|x-q|=d\) and equality in the
triangle inequality, so \(x_a\) lies on the segment \([x,q]\) and hence
\(q=a\).  Similarly \(b\) is the unique nearest point of \(x_b\).

Let \(\pi_k\) denote the nearest-point projection to \(S_k\), which is
single-valued on the open \(r\)-neighbourhood of \(S_k\) and, by Federer
\cite[Theorem~4.8(8)]{FedererReach}, Lipschitz with constant
\(L=r/(r-d')\) on \(\{y:\operatorname{dist}(y,S_k)\leq d'\}\) for each
\(d'<r\); we take \(d'=d-\tfrac\delta2\), admissible for large \(k\) since
\(\operatorname{dist}(x_a,S_k)\to\operatorname{dist}(x_a,S)=d-\delta\) by
Hausdorff convergence, and likewise for \(x_b\).

Every accumulation point \(q\) of the sequence \(\bigl(\pi_k(x_a)\bigr)_k\)
lies in \(S\) by Hausdorff convergence and satisfies
\(|x_a-q|=\lim_k\operatorname{dist}(x_a,S_k)=d-\delta\), so \(q\) is a nearest
point of \(x_a\) in \(S\) and hence \(q=a\) by the uniqueness claim.  Thus
\(\pi_k(x_a)\to a\), and likewise \(\pi_k(x_b)\to b\).  But then
\[
  |a-b|
  =\lim_k\bigl|\pi_k(x_a)-\pi_k(x_b)\bigr|
  \leq L\,|x_a-x_b|
  \leq \frac{r}{r-d}\cdot 2\delta ,
\]
which is smaller than \(|a-b|\) once \(\delta\) is chosen small.  This
contradiction proves the lemma.
\end{proof}

\begin{lemma}[Diameter bound for connected thick surfaces]
\label{lem:diameter-bound}
Fix \(\tau,\Delta>0\).  There is a constant \(C_{\mathrm{diam}}>0\), depending
only on the bounded-geometry convention of
\Cref{def:relative-surface-thickness}, such that every connected compact
properly embedded surface \(F'\) with \(\Thi(F')\geq\tau\) and
\(\Area(F')\leq\Delta\) satisfies
\[
  \operatorname{diam}_{\R^3}(F')
  \ \leq\
  \operatorname{diam}_{\mathrm{intr}}(F')
  \ \leq\
  C_{\mathrm{diam}}\,\frac{\Delta}{\tau}.
\]
\end{lemma}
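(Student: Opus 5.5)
The first inequality is trivial: the extrinsic distance between two points is at most the length of any path in \(F'\) joining them. The work is in the intrinsic bound. The plan is a standard packing-along-a-geodesic argument, using the local area lower bound from the proof of \Cref{thm:finite-resolution-finiteness-filtered-pairs} (Step~2) at the largest admissible scale, namely a fixed multiple of \(\tau\).

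\textbf{Step 1: a path realizing the diameter.} Let \(D=\operatorname{diam}_{\mathrm{intr}}(F')\) and choose \(p,q\in F'\) with intrinsic distance at least \(D-\eta\). Since \(F'\) is compact and connected, hence path connected, take a minimizing (or near-minimizing) path \(\sigma\) from \(p\) to \(q\).

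\textbf{Step 2: pack disjoint balls along the path.} Fix \(s=c_0\tau/4\). Along \(\sigma\), pick points \(x_0,\dots,x_m\) with intrinsic distance \(d_F(p,x_i)=2is\). Then \(m\geq (D-\eta)/(2s)-1\). Because \(\sigma\) is distance-realizing from \(p\), the triangle inequality gives \(d_F(x_i,x_j)\geq 2s|i-j|\). Hence the intrinsic balls \(B_F(x_i,s)\) are pairwise disjoint.

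\textbf{Step 3: a uniform area lower bound for each ball.} The key estimate is
\[
  \Area\bigl(B_F(x,s)\bigr)\geq a\,\tau^2
\]
for a universal \(a>0\). The cases are as follows.
\begin{itemize}
\item For \(x\) in the interior, clause~(ii) of \Cref{def:relative-surface-thickness} places \(F'\) near \(x\) as a graph of \(u\) with \(|Du|\leq L_0\) over the disk of radius \(c_0\tau/2\). Along a graph, intrinsic distance is at most \(\sqrt{1+L_0^2}\) times horizontal distance. So \(B_F(x,s)\) contains the graph over a horizontal disk of radius \(s/\sqrt{1+L_0^2}\), whose area is at least \(\pi s^2/(1+L_0^2)\).
\item For \(x\) at or near \(\partial F'\), use the half-chart clause~(iii) in the same way. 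If \(x\) is within \(c_0\tau/4\) of the boundary but not on it, either the interior chart at \(x\) already covers a disk of radius comparable to \(s\), or one recenters at a nearby boundary point and uses a half-disk.
\end{itemize}
In every case the area is at least \(\pi s^2/(2(1+L_0^2))\).

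\textbf{Step 4: conclude.} Summing over the disjoint balls gives \((m+1)a\tau^2\leq\Delta\). Letting \(\eta\to0\), this yields \(D\leq 2s\bigl(\Delta/(a\tau^2)+1\bigr)\lesssim \Delta/\tau + \tau\).

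The remaining issue, and the main obstacle, is the additive term \(\tau\): it must be absorbed into \(C\Delta/\tau\), which requires \(\tau^2\lesssim\Delta\).
\begin{itemize}
\item A nonempty connected \(F'\) with \(\Thi\geq\tau\) contains at least one ball of Step~3. Therefore \(\Delta\geq\Area(F')\geq a\tau^2\), and so \(\tau\leq\Delta/(a\tau)\).
\item This gives \(D\leq C_{\mathrm{diam}}\Delta/\tau\) with \(C_{\mathrm{diam}}\) depending only on \(c_0\) and \(L_0\).
\end{itemize}
The only delicate point is the careful near-boundary case in Step~3, checking that the half-chart supplies a definite fraction of a disk of radius \(\sim\tau\) uniformly. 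Nothing about \(\gamma\) or essentiality is used.
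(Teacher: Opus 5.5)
Your proof is correct, but it takes a different route from the paper. The paper proceeds in three steps:
\begin{itemize}
\item it takes a maximal intrinsically \(c_0\tau\)-separated net \(\mathcal P\subset F'\) and bounds \(|\mathcal P|\leq\Delta/(ac_0^2\tau^2)\) using the disjoint balls of radius \(c_0\tau/2\);
\item it notes that the balls of radius \(c_0\tau\) cover \(F'\);
\item it uses connectedness to chain covering balls along a path, which gives \(\operatorname{diam}_{\mathrm{intr}}(F')\leq 2c_0\tau|\mathcal P|\).
\end{itemize}
You instead pack directly along a single path. Choosing points at prescribed intrinsic distances \(2is\) from \(p\) makes their separation automatic, so you need neither a covering statement nor the chaining step, and the count is tied directly to the diameter.

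What each route buys is different. The paper's version yields a covering-number bound \(|\mathcal P|\lesssim\Delta/\tau^2\), the same mechanism it reuses in the finite-resolution and compactness arguments. Yours is more economical if the diameter is all you want.

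Both rest on the same input: a uniform lower bound \(\Area(B_F(x,c\tau))\geq a\tau^2\) that holds up to the boundary. Your recentering at a nearby boundary point treats the near-boundary case more explicitly than the paper does; the paper simply asserts the bound for ``balls, or half-balls''.

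Two simplifications are available to you.
\begin{itemize}
\item You do not need \(\sigma\) to be minimizing. The reverse triangle inequality \(d_F(x_i,x_j)\geq|d_F(p,x_i)-d_F(p,x_j)|\) holds for any points, and the intermediate value theorem locates the \(x_i\) along any path from \(p\) to \(q\).
\item The additive \(\tau\) term, which you flag as the main obstacle, never actually arises. Since \(m+1\geq(D-\eta)/(2s)\) and \((m+1)a\tau^2\leq\Delta\), you get \(D\leq 2s\Delta/(a\tau^2)=(c_0/(2a))\,\Delta/\tau\) directly. Your absorption via \(\Delta\geq a\tau^2\) is nonetheless valid.
\end{itemize}
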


\begin{proof}
Choose a maximal subset \(\mathcal P\subset F'\) whose points have pairwise
intrinsic distance at least \(c_0\tau\).  The intrinsic balls, or half-balls
near \(\partial F'\), of radius \(c_0\tau/2\) centered at \(\mathcal P\) have
pairwise disjoint interiors, and the graphical and collar clauses of
\Cref{def:relative-surface-thickness} give each of them area at least
\(a\,(c_0\tau)^2\) for a universal \(a>0\).  Hence
\(|\mathcal P|\leq\Delta/(a c_0^2\tau^2)\).  By maximality the intrinsic balls
of radius \(c_0\tau\) centered at \(\mathcal P\) cover \(F'\).  Since \(F'\) is
connected, any two of its points are joined by an intrinsic path, and a chain
of covering balls along this path shows that the intrinsic diameter is at most
\(2c_0\tau\,|\mathcal P|\leq (2/(ac_0))\,\Delta/\tau\).  The extrinsic diameter
is dominated by the intrinsic one.
\end{proof}

\begin{lemma}[Anchoring of essential surfaces]
\label{lem:anchoring}
Let \(\gamma\) be normalized with barycenter at the origin and
\(\Len(\gamma)\leq\Lambda\), and set
\(B_*=\overline B\bigl(0,\tfrac\Lambda2+1\bigr)\), a fixed compact ball
containing \(N_{\rho_0}(\gamma)\).  Then every properly embedded surface
\(F\subset E(\gamma)\) all of whose components are essential in \(E(\gamma)\)
satisfies: each component of \(F\) meets \(B_*\).  Consequently, if in
addition \(\Thi(F)\geq\tau\) and \(\Area(F)\leq\Delta\), then
\[
  F\subset K_0
  :=\overline B\Bigl(0,\ \tfrac\Lambda2+1+C_{\mathrm{diam}}\tfrac\Delta\tau\Bigr)
  \cap E^\circ(\gamma),
\]
a fixed compact subset of \(E^\circ(\gamma)\) depending only on
\((\Lambda,\Delta,\tau)\).
\end{lemma}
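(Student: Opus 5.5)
The lemma has two parts: a topological anchoring statement, that each essential component meets \(B_*\), and a metric consequence obtained by combining anchoring with \Cref{lem:diameter-bound}.

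\emph{Step 1: \(N_{\rho_0}(\gamma)\subset B_*\).} With barycenter at the origin and \(\Len(\gamma)\leq\Lambda\), every point of \(\gamma\) lies within \(\Lambda/2\) of the barycenter. Indeed, for each \(s\), \(\gamma(s)-\bar\gamma\) is an average of \(\gamma(s)-\gamma(t)\), and \(|\gamma(s)-\gamma(t)|\) is at most the shorter arc between them, hence at most \(\Lambda/2\). Adding \(\rho_0<1\) gives \(N_{\rho_0}(\gamma)\subset \overline B(0,\Lambda/2+1)\).

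\emph{Step 2: anchoring.} Let \(F'\) be a component of \(F\) with \(F'\cap B_*=\varnothing\).
\begin{itemize}[leftmargin=2em]
\item \(F'\) cannot meet \(\partial E(\gamma)=\partial N_{\rho_0}(\gamma)\subset B_*\), so \(\partial F'=\varnothing\) and \(F'\) is closed.
\item \(F'\) lies in \(\R^3\setminus B_*\), which is homeomorphic to \(S^2\times[0,\infty)\). Adding \(\infty\) gives a closed ball \(D\) in \(S^3\) disjoint from the tube.
\item A closed surface embedded in a ball \(D\subset E(\gamma)\) is not essential. If it is a sphere, it bounds a ball in \(D\), which is excluded by convention. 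If it is two-sided of positive genus, \(\pi_1(F')\to\pi_1(D)=1\) is not injective, so by the loop theorem \(F'\) is compressible in \(D\subset E(\gamma)\).
\item A closed one-sided surface cannot embed in a ball at all (\(H_1\) obstruction, or the fact that \(\R^3\) contains no closed non-orientable surface).
\end{itemize}
This contradicts essentiality.

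One point needs care: the paper isotopes surfaces off \(\infty\), so \(F'\subset\R^3\setminus B_*\) is compact. The region it sits in is \(D\setminus\{\infty\}\), and compressibility in \(D\) is the relevant notion; since \(\pi_1\) is computed in \(D\), this is fine.

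\emph{Step 3: metric consequence.} Each component \(F'\) contains a point \(x\in B_*\), so \(|x|\leq\Lambda/2+1\). Each component also inherits \(\Thi(F')\geq\tau\), because the clauses of \Cref{def:relative-surface-thickness} restrict to components via the union convention. It also satisfies \(\Area(F')\leq\Delta\). So \Cref{lem:diameter-bound} gives \(\operatorname{diam}F'\leq C_{\mathrm{diam}}\Delta/\tau\). By the triangle inequality, \(F'\subset\overline B(0,\Lambda/2+1+C_{\mathrm{diam}}\Delta/\tau)\), and \(F\subset E^\circ(\gamma)\) by definition.

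The main subtlety is Step 2: correctly invoking the convention on one-sided surfaces and checking that the ball \(D\) lies in \(E(\gamma)\), so that a compressing disk found in \(D\) is a compressing disk in the exterior.
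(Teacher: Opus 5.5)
Your proof is correct and follows the paper's argument essentially step for step. Boundary components are anchored through $\partial E(\gamma)\subset B_*$; a closed component missing $B_*$ lies in the simply connected region $S^3\setminus B_*\subset E(\gamma)$ and is ruled out by the loop theorem or, for spheres, by the ball convention; and the diameter lemma then confines $F$.

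Two small points. First, the paper explicitly justifies that a $C^{1,1}$ sphere bounds a ball on the side away from $B_*$, using the generalized Schoenflies theorem via local flatness, whereas you assert this without justification. Second, $\R^3\setminus B_*$ is open, so your $D$ is an open ball rather than a closed one; this is harmless, and you can shrink it slightly since $F'$ is compact.
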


\begin{proof}
Since the barycenter of \(\gamma\) is the origin and
\(\operatorname{diam}(\gamma)\leq\Len(\gamma)/2\leq\Lambda/2\), the curve lies
in \(\overline B(0,\Lambda/2)\) and its \(\rho_0\)-tube in \(B_*\).  Let
\(F'\) be a component of \(F\).  If \(\partial F'\neq\varnothing\), then
\(\partial F'\subset\partial E(\gamma)=\partial N_{\rho_0}(\gamma)\subset
B_*\), and \(F'\) meets \(B_*\).

Suppose instead that \(F'\) is closed and disjoint from \(B_*\).  The
complement \(U=S^3\setminus B_*\) is an open round ball contained in
\(E(\gamma)\) (it is disjoint from \(N_{\rho_0}(\gamma)\)), so
\(F'\subset U\) with \(\pi_1(U)=1\).  Closed embedded surfaces in \(S^3\) are
orientable, hence two-sided.  If \(F'\) is not a sphere,
incompressibility of \(F'\) in \(E(\gamma)\) is equivalent, by the loop
theorem, to injectivity of
\(\pi_1(F')\to\pi_1(E(\gamma))\); this map factors through
\(\pi_1(U)=1\), a contradiction since \(\pi_1(F')\neq1\).  If \(F'\) is a
\(2\)-sphere, it separates \(S^3\) into two balls: the surfaces of this paper
are \(C^{1,1}\), hence \(C^1\) and locally flat, so the generalized
Schoenflies theorem applies (alternatively, smooth \(F'\) by a small isotopy
first).  The
connected set \(B_*\), which contains \(\gamma\) and is disjoint from \(F'\),
lies in one of the two balls, so the other is a ball in \(E(\gamma)\) bounded
by \(F'\); by the sphere convention in the definition of the essential
subspace, such a component is not essential.
This contradiction shows every essential component meets \(B_*\).

Finally, each component meets \(B_*\) and, by \Cref{lem:diameter-bound}, has
extrinsic diameter at most \(C_{\mathrm{diam}}\Delta/\tau\); hence \(F\) lies
in the stated ball, whose intersection with \(E^\circ(\gamma)\) is compact.
\end{proof}

\Cref{fig:anchoring} illustrates the two mechanisms of the lemma.

\begin{figure}[t]
\centering
\begin{tikzpicture}[x=1cm,y=1cm,>=Latex,
  note/.style={font=\scriptsize, align=center}]
  \draw[dashed, thin] (4.1,2.15) circle (2.95);
  \draw[thin] (4.1,2.15) circle (1.5);
  \node[note] at (2.15,4.15) {$B_*=\overline B(0,\tfrac\Lambda2+1)$};
  \draw[->, thin] (2.80,3.92) -- (3.30,3.32);
  \node[note] at (6.55,4.65) {$K_0$};
  \draw[->, thin] (6.45,4.50) -- (6.05,4.12);
  \draw[black!15, line width=5.5pt] (3.65,2.15) circle (0.52);
  \draw[thin] (3.65,2.15) circle (0.335);
  \draw[thin] (3.65,2.15) circle (0.705);
  \node[note] at (2.35,1.15) {$N_{\rho_0}(\gamma)$};
  \draw[->, thin] (2.62,1.35) -- (3.15,1.72);
  \draw[very thick]
    (4.36,2.32) .. controls (5.1,2.7) and (5.7,1.7) .. (6.35,2.3)
    .. controls (6.7,2.62) and (6.85,2.35) .. (6.95,2.20);
  \fill (4.36,2.32) circle (1.2pt);
  \node[note] at (5.6,3.0) {$F$};
  \draw[->, thin] (5.5,2.85) -- (5.35,2.48);
  \draw[<->, thin] (4.42,1.55) -- (6.95,1.55);
  \node[note, fill=white, inner sep=1pt] at (5.7,1.55)
    {$\leq C_{\mathrm{diam}}\Delta/\tau$};
  \draw[very thick, dashed] (9.65,3.35) ellipse (0.85 and 0.5);
  \node[note] at (9.65,4.35) {closed, disjoint from $B_*$};
  \node[note, align=center] at (9.95,2.30)
    {lies in a ball $\subset E(\gamma)$\\$\Rightarrow$ not essential};
  \node[note] at (1.0,0.1) {};
\end{tikzpicture}
\caption{Anchoring and confinement (\Cref{lem:anchoring}), schematically.
The tube \(N_{\rho_0}(\gamma)\) lies in the fixed ball \(B_*\).  A component
with boundary is anchored through \(\partial F\subset\partial E(\gamma)\); a
closed component disjoint from \(B_*\) (dashed) would lie in the
simply connected complement of \(B_*\), hence in a ball inside the exterior,
and could not be essential.  Once anchored, the diameter bound
\(\operatorname{diam}\leq C_{\mathrm{diam}}\Delta/\tau\) confines every
essential component to the fixed compact set \(K_0\) (dashed circle).  This
anchoring is what allows the compactness lemma to operate in the unbounded
exterior.}
\label{fig:anchoring}
\end{figure}

\begin{lemma}[Compactness of anchored thick surface families]
\label{lem:compactness-thick-surfaces}
Fix \(\gamma\), \(\tau>0\), and \(\Delta>0\).  Let \((F_n)_{n\geq1}\) be compact
properly embedded \(C^{1,1}\) surfaces in \(E(\gamma)\) with
\[
  \Thi(F_n)\geq\tau,
  \qquad
  \Area(F_n)\leq\Delta,
\]
and assume that the family is \emph{anchored}: there is a fixed compact set
\(B\subset\R^3\) met by every component of every \(F_n\).  Then a subsequence
converges locally graphically in \(C^1\), uniformly up to the boundary in the
sense of \Cref{def:graphical-convergence}, to a compact properly embedded
\(C^{1,1}\) surface \(F_\infty\subset E(\gamma)\) satisfying
\[
  \Thi(F_\infty)\geq\tau,
  \qquad
  \Area(F_\infty)\leq\Delta.
\]
The limit is not asserted to be essential.  When all components are essential,
the anchoring hypothesis follows from \Cref{lem:anchoring}.
\end{lemma}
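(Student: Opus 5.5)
The plan is an Arzelà--Ascoli argument in uniform charts, with the reach bound and the relative-thickness clauses passed to the limit. First, confine the family. Each component of \(F_n\) meets the fixed compact set \(B\). By \Cref{lem:diameter-bound}, each component has extrinsic diameter at most \(C_{\mathrm{diam}}\Delta/\tau\). Hence every \(F_n\) lies in a fixed compact set \(K_1=N_{C_{\mathrm{diam}}\Delta/\tau}(B)\cap E^\circ(\gamma)\). By \Cref{lem:area-lower-bound}, the number of components is at most \(\Delta/a_{\min}(\tau)\). By \Cref{prop:geometric-topological-boundedness}, \(|\chi|\) and the number of boundary components are bounded. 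Passing to a subsequence, we may therefore assume that every \(F_n\) has the same topological type and the same number of boundary components, with fixed labelling. By Blaschke selection in the compact set \(K_1\), a further subsequence has \(\overline F_n\to S\) in Hausdorff distance for some compact \(S\subset K_1\).

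Next, build the charts. Cover \(S\) by finitely many balls \(B(x_i,c_0\tau/8)\), with \(x_i\in S\) and points \(x_i^n\in F_n\) converging to \(x_i\). We split by the position of \(x_i^n\).

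For interior points at distance more than \(c_0\tau/4\) from \(\partial F_n\), clause (ii) gives graphs \(u_i^n\) over disks in \(T_{x_i^n}F_n\). These functions have uniform \(C^{1,1}\) bounds of size \(L_0/\tau\). After passing to a subsequence, the planes \(T_{x_i^n}F_n\) converge in the Grassmannian. Re-expressing the graphs over the limit plane with a slightly smaller disk costs only a uniform loss, since the slopes are small on scale \(c_0\tau\).

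For points near the boundary, use the half-chart clause (iii) together with the collar maps \(C_{F_n}\). These are uniformly \(C^{1,1}\) on the domain \([0,c_0\tau]\times\partial F_n\). The boundary components have arclength parametrizations with curvature at most \(\kappa_\partial(\tau)\), by the boundary-curvature estimates. Their lengths are bounded above by \(\Delta/c_{\mathrm{col}}\) and below by Fenchel's theorem. After rescaling to fixed circle parameters, Arzelà--Ascoli gives \(C^1\) convergence of boundary parametrizations and collar maps, with \(C^{1,1}\) limits.

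A diagonal subsequence then converges in every chart. The limit graphs are \(C^{1,1}\) with the same bounds, because Lipschitz constants are lower semicontinuous under uniform convergence. The two inclusions of clauses (ii) and (iii) pass to the limit with Hausdorff convergence. This shows that \(S\) is locally a \(C^{1,1}\) graph or half-graph, so \(S=\overline{F_\infty}\) for a compact \(C^{1,1}\) surface.

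Then verify the conclusions. Reach at least \(\tau\) follows from \Cref{lem:reach-hausdorff-closed}. The collar estimates (iv-a)--(iv-c) are closed conditions under \(C^0\) convergence of the collar maps, since the parameters \(d_{\partial F}\) converge once the boundary lengths converge. The same holds for the peripheral clearance (v), which is a closed condition under Hausdorff convergence. Embeddedness of the limit collar follows from the lower bound (iv-b). Properness, meaning \(\partial F_\infty\subset T\) and \(F_\infty\cap T=\partial F_\infty\), follows from the clearance inequalities and the convergence of boundary curves lying on the closed torus \(T\).

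Area converges because the graph functions converge in \(C^1\) on a finite chart cover with a subordinate partition of unity. The charts are uniformly graphical, so area is continuous and \(\Area(F_\infty)=\lim\Area(F_n)\leq\Delta\). Since \(\Thi\) is defined by a maximum over scales, \(\Thi(F_\infty)\geq\tau\).

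The main obstacle is the sheet structure: ensuring that distinct sheets of \(F_n\) in one chart ball do not merge and that \(S\) is a manifold rather than a union of touching sheets. I would handle this with reach at least \(\tau\), which separates distinct sheets inside \(B(x,c_0\tau)\) by at least \(2\tau\) (as in \Cref{rem:disconnected-thickness}). Only one sheet then meets each small ball, and the limit cannot develop self-tangencies. The preserved reach bound for \(S\) confirms this a posteriori. The boundary and interior charts must be matched on their overlaps, which the collar convergence handles.
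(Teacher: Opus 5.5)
Your proposal is correct and follows essentially the paper's route: confinement via anchoring and the diameter bound, Blaschke selection to a single Hausdorff limit \(S\) whose reach is controlled by \Cref{lem:reach-hausdorff-closed}, Arzel\`a--Ascoli in uniform interior charts and boundary half-charts (overlaps are automatically consistent because every limit graph is a piece of the same set \(S\)), \(C^1\) convergence of the reparametrized boundary curves and collar maps, and closedness of the collar and clearance inequalities. The differences are minor. You take charts directly at scale \(\tau\); the ``uniform loss'' you incur when re-expressing over the limit plane then requires closedness of the clauses as the scale increases to \(\tau\), which is exactly the paper's \(r\nearrow\tau\) step, or can be avoided altogether by straightening with rigid motions that converge to the identity. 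You also prove area continuity where the paper needs only lower semicontinuity, and both are valid.
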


\begin{proof}
By \Cref{lem:diameter-bound}, every component has extrinsic diameter at most
\(C_{\mathrm{diam}}\Delta/\tau\).  Since every component meets \(B\), all
surfaces lie in one compact set
\[
  K_0=\{x\in E^\circ(\gamma):
  \operatorname{dist}(x,B)\leq C_{\mathrm{diam}}\Delta/\tau\}.
\]
The topology and component-count estimates of
\Cref{prop:geometric-topological-boundedness,lem:area-lower-bound} allow us,
after passing to a subsequence, to fix the number and homeomorphism types of
the components and to label the boundary components consistently.

\emph{Step 1: a global Hausdorff limit.}
By Blaschke selection, a subsequence of the closed sets
\(\overline F_n\subset K_0\) converges in Hausdorff distance to a nonempty
compact set \(S\subset K_0\).  By
\Cref{lem:reach-hausdorff-closed},
\[
  \operatorname{reach}(S)\geq\tau.
\]
This global limit will force compatibility of all local chart limits; no
transition map will be declared constant along the sequence.

\emph{Step 2: fixed chart domains and \(C^1\) limits.}
Fix temporarily \(0<r<\tau\).  By the scale monotonicity of
\Cref{lem:thickness-scale-monotonicity}, every \(F_n\) satisfies the graphical
and collar estimates at scale \(r\).  Choose on each \(F_n\) a maximal
intrinsically \(c_0r/2\)-separated set, including a maximal such set on the
boundary.  The area lower bound for the corresponding disjoint graphical
half-balls gives a uniform bound \(M(r,\Delta)\) for the number of centers.
After passing to a further subsequence, the number and the interior/boundary
type of the centers are fixed, their positions converge, and their tangent
planes or tangent half-planes converge.

Use rigid motions converging to the limiting rigid motions to express every
chart over the corresponding limiting plane.  On the smaller fixed disk or
half-disk of radius \(c_0r/4\), its graph function \(u_{n,j}\) satisfies
\[
  \|u_{n,j}\|_{C^1}\leq C(r),
  \qquad
  \operatorname{Lip}(Du_{n,j})\leq C/r.
\]
Arzel\`a--Ascoli and a diagonal argument give
\[
  u_{n,j}\longrightarrow u_{\infty,j}
  \qquad\text{in }C^1
\]
for all chart indices.  Each limiting graph is a subset of \(S\).  Conversely,
if \(x\in S\), choose \(x_n\in F_n\) with \(x_n\to x\); a covering chart
contains each \(x_n\), and after fixing its index along a subsequence the
corresponding limit graph contains \(x\).  Therefore the limiting graphs
cover \(S\).  On an overlap two limiting graphs agree because both are equal
to the same subset of the unique Hausdorff limit \(S\).  This replaces the
incorrect assertion that continuously varying transition maps become
constant.  The functions \(u_{\infty,j}\) inherit the Lipschitz bound on their
first derivatives, so \(S\) is locally a \(C^{1,1}\) surface in the interior
and a \(C^{1,1}\) half-surface at its boundary.

\emph{Step 3: the boundary system and collar.}
By the boundary curvature, length, component-count, and reach estimates,
including \Cref{lem:boundary-system-reach}, the labelled boundary components
admit normalized arclength parametrizations
\[
  b_n:\Gamma\longrightarrow\partial F_n,
\]
where \(\Gamma\) is one fixed finite disjoint union of circles, such that,
after a subsequence, \(b_n\to b_\infty\) in \(C^1\).  The limit
\(B_\infty=b_\infty(\Gamma)\) lies on \(\partial E(\gamma)\), is embedded, and
is exactly the boundary stratum of \(S\) supplied by the boundary half-chart
limits.

Pull the collar maps back to the fixed domain by setting
\[
  \widehat C_n(s,z)=C_{F_n}(s,b_n(z)),
  \qquad (s,z)\in[0,c_0r]\times\Gamma.
\]
The scaled \(C^{1,1}\) estimates and Arzel\`a--Ascoli give
\(\widehat C_n\to\widehat C_\infty\) in \(C^1\), after another subsequence.
The collar estimates of clause (iv) of
\Cref{def:relative-surface-thickness} --- the local upper bound (iv-a), the
truncated lower bound (iv-b), and the separation (iv-c) of distinct
components --- are non-strict inequalities between continuous quantities and
are therefore closed under uniform convergence; in particular the limit map
\(\widehat C_\infty\) is injective by (iv-b) and (iv-c), hence an embedded
collar of \(B_\infty\) in \(S\).  The collar-coordinate clearance inequalities pass
directly to the limit:
\[
  \operatorname{dist}(\widehat C_\infty(s,z),\partial E(\gamma))
  \geq c_0s,
\]
and the part outside the half-collar remains at distance at least
\(c_0^2r/4\) from the peripheral torus.  Thus
\[
  S\cap\partial E(\gamma)=B_\infty=\partial S,
\]
so \(S\) is properly embedded in \(E(\gamma)\).

\emph{Step 4: preservation of thickness and area.}
The reach estimate at the full scale \(\tau\) was already obtained from the
global Hausdorff limit.  For every fixed \(r<\tau\), Steps 2 and 3 show that
the graphical, collar, and clearance clauses pass to the limit at scale
\(r\).  Since \(r<\tau\) is arbitrary and the defining inequalities are
closed as \(r\nearrow\tau\) --- the truncation \(\min\{r,\cdot\}\) in
(iv-b) and the thresholds \(c_0r\), \(c_0^2r/4\) depend continuously and
monotonically on \(r\) --- the clauses hold at scale \(\tau\) itself, so
\(\Thi(S)\geq\tau\).  Finally, a finite partition into the smaller graph
charts and the area formula give
\[
  \Area(S)\leq\liminf_{n\to\infty}\Area(F_n)\leq\Delta.
\]
Set \(F_\infty=S\).  The construction gives precisely the locally graphical
\(C^1\) convergence of \Cref{def:graphical-convergence}.
\end{proof}

\begin{corollary}[Moving-exterior compactness]
\label{cor:moving-exterior-compactness}
Let \(\gamma_j\to\gamma_\infty\) in \(C^1\), all
representatives of \(K\) with \(\Thi(\gamma_j)\geq1\) (exact unit thickness
is not required), barycenter at the origin, and length at most
\(\Lambda\), and let \(F_j\subset E(\gamma_j)\) be essential surfaces with
\(\Thi(F_j)\geq\tau\) and \(\Area(F_j)\leq\Delta\).  Then a subsequence of
\((F_j)\) converges in \(C^1\), uniformly up to the boundary --- in
particular the boundary curves \(\partial F_j\) converge in \(C^1\) --- to a
compact
properly embedded surface \(F_\infty\subset E(\gamma_\infty)\) with
\[
  \Thi(F_\infty)\geq\tau,
  \qquad
  \Area(F_\infty)\leq\Delta ,
  \qquad
  \partial F_\infty\subset\partial E(\gamma_\infty).
\]
We stress that the thickness of
\(F_\infty\) is obtained at the undistorted scale \(\tau\): no ambient
diffeomorphism is applied to the surfaces at any point.
\end{corollary}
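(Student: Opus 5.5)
The plan is to repeat the proof of \Cref{lem:compactness-thick-surfaces} in a fixed ambient compact set, replacing the fixed peripheral torus by the moving tori \(T_j=\partial N_{\rho_0}(\gamma_j)\). Throughout I work with the undistorted surfaces \(F_j\subset\R^3\), so no ambient diffeomorphism is ever applied to them.

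\emph{Anchoring and confinement.} \Cref{lem:anchoring} used only barycenter normalization, \(\Len\leq\Lambda\), and the fact that \(N_{\rho_0}(\gamma)\) lies in \(B_*\). This holds for \(\Thi(\gamma_j)\geq1\), since \(\rho_0<1\) still gives an embedded tube. So every component of \(F_j\) meets the fixed ball \(B_*=\overline B(0,\Lambda/2+1)\). By \Cref{lem:diameter-bound}, all \(F_j\) then lie in the fixed compact ball \(\overline B(0,\Lambda/2+1+C_{\mathrm{diam}}\Delta/\tau)\). \Cref{prop:geometric-topological-boundedness} and \Cref{lem:area-lower-bound} bound the number of components and their topology uniformly. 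Passing to a subsequence, I fix the component types and the labels of boundary components.

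\emph{Ambient limits.} By Blaschke selection, \(\overline F_j\to S\) in Hausdorff distance, and \Cref{lem:reach-hausdorff-closed} gives \(\operatorname{reach}(S)\geq\tau\) at the full scale. This step is intrinsic to the ambient Euclidean space and never mentions the exterior. For fixed \(r<\tau\), \Cref{lem:thickness-scale-monotonicity} applies, and I take the interior charts and boundary half-charts exactly as in Step~2 of \Cref{lem:compactness-thick-surfaces}. These need only the uniform graphical bounds, not the exterior. Arzel\`a--Ascoli then gives \(C^1\) limits of the chart functions. The limiting graphs cover \(S\) and agree on overlaps because they are all subsets of \(S\). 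Similarly, the boundary parametrizations \(b_j:\Gamma\to\partial F_j\) and the pulled-back collars \(\widehat C_j\) converge in \(C^1\) by \Cref{lem:boundary-system-reach} and the scaled \(C^{1,1}\) bounds. The collar inequalities (iv-a)--(iv-c) are closed conditions, so they pass to the limit and make \(\widehat C_\infty\) an embedded collar.

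\emph{The main obstacle: peripheral data.} Here the torus moves. I will first show that \(C^1\) convergence \(\gamma_j\to\gamma_\infty\), with uniform \(C^{1,1}\) bounds coming from \(\Thi\geq1\), gives the following uniform statement. For every \(x\),
\[
\operatorname{dist}(x,T_j)=\bigl|\operatorname{dist}(x,\gamma_j)-\rho_0\bigr|
\]
whenever \(\operatorname{dist}(x,\gamma_j)<1\). Since \(|\operatorname{dist}(x,\gamma_j)-\operatorname{dist}(x,\gamma_\infty)|\leq\|\gamma_j-\gamma_\infty\|_{C^0}\), the distance functions to \(T_j\) converge uniformly on the ball to the distance function to \(T_\infty\). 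Consequently:
\begin{itemize}
\item \(\partial F_j\subset T_j\) gives \(B_\infty=b_\infty(\Gamma)\subset T_\infty\);
\item \(F_j\subset E^\circ(\gamma_j)\), i.e. \(\operatorname{dist}(\cdot,\gamma_j)\geq\rho_0\), gives \(S\subset E^\circ(\gamma_\infty)\);
\item the two clearance inequalities of clause~(v) pass to the limit with the same constants \(c_0s\) and \(c_0^2r/4\).
\end{itemize}
Together these yield \(S\cap T_\infty=B_\infty=\partial S\), so \(S\) is properly embedded in \(E(\gamma_\infty)\). The care needed is the interplay between the boundary half-charts and the clearance: I will use the first clearance inequality to rule out interior points of \(S\) landing on \(T_\infty\) near the boundary.

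\emph{Conclusion.} As in Step~4, the clauses hold at every \(r<\tau\) and are closed as \(r\nearrow\tau\). Together with \(\operatorname{reach}(S)\geq\tau\), this gives \(\Thi(S)\geq\tau\). Lower semicontinuity of area over finite chart partitions gives \(\Area(S)\leq\Delta\). Setting \(F_\infty=S\) finishes the proof; since the \(b_j\) converge in \(C^1\), the boundary curves converge in \(C^1\) as asserted.
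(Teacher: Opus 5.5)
Your proposal is correct and follows essentially the same route as the paper. Both rerun the fixed-exterior compactness proof with the uniform anchoring ball, Euclidean charts and a global Hausdorff limit, and pass the boundary-on-torus, membership-in-exterior and clearance conditions to the limit through convergence of the moving peripheral tori; you make this step slightly more explicit via uniform convergence of \(\operatorname{dist}(\cdot,T_j)=|\operatorname{dist}(\cdot,\gamma_j)-\rho_0|\). One point should be stated explicitly: this identity for \(T_\infty\), and indeed the definition of \(E(\gamma_\infty)\), needs \(\operatorname{reach}(\gamma_\infty)>\rho_0\), which the curvature bound alone does not give but which follows from \Cref{lem:reach-hausdorff-closed} applied to \(\operatorname{reach}(\gamma_j)=\Thi(\gamma_j)\geq1\).
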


\begin{proof}
The proof of \Cref{lem:compactness-thick-surfaces} applies with the fixed
exterior replaced by the moving ones.  The anchoring ball
\(B_*=\overline B(0,\Lambda/2+1)\) of \Cref{lem:anchoring} is uniform in
\(j\), because all \(\gamma_j\) are normalized with barycenter at the origin
and length at most \(\Lambda\); hence all \(F_j\) lie in one fixed compact
region \(K_0\).  The graphical charts are Euclidean and make no reference to
an exterior.  The boundary curves lie on the peripheral tori
\(\partial N_{\rho_0}(\gamma_j)\), which converge in \(C^1\), with uniform
\(C^{1,1}\) bounds, to \(\partial N_{\rho_0}(\gamma_\infty)\); the collar and
clearance clauses are stable under this convergence.  The limit \(F_\infty\)
lies in \(E(\gamma_\infty)\), since points at distance at least \(\rho_0\)
from \(\gamma_j\) have distance at least \(\rho_0\) from \(\gamma_\infty\) in
the limit; it has reach at least \(\tau\) by
\Cref{lem:reach-hausdorff-closed}; and it is properly embedded by the
clearance clause, exactly as in the fixed-exterior case.  The convergence is uniform up to the boundary, as in the fixed-exterior proof.
\end{proof}

\begin{remark}[Why reach, not merely bounded curvature]
\label{rem:reach-vs-curvature}
A bound on the second fundamental form alone does not give compactness: parallel
sheets could approach one another and the \(C^1\) limit would fail to be
embedded.  The reach lower bound simultaneously controls curvature and sheet
separation, so the limit is again an embedded \(C^{1,1}\) surface of reach at
least \(\tau\).  This is the exact place where relative thickness, rather than a
purely intrinsic bound, is used.
\end{remark}

The last ingredient of the smooth theory is a stability statement: two proper
pairs that are close in the \(C^1\) sense, \emph{including their peripheral
tori and their boundary curves}, are pair-isotopic through proper pairs.  We
do not construct the intermediate surface by blending an independently
interpolated collar with an independently interpolated interior.  Instead we
compose four genuine ambient isotopies: first the core and its fixed-radius
tube are moved, then the boundary curves are aligned on one fixed peripheral
torus, then the two collar germs are aligned while the torus is fixed, and
finally one surface is moved to the other as a single normal graph that
vanishes near the boundary.  The auxiliary lemmas below make each of these
steps quantitative and preserve properness automatically.

\begin{lemma}[Thickness stability of curve interpolation]
\label{lem:thickness-stability}
There are universal constants \(C_{\mathrm{th}}>0\) and
\(\varepsilon_0>0\) with the following property.  Let \(\eta_0\) be a closed
arclength-parametrized \(C^{1,1}\) curve with \(\Thi(\eta_0)\geq1\), and let
\(\eta_1\) be a closed \(C^{1,1}\) curve, parametrized over the same circle,
with
\[
  \|\eta_1-\eta_0\|_{C^1}\leq\varepsilon\leq\varepsilon_0,
  \qquad
  |\eta_1'|\in[1-\varepsilon,1+\varepsilon],
  \qquad
  |\eta_1''|\leq1+\varepsilon\ \text{a.e.}
\]
Then every interpolant \(\eta_t=(1-t)\eta_0+t\eta_1\), \(t\in[0,1]\), is an
embedded closed \(C^{1,1}\) curve with
\[
  \Thi(\eta_t)\ \geq\ 1-C_{\mathrm{th}}\,\varepsilon .
\]
\end{lemma}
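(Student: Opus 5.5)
We will use the standard characterization of thickness for closed \(C^{1,1}\) curves (Gonzalez--Maddocks, Litherland--Simon--Durumeric--Rawdon): \(\Thi(\eta)=\min\{1/\sup\kappa,\ \tfrac12\,\mathrm{dcsd}(\eta)\}\). Here \(\kappa\) is curvature and \(\mathrm{dcsd}\) is the doubly critical self-distance, the infimum of \(|\eta(s)-\eta(u)|\) over pairs \(s\neq u\) with \(\eta(s)-\eta(u)\) orthogonal to both \(\eta'(s)\) and \(\eta'(u)\). We bound each of the two quantities separately for \(\eta_t\) and then combine.

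\emph{Step 1: curvature.} Since \(\eta_t'=(1-t)\eta_0'+t\eta_1'\) and \(|\eta_1'-\eta_0'|\leq\varepsilon\), we get \(|\eta_t'|\geq1-\varepsilon\) and \(|\eta_t''|\leq(1-t)|\eta_0''|+t|\eta_1''|\leq1+\varepsilon\) a.e., using \(|\eta_0''|\leq1\) from \(\Thi(\eta_0)\geq1\). The curvature of a non-arclength \(C^{1,1}\) curve satisfies \(\kappa\leq|\eta_t''|/|\eta_t'|^2\leq(1+\varepsilon)/(1-\varepsilon)^2\leq1+C\varepsilon\) for \(\varepsilon\leq\varepsilon_0\). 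This gives \(1/\sup\kappa\geq1-C\varepsilon\), and \(\eta_t\) is a \(C^{1,1}\) immersion.

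\emph{Step 2: doubly critical distance (the main obstacle).} The difficulty is that doubly critical pairs of \(\eta_t\) are not doubly critical pairs of \(\eta_0\), so we cannot simply compare with \(\eta_0\). We split pairs \((s,u)\) by the parameter separation \(|s-u|\) on the circle.

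For \(|s-u|\leq a\), with \(a\) universal and small (say \(a=1\)), the curvature bound of Step 1 and the chord--tangent estimate give \(\langle\eta_t(u)-\eta_t(s),\eta_t'(s)\rangle\geq c|u-s|>0\) for \(s\neq u\). So such pairs are never doubly critical, and this range also gives local injectivity.

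For \(|s-u|\geq a\), we use a quantitative form of \(\Thi(\eta_0)\geq1\). Federer's two-point criterion (\Cref{rem:two-point-criterion}) for the reach-one curve \(\eta_0\) gives \(\operatorname{dist}(\eta_0(u)-\eta_0(s),\R\eta_0'(s))\leq|\eta_0(u)-\eta_0(s)|^2/2\). Now suppose \(v=\eta_t(u)-\eta_t(s)\) is orthogonal to \(\eta_t'(s)\) and \(\eta_t'(u)\). Write \(w=\eta_0(u)-\eta_0(s)\). Then \(|v-w|\leq2\varepsilon\), and \(\eta_0'\) differs from the unit vector \(\eta_t'/|\eta_t'|\) by \(O(\varepsilon)\). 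Hence \(|\langle w,\eta_0'(s)\rangle|\) and \(|\langle w,\eta_0'(u)\rangle|\) are both \(O(\varepsilon(1+|w|))\). Since \(|w|\) is bounded on the relevant range (distances of order \(\leq3\) suffice), \(w\) is \(O(\varepsilon)\)-nearly doubly critical for \(\eta_0\).

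The key auxiliary claim is that nearly doubly critical pairs of a thickness-one curve with parameter separation \(\geq a\) have \(|w|\geq2-C\varepsilon\). We plan to prove it from the metric-projection definition of reach, as follows. The point \(x=\eta_0(s)+\tfrac12 w\), corrected by \(O(\varepsilon)\) into the normal plane at \(s\), lies at distance about \(|w|/2\) from both \(\eta_0(s)\) and \(\eta_0(u)\). Both of these are approximately nearest points, because the normal disks of radius \(<1\) are embedded. If \(|w|/2<1-C\varepsilon\), then the nearest-point projection near the medial point would fail to be single-valued, or Lipschitz as in \Cref{lem:reach-hausdorff-closed}, within reach \(1\). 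If this route is delicate, we will instead argue by compactness/contradiction: a sequence of \(\varepsilon_k\)-nearly doubly critical pairs with \(|w_k|\leq2-\delta\) converges to a genuine doubly critical pair of the limit, giving a contradiction with \(\mathrm{dcsd}\geq2\). That version loses explicit linearity in \(\varepsilon\), so for the linear rate we rely on the quantitative medial argument. Once the claim holds, \(|v|\geq|w|-2\varepsilon\geq2-C\varepsilon\).

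\emph{Step 3: embeddedness and conclusion.} The estimates of Step 2 also give injectivity. Nearby parameters are separated by the chord--tangent bound. For far parameters, \(|\eta_t(s)-\eta_t(u)|\geq|\eta_0(s)-\eta_0(u)|-2\varepsilon\), and \(|\eta_0(s)-\eta_0(u)|\geq c(a)>0\) by the reach of \(\eta_0\). Combining Steps 1 and 2 in the thickness characterization yields \(\Thi(\eta_t)\geq1-C_{\mathrm{th}}\varepsilon\) uniformly in \(t\in[0,1]\).
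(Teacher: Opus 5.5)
Your outline is the paper's: the same characterization \(\Thi=\min\{1/\kappa_{\max},\tfrac12\operatorname{dcsd}\}\), the same curvature bound, and chord--tangent positivity to rule out doubly critical pairs at short separation. You also follow the paper in transferring a doubly critical pair of \(\eta_t\) to a nearly doubly critical pair of \(\eta_0\) and testing it against Federer's two-point inequality. The gap is at the step you call the main obstacle: the ``key auxiliary claim'' is never proved. Your compactness route gives only \(\Thi(\eta_t)\ge1-\omega(\varepsilon)\), not the linear bound in the statement, and the medial route is only gestured at.

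In fact no new argument is needed, because the claim follows from the inequality you already wrote. Put \(\theta=\angle(w,\eta_0'(s))\). That inequality reads \(|w|\sin\theta\le|w|^2/2\), i.e.\ \(|w|\ge2\sin\theta\). The near-orthogonality \(|\langle w,\eta_0'(s)\rangle|\le C\varepsilon\) gives \(\cos\theta\le C\varepsilon/|w|\), hence \(\sin\theta\ge1-O(\varepsilon^2)\) and \(|w|\ge2-O(\varepsilon^2)\). Therefore \(|v|\ge|w|-2\varepsilon\ge2-3\varepsilon\). This is exactly the paper's Step~4, and only the orthogonality at \(s\) is used.

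That one-line argument works only if \(|w|\) is bounded below by a universal positive constant, and this is the genuinely missing ingredient. You assert it in your Step~3 (``\(|\eta_0(s)-\eta_0(u)|\ge c(a)>0\) by the reach of \(\eta_0\)'') without proof. Both of your routes need it:
\begin{itemize}
\item for small chords the two-point inequality is vacuous;
\item the medial argument via the Lipschitz bound on nearest-point projection yields only \((1-|w|/2)\,|w|\lesssim\varepsilon\), which is compatible with \(|w|=O(\varepsilon)\).
\end{itemize}

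The paper proves the bound by minimizing the chord \(|\eta_0(s)-\eta_0(u)|\) over the compact set of pairs whose two connecting arcs both have length at least \(1\).
\begin{itemize}
\item At an interior minimum both partial derivatives of the squared distance vanish. The pair is then doubly critical, so the chord is at least \(\operatorname{dcsd}(\eta_0)\ge2\).
\item At a boundary minimum one arc has length exactly \(1\). The Schur chord bound for curvature \(\le1\) then gives a chord of at least \(2\sin\tfrac12>0.95\).
\end{itemize}
Take \(a=1\); parameter separation equals arclength separation on \(\eta_0\). This bound then supplies \(|w|\ge0.95\) in your Step~2 and \(c(a)\) in your Step~3, and your argument closes. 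With it, the medial route would also close, but the two-point inequality is shorter. Your direct embeddedness argument is a valid alternative to the paper's remark that a self-intersection would be a doubly critical pair with \(D=0\).
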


\begin{proof}
Write \(T_t=\eta_t'/|\eta_t'|\), and recall the characterization
\(\Thi=\min\{1/\kappa_{\max},\ \tfrac12\operatorname{dcsd}\}\) of thickness by
maximal curvature and doubly critical self-distance
\cite{LitherlandSimonDurumericRawdon}; recall also that for a closed
\(C^{1,1}\) curve the thickness coincides with the Federer reach of the
image, so that \(\Thi(\eta_0)\geq1\) gives, by the tangent-cone criterion of
\Cref{rem:two-point-criterion} (tangent cones of a closed curve are the
tangent lines),
\begin{equation}
  \sin\angle\bigl(y-x,\ T_{x}\eta_0\bigr)\ \leq\ \frac{|y-x|}{2}
  \qquad\text{for all }x,y\in\eta_0.
  \tag{\(*\)}
\end{equation}

\emph{Step 1: speed and curvature.}  From
\(\eta_t'=(1-t)\eta_0'+t\eta_1'\) we get \(|\eta_t'|\in
[1-\varepsilon,1+\varepsilon]\) and \(|\eta_t''|\leq1+\varepsilon\) a.e., so
the curvature of \(\eta_t\) satisfies
\(\kappa_t\leq|\eta_t''|/|\eta_t'|^2\leq(1+\varepsilon)/(1-\varepsilon)^2
\leq1+4\varepsilon=:\kappa\) for \(\varepsilon\leq\tfrac18\).

\emph{Step 2: two classical consequences of a curvature bound.}  First, the
Schur-type chord bound: along any subarc of \(\eta_t\) of arclength
\(\ell\leq\pi/\kappa\), the chord has length at least
\((2/\kappa)\sin(\kappa\ell/2)\); this holds for \(C^{1,1}\) curves by smooth
approximation.  Second, chord--tangent positivity: if two parameters are
joined by an arc of length \(\ell<\pi/(2\kappa)\), then
\[
  \bigl\langle T_t(s),\ \eta_t(s')-\eta_t(s)\bigr\rangle
  =\int \bigl\langle T_t(s),\eta_t'(u)\bigr\rangle\,du
  \ \geq\ (1-\varepsilon)\,\ell\cos(\kappa\ell)\ >\ 0,
\]
since the tangent direction turns by at most \(\kappa\ell<\pi/2\) along the
arc.  In particular a \emph{doubly critical} pair of \(\eta_t\) --- one where
the chord is orthogonal to both tangents --- has both of its connecting arcs
of length at least \(\pi/(2\kappa)\geq\tfrac32(1-4\varepsilon)\geq\tfrac{7}5\).

\emph{Step 3: separated pairs of \(\eta_0\) have long chords.}  We claim that
any pair of points of \(\eta_0\) whose two connecting arcs both have length
at least \(1\) has chord at least \(2\sin\tfrac12>0.95\).  The chord length,
as a function on the compact set of such pairs, attains its minimum either at
an interior pair, where both partial derivatives of the squared distance
vanish, making the pair doubly critical, so the chord is at least
\(\operatorname{dcsd}(\eta_0)\geq2\Thi(\eta_0)\geq2\); or at a boundary pair,
where one arc has length exactly \(1\leq\pi\), and the Schur bound with
\(\kappa=1\) gives chord at least \(2\sin\tfrac12\).

\emph{Step 4: doubly critical pairs of \(\eta_t\).}  Let \((s,s')\) be doubly
critical for \(\eta_t\), with chord vector \(c_t=\eta_t(s')-\eta_t(s)\) and
\(D=|c_t|\).  Let \(x=\eta_0(s)\), \(y=\eta_0(s')\) be the corresponding
points of \(\eta_0\); then \(|(y-x)-c_t|\leq2\varepsilon\) and
\(|T_0(s)-T_t(s)|\leq C\varepsilon\).  By Step 2 the connecting arcs of the
pair exceed \(\tfrac75\), hence the corresponding arcs of \(\eta_0\) exceed
\(1\), and Step 3 gives \(|y-x|\geq0.95\).  Orthogonality
\(\langle T_t(s),c_t\rangle=0\) then yields
\[
  \bigl|\langle T_0(s),\,y-x\rangle\bigr|
  \leq
  \bigl|\langle T_0-T_t,\,c_t\rangle\bigr|
  +\bigl|\langle T_0,\,(y-x)-c_t\rangle\bigr|
  \leq C\varepsilon\,D+2\varepsilon ,
\]
so, dividing by \(|y-x|\geq0.95\) and using \(D\leq|y-x|+2\varepsilon\),
\[
  \cos\angle\bigl(y-x,\ T_x\eta_0\bigr)\ \leq\ C'\varepsilon,
  \qquad\text{hence}\qquad
  \sin\angle\bigl(y-x,\ T_x\eta_0\bigr)\ \geq\ 1-C'^2\varepsilon^2 .
\]
The criterion \((*)\) forces \(|y-x|\geq2(1-C'^2\varepsilon^2)\), and
therefore
\[
  D\ \geq\ |y-x|-2\varepsilon\ \geq\ 2-3\varepsilon
\]
for \(\varepsilon\) below a universal threshold.  Thus
\(\operatorname{dcsd}(\eta_t)\geq2-3\varepsilon\).

Combining Steps 1 and 4,
\(\Thi(\eta_t)\geq\min\{1-4\varepsilon,\,1-\tfrac32\varepsilon\}
\geq1-C_{\mathrm{th}}\varepsilon\).  Embeddedness follows from the same
Step~4 estimate: a self-intersection of \(\eta_t\) would be a doubly critical
pair with \(D=0\), which Step~4 excludes.
\end{proof}

\begin{lemma}[Periodic reparametrization of nearby thick curves]
\label{lem:periodic-reparametrization}
There are universal constants \(c_{\mathrm{par}},C_{\mathrm{par}}>0\) such
that the following holds.  Let \(\gamma_0,\gamma_1\) be closed
arclength-parametrized \(C^{1,1}\) curves with \(\Thi(\gamma_i)\geq1\).
If their images and tangent directions are \(\eta\)-close, with
\(\eta<c_{\mathrm{par}}\), then \(\gamma_1\) has an orientation-preserving
periodic reparametrization \(\eta_1\), defined on the parameter circle of
\(\gamma_0\), such that
\[
  \|\eta_1-\gamma_0\|_{C^1}\leq C_{\mathrm{par}}\eta,
  \qquad
  |\eta_1'|\in[1-C_{\mathrm{par}}\eta,1+C_{\mathrm{par}}\eta],
  \qquad
  |\eta_1''|\leq1+C_{\mathrm{par}}\eta
\]
almost everywhere.  The image of \(\eta_1\) is exactly the image of
\(\gamma_1\).
\end{lemma}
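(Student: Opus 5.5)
The plan is to build \(\eta_1\) by projecting \(\gamma_0\) onto \(\gamma_1\) using the normal tube of \(\gamma_1\). Since \(\Thi(\gamma_1)\geq1\), the nearest-point projection \(\pi_1\) onto the image of \(\gamma_1\) is single-valued and Lipschitz on the open unit neighbourhood. It is \(C^{0,1}\) with differential controlled by the curvature bound \(\kappa\leq1\). For \(\eta<c_{\mathrm{par}}\), the image of \(\gamma_0\) lies in the \(\eta\)-neighbourhood of \(\gamma_1\). Writing \(\pi_1(\gamma_0(s))=\gamma_1(\sigma(s))\) defines a map \(\sigma\) from the parameter circle of \(\gamma_0\) to that of \(\gamma_1\). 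Equivalently, \(\sigma(s)\) is the unique parameter near the correct sheet with \(\langle \gamma_0(s)-\gamma_1(\sigma),\gamma_1'(\sigma)\rangle=0\).

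Differentiating this orthogonality relation gives
\[
  \sigma'(s)\bigl(1-\langle \gamma_0(s)-\gamma_1(\sigma),\gamma_1''(\sigma)\rangle\bigr)
  =\langle\gamma_0'(s),\gamma_1'(\sigma)\rangle .
\]
The correction term is at most \(\eta\cdot1\), and the right side is at least \(1-C\eta\) by tangent closeness. Hence \(\sigma\) is a \(C^{1,1}\) local diffeomorphism with \(|\sigma'-1|\leq C\eta\), and \(\sigma''\) is bounded a.e. by a universal constant. To see that \(\sigma\) is a global orientation-preserving diffeomorphism of degree one, I would use a symmetric argument. Projecting \(\gamma_1\) to \(\gamma_0\) via the reach of \(\gamma_0\) gives a map \(\sigma^{*}\) with the same properties. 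The composite \(\sigma^{*}\circ\sigma\) moves points by \(O(\eta)\), so it is homotopic to the identity. Hence \(\deg\sigma=\pm1\), and \(\sigma'>0\) forces \(+1\); a covering map of degree one is a bijection.

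Now set \(\eta_1=\gamma_1\circ\sigma\), so that the image is exactly that of \(\gamma_1\). The \(C^0\) estimate holds because \(|\eta_1(s)-\gamma_0(s)|\) is the distance to the projection, which is at most \(\eta\). For the \(C^1\) estimate, \(\eta_1'=\sigma'\gamma_1'(\sigma)\), and
\[
  |\eta_1'-\gamma_0'|\leq|\sigma'-1|+|\gamma_1'(\sigma(s))-\gamma_0'(s)|\leq C\eta .
\]
Here the second term uses the hypothesis that tangent directions at nearby points agree to within \(\eta\). I read the hypothesis as pointwise closeness at corresponding nearest points, and I would use the reach to upgrade it to this if needed. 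The speed bound follows directly.

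The main obstacle is the second-derivative estimate \(|\eta_1''|\leq1+C\eta\). We have \(\eta_1''=\sigma''\gamma_1'(\sigma)+\sigma'^2\gamma_1''(\sigma)\). The normal part is at most \((1+C\eta)^2\kappa_1\leq1+C\eta\), but the tangential part \(\sigma''\) is only bounded by a constant, not by \(O(\eta)\). From the derivative of the relation above, \(\sigma''\) involves \(\gamma_0''\) paired with \(\gamma_1'\) and \(\gamma_1''\) paired with \(\gamma_0'\). These pairings are not small pointwise when the curves are only \(C^1\)-close.

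My first fix is to replace \(\sigma\) by the reparametrization making \(\eta_1\) constant speed: \(\tilde\sigma=\sigma\circ\phi\), with \(\phi\) chosen so that \(|\eta_1'|\equiv\Len(\gamma_1)/\Len(\gamma_0)=1+O(\eta)\). Length closeness follows from integrating \(\sigma'\). With constant speed, \(\eta_1''\) is purely normal and equal to speed\({}^2\) times curvature, so \(|\eta_1''|\leq(1+C\eta)^2\). The map \(\phi\) is \(C^{1,1}\) with \(|\phi'-1|\leq C\eta\), so the \(C^0\) and \(C^1\) bounds are only perturbed by \(C\eta\) after fixing the basepoint shift. The \(C^0\) bound degrades by \(|\phi(s)-s|\leq C\eta\cdot\Len\). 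Since \(\Len\) is not universally bounded, I would normalize \(\phi(s_0)=s_0\) and argue locally, or accept dependence through the closed-curve estimate \(\int(\sigma'-1)=\Len(\gamma_1)-\Len(\gamma_0)\). Checking that this drift stays \(O(\eta)\) uniformly is the delicate point.
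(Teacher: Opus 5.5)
Your argument fails at exactly the step you flag as delicate, and neither of your proposed repairs works. Forcing constant speed makes the reparametrization affine, \(\tilde\sigma(s)=\lambda s+c\) with \(\lambda=\Len(\gamma_1)/\Len(\gamma_0)\). The \(C^0\) and \(C^1\) errors are then governed by the oscillation of the periodic function \(u(s)=\sigma(s)-\lambda s\), and the only bound on that is \(\|u'\|_\infty\Len(\gamma_0)\lesssim\eta\Len(\gamma_0)\).

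This is not an artifact of the estimate. Your own formula gives \(\sigma'=1+\langle\gamma_0-\gamma_1(\sigma),\gamma_1''(\sigma)\rangle+O(\eta^2)\). Suppose the displacement points toward the centre of curvature along one half of a long thick coil and away from it along the other half. Then \(u\) drifts by roughly \(\eta\) times the total curvature of that half, which can be comparable to \(\eta\Len(\gamma_0)\). In that case \(|\gamma_1(\lambda s+c)-\gamma_0(s)|\) and the tangent error are of that size for every choice of \(c\).

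Normalizing at a basepoint or arguing locally cannot help, because the constant-speed constraint is global. Accepting length dependence destroys the universality of \(c_{\mathrm{par}},C_{\mathrm{par}}\). The paper needs that universality: it propagates through \Cref{lem:pair-stability} to the \(\Lambda\)-independent resolution threshold in \Cref{thm:faithfulness}.

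The missing idea is to smooth \(\sigma\) at a fixed universal scale instead of flattening it globally. Keep \(\sigma(s)=\lambda s+u(s)\), but replace \(u\) by \(u*\varphi_h\) for a periodic mollifier of universal width \(h\); this is possible since \(\Len(\gamma_0)\geq2\pi\). Then:
\(\tilde\sigma'-1=(\lambda-1)+u'*\varphi_h=O(\eta)\);
\(|\tilde\sigma''|=|u'*\varphi_h'|\leq C\eta/h\);
\(|\tilde\sigma-\sigma|\leq h\|u'\|_\infty\leq C\eta\).
Since \(\gamma_1'\) is \(1\)-Lipschitz, the last bound keeps your \(C^0\) and \(C^1\) estimates intact, and \(|\eta_1''|\leq(\tilde\sigma')^2+|\tilde\sigma''|\leq1+C\eta\). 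This is the paper's argument.

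It also disposes of a second error in your plan: \(\sigma\) is not \(C^{1,1}\) in general. Your formula for \(\sigma'\) contains \(\gamma_1''(\sigma(s))\), which is only \(L^\infty\). So \(\sigma'\) jumps by \(O(\eta)\) wherever the curvature of \(\gamma_1\) jumps; a stadium curve and its outer parallel at distance \(\eta\) already show this. Then \(\gamma_1\circ\sigma\) is not even \(C^1\).

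Relatedly, the pairings \(\langle\gamma_0'',\gamma_1'(\sigma)\rangle\) and \(\langle\gamma_0',\gamma_1''(\sigma)\rangle\) that you blame are in fact \(O(\eta)\), because each second derivative is orthogonal to its own unit tangent. The real obstruction is differentiating \(s\mapsto\gamma_1''(\sigma(s))\). Mollification needs only that \(\sigma\) is Lipschitz with \(\sigma'=1+O(\eta)\) almost everywhere, so it handles this as well.
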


\begin{proof}
Nearest-point projection between the two curves is single-valued and
transverse for \(\eta\) small, and restricts to an orientation-preserving
bi-Lipschitz degree-one correspondence.  In arclength coordinates, lift its
inverse to a monotone map
\[
  \Theta:\R\longrightarrow\R,
  \qquad
  \Theta(s+L_0)=\Theta(s)+L_1,
\]
with \(\Theta'=1+O(\eta)\) almost everywhere; here \(L_i=\Len(\gamma_i)\),
and the same estimates give \(L_1/L_0=1+O(\eta)\).  Write
\[
  \Theta(s)=\lambda s+u(s),
  \qquad \lambda=L_1/L_0,
\]
where \(u\) is \(L_0\)-periodic.  Convolve \(u\) with a fixed smooth periodic
kernel of sufficiently small universal scale and put
\[
  \widetilde\Theta(s)=\lambda s+(u*\varphi)(s).
\]
Then
\(\widetilde\Theta(s+L_0)=\widetilde\Theta(s)+L_1\),
\(\widetilde\Theta'>0\), and
\[
  |\widetilde\Theta-\Theta|
  +|\widetilde\Theta'-1|
  +|\widetilde\Theta''|
  \leq C\eta.
\]
Thus \(\widetilde\Theta\) descends to an orientation-preserving degree-one
circle diffeomorphism.  Setting
\(\eta_1=\gamma_1\circ\widetilde\Theta\) preserves the image of \(\gamma_1\),
and the displayed estimates, together with
\(|\gamma_1''|\leq1\) almost everywhere, give the conclusion.
\end{proof}

\begin{lemma}[Parametric fixed-radius tube isotopy]
\label{lem:parametric-tube-isotopy}
Let \(\gamma_t\), \(0\leq t\leq1\), be a continuous \(C^1\)-isotopy of
closed \(C^{1,1}\) curves such that
\[
  \inf_t\Thi(\gamma_t)>\rho_0.
\]
Then there is an ambient isotopy \(H_t\) of \(S^3\) satisfying
\[
  H_t(\gamma_0)=\gamma_t,
  \qquad
  H_t(N_{\rho_0}(\gamma_0))=N_{\rho_0}(\gamma_t),
  \qquad
  H_t(E(\gamma_0))=E(\gamma_t).
\]
If the curve isotopy is \(C^1\)-small, \(H_t\) may be chosen \(C^1\)-small on
the fixed tube and supported in a slightly larger tube.
\end{lemma}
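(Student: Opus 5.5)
The plan is to build the ambient isotopy from the normal-tube coordinates of the moving curve, using the uniform thickness margin \(\inf_t\Thi(\gamma_t)>\rho_0\).

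First fix \(\rho_1\) with \(\rho_0<\rho_1<\inf_t\Thi(\gamma_t)\). By \Cref{not:ambient-metric}, for every \(t\) the normal exponential map \(\Phi_t(s,v)=\gamma_t(s)+v\), with \(v\perp\gamma_t'(s)\) and \(|v|\leq\rho_1\), is a bi-Lipschitz homeomorphism onto \(N_{\rho_1}(\gamma_t)\). Because the isotopy is continuous in \(C^1\), these maps vary continuously in \(t\).

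Next, transport the normal bundle. Reparametrize each \(\gamma_t\) proportionally to arclength over a fixed circle, continuously in \(t\), by the construction of \Cref{lem:periodic-reparametrization} applied along the path. Then choose continuously varying orthonormal normal frames \((e_1^t,e_2^t)\), obtained by projecting a fixed frame onto the normal planes and applying Gram--Schmidt. Since the frames are only \(C^0\) in \(s\), first mollify the reparametrized curves slightly; this keeps the tube maps bi-Lipschitz.

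With these frames, define on the tube \(N_{\rho_1}(\gamma_0)\) the map
\[
  G_t\bigl(\Phi_0(s,a e_1^0+b e_2^0)\bigr)=\Phi_t\bigl(s,\ a e_1^t+b e_2^t\bigr).
\]
This is a homeomorphism \(N_{\rho_1}(\gamma_0)\to N_{\rho_1}(\gamma_t)\) that preserves normal radius. In particular it carries \(\gamma_0\) onto \(\gamma_t\) and \(N_{\rho_0}(\gamma_0)\) onto \(N_{\rho_0}(\gamma_t)\).

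To extend \(G_t\) to all of \(S^3\), turn it into a flow. Differentiate \(G_t\circ G_s^{-1}\) in \(t\), or, if the path is only continuous in \(t\), subdivide \([0,1]\) finely and treat each short piece as a \(C^1\)-small step. In either case the tube map is radius-preserving, so its velocity field is tangent to the concentric tori \(\partial N_\rho(\gamma_t)\) up to the motion of the core. Multiply this field by a cutoff \(\chi(|v|)\) that equals \(1\) for \(|v|\leq\rho_0\) and vanishes for \(|v|\geq\rho_1\). The resulting time-dependent vector field is Lipschitz with compact support, so it integrates to an ambient isotopy \(H_t\). On the closed \(\rho_0\)-tube the flow equals \(G_t\), which gives \(H_t(\gamma_0)=\gamma_t\) and \(H_t(N_{\rho_0}(\gamma_0))=N_{\rho_0}(\gamma_t)\). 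Taking complements gives \(H_t(E(\gamma_0))=E(\gamma_t)\).

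The main obstacle is keeping the cutoff region consistent with the moving tubes. The cutoff must use the radial coordinate of \(\gamma_t\) at each time, and the \(\rho_1\)-tubes must remain embedded; both follow from the uniform margin \(\rho_1<\inf_t\Thi(\gamma_t)\). On a short step, one can instead use a fixed cutoff around \(\gamma_{t_i}\), because nearby tubes are nested up to \(O(\varepsilon)\).

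The \(C^1\)-small clause follows from the same construction. If \(\|\gamma_t-\gamma_0\|_{C^1}\) is small, the frames and the maps \(\Phi_t\) are \(C^0\)-close and have close derivatives, so \(G_t\) is \(C^1\)-close to the identity on the fixed tube. The support of \(H_t\) lies in \(N_{\rho_1}(\gamma_0)\), and \(\rho_1\) can be taken slightly larger than \(\rho_0\).
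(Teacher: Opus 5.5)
Your first half agrees with the paper's proof: your \(G_t\) is its \(e_t\circ e_0^{-1}\), the radius-preserving map of normal-disk bundles. The gap is the extension to \(S^3\). The paper does not build that extension by hand. It invokes the isotopy extension theorem (Palais's local-triviality form, or Edwards--Kirby for topological isotopies, which need only be continuous in \(t\)), and then cuts off in a slightly larger tube. Your flow argument cannot replace that theorem under the stated hypotheses.

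\begin{itemize}
\item The curve isotopy is only continuous in \(t\), so \(G_t\) has no velocity field. Subdividing \([0,1]\) does not create one: each short piece is still a merely continuous family, and ``treat it as a \(C^1\)-small step'' is exactly the assertion that needs proof.
\item Even for a path that is \(C^1\) in \(t\), your frames are built from \(\gamma_t'\). The component \(\langle\partial_t e_i^t,\gamma_t'\rangle=-\langle e_i^t,\partial_t\gamma_t'\rangle\) is forced and is only continuous in \(s\), so the velocity field is not Lipschitz. You then lose both the existence of a flow of homeomorphisms and the uniqueness of integral curves you use to conclude that the cut-off flow equals \(G_t\) on the \(\rho_0\)-tube.
\end{itemize}

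To close the gap, either cite isotopy extension as the paper does, or make the short-step idea flow-free. For \(t\) near \(t_i\), work in each normal disk \(D_\sigma\) of \(\gamma_{t_i}\). Send its center to the unique point \(c_t(\sigma)\) where \(\gamma_t\) crosses \(D_\sigma\), and map the round \(\rho_0\)-disk radially onto the slice \(N_{\rho_0}(\gamma_t)\cap D_\sigma\), which is star-shaped about \(c_t(\sigma)\) when the curves are \(C^1\)-close. Interpolate radially to the identity at radius \(\rho_1\), and concatenate over the subdivision.

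Three further steps fail as written.

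\begin{itemize}
\item \emph{Frames.} Projecting a fixed orthonormal pair \(E_1,E_2\) onto \(\gamma_t'(s)^\perp\) degenerates wherever \(\gamma_t'(s)\perp E_3\), with \(E_3\perp E_1,E_2\). Since \(\oint\langle\gamma_t',E_3\rangle\,ds=0\), this happens on every closed curve. A continuous frame family does exist, because the normal bundle over \([0,1]\times S^1\) is trivial (the paper's contractibility step). It has to be built differently, e.g.\ by projecting the frame of \(\gamma_{t_i}\) onto the normal planes of nearby \(\gamma_t\) and gluing.
\item \emph{Mollification.} Mollifying the curves replaces \(\gamma_t\) and \(N_{\rho_0}(\gamma_t)\) by other cores and tubes, which destroys the exact identities the lemma asserts. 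It is also unnecessary, since \(C^{1,1}\) curves carry Lipschitz normal frames.
\item \emph{The \(C^1\)-small clause.} \(G_t\) is not \(C^1\)-close to the identity merely because \(\gamma_t\) is \(C^1\)-close to \(\gamma_0\). The derivative \(\partial_s(G_t\circ\Phi_0)\) contains \(a(e_1^t)'+b(e_2^t)'\), and \(\langle(e_i^t)',\gamma_t'\rangle=-\langle e_i^t,\gamma_t''\rangle\) is governed by the curvature of \(\gamma_t\), which \(C^1\)-closeness does not control. Near \(\partial N_{\rho_0}(\gamma_0)\) the derivative error is therefore of order \(\rho_0\) times the curvature difference. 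The fiberwise construction above avoids this, since it uses only positions and tangents of \(\gamma_t\), read in the fixed coordinates of \(\gamma_{t_i}\), and it preserves the disk parameter \(\sigma\).
\end{itemize}
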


\begin{proof}
The strict reach inequality makes the fixed-radius normal disk bundles a
continuous \(C^1\) family of embedded solid tori.  Since the parameter
interval is contractible, these bundles can be trivialized continuously in
\(t\), producing an isotopy of solid-torus embeddings
\[
  e_t:S^1\times D^2_{\rho_0}\longrightarrow S^3
\]
whose cores are \(\gamma_t\) and whose images are exactly
\(N_{\rho_0}(\gamma_t)\).  The \(C^1\) isotopy extension theorem, in the local
triviality form of Palais \cite{PalaisRestriction}, extends
\(e_t\circ e_0^{-1}\) to an ambient isotopy; the locally flat topological
version is due to Edwards--Kirby \cite{EdwardsKirby}.  Local triviality also
shows that a sufficiently small tube isotopy has a \(C^1\)-small ambient
extension.  A cutoff in a slightly larger tube gives the support statement,
and the identities for the exteriors follow by taking complements.
\end{proof}

\begin{lemma}[Boundary alignment on a fixed peripheral torus]
\label{lem:boundary-alignment}
Fix a peripheral torus \(T=\partial E(\gamma)\).  There are
\(c_b,\delta_b>0\) such that the following holds.  Let
\(B_1\subset T\) be the boundary system of a surface of relative thickness
at least \(\tau\), and let \(B_0\subset T\) be an embedded \(C^1\) curve
system with the same number of components; \(B_0\) is \emph{not} assumed to
bound a thick surface, only to be an embedded \(C^1\) system --- in the
application it is the image of a thick boundary system under a
\(C^1\)-small ambient map, which controls positions and tangent directions
but no second-order data.  If \(B_0\) and \(B_1\) are
\(c_b\min\{\tau,\rho_0\}\)-close in position and
\(\delta_b\)-close in tangent direction, then there is a \(C^1\)-small
ambient isotopy \(K_t\), supported in a product neighbourhood of \(T\), with
\[
  K_t(T)=T,
  \qquad
  K_1(B_0)=B_1.
\]
The isotopy preserves each side of \(T\).
\end{lemma}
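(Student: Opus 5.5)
The plan is to write $B_0$ as a normal graph over $B_1$ inside the torus $T$, interpolate the graph function linearly, and extend the resulting curve-system isotopy of $T$ to an ambient isotopy using a product neighbourhood of $T$.

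First, place a tubular neighbourhood around $B_1$ within $T$. \Cref{lem:boundary-system-reach} says the normal injectivity radius of $B_1$ as a curve system in $T$ is at least $c_\partial\min\{\tau,\rho_0\}$. So the normal exponential map
\[
  \Phi:B_1\times(-r_*,r_*)\to T,\qquad r_*=\tfrac12 c_\partial\min\{\tau,\rho_0\},
\]
built from the unit conormal field of $B_1$ in $T$, is an embedding. Its $C^1$ distortion is controlled by the curvature bound $\kappa_\partial(\tau)$ and by the second fundamental form of $T$, which is bounded in terms of $\rho_0$.

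Next, express $B_0$ as a graph. Choose $c_b\le \tfrac14 c_\partial$, so that position closeness puts $B_0$ inside the image of $\Phi$ at depth at most $r_*/2$. Choose $\delta_b$ small, so that the tangents of $B_0$ make angle less than $\pi/4$ with the pulled-back $B_1$-direction. Nearest-point projection to $B_1$ inside $T$ then restricts to a local $C^1$ diffeomorphism from each component of $B_0$ onto a component of $B_1$. This is the step I expect to be the main obstacle: showing the projection is a degree-one covering of the correct component, and not a multiple cover or a map onto the wrong component. I would use a counting argument. The components of $B_1$ are separated by $c_0\tau$ (clause (iv-c)), so each component of $B_0$ lies in a single tubular piece. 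A local diffeomorphism between circles is a covering, and an $m$-fold cover with $m\ge2$ would force two points of $B_0$ in the same fibre, at distance at most $r_*$. Position closeness combined with the truncated lower bound (iv-b) on $B_1$ rules this out when $c_b$ is small. Since the numbers of components agree, the correspondence is a bijection. Hence $B_0=\{\Phi(p,f(p))\}$ for a $C^1$ function $f$ with $|f|\le r_*/2$ and $|Df|\le C\delta_b$.

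Now build the isotopy on $T$. Define $g_t(\Phi(p,s))=\Phi(p,s-t\,\psi(s)f(p))$, where $\psi$ is a cutoff equal to $1$ on $|s|\le r_*/2$ and $0$ near $|s|=r_*$. After normalizing, $|\psi'|\le 3/r_*$, so $|\partial_s(t\psi f)|\le \tfrac32<$... To get a genuine contraction this needs $|f|\le r_*/4$, which we obtain by shrinking $c_b$. With that, each $g_t$ is a diffeomorphism of $T$, supported in $\operatorname{im}\Phi$, and $C^1$-close to the identity. Since $\psi=1$ on the graph region, $g_1(B_0)=B_1$.

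Finally, extend off the torus. Use the product neighbourhood $T\times[-\rho_0/2,\rho_0/2]$ given by the normal coordinate of $T$; it is embedded because the reach of $T$ is at least $1-\rho_0$ and at least $\rho_0$. Set
\[
  K_t(x,h)=\bigl(g_{t\chi(h)}(x),h\bigr),
\]
with $\chi$ a cutoff equal to $1$ near $h=0$, and let $K_t$ be the identity elsewhere. Then $K_t(T)=T$, each level $h$ is preserved (so each side of $T$ is preserved), $K_t$ is supported in the product neighbourhood, and $K_t$ is $C^1$-small because it is built from the $g_s$ and the fixed $\chi$.
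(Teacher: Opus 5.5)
\textbf{Where the proposal breaks: ruling out a multiple cover.} Your construction matches the paper's: a tubular annulus around the thick side $B_1$ from \Cref{lem:boundary-system-reach}, $B_0$ written as a normal graph, a cutoff shear interpolating the graph to zero, and extension constant in the normal coordinate of $T$. The gap is at the step you flagged, excluding an $m$-fold cover with $m\geq 2$. Position closeness plus the truncated lower bound (iv-b) on $B_1$ cannot do this. Clause (iv-b) constrains pairs of points of $B_1$. But two points $x\neq y$ of $B_0$ on one normal fibre project to the \emph{same} point of $B_1$. The lemma also explicitly gives $B_0$ no self-separation or second-order control, so nothing contradicts such a pair.

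In fact every hypothesis you invoke is met by an immersed loop in the tubular annulus. In coordinates $(p,s)$, with $p$ the arclength on a core component of length $L$, take $t\mapsto \bigl(t \bmod L,\ \tfrac{\epsilon}{2}\sin(\pi t/L)\bigr)$ for $t\in[0,2L]$. It is a smooth closed curve, two-sidedly $\epsilon$-close to the core, and of tangent slope at most $\pi\epsilon/(2L)$. Yet its projection has degree two, with a single transverse double point.

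\textbf{How to repair it.} No argument using only position, direction, and the geometry of $B_1$ can force $m=1$; you must use that $B_0$ is \emph{embedded}, and use it topologically. The paper's argument runs as follows. The tubular annulus deformation-retracts onto its core component of $B_1$. A degree-$d$ covering therefore makes the component of $B_0$ an embedded circle in an open annulus, freely homotopic to $d$ times the core. An embedded circle in an annulus is freely homotopic to $0$ or $\pm1$ times the core. Since $d\geq 1$ excludes $0$, we get $d=1$.

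There is also a smaller omission in the component matching. Equal component counts alone do not make the assignment of components of $B_0$ to annuli a bijection: two parallel components of $B_0$ could sit in one annulus and leave another empty. You need the two-sided form of position closeness, namely that every point of $B_1$ is close to $B_0$. That puts at least one component of $B_0$ in each annulus, and equal counts then give the bijection. With these two repairs, the rest of your argument goes through as in the paper: the graph function $f$, the shear $g_t$ (after shrinking to $|f|\leq r_*/4$), and $K_t(x,h)=(g_{t\chi(h)}(x),h)$.
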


\begin{proof}
All curvature and separation control is taken on the \emph{thick} side
\(B_1\).  By \Cref{lem:boundary-system-reach}, applied to the surface of
relative thickness at least \(\tau\) whose boundary is \(B_1\), the system
\(B_1\) has normal injectivity radius in \(T\) at least
\(c_\partial\min\{\tau,\rho_0\}\), and its components have uniformly
disjoint intrinsic tubular neighbourhoods in \(T\) at that scale.  For
\(c_b\) small relative to \(c_\partial\), every point of \(B_0\) lies in
this tubular neighbourhood, and the intrinsic nearest-point projection
\(B_0\to B_1\) is well defined.  Each component of \(B_0\), being connected,
lies in the tubular annulus of a single component of \(B_1\); two-sided
position closeness forces every tubular annulus of a component of \(B_1\) to
contain at least one component of \(B_0\), and since the two systems have
the same number of components, the assignment is a bijection.  The
tangent-direction closeness makes the
projection an immersion, hence a covering map of circles on each component,
of some degree \(d\geq1\).  The degree is one for a topological reason:
the tubular annulus deformation-retracts onto its core component of
\(B_1\), so a degree-\(d\) covering exhibits the corresponding component of
\(B_0\) as an embedded circle in an open annulus freely homotopic to \(d\)
times the core; an embedded circle in an annulus is freely homotopic to
zero or \(\pm1\) times the core, and degree \(d\geq1\) excludes zero, so
\(d=1\).  Thus,
under this matching of components, each component of \(B_0\) is the intrinsic
normal graph in \(T\) of a unique small \(C^1\) section of the normal line
bundle of the corresponding component of \(B_1\).  Linear interpolation of
these sections to zero stays in the disjoint tubular neighbourhoods and
remains a disjoint embedded curve system at each time.  Extend the
resulting curve isotopy to a small isotopy of \(T\) by a cutoff vector
field in the intrinsic tubular neighbourhoods of the components of \(B_1\).
Extending this vector field constantly in the normal product coordinate of
\(T\), with another cutoff, gives the required ambient isotopy; only
\(C^1\) data of \(B_0\) enter, so \(K_t\) is \(C^1\)-small.  The normal
product coordinate is unchanged, so the two sides of \(T\) are preserved.
\end{proof}

\begin{lemma}[Relative collar alignment]
\label{lem:relative-collar-alignment}
Fix \(\tau>0\).  There are \(c_c,\delta_c>0\) with the following property.
Let \(G\subset E(\gamma)\) be a proper \(C^{1,1}\) surface of relative
thickness at least \(\tau\), let \(F_*\subset E(\gamma_*)\) be a proper
\(C^{1,1}\) surface of relative thickness at least \(\tau\) in a possibly
different unit-thickness exterior, and let \(\Phi\) be an ambient \(C^1\)
diffeomorphism, defined on a neighbourhood of \(F_*\), with
\[
  \sup|\Phi-\mathrm{id}|\leq c_c\tau,
  \qquad
  \sup\|D\Phi-I\|\leq c_c,
  \qquad
  F:=\Phi(F_*)\subset E(\gamma)
\]
proper, with the same boundary
\(\partial F=\partial G=B\subset T=\partial E(\gamma)\).  Assume in addition
that \(\Phi\) matches the two peripheral tori near the boundary and respects
their sides:
\[
  \Phi\bigl(\,U\cap\partial E(\gamma_*)\bigr)\subseteq\partial E(\gamma)
  \quad\text{for some neighbourhood \(U\) of }\partial F_*,
\]
and \(\Phi\) carries the exterior side of \(\partial E(\gamma_*)\) to the
exterior side of \(\partial E(\gamma)\) there.  This hypothesis is what
permits the transport of the peripheral clearance of \(F_*\) to \(F\) in
the proof; in the application in
\Cref{lem:pair-stability} it holds by construction.  The transported
surface \(F\) is only \(C^1\); no relative-thickness or reach hypothesis is
made for \(F\) itself.  If \(F\) and \(G\) are
\(c_c\tau\)-close in position and \(\delta_c\)-close in tangent planes in
their boundary collars, then an ambient \(C^1\)-small isotopy \(A_t\),
supported in those
collars and fixed pointwise on \(T\), carries \(F\) to a surface \(F'\) that
coincides with \(G\) on a smaller boundary collar.  The isotopy preserves
\(E(\gamma)\).
\end{lemma}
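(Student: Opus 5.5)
The plan is to write \(F\) as a normal graph over \(G\) inside the boundary collar of \(G\), and then interpolate the graph function to zero with a cutoff. All the second-order control comes from \(G\), as in \Cref{lem:boundary-alignment}: \(G\) has relative thickness at least \(\tau\), so by clause~(i) of \Cref{def:relative-surface-thickness} its normal \(\tau\)-tube is embedded, and by clause~(iv) the collar \(C_G:[0,c_0\tau]\times B\to G\) is a controlled embedding. The surface \(F\) enters only through \(C^1\) data. First I will show that the collar of \(F\) is a normal graph over the collar of \(G\). Take a point of \(F\) in its boundary collar. Position closeness at scale \(c_c\tau\) puts it in the normal tube of \(G\), and nearest-point projection there is single-valued and \(C^{0,1}\) with constant near one. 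Tangent-plane closeness \(\delta_c\) makes the projection restricted to \(F\) a local \(C^1\) diffeomorphism onto a neighbourhood in \(G\). Since \(\partial F=\partial G=B\), the projection fixes \(B\) pointwise, and it is boundary-compatible because both surfaces meet \(T\) transversally at angle bounded below. For \(F\) this transversality comes from the clearance of \(F_*\) transported by \(\Phi\), and here the side and torus-matching hypotheses on \(\Phi\) are used. So near \(B\) the projection is a degree-one covering that fixes the boundary, hence a diffeomorphism onto a collar of \(G\). This gives a \(C^1\) function \(f\) on \(C_G([0,c_0\tau/2]\times B)\) with \(F=\{y+f(y)n_G(y)\}\) there, \(f|_B=0\), and \(|f|\le Cc_c\tau\), \(|Df|\le C(c_c+\delta_c)\).

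Next I choose a cutoff \(\chi\) on the collar parameter \(s\). It equals \(1\) for \(s\le c_0\tau/8\), vanishes for \(s\ge c_0\tau/4\), and satisfies \(|\chi'|\le C/\tau\). For \(t\in[0,1]\) I deform the graph function to \(f_t=(1-t\chi)f\). Then \(f_t\) agrees with \(f\) away from the collar and vanishes near \(B\) at \(t=1\). Its derivative stays small, because \(|\chi' f|\le (C/\tau)\cdot c_c\tau=Cc_c\). Each intermediate surface therefore stays a graph inside the embedded normal tube, so it is embedded, and it has the same boundary \(B\). To get an ambient isotopy rather than just a surface isotopy, I realize the interpolation by the fibrewise map \(y+\lambda n_G(y)\mapsto y+(\lambda-t\chi(y)f(y)\psi(\lambda))n_G(y)\) in tube coordinates, where \(\psi\) is a cutoff in the normal variable equal to \(1\) on \(|\lambda|\le 2\sup|f|\). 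The map is \(C^1\)-close to the identity, and it is compactly supported in the collar region of the tube.

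The main obstacle is behaviour at \(T\): the isotopy must fix \(T\) pointwise and preserve \(E(\gamma)\). Along \(B\), \(f=0\), but the normal direction \(n_G\) is transverse to \(T\) only at angle bounded below by \(c_0\). A pure normal-line motion would therefore push points of \(T\) off \(T\). My first attempt will be to replace the normal lines by a line field \(\nu\) along \(G\) that is tangent to \(T\) at points of \(T\) and transverse to \(G\) throughout. Such a field exists because \(G\) meets \(T\) transversally by clearance clause~(v); concretely, I project \(n_G\) onto \(TT\) in a thin neighbourhood and blend it back to \(n_G\) using the clearance scale \(c_0^2\tau/4\). The graph over \(G\) along \(\nu\) is still well defined by the same degree argument, and the flow lines then preserve \(T\) and each of its sides. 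Since \(f_t\) vanishes on \(B\) and \(\nu\) is tangent to \(T\), points of \(T\) move within \(T\). To get \(T\) fixed pointwise I will additionally multiply by a cutoff in the distance to \(T\), which is harmless because \(F\) and \(G\) already coincide on \(B\) and are \(C^1\)-close there. The constants \(c_c,\delta_c\) are then chosen small relative to \(c_0\), \(L_0\) and \(c_\partial\).
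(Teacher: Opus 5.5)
Your first two steps are sound and parallel the paper. You write the collar of \(F\) as a graph over the collar of \(G\), you take the transversality of \(F\) at \(T\) from the clearance of \(F_*\) transported by \(\Phi\) (using the torus-matching and side hypotheses), and you interpolate the graph function with a cutoff. The gap is in your last paragraph, at exactly the point you flag as the main obstacle.

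\emph{Straight-line reading.} Suppose \(\nu\) is a line field along \(G\), tangent to \(T\) only at points of \(B=G\cap T\), and the fibres are the segments \(y+\lambda\nu(y)\). Then your fibrewise map does not preserve \(T\). A segment tangent to the curved torus \(T\) at a point of \(B\) leaves \(T\). Points of \(T\) near \(B\) therefore also lie on fibres through points \(y\notin B\), where in general \(f(y)\neq0\), and those points are moved, in general off \(T\).

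\emph{The proposed remedy fails.} Multiplying the displacement by a cutoff in \(\operatorname{dist}(\cdot,T)\) that vanishes on \(T\) destroys the conclusion of the lemma. A point of the collar of \(F\) at depth \(s\) lies within distance \(O(s)\) of \(T\). So the cutoff is \(<1\) on a full neighbourhood of \(B\) in \(F\), and the image surface is there a graph of (cutoff-damped, nonzero) \(f\). It therefore coincides with \(G\) on no boundary collar. Agreement of \(F\) and \(G\) along the curve \(B\), together with \(C^1\)-closeness, does not rescue this, because the lemma demands agreement on an open collar.

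\emph{Flow-line reading.} If instead you mean integral curves of a vector field tangent to \(T\), the extra cutoff is not needed at all. The flow preserves \(T\), so \(T\) is exactly the union of the fibres through \(B\). Since \(f|_B=0\), the map already fixes \(T\) pointwise.

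The missing idea is to take the transverse fibres inside level sets of a defining function for \(T\), not along straight lines. The paper does this with product coordinates \((z,u,v)\) on a collar box around each boundary component, with \(T=\{v=0\}\) and \(G=\{u=0,\ v\geq0\}\). Closeness and the transported transversality give \(F=\{u=g(z,v),\ v\geq0\}\) with \(g(z,0)=0\) and small \(C^1\)-norm. The shears
\[
  A_t(z,u,v)=\bigl(z,\ u-t\chi(v)g(z,v),\ v\bigr),
\]
with \(\chi\equiv1\) near \(v=0\), preserve \(v\), hence preserve \(T\) and both of its sides. They are the identity on \(T\) precisely because \(g(z,0)=0\). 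The cutoff \(\chi\) switches off only away from \(T\), never near it. The boxes of different boundary components are disjoint by \Cref{lem:boundary-system-reach}, so the local shears combine. With fibres chosen this way your construction becomes essentially the paper's, and your last sentence about an additional cutoff near \(T\) should be deleted.
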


\begin{proof}
All second-order control is taken from the thick surfaces \(G\) and
\(F_*\); the transported surface \(F\) contributes only first-order data,
which are inherited from \(F_*\) up to the \(C^1\)-small distortion of
\(\Phi\).  The clearance inequality of clause (v) of
\Cref{def:relative-surface-thickness}, applied to \(F_*\) and to \(G\),
gives a uniform angle between each collar and the corresponding peripheral
torus: for \(F_*\) this angle is measured against
\(\partial E(\gamma_*)\).  By the torus-matching hypothesis, \(\Phi\)
carries \(\partial E(\gamma_*)\) to \(T=\partial E(\gamma)\) near
\(\partial F_*\), preserving sides; since \(\Phi\) is \(C^1\)-small, the
collar of \(F=\Phi(F_*)\)
meets \(T\) at an angle degraded by at most \(O(c_c)\), hence still
uniformly positive after decreasing \(c_c\).  Without the torus-matching
hypothesis this transport of clearance would fail, since the clearance of
\(F_*\) constrains its position relative to its own torus only.  Around one boundary component
choose product coordinates
\((z,u,v)\), with \(z\) along \(B\), such that
\[
  T=\{v=0\},
  \qquad
  G=\{u=0,\ v\geq0\}
\]
in a fixed collar box; the box dimensions depend only on the thickness
scale of \(G\).  Closeness and the uniform transversality just established
express \(F\) uniquely as
\[
  F=\{u=g(z,v),\ v\geq0\},
  \qquad g(z,0)=0,
\]
with small \(C^1\)-norm; only the \(C^1\) closeness of \(F\) to \(G\)
enters here, so \(g\) is \(C^1\) but need not be \(C^{1,1}\).  For a cutoff
\(\chi(v)\) equal to one near
\(v=0\), the shears
\[
  A_t(z,u,v)=(z,u-t\chi(v)g(z,v),v)
\]
are \(C^1\) embeddings when the constants are small.  They are the identity
on \(T\), preserve the coordinate \(v\geq0\), and carry the germ of \(F\) to
the germ of \(G\).  The boundary components of \(G\) have uniformly disjoint
collar boxes by \Cref{lem:boundary-system-reach} applied to \(G\), so the
local shears combine and may be cut off to the identity outside the
collars.
\end{proof}

\begin{lemma}[Relative normal graph after collar agreement]
\label{lem:interior-normal-graph}
There are universal constants \(c_g,\delta_g>0\) such that the following
holds.  Let \(F_*,G\subset\R^3\) be compact properly embedded \(C^{1,1}\)
surfaces with
\[
  \operatorname{reach}(\overline{F_*}),
  \operatorname{reach}(\overline G)\geq\tau.
\]
Let \(\Phi\) be an ambient \(C^1\) diffeomorphism, defined on a neighbourhood
of \(F_*\), with
\[
  \sup|\Phi-\mathrm{id}|\leq c_g\tau,
  \qquad
  \sup\|D\Phi-I\|\leq c_g,
\]
and put \(F=\Phi(F_*)\).  Suppose that
\[
  d_H(\overline F,\overline G)\leq c_g\tau,
\]
that corresponding tangent planes of \(F\) and \(G\) differ by at most
\(\delta_g\), and that \(F\) and \(G\) coincide on a neighbourhood of their
common boundary.  Then nearest-point projection
\[
  \pi_G:F\longrightarrow G
\]
is a \(C^1\) diffeomorphism, equal to the identity near the boundary.
Consequently \(F\) is the graph, in the normal line bundle of \(G\), of a
\(C^1\)-small section \(u\) that vanishes near \(\partial G\).  The family
\[
  F_s=\{x+s u(x):x\in G\},\qquad 0\leq s\leq1,
\]
is an isotopy through embedded surfaces fixed near the boundary.
No reach bound is assumed for the transported surface \(F\).
\end{lemma}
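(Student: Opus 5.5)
The plan is to show that the nearest-point projection \(\pi_G\) is well defined on \(F\), is a local diffeomorphism there, and is bijective onto \(G\); then to read off the normal-graph description and the linear isotopy.

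\emph{Well-definedness and regularity.} Every point of \(F\) lies within \(c_g\tau\) of \(G\) by the Hausdorff hypothesis. Since \(\operatorname{reach}(\overline G)\geq\tau\), Federer's theory makes \(\pi_G\) single-valued on the open \(\tau\)-neighbourhood of \(\overline G\). It is Lipschitz with constant \(\tau/(\tau-c_g\tau)\) on the \(c_g\tau\)-neighbourhood. In the interior of \(G\) the projection is \(C^{0,1}\) and, off the boundary stratum, given by the \(C^1\) inverse of the normal exponential map \((x,t)\mapsto x+t\,n_G(x)\). Here \(n_G\) is a local unit normal, which is Lipschitz because \(G\) is \(C^{1,1}\). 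Near \(\partial G\) we use no normal-bundle argument at all: \(F\) and \(G\) coincide there, so \(\pi_G\) is the identity on that collar neighbourhood. One small point to check is that a point of \(F\) just outside the coincidence region still projects to an interior point of \(G\), and not onto the boundary curve along a half-plane cone. This follows from the clearance and collar structure at scale \(\tau\). Alternatively, shrink the coincidence neighbourhood: since \(F=G\) on a collar of width comparable to \(\tau\), points of \(F\) outside it are at distance at least a multiple of \(\tau\) from \(\partial G\) but within \(c_g\tau\) of \(G\). For \(c_g\) small they therefore project into the interior.

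\emph{Local diffeomorphism.} At \(x\in F\) with \(p=\pi_G(x)\), the differential of \(\pi_G\) restricted to \(T_xF\) is the orthogonal projection onto \(T_pG\), composed with the inverse of \(I-t\,S_p\). Here \(t=|x-p|\leq c_g\tau\) and \(|S_p|\leq C/\tau\) is the shape operator bound from clause (ii). The factor \(I-tS_p\) is within \(Cc_g\) of the identity. The tangent-plane hypothesis makes \(T_xF\) within \(\delta_g\) of \(T_pG\), where the correspondence is compared at \(p\) itself; this is absorbed using \(|x-p|\leq c_g\tau\) and the Lipschitz bound on the normal of \(G\). Hence \(d\pi_G|_{T_xF}\) is invertible with norm close to \(1\), and \(\pi_G\colon F\to G\) is a \(C^1\) local diffeomorphism.

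\emph{Bijectivity.} This is the step I expect to be the main obstacle, since \(F\) carries no reach bound of its own. Properness comes from compactness, so \(\pi_G\) is a covering map on each component. Degree one is already forced on the collar region, where \(\pi_G\) is the identity. For a closed component this argument is unavailable, and I would use the reach of \(F_*\) transported by \(\Phi\) instead. Two points \(x\neq y\) of \(F\) with the same projection lie on one normal segment of \(G\) at mutual distance at most \(2c_g\tau\), with chord nearly normal to \(G\). Their preimages \(\Phi^{-1}x,\Phi^{-1}y\) are at distance at most \(2c_g\tau(1+c_g)\). Since \(\|D\Phi-I\|\leq c_g\), the chord between the preimages stays nearly normal to \(T F_*\) as well, by the tangent-plane closeness. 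Federer's two-point criterion (Remark~\ref{rem:two-point-criterion}) for \(F_*\) at reach \(\tau\) says a short chord makes angle \(O(|\text{chord}|/\tau)=O(c_g)\) with the tangent cone. A near-normal chord therefore contradicts it once \(c_g,\delta_g\) are small. Hence \(\pi_G\) is injective. It is surjective because it is an open and closed map onto each component of \(G\) it meets. Every component of \(G\) is met, by the two-sided Hausdorff bound together with the fact that a point of \(G\) within \(c_g\tau\) of \(F\) is the projection of some nearby point.

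\emph{Conclusion.} Write \(\pi_G^{-1}(p)=p+u(p)\) with \(u(p)\) normal to \(T_pG\). Then \(u\) is \(C^1\) with \(|u|\leq c_g\tau\) and small derivative, by the local diffeomorphism estimate, and \(u\) vanishes near \(\partial G\). The surfaces \(F_s=\{x+su(x)\}\) are images of \(G\) under the normal exponential map at heights below \(c_g\tau<\tau\), where that map is injective by the reach of \(G\). Each \(F_s\) is therefore an embedded surface; the family is continuous in \(s\), fixed near the boundary, and runs from \(G\) to \(F\).
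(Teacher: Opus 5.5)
Your proposal follows the paper's proof essentially step for step: single-valuedness of \(\pi_G\) from the reach of \(G\), local invertibility from the tangent-plane bound, injectivity from Federer's tangent-cone inequality for the unmoved source \(F_*\) transported through \(\Phi\), surjectivity by the open--closed argument on each component of \(G\), and the isotopy inside the embedded normal \(\tau\)-tube of \(G\). One caveat applies to the paper and to you alike: ``\(C^1\)'' should really read ``bi-Lipschitz'', because the normal field of a \(C^{1,1}\) surface is only Lipschitz, so the ``\(C^1\) inverse of the normal exponential map'' you invoke is not available; Lipschitz control suffices for the subsequent locally flat isotopy extension. Your extra concern that a point of \(F\) might project onto \(\partial G\) is legitimate and the paper passes over it, but you settle it with a collar of width comparable to \(\tau\) or with clearance, neither of which is a hypothesis of this lemma; use the same \(F_*\) estimate instead. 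If \(\pi_G(x)=p\in\partial G\) with \(x\neq p\), then \(x-p\) lies in the normal cone of \(\overline G\) at \(p\), so it has distance \(|x-p|\) from \(\operatorname{Tan}(\overline G,p)=\operatorname{Tan}(\overline F,p)\), and this contradicts the transported cone inequality for \(F_*\) once \(c_g\) is small.
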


\begin{proof}
The reach of \(G\) makes \(\pi_G\) single-valued on \(F\).  The tangent-angle
bound and \(d_H(F,G)\ll\tau\) imply that its differential is nonsingular in
every graphical chart, so it is a local diffeomorphism.

We prove injectivity using the reach of the \emph{unmoved} source \(F_*\).
Suppose \(x=\Phi(x_*)\) and \(y=\Phi(y_*)\), with \(x\ne y\), lie on one
normal fiber of \(G\).  Then \(|x-y|\leq2c_g\tau\), and the chord \(y-x\) is
normal to \(G\).  Since \(T_xF\) is \(\delta_g\)-close to \(T_{\pi_G(x)}G\),
the chord makes angle \(O(\delta_g+c_g)\) with a normal to \(F\).  The
\(C^1\)-smallness of \(\Phi\) gives
\[
  |(y-x)-(y_*-x_*)|\leq c_g|y_*-x_*|
\]
and carries \(T_{x_*}F_*\) to within \(O(c_g)\) of \(T_xF\).  Hence
\(y_*-x_*\) makes angle \(O(\delta_g+c_g)\) with a normal to \(F_*\), while
\(|y_*-x_*|\leq3c_g\tau\).  Federer's tangent-cone inequality for
\(F_*\) gives
\[
  \operatorname{dist}(y_*-x_*,T_{x_*}F_*)
  \leq \frac{|y_*-x_*|^2}{2\tau}.
\]
The left-hand side is at least
\((1-O(\delta_g+c_g))|y_*-x_*|\), which is impossible for
\(c_g,\delta_g\) sufficiently small.  Therefore \(\pi_G\) is injective.

Its image is open by local invertibility and closed by compactness.  On every
component with boundary it contains a collar because the two surfaces agree
there.  For closed components, Hausdorff closeness and the separation of the
components of \(F_*\) and \(G\) at the reach scale match every component of
one with a component of the other.  Thus the image meets every component of
\(G\), and is all of \(G\).  The inverse gives a normal-bundle section \(u\),
equal to zero near the boundary.  Since \(|u|<\tau\), all graphs of \(su\)
lie in the embedded normal tube of \(G\); hence they are embedded and fixed
near the boundary.
\end{proof}

\begin{lemma}[Pair stability with moving peripheral tori]
\label{lem:pair-stability}
There are universal constants \(c_{\mathrm{st}}>0\) and
\(\delta_{\mathrm{st}}>0\), depending only on the fixed bounded-geometry
conventions, with the following property.  Fix \(\tau>0\), and let
\((\gamma_0,F_0)\) and \((\gamma_1,F_1)\) be pairs with
\(\Thi(\gamma_i)\geq1\) and with
\(F_i\subset E(\gamma_i)\) compact properly embedded surfaces satisfying
\(\Thi(F_i)\geq\tau\).  Suppose that, for some \(\eta,\delta>0\),
\[
  \eta\leq c_{\mathrm{st}}\min\{1,\tau\},
  \qquad
  \delta\leq\delta_{\mathrm{st}},
\]
and
\begin{enumerate}[label=(\alph*),leftmargin=2em]
\item the two cores are \(\eta\)-close in Hausdorff distance and in tangent
direction under nearest-point correspondence;
\item the two closed surface sets are \(\eta\)-close, and their tangent
planes are \(\delta\)-close at corresponding nearby points; and
\item after identification by the fixed-radius tube coordinates, the
boundary systems are \(\eta\)-close in position and \(\delta\)-close in
tangent direction.
\end{enumerate}
Then the pairs are ambient pair-isotopic through proper pairs.
\end{lemma}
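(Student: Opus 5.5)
The proof will compose the four ambient isotopies announced before the auxiliary lemmas; each step uses one of \Cref{lem:periodic-reparametrization,lem:thickness-stability,lem:parametric-tube-isotopy,lem:boundary-alignment,lem:relative-collar-alignment,lem:interior-normal-graph}. Every composite map will be \(C^1\)-small, with error \(O(\eta)\) in position and \(O(\eta+\delta)\) in derivative. The smallness constants \(c_{\mathrm{st}},\delta_{\mathrm{st}}\) are then chosen below the thresholds \(c_b,\delta_b,c_c,\delta_c,c_g,\delta_g\) after these errors are accounted for.

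\emph{Step 1 (cores and tubes).} From hypothesis (a), \Cref{lem:periodic-reparametrization} reparametrizes \(\gamma_1\) over the parameter circle of \(\gamma_0\) with \(C^1\) error \(O(\eta)\). \Cref{lem:thickness-stability} then shows that the linear interpolants \(\gamma_t\) have thickness at least \(1-C\eta>\rho_0=\tfrac12\). \Cref{lem:parametric-tube-isotopy} gives an ambient isotopy \(H_t\) that is \(C^1\)-small, carries \(N_{\rho_0}(\gamma_0)\) onto \(N_{\rho_0}(\gamma_1)\), and carries \(E(\gamma_0)\) onto \(E(\gamma_1)\). Set \(\Phi=H_1\) and \(F=\Phi(F_0)\subset E(\gamma_1)\). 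This is a proper surface with \(\Phi(\partial E(\gamma_0))=\partial E(\gamma_1)=:T\), and \(\Phi\) preserves the exterior side. This is exactly the torus-matching hypothesis needed later.

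\emph{Step 2 (boundary).} The system \(B_0=\Phi(\partial F_0)\subset T\) is an embedded \(C^1\) system. By hypothesis (c) and the \(C^1\)-smallness of \(\Phi\), it is \(O(\eta)\)-close in position and \(O(\delta+\eta)\)-close in tangent direction to the thick system \(B_1=\partial F_1\). The two systems have the same number of components: this follows from the matching in (c), or alternatively from Hausdorff closeness together with the separation bound (iv-c). \Cref{lem:boundary-alignment} then gives \(K_t\) preserving \(T\) and its sides, with \(K_1(B_0)=B_1\). Replace \(\Phi\) by \(K_1\circ\Phi\). It is still \(C^1\)-small and still matches tori.

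\emph{Step 3 (collar).} Apply \Cref{lem:relative-collar-alignment} with \(G=F_1\) and \(F_*=F_0\) to obtain \(A_t\), fixed on \(T\), so that \(A_1\Phi(F_0)\) agrees with \(F_1\) near the boundary. The collar-closeness hypotheses come from (b): a point of one collar is near the other surface by Hausdorff closeness. The clearance clause keeps interior sheets at distance \(c_0^2\tau/4\) from \(T\), so points near \(T\) on one surface correspond to collar points of the other. I expect this correspondence of collars, and the matching of corresponding points needed for the tangent-plane comparison, to be the main technical obstacle. I would handle it with reach: once \(\eta\ll\tau\), nearest-point projection to \(F_1\) is single-valued, which makes ``corresponding points'' canonical and lets the angle estimate pass through a bounded number of \(C^1\)-small maps.

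\emph{Step 4 (interior).} Set \(\Psi=A_1\circ\Phi\). Apply \Cref{lem:interior-normal-graph} with \(G=F_1\), \(F_*=F_0\) and this \(\Psi\); \(\Psi\) is again \(C^1\)-close to the identity. This exhibits \(\Psi(F_0)\) as a normal graph over \(F_1\) vanishing near the boundary. The straight-line isotopy \(F_s\) lies in the embedded \(\tau\)-tube of \(F_1\). It is fixed near \(T\), and it stays off \(T\) elsewhere because its displacement \(O(\eta)\) is below the clearance \(c_0^2\tau/4\). Hence it extends, by isotopy extension supported in the normal tube of \(F_1\) away from \(T\), to an ambient isotopy preserving \(E(\gamma_1)\).

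Concatenating \(H_t\), \(K_t\), \(A_t\) and this final isotopy gives a pair isotopy through proper pairs from \((\gamma_0,F_0)\) to \((\gamma_1,F_1)\). Properness holds at every stage: each of \(K_t\), \(A_t\) and the final isotopy preserves the current exterior and its boundary torus, and \(H_t\) carries \(E(\gamma_0)\) onto \(E(\gamma_t)\).
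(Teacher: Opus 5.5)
Your proposal is correct and follows the paper's proof essentially step for step. It uses the same four concatenated ambient isotopies: core and tube via \Cref{lem:periodic-reparametrization,lem:thickness-stability,lem:parametric-tube-isotopy}; boundary alignment with the thick side $\partial F_1$; collar alignment with $F_*=F_0$, $G=F_1$ and the torus-matching composite; then the interior normal graph over $F_1$, always drawing second-order control from the unmoved $F_0$ or from $F_1$. Your extra justifications (component matching in Step~2, properness of the final interpolation) go beyond what the paper writes, but for the latter you quote the wrong clearance in part of the surface. In the band between the agreement collar and collar depth $c_0\tau/2$, use $\operatorname{dist}(C_{F_1}(s,p),T)\geq c_0 s$ together with the fact that the agreement collar has width comparable to $\tau$; the constant $c_0^2\tau/4$ only applies outside the half-collar.
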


\begin{proof}
We construct the isotopy as a concatenation of ambient isotopies.

\emph{Step 1: move the core together with its tube.}
Apply \Cref{lem:periodic-reparametrization} to reparametrize \(\gamma_1\) as
\(\eta_1\) on the parameter circle of \(\gamma_0\).  By
\Cref{lem:thickness-stability}, after reducing \(c_{\mathrm{st}}\), the linear
curve isotopy
\[
  \gamma_t=(1-t)\gamma_0+t\eta_1
\]
satisfies \(\Thi(\gamma_t)>\rho_0\) for every \(t\).  The parametric tube
isotopy lemma gives an ambient isotopy \(H_t\) with
\[
  H_t(N_{\rho_0}(\gamma_0))=N_{\rho_0}(\gamma_t).
\]
Set \(F'_0=H_1(F_0)\).  Then \(F'_0\subset E(\gamma_1)\) is proper, and the
smallness statement in \Cref{lem:parametric-tube-isotopy} preserves the
required \(C^1\)-closeness to \(F_1\).  We emphasize the bookkeeping used
from here on: the ambient isotopy \(H_1\) is only \(C^1\)-small, so the
transported surface \(F'_0\) is a proper \(C^1\) surface whose positions,
tangent planes, and clearance angles are close to those of the thick
surface \(F_0\), but which need be neither \(C^{1,1}\) nor of relative
thickness \(\geq\tau\) (\Cref{lem:reach-distortion}).  Accordingly, every
lemma applied below to a transported surface is stated for a
\(C^1\)-small image of a thick surface and takes its second-order control
from the unmoved thick source \(F_0\) or from the thick comparison surface
\(F_1\), never from a transported surface.

\emph{Step 2: align the boundary on the fixed torus.}
Both \(\partial F'_0\) and \(\partial F_1\) now lie on
\(T_1=\partial E(\gamma_1)\).  Apply
\Cref{lem:boundary-alignment} with the thick side
\(B_1=\partial F_1\) --- the boundary system of the surface \(F_1\) of
relative thickness at least \(\tau\), to which
\Cref{lem:boundary-system-reach} applies --- and with
\(B_0=\partial F'_0=H_1(\partial F_0)\), which is exactly the kind of
embedded \(C^1\) system, close in position and tangent direction, allowed
there.  A small ambient
isotopy \(K_t\), preserving \(T_1\) and its exterior side, carries
\(\partial F'_0\) to \(\partial F_1\).  Put
\(\widetilde F_0=K_1(F'_0)\).  Thus
\[
  \partial\widetilde F_0=\partial F_1.
\]

\emph{Step 3: align the collar germs.}
Apply \Cref{lem:relative-collar-alignment} with
\(G=F_1\), with thick source \(F_*=F_0\subset E(\gamma_0)\), and with the
\(C^1\)-small ambient diffeomorphism \(\Phi_2=K_1\circ H_1\), so that
\(F=\Phi_2(F_0)=\widetilde F_0\); the lemma requires no thickness of the
transported surface \(\widetilde F_0\), only of \(F_0\) and \(F_1\).  Its
torus-matching hypothesis holds by construction:
\(H_1\) carries the tube \(N_{\rho_0}(\gamma_0)\) onto
\(N_{\rho_0}(\gamma_1)\), hence \(\partial E(\gamma_0)\) onto
\(T_1=\partial E(\gamma_1)\) with the exterior sides corresponding
(\Cref{lem:parametric-tube-isotopy}), and \(K_1\) preserves \(T_1\) and its
sides (\Cref{lem:boundary-alignment}); so
\(\Phi_2(\partial E(\gamma_0))=T_1\) globally, not merely near
\(\partial F_0\).  The
resulting ambient isotopy is fixed on
\(T_1\), preserves \(E(\gamma_1)\), and produces a surface \(F''_0\) that
coincides with \(F_1\) on a smaller boundary collar.

\emph{Step 4: move the remaining interior as one normal graph.}
Let \(\Phi\) be the composition of the time-one maps in Steps 1--3, so
\(F''_0=\Phi(F_0)\).  The preceding ambient isotopies are \(C^1\)-small;
hence, after decreasing \(c_{\mathrm{st}}\) and
\(\delta_{\mathrm{st}}\), the hypotheses of
\Cref{lem:interior-normal-graph} hold with
\(F_*=F_0\), \(F=F''_0\), and \(G=F_1\).  Thus \(F''_0\) is a normal graph
over \(F_1\) of a section vanishing near the boundary.  Linear interpolation of that single section is an isotopy through
embedded proper surfaces fixed near \(T_1\).  By the isotopy extension theorem
for locally flat embeddings \cite{EdwardsKirby}, it extends to an ambient
isotopy supported away from the core and fixed near \(T_1\).

Concatenating the four ambient isotopies gives an ambient isotopy of \(S^3\)
that carries \((\gamma_0,F_0)\) to \((\gamma_1,F_1)\).  During Step 1 the
entire tube and exterior move together; during Steps 2--4 the core and its
peripheral torus are fixed and every isotopy preserves the exterior side.
Hence every intermediate pair is proper.
\end{proof}

\begin{remark}[What the stability lemma replaces]
\label{rem:stability-replaces}
All isotopy conclusions in this paper for pairs with different knot
representatives are routed through \Cref{lem:pair-stability}: the smooth
finiteness of the pair space, the attainment of visibility infima, and the
finite-resolution faithfulness theorem.  Two older shortcuts are deliberately
avoided.  Pulling surfaces back by a straightening diffeomorphism fails
because reach is not controlled under \(C^1\)-small maps
(\Cref{lem:reach-distortion}) and controlled \(C^{1,1}\) straightening maps
do not exist along \(C^{1,1}\) curves; and pushing boundaries into the
interior before interpolating fails because the intermediate surfaces would
not be properly embedded.  The revised proof keeps properness throughout by moving the entire
fixed-radius tube with the core, aligning the boundary and collar by ambient
isotopies in one fixed exterior, and using a single normal graph, zero near
the boundary, for the final interior motion.  No cutoff blend of two
independently constructed surface interpolations is used.

Moreover, the same discipline that rules out the first shortcut is enforced
inside the proof itself: since the ambient isotopies of Steps 1--3 are only
\(C^1\)-small, the transported surfaces \(F'_0\), \(\widetilde F_0\), and
\(F''_0\) carry no reach or relative-thickness hypotheses anywhere in the
argument.  \Cref{lem:boundary-alignment,lem:relative-collar-alignment,lem:interior-normal-graph}
are stated asymmetrically for exactly this reason: each takes a thick
unmoved source \(F_*\), its \(C^1\)-small image \(F=\Phi(F_*)\), and a thick
comparison surface \(G\), and draws all second-order control from \(F_*\)
and \(G\) alone.
\end{remark}

\subsection{Smooth finiteness}
\label{subsec:smooth-finiteness}

The compactness lemma turns the encoded finiteness of
\Cref{thm:finite-resolution-finiteness-filtered-pairs} into a genuinely smooth
statement: the number of isotopy classes is finite before any resolution is
chosen.  For closed submanifolds, this compactness--isotopy mechanism is due to
Durumeric \cite{DurumericCompactness}; the point here is its relative-boundary
form and the moving-pair extension below.

\begin{theorem}[Smooth finiteness in a fixed exterior]
\label{thm:smooth-finiteness-fixed-exterior}
Fix a unit-thickness representative \(\gamma\) of \(K\) and \(\tau,\Delta>0\).
Then the set of ambient isotopy classes in \(E(\gamma)\) of essential surfaces
\(F\) with \(\Thi(F)\geq\tau\) and \(\Area(F)\leq\Delta\) is finite.
\end{theorem}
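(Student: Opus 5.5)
The plan is a standard compactness-plus-stability argument by contradiction. Suppose there were infinitely many pairwise non-isotopic classes. Choose representatives \(F_n\subset E(\gamma)\), \(n\geq1\), each essential, with \(\Thi(F_n)\geq\tau\) and \(\Area(F_n)\leq\Delta\), lying in pairwise distinct ambient isotopy classes.

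Since every component of every \(F_n\) is essential, \Cref{lem:anchoring} shows the family is anchored in the fixed ball \(B_*\). \Cref{lem:compactness-thick-surfaces} then gives a subsequence converging locally graphically in \(C^1\), uniformly up to the boundary (\Cref{def:graphical-convergence}). The limit is a compact properly embedded \(C^{1,1}\) surface \(F_\infty\subset E(\gamma)\) with \(\Thi(F_\infty)\geq\tau\). Essentiality of \(F_\infty\) is not needed; only its thickness is used.

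The key step is to show that two sufficiently late terms \(F_n,F_m\) of the subsequence satisfy the hypotheses of \Cref{lem:pair-stability}, applied with \(\gamma_0=\gamma_1=\gamma\). Hypothesis (a) is then trivial, with \(\eta\) as small as desired. For (b), I would compare both surfaces with \(F_\infty\).
\begin{itemize}
\item Hausdorff convergence of the closures gives \(d_H(\overline F_n,\overline F_m)\to0\).
\item \(C^1\) convergence of the graph functions on the fixed charts of \(F_\infty\) makes tangent planes at corresponding nearby points converge. Correspondence is well defined through nearest-point projection to \(F_\infty\), because \(\operatorname{reach}(\overline F_\infty)\geq\tau\).
\end{itemize}
For (c), the boundary parametrizations \(b_n:\Gamma\to\partial F_n\) converge in \(C^1\) on the fixed parameter circles. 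Hence the boundary systems become \(\eta\)-close in position and \(\delta\)-close in tangent direction on the common torus \(T=\partial E(\gamma)\).

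Once \(\eta\leq c_{\mathrm{st}}\min\{1,\tau\}\) and \(\delta\leq\delta_{\mathrm{st}}\), \Cref{lem:pair-stability} yields an ambient pair isotopy from \((\gamma,F_n)\) to \((\gamma,F_m)\). That contradicts the choice of distinct classes.

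The main obstacle is the mismatch of notions. \Cref{lem:pair-stability} produces an isotopy of \(S^3\) moving the core, whereas the theorem asks for ambient isotopy in the fixed exterior \(E(\gamma)\). With identical cores, however, Step 1 of that proof is the identity: take \(\eta_1=\gamma\), and the tube isotopy \(H_t\) can be taken constant. Steps 2--4 are by construction supported in \(E(\gamma)\) or preserve \(T\) and its exterior side. So the concatenated isotopy restricts to an isotopy of \(E(\gamma)\). I would make this explicit rather than cite the lemma as a black box.

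A second subtlety arises if some \(F_n\) are disconnected. \Cref{prop:geometric-topological-boundedness} and \Cref{lem:area-lower-bound} bound the component count, so one may pass to a subsequence with a fixed number of components and fixed topological types. The union-reach condition keeps components \(2\tau\)-separated, so the limit components and their matching are unambiguous, and \Cref{lem:interior-normal-graph} already handles closed components through this matching.
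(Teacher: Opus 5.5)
Your proposal is correct and follows essentially the same route as the paper. Anchoring via \Cref{lem:anchoring} gives a convergent subsequence from \Cref{lem:compactness-thick-surfaces}, and \Cref{lem:pair-stability} in its fixed-core case then joins two late terms; essentiality of the limit is never used. Your extra checks of clauses (a)--(c) against the limit, and your observation that with identical cores every step preserves \(T=\partial E(\gamma)\) so the isotopy restricts to \(E(\gamma)\), make explicit what the paper's short proof leaves implicit.
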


\begin{proof}
If not, choose essential surfaces \(F_n\) with the stated bounds, pairwise
non-isotopic in \(E(\gamma)\).  The family is anchored by
\Cref{lem:anchoring}, so \Cref{lem:compactness-thick-surfaces} gives a
subsequence converging in \(C^1\), uniformly up to the boundary, to a proper
surface \(F_\infty\).  Hence for all large \(k,l\) the two pairs
\((\gamma,F_{n_k})\) and \((\gamma,F_{n_l})\) satisfy
\Cref{lem:pair-stability} with arbitrarily small positional and angular
errors.  They are therefore ambient isotopic in \(E(\gamma)\), contradicting
the choice.  Only mutual isotopy of the tail is used; the limit itself need
not be essential.
\end{proof}

For the pair-space version, the knot representative varies, and the naive
strategy --- straighten \(\gamma_n\) to a limit curve by an ambient
diffeomorphism and pull the surfaces back --- must be handled with care, for
two reasons.  First, \emph{reach is a second-order quantity and is not
continuous under \(C^1\)-small diffeomorphisms}: a \(C^1\)-small map with
merely bounded second derivatives can decrease the reach of a surface by a
definite factor.  The following lemma quantifies this, using the tangent-cone
criterion of \Cref{rem:two-point-criterion},
which is valid for arbitrary closed sets and hence for surfaces with
boundary.  Second, for \(C^{1,1}\) curves the straightening maps themselves
cannot be taken \(C^{1,1}\) with uniform bounds: normal frames along a
\(C^{1,1}\) curve are only Lipschitz, so exact tube-to-tube transport maps
have merely bounded, generally discontinuous, derivatives, and the lemma
below could not even be applied to them.  For both reasons, the proofs in
this subsection never pull surfaces back; they take limits with
\emph{moving exteriors}, where the reach passes to the limit undistorted.
The lemma is recorded because it explains precisely what is being avoided,
and because it is of independent use when a genuinely \(C^{1,1}\) change of
coordinates is available.

\begin{lemma}[Reach distortion under \(C^{1,1}\) diffeomorphisms]
\label{lem:reach-distortion}
Let \(\varphi\colon\R^3\to\R^3\) be a diffeomorphism with
\(\operatorname{Lip}(\varphi)\leq L\),
\(\operatorname{Lip}(\varphi^{-1})\leq L'\), and
\(\operatorname{Lip}(D\varphi)\leq M\).  Then every closed set
\(S\subset\R^3\) with \(\operatorname{reach}(S)\geq\tau\) satisfies
\[
  \operatorname{reach}\bigl(\varphi(S)\bigr)
  \ \geq\
  \frac{\tau}{L'^2\,(L+M\tau)}
  \ =:\ \tau'(\tau;L,L',M)\ >\ 0 .
\]
\end{lemma}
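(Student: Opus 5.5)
The plan is to verify Federer's two-point criterion of \Cref{rem:two-point-criterion} for \(\varphi(S)\) directly, transporting the criterion for \(S\) through \(\varphi\). That criterion holds for arbitrary closed sets, so no special treatment of boundary points is needed. Since \(\varphi\) is a diffeomorphism of \(\R^3\), the set \(\varphi(S)\) is closed. It therefore suffices to show that for all \(a',b'\in\varphi(S)\)
\[
  \operatorname{dist}\bigl(b'-a',\operatorname{Tan}(\varphi(S),a')\bigr)\leq\frac{|b'-a'|^2}{2\tau'},
  \qquad \tau'=\frac{\tau}{L'^2(L+M\tau)} .
\]

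\emph{Step 1: tangent cones push forward.} We show \(D\varphi(a)\operatorname{Tan}(S,a)\subseteq\operatorname{Tan}(\varphi(S),\varphi(a))\). Federer's tangent cone at \(a\) consists of \(0\) together with the limits of \(t_k(s_k-a)\), where \(s_k\to a\) in \(S\) and \(t_k>0\). For such a sequence, \(t_k(\varphi(s_k)-\varphi(a))-D\varphi(a)\,t_k(s_k-a)\to0\) by differentiability of \(\varphi\) at \(a\), because \(t_k|s_k-a|\) stays bounded. Hence the image vector is again such a limit, which gives the inclusion.

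\emph{Step 2: the estimate.} Write \(a'=\varphi(a)\) and \(b'=\varphi(b)\). The cone \(\operatorname{Tan}(S,a)\) is closed, so we may choose \(v\in\operatorname{Tan}(S,a)\) with \(|b-a-v|\leq|b-a|^2/(2\tau)\), by the criterion for \(S\). By Step 1, \(D\varphi(a)v\in\operatorname{Tan}(\varphi(S),a')\), and
\[
  |b'-a'-D\varphi(a)v|\leq|\varphi(b)-\varphi(a)-D\varphi(a)(b-a)|+\|D\varphi(a)\|\,|b-a-v| .
\]
The first term is at most \(\tfrac M2|b-a|^2\) by the Lipschitz bound on \(D\varphi\), using the integral form of Taylor's theorem along the segment \([a,b]\). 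The second term is at most \(L|b-a|^2/(2\tau)\), since \(\|D\varphi\|\leq\operatorname{Lip}(\varphi)\leq L\). Finally, \(|b-a|\leq L'|b'-a'|\). Combining these gives
\[
  \operatorname{dist}\bigl(b'-a',\operatorname{Tan}(\varphi(S),a')\bigr)\leq\tfrac12\bigl(M+L/\tau\bigr)L'^2|b'-a'|^2=\frac{|b'-a'|^2}{2\tau'} .
\]
The converse direction of Federer's criterion then yields \(\operatorname{reach}(\varphi(S))\geq\tau'\).

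\emph{Main obstacle.} The only delicate point is Step 1, the inclusion of tangent cones for general closed sets, in particular at boundary points where the cone is a half-plane. It is the reason the argument is phrased with cones rather than tangent planes. If it were available, I would simply cite Federer's result that tangent cones are natural under \(C^1\) diffeomorphisms (\cite{FedererReach}); otherwise the short sequential argument above suffices. Everything else is a one-line Taylor estimate.
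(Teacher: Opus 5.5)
Your proof is correct and follows the paper's argument essentially verbatim. Both use Federer's tangent-cone criterion, transport $\operatorname{Tan}(S,a)$ by $D\varphi(a)$, bound the Taylor remainder by $\tfrac M2|b-a|^2$ and the transported cone distance by $L|b-a|^2/(2\tau)$, and convert via $|b-a|\leq L'|\varphi(b)-\varphi(a)|$. The one difference is that the paper simply asserts the equality $\operatorname{Tan}(\varphi(S),\varphi(a))=D\varphi_a\operatorname{Tan}(S,a)$, whereas you prove by a sequential argument the inclusion $D\varphi(a)\operatorname{Tan}(S,a)\subseteq\operatorname{Tan}(\varphi(S),\varphi(a))$, which is all the estimate needs.
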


\begin{proof}
By Federer's tangent-cone criterion
(\Cref{rem:two-point-criterion}), it suffices to bound
\(\operatorname{dist}(\varphi(b)-\varphi(a),\operatorname{Tan}(\varphi(S),
\varphi(a)))\) for \(a,b\in S\).  Since \(\varphi\) is a diffeomorphism,
\(\operatorname{Tan}(\varphi(S),\varphi(a))=D\varphi_a\operatorname{Tan}(S,a)\).
Taylor's theorem with Lipschitz derivative gives
\(|\varphi(b)-\varphi(a)-D\varphi_a(b-a)|\leq\tfrac M2|b-a|^2\), and linearity
gives
\(\operatorname{dist}(D\varphi_a(b-a),D\varphi_a\operatorname{Tan}(S,a))
\leq L\operatorname{dist}(b-a,\operatorname{Tan}(S,a))
\leq \tfrac{L}{2\tau}|b-a|^2\), using the criterion for \(S\).  Adding, and
using \(|b-a|\leq L'|\varphi(b)-\varphi(a)|\),
\[
  \operatorname{dist}\bigl(\varphi(b)-\varphi(a),\
  \operatorname{Tan}(\varphi(S),\varphi(a))\bigr)
  \leq
  \Bigl(\frac M2+\frac{L}{2\tau}\Bigr)L'^2\,
  |\varphi(b)-\varphi(a)|^2
  =
  \frac{|\varphi(b)-\varphi(a)|^2}{2\tau'} . \qedhere
\]
\end{proof}

\begin{theorem}[Smooth finiteness of the filtered pair space]
\label{thm:smooth-finiteness-pair-space}
Fix a knot type \(K\) and \(\Lambda,\Delta,\tau>0\).  Then
\(\calZ^{\mathrm{ess}}_{\Lambda,\Delta,\tau}(K)\) consists of finitely many
pair-isotopy classes.
\end{theorem}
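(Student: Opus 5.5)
We argue by contradiction, following the pattern of \Cref{thm:smooth-finiteness-fixed-exterior} but with moving exteriors. Suppose \(\calZ^{\mathrm{ess}}_{\Lambda,\Delta,\tau}(K)\) contains infinitely many pair-isotopy classes. Then we may choose pairs \((\gamma_n,F_n)\), pairwise non-pair-isotopic, with \(\Thi(\gamma_n)=1\), \(\Len(\gamma_n)\leq\Lambda\), \(F_n\subset E(\gamma_n)\) essential, \(\Thi(F_n)\geq\tau\), and \(\Area(F_n)\leq\Delta\). Using the rigid-motion quotient, we normalize every \(\gamma_n\) to have barycenter at the origin. Pair isotopy is realized by ambient isotopies of \(S^3\), and orientation-preserving rigid motions are isotopic to the identity, so this normalization does not change the classes.

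\emph{Step 1: compactness of the cores.}
Each \(\gamma_n\) has arclength parametrization with \(|\gamma_n''|\leq1\) almost everywhere and length bounded by \(\Lambda\). Length is also bounded below universally, since unit thickness gives \(\Len\geq2\pi\) by Fenchel. After rescaling the parameter domain, Arzel\`a--Ascoli therefore gives a subsequence converging in \(C^1\) to a closed \(C^{1,1}\) curve \(\gamma_\infty\). Thickness is upper semicontinuous under \(C^1\) convergence with uniform curvature bounds, through the Federer reach of the images and \Cref{lem:reach-hausdorff-closed}. Hence \(\Thi(\gamma_\infty)\geq1\), \(\Len(\gamma_\infty)\leq\Lambda\), and the barycenter is still the origin. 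Exact unit thickness of the limit is not needed.

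\emph{Step 2: compactness of the surfaces.}
By \Cref{cor:moving-exterior-compactness}, a further subsequence of \(F_n\) converges in \(C^1\), uniformly up to the boundary, to a proper surface \(F_\infty\subset E(\gamma_\infty)\). The boundary curves converge in \(C^1\) as well. Using \Cref{prop:geometric-topological-boundedness} and \Cref{lem:area-lower-bound}, we may also assume the number and topological types of components are constant along the subsequence. The limit is not claimed to be essential and is not used as a pair in its own right; it serves only as a Cauchy witness.

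\emph{Step 3: verifying pair stability for the tail.}
For large \(k,l\) we check the three hypotheses of \Cref{lem:pair-stability} for \((\gamma_{n_k},F_{n_k})\) and \((\gamma_{n_l},F_{n_l})\), with \(\eta\leq c_{\mathrm{st}}\min\{1,\tau\}\) and \(\delta\leq\delta_{\mathrm{st}}\).
Hypothesis (a) follows from the \(C^1\) convergence of the cores.
Hypothesis (b) follows from the Hausdorff and chart-wise \(C^1\) convergence of the surfaces. The point is that corresponding tangent planes are matched through the common limit charts of \Cref{def:graphical-convergence}.
Hypothesis (c) follows from the \(C^1\) convergence of the boundary parametrizations \(b_n\) together with \(C^1\) convergence of the peripheral tori \(\partial N_{\rho_0}(\gamma_n)\). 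Read in fixed-radius tube coordinates, these give closeness of the boundary systems.
\Cref{lem:pair-stability} then yields an ambient pair isotopy between \((\gamma_{n_k},F_{n_k})\) and \((\gamma_{n_l},F_{n_l})\), contradicting the choice of pairwise non-isotopic pairs.

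\emph{Expected main obstacle.}
The delicate step is the bookkeeping in Step 3 that converts convergence to a common limit into the pairwise closeness data (a)--(c), especially (c). The boundary curves lie on different tori, so closeness must be measured after identification by tube coordinates. I would handle this through the triangle inequality through the limit: the tubes \(N_{\rho_0}(\gamma_n)\) converge in \(C^1\) with uniform \(C^{1,1}\) bounds, so the nearest-point correspondences between tori are \(C^1\)-close to the identity. A secondary check is that \(\Thi(\gamma_\infty)\geq1\) really follows from \(C^1\) convergence. If the reach route proves awkward, \Cref{lem:pair-stability} needs only \(\Thi(\gamma_i)\geq1\) for the two tail pairs themselves, which holds by hypothesis. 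So the limit curve's thickness matters only for applying \Cref{cor:moving-exterior-compactness}, whose hypotheses concern only the sequence.
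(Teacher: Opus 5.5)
Your proposal is correct and follows the paper's proof essentially step for step. The shared steps are: $C^1$ subsequential compactness of the normalized cores, requiring only $\Thi(\gamma_\infty)\geq1$ for the limit; moving-exterior compactness of the surfaces via \Cref{cor:moving-exterior-compactness}, with no pull-back to a fixed exterior; and \Cref{lem:pair-stability} applied directly to two tail pairs, with the limit used only as a Cauchy witness, where the paper likewise obtains clauses (a)--(c) from the convergence of both pairs, including their boundary curves, to the common limit.
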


\begin{proof}
Suppose not, and choose pairs \((\gamma_n,F_n)\) representing pairwise distinct
pair-isotopy classes.  The set of representatives of \(K\) of thickness at least
\(1\) and length at most \(\Lambda\) is sequentially compact in the \(C^1\)
topology modulo \(\Isom^+(\R^3)\), by the Arzel\`a--Ascoli argument underlying
the ropelength existence theorem \cite{CantarellaKusnerSullivan}: thickness at
least one gives a uniform \(C^{1,1}\) bound, and the length bound controls the
domain.  After applying rigid motions and passing to a subsequence,
\(\gamma_n\to\gamma_\infty\) in \(C^1\), where \(\gamma_\infty\) is a
\(C^{1,1}\) representative of \(K\) with \(\Thi(\gamma_\infty)\geq1\) and
\(\Len(\gamma_\infty)\leq\Lambda\).

We do not pull the surfaces back to a fixed exterior: as explained before
\Cref{lem:reach-distortion}, a pull-back by \(C^1\)-small straightening maps
does not preserve the reach scale, and for \(C^{1,1}\) curves controlled
\(C^{1,1}\) straightening maps are not available.  Instead, by
\Cref{cor:moving-exterior-compactness}, a subsequence of \((F_n)\) converges
in \(C^1\) to a properly embedded surface
\(F_\infty\subset E(\gamma_\infty)\) with \(\Thi(F_\infty)\geq\tau\) and
\(\Area(F_\infty)\leq\Delta\).

Since \(\gamma_m,\gamma_n\to\gamma_\infty\) in \(C^1\) and
\(F_m,F_n\to F_\infty\) in \(C^1\) with converging tangent planes and, by
\Cref{cor:moving-exterior-compactness}, with boundary curves converging in
\(C^1\), for all
large \(m,n\) the two pairs \((\gamma_m,F_m)\) and \((\gamma_n,F_n)\) are
close in the sense of clauses (a)--(c) of
\Cref{lem:pair-stability}, with positional error at most
\(c_{\mathrm{st}}\min\{1,\tau\}\) and angular error at most
\(\delta_{\mathrm{st}}\).  The stability lemma
therefore provides a pair isotopy between them, through pairs that remain
proper in the moving exteriors at every stage; no thickness or area
constraint is imposed, or needed, along the way, since pair isotopy is an
equivalence of pairs, not an admissible thick deformation.  This contradicts
the choice of pairwise distinct classes and proves the theorem.
\end{proof}

\begin{remark}[The encoded theorem as an effective form]
\label{rem:encoded-is-effective}
\Cref{thm:smooth-finiteness-pair-space} is a purely smooth statement: it mentions
neither a resolution \(\varepsilon\) nor an encoding scheme.
\Cref{thm:finite-resolution-finiteness-filtered-pairs} is its effective
counterpart.  The smooth theorem asserts that the number of pair-isotopy classes
is finite; the encoded theorem bounds that number explicitly by
\((CN)^{BN^2}\) once a resolution and a finite encoding scheme are fixed
(\Cref{cor:explicit-encoded-bound}).  The reconstruction theorem below
(\Cref{thm:faithfulness}) shows that, at fine enough resolution, distinct
pair-isotopy classes receive distinct codes, so the explicit encoded bound is an
upper bound for the genuinely finite count of
\Cref{thm:smooth-finiteness-pair-space}.
\end{remark}

\begin{remark}[Consistency with the infiniteness of the full surface set]
\label{rem:slice-vs-whole}
\Cref{thm:smooth-finiteness-fixed-exterior,thm:smooth-finiteness-pair-space} do
not contradict the fact, emphasized throughout, that a fixed exterior may carry
infinitely many essential surfaces and an infinite Kakimizu complex.  The
finiteness holds for the slice cut out by \(\Thi\geq\tau\) and
\(\Area\leq\Delta\).  As \(\Delta\to\infty\) or \(\tau\to0\) the number of
classes may grow without bound; the infinite twisting families of the satellite
example escape every fixed \((\Delta,\tau)\)-slice by increasing area at fixed
thickness, or by requiring smaller and smaller thickness.  Thus the theorems
locate precisely which part of the essential-surface theory is finite.
\end{remark}

One point requires care in the statement.  The slice \(Y_\Lambda(K)\) is cut
out by the \emph{exact} normalization \(\Thi(\gamma)=1\), and this equality is
not preserved by \(C^1\) limits: a limit of unit-thickness curves satisfies
only \(\Thi(\gamma_\infty)\geq1\), and strict inequality can occur when the
features realizing the thickness (a curvature-one arc, or a doubly critical
pair at distance exactly \(2\)) disappear in the limit.  Rescaling
\(\gamma_\infty\) to unit thickness shrinks the ambient picture and with it
the surface, so \(\Thi(F)\geq\tau\) may fail after rescaling.  A limit
argument therefore does \emph{not} prove that the exact-slice level
\(\beta_{\mathfrak S}\) is attained; what it attains is a compactified
level, which we now define and which must be distinguished from
\(\beta_{\mathfrak S}\).  Write
\[
  \overline{Y}_\Lambda(K)
  =\{\gamma\in K\mid\Thi(\gamma)\geq1,\ \Len(\gamma)\leq\Lambda\}
  /\Isom^+(\R^3)
  \ \supset\ Y_\Lambda(K),
\]
let \(\overline{\calZ}^{\mathfrak S}_{\Lambda,\Delta,\tau}(K)\) be the
corresponding compactified pair space, defined by the same surface
conditions over \(\overline Y_\Lambda(K)\), and define the
\emph{compactified visibility level}
\[
  \overline\beta_{\mathfrak S}(K;\Delta,\tau)
  =
  \inf\bigl\{\Len(\gamma)\ \big|\
  (\gamma,F)\in\overline{\calZ}^{\mathfrak S}_{\Lambda,\Delta,\tau}(K)
  \text{ for some }\Lambda\bigr\}.
\]
Every \(\gamma\in\overline Y_\Lambda(K)\) still has ropelength at most
\(\Lambda\), and since the exact slice is contained in the compactified one,
\[
  \overline\beta_{\mathfrak S}(K;\Delta,\tau)
  \ \leq\
  \beta_{\mathfrak S}(K;\Delta,\tau),
\]
with strict inequality not excluded in general: a pair with
\(\Thi(\gamma)>1\) may be shorter than every exact-slice pair.  For this
reason no claim is made that \(\beta_{\mathfrak S}\) itself is attained on
the compactified slice.

\begin{corollary}[Attainment of the compactified visibility level]
\label{cor:attainment-visibility}
Fix \(K\) and \(\Delta,\tau>0\), and let \(\mathfrak S\) be a filtered surface
type that is closed under pair isotopy and stable under \(C^1\)-limits of thick
representatives of a fixed topological type (for example a fixed isotopy class of
essential surface, a fixed boundary slope, a characteristic JSJ class, or the
taut class).  Whenever \(\overline\beta_{\mathfrak S}(K;\Delta,\tau)<\infty\), the following
hold.  In particular, this hypothesis is satisfied whenever
\(\beta_{\mathfrak S}(K;\Delta,\tau)<\infty\), since
\(\overline\beta_{\mathfrak S}\leq\beta_{\mathfrak S}\).
\begin{enumerate}[label=(\roman*),leftmargin=2em]
\item The compactified level is attained: there is a pair
\((\gamma,F)\) in the compactified pair space with \(\Thi(\gamma)\geq1\) and
\(\Len(\gamma)=\overline\beta_{\mathfrak S}(K;\Delta,\tau)\).
\item If some attaining pair satisfies \(\Thi(\gamma)=1\), then
\(\overline\beta_{\mathfrak S}=\beta_{\mathfrak S}\) and the exact-slice
level \(\beta_{\mathfrak S}\) is attained in
\(\calZ^{\mathfrak S}_{\beta,\Delta,\tau}(K)\).
\item At the ideal level this is automatic: one always has
\(\overline\beta_{\mathfrak S}\geq\Rop(K)\), because
\(\Len(\gamma)\geq\Thi(\gamma)\Rop(K)\geq\Rop(K)\) for every representative
with \(\Thi(\gamma)\geq1\); hence if
\(\beta_{\mathfrak S}(K;\Delta,\tau)=\Rop(K)\), then
\(\overline\beta_{\mathfrak S}=\beta_{\mathfrak S}=\Rop(K)\), every attaining
pair has \(\Thi(\gamma)=1\) (otherwise its ropelength would be strictly below
\(\Rop(K)\)), and the exact-slice level is attained.
\item In general, rescaling an attaining pair to unit curve thickness
\emph{produces an admissible exact-unit-thickness representative at the
degraded surface parameters}
\(\Thi(F)\geq\tau/\Thi(\gamma)\) and
\(\Area(F)\leq\Delta/\Thi(\gamma)^2\leq\Delta\); this is an existence
statement, not a claim that any infimum at those degraded parameters is
attained.
\end{enumerate}
The same statements hold for the compactified form of the area
visibility level \(a_{\mathfrak S}(K;\Lambda,\tau)\).
\end{corollary}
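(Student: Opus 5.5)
The argument is a direct-method compactness argument on the compactified pair space. Set \(\overline\beta=\overline\beta_{\mathfrak S}(K;\Delta,\tau)<\infty\) and choose a minimizing sequence \((\gamma_n,F_n)\) in \(\overline{\calZ}^{\mathfrak S}_{\Lambda,\Delta,\tau}(K)\) with \(\Thi(\gamma_n)\geq1\) and \(\Len(\gamma_n)\searrow\overline\beta\). All lengths are bounded by a fixed \(\Lambda\), say \(\overline\beta+1\).

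\emph{Compactness of the knots.} Normalize by rigid motions so that each barycenter is at the origin. Thickness at least one gives uniform \(C^{1,1}\) bounds, so the Arzel\`a--Ascoli argument already used in the proof of \Cref{thm:smooth-finiteness-pair-space} yields a subsequence \(\gamma_n\to\gamma_\infty\) in \(C^1\). The limit is embedded of type \(K\), because thickness is upper semicontinuous under \(C^1\)-limits, so \(\Thi(\gamma_\infty)\geq1\). Length is lower semicontinuous and in fact continuous under \(C^1\) convergence, so \(\Len(\gamma_\infty)=\overline\beta\).

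\emph{Compactness of the surfaces.} Apply \Cref{cor:moving-exterior-compactness}, which is formulated for \(\Thi(\gamma_j)\geq1\) precisely for this purpose. It gives \(F_\infty\subset E(\gamma_\infty)\) with \(\Thi(F_\infty)\geq\tau\), \(\Area(F_\infty)\leq\Delta\), and \(C^1\) convergence uniformly up to the boundary.

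\emph{Persistence of the surface type (main obstacle).} The limit need not be essential, so \(F_\infty\) cannot be placed in \(\mathfrak S\) directly. Instead apply \Cref{lem:pair-stability} to \((\gamma_n,F_n)\) and \((\gamma_\infty,F_\infty)\) for large \(n\). Its hypotheses are \(\Thi\geq1\) for the cores and \(\Thi\geq\tau\) for both surfaces, and \(C^1\)-closeness including boundary curves, all of which are supplied by the corollary. It gives a pair isotopy from \((\gamma_n,F_n)\) to \((\gamma_\infty,F_\infty)\). Essentiality is a topological property invariant under pair isotopy, and \(\mathfrak S\) is closed under pair isotopy, so \(F_\infty\) is essential and of type \(\mathfrak S\). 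Alternatively, one can invoke the assumed stability of \(\mathfrak S\) under \(C^1\)-limits of a fixed topological type; the stability lemma is exactly what shows the type is fixed along the tail. This proves (i).

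\emph{Parts (ii)--(iv).} These are bookkeeping.

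For (ii), an attaining pair with \(\Thi(\gamma)=1\) lies in the exact slice, so \(\beta_{\mathfrak S}\leq\overline\beta\). Combined with \(\overline\beta\leq\beta_{\mathfrak S}\), this gives equality and attainment in \(\calZ^{\mathfrak S}_{\beta,\Delta,\tau}(K)\).

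For (iii), the inequality \(\Len\geq\Thi\cdot\Rop(K)\) gives \(\overline\beta\geq\Rop(K)\). If \(\beta_{\mathfrak S}=\Rop(K)\), equality is forced. An attaining pair with \(\Thi(\gamma)>1\) would have ropelength \(\Len/\Thi<\Rop(K)\), which is impossible, so (ii) applies.

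For (iv), scale by \(\lambda=1/\Thi(\gamma)\leq1\). Reach scales by \(\lambda\) and the relative-thickness clauses are scale-covariant, giving \(\Thi(\lambda F)\geq\tau\lambda\). Area scales by \(\lambda^2\). Essentiality and type are preserved because scaling is an ambient isotopy, using a path of homotheties together with \Cref{lem:parametric-tube-isotopy}, since the tube radius \(\rho_0\) is fixed while the core thickness stays above \(\rho_0\). The scaled pair is only claimed to exist.

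The area-visibility version follows by the same argument with a minimizing sequence in \(\Area\) at fixed \(\Lambda\). Here lower semicontinuity of area from \Cref{lem:compactness-thick-surfaces} replaces continuity of length.
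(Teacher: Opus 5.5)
For (i)--(iii) and the area version your argument is the paper's own. You take a minimizing sequence in the compactified slice and use \(C^1\)-compactness of curves with \(\Thi\geq1\). You then apply \Cref{cor:moving-exterior-compactness} without pulling back, and \Cref{lem:pair-stability} carries essentiality and type from the tail to the limit. The one deviation is that you get \(\Thi(\gamma_\infty)\geq1\) from upper semicontinuity of thickness, whereas the paper uses the curvature bound together with \Cref{lem:reach-hausdorff-closed}; both are fine.

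\emph{The step that does not work is your justification of (iv).}

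\begin{itemize}
\item The tube radius \(\rho_0\) is fixed for every representative, including those in \(\overline Y_\Lambda(K)\) with \(\Thi(\gamma)>1\). This is how \Cref{cor:moving-exterior-compactness} is set up: the boundaries lie on \(\partial N_{\rho_0}(\gamma_j)\).
\item The homothety by \(\lambda=1/\Thi(\gamma)<1\) carries \(N_{\rho_0}(\gamma)\) onto \(N_{\lambda\rho_0}(\lambda\gamma)\), not onto \(N_{\rho_0}(\lambda\gamma)\). So \(\partial(\lambda F)\) lies on the torus of radius \(\lambda\rho_0\), strictly inside \(N_{\rho_0}(\lambda\gamma)\). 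Hence \(\lambda F\) is not a properly embedded surface in \(E(\lambda\gamma)\).
\item For the same reason, the clearance clause (v) of \Cref{def:relative-surface-thickness} is not scale covariant in this convention, because it is measured against \(\partial E\).
\item Your path of homotheties is therefore not a pair isotopy. \Cref{lem:parametric-tube-isotopy} does not repair this: it produces a surface \(H_1(F)\subset E(\lambda\gamma)\) of the right type, but that surface is not \(\lambda F\) and carries no area or reach bound.
\end{itemize}

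So your argument does not yield the representative asserted in (iv). The paper's proof of (iv) is the same one-line rescaling and passes over this point as well. Read literally, (iv) holds if the rescaled pair is allowed the rescaled tube radius \(\lambda\rho_0\). Otherwise it needs an extra argument moving the boundary from the \(\lambda\rho_0\)-torus out to the \(\rho_0\)-torus with controlled area and thickness. That is not automatic: the shell between the two tori has width \((1-\lambda)\rho_0\), which can be much larger than the collar and clearance scale \(c_0\lambda\tau\) that controls \(\lambda F\) near its boundary.
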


\begin{proof}
For (i), take a minimizing sequence for the \emph{compactified} level: pairs
\((\gamma_j,F_j)\) in the compactified space with
\(\Thi(\gamma_j)\geq1\) and
\(\Len(\gamma_j)\downarrow\overline\beta_{\mathfrak S}(K;\Delta,\tau)\).  The
class \(\{\Thi\geq1,\ \Len\leq\overline\beta+1\}\) is sequentially compact in
\(C^1\) modulo rigid motions, exactly as in the proof of
\Cref{thm:smooth-finiteness-pair-space}; the limit \(\gamma_\infty\)
satisfies \(\Thi(\gamma_\infty)\geq1\), because the curvature bound passes to
weak-\(*\) limits and the reach bound
\(\operatorname{reach}(\gamma_j)=\Thi(\gamma_j)\geq1\) --- for a closed
\(C^{1,1}\) curve the thickness, the normal injectivity radius, and the
Federer reach of the image coincide
\cite{LitherlandSimonDurumericRawdon,FedererReach} --- passes to Hausdorff
limits by \Cref{lem:reach-hausdorff-closed}, and
\(\Len(\gamma_\infty)=\lim_j\Len(\gamma_j)
=\overline\beta_{\mathfrak S}(K;\Delta,\tau)\), lengths being continuous
under \(C^1\) convergence.

We do \emph{not} pull the surfaces back by straightening
diffeomorphisms: by \Cref{lem:reach-distortion} a pull-back can lose a
definite amount of reach, which would only give \(\Thi(F_\infty)\geq\tau'<\tau\)
and would not prove attainment at the prescribed scale \(\tau\).  Instead,
\Cref{cor:moving-exterior-compactness}, whose hypotheses require only
\(\Thi(\gamma_j)\geq1\), provides a subsequence with
\(F_j\to F_\infty\subset E(\gamma_\infty)\) in \(C^1\), where \(F_\infty\) is
properly embedded with
\(\Area(F_\infty)\leq\Delta\) and, crucially,
\(\Thi(F_\infty)\geq\tau\) at the \emph{undistorted} scale: no
diffeomorphism has touched the surfaces.

To identify the type: for \(j\) large the pairs
\((\gamma_\infty,F_\infty)\) and \((\gamma_j,F_j)\) satisfy the positional and angular closeness hypotheses of
\Cref{lem:pair-stability}, including the boundary clause, since the boundary
curves converge in \(C^1\) by \Cref{cor:moving-exterior-compactness}; with both
errors as small as desired, the stability lemma joins them by a pair isotopy through proper
pairs (its thickness hypotheses, \(\Thi(\gamma)\geq1\) and
\(\Thi(F)\geq\tau\), hold for both pairs).  Hence
\(F_\infty\) is essential of type \(\mathfrak S\), and
\((\gamma_\infty,F_\infty)\) attains \(\overline\beta_{\mathfrak S}\), proving
(i).  Since the limit pair belongs to the same compactified class over which the
infimum is taken and its length equals the limiting infimum, it is a
minimizer.

For (ii): an attaining pair with \(\Thi(\gamma)=1\) lies in the exact slice,
so \(\beta_{\mathfrak S}\leq\Len(\gamma)=\overline\beta_{\mathfrak S}
\leq\beta_{\mathfrak S}\), forcing equality and exact attainment.  For (iii):
the inequality \(\Len\geq\Thi\cdot\Rop(K)\) is the definition of ropelength
as a scale-invariant infimum; the rest is stated in the corollary.  For (iv):
rescaling by \(\Thi(\gamma)^{-1}\leq1\) preserves the knot type and the
surface class, multiplies \(\Len\) and \(\Thi(F)\) by \(\Thi(\gamma)^{-1}\)
and \(\Area\) by \(\Thi(\gamma)^{-2}\), and produces the asserted
representative; nothing more is claimed.  The
area statement is
proved identically, minimizing \(\Area\) along the compact family in the
compactified slice.
\end{proof}

\subsection{Finite-resolution reconstruction}
\label{subsec:reconstruction}

The finiteness theorems above count pair-isotopy classes.  For the recognition
program of \Cref{sec:finite-recognition} one needs more: that the finite
\(\varepsilon\)-code determines the pair-isotopy class, so that distinct classes
are distinguished by their codes.  This is a sampling statement.  Its antecedent
for recovering homology from finite samples is the Niyogi--Smale--Weinberger
theorem \cite{NiyogiSmaleWeinberger}, and isotopic reconstruction of a fixed
positive-reach submanifold from a sufficiently dense sample is itself known:
Boissonnat--Ghosh \cite{BoissonnatGhosh} produce, from a dense enough point
sample of a closed submanifold with positive reach, a tangential Delaunay
complex ambient-isotopic to it, while reconstruction results for compact
surfaces with boundary were established by Abe--Bisceglio--Ferguson--Peters--%
Russell--Sakkalis \cite{AbeEtAl}.  The contribution here is therefore
\emph{not} isotopic reconstruction of a single manifold from a sample; what
is new is the setting and the joint statement: the exterior moves with the
knot representative, the surfaces are properly embedded with boundary on a
moving peripheral torus, the code records the knot, the surface, and their
relative position in one layered word, and equality of codes recovers the
\emph{pair}-isotopy class, uniformly over the whole filtered slice rather
than for one fixed submanifold.  Positive reach upgrades homology
recovery to an isotopy statement, but for surfaces \emph{with boundary} this
upgrade is not automatic, as the following remark explains.

\begin{remark}[No unconditional reach rigidity for surfaces with boundary]
\label{rem:no-boundary-reach-rigidity}
For \emph{closed} surfaces, positive reach makes the isotopy statement
elementary: if two closed surfaces of reach at least \(\tau\) are
\(c\tau\)-close in Hausdorff distance with tangent planes almost parallel at
nearby points, then nearest-point projection exhibits one as a small normal
graph over the other, and interpolation of the graph section gives a small
ambient isotopy.  For surfaces with boundary, however, Hausdorff closeness,
tangent-plane closeness, and even closeness of the boundaries do \emph{not}
imply that the nearest-point projection is surjective up to the boundary: a
model obstruction is a pair of flat half-planes with nearly identical tangent
planes, one shifted within its own plane in the direction transverse to the
boundary --- the region \(\{y\geq\epsilon\}\) viewed over
\(\{y\geq0\}\) --- for which the projection of one surface misses a
boundary collar of width \(\epsilon\) of the other while all three closeness
hypotheses hold at an
arbitrarily fine scale.  A correct statement must assume, in addition, that
the boundaries coincide and that the surfaces agree on a definite boundary
collar; that statement is exactly
\Cref{lem:interior-normal-graph}, which this paper uses instead.  All
isotopy conclusions below are therefore routed through the boundary and
collar alignment
\Cref{lem:boundary-alignment,lem:relative-collar-alignment} followed by
\Cref{lem:interior-normal-graph}, packaged as the pair-stability
\Cref{lem:pair-stability}; no free-standing reach-rigidity lemma for
surfaces with boundary is stated or used.
\end{remark}

\begin{theorem}[Finite-resolution faithfulness]
\label{thm:faithfulness}
Fix the concrete cubical--triangulated encoding scheme of
\Cref{con:concrete-encoding}.  Let \(C_{\mathrm{enc}}\) be the constant in
\Cref{lem:code-proximity} and put
\(C_*:=\max\{4,2C_{\mathrm{enc}}\}\).  Impose
\[
  \delta_{\gamma}\leq\varepsilon,
  \qquad
  \delta_F\leq\frac{\delta_{\mathrm{st}}}{8C_*}.
\]
There is a universal constant \(c_1\in(0,c]\) such that, for every
\(0<\varepsilon\leq c_1\min\{1,\tau\}\), any two pairs
\((\gamma_0,F_0),(\gamma_1,F_1)\in\calZ^{\mathrm{ess}}_{\Lambda,\Delta,\tau}(K)\)
with the same canonical encoded \(\varepsilon\)-type are pair-isotopic.  Consequently the
pair-isotopy class of a pair is a function of its encoded \(\varepsilon\)-type;
distinct pair-isotopy classes never share a code; and the boundary slope of each
boundary component and the essential intersection pattern with any fixed
bounded-geometry characteristic system are determined by the encoded
\(\varepsilon\)-type alone.
\end{theorem}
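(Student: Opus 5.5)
The plan is to feed \Cref{lem:code-proximity} directly into the pair-stability \Cref{lem:pair-stability}, and then read off the consequences. Suppose the two pairs have the same canonical code. By \Cref{con:concrete-encoding}(f), both pairs admit raw realizations producing the same word. \Cref{lem:code-proximity} then yields the following bounds.
\begin{itemize}
\item Core Hausdorff distance at most \(4\varepsilon\), and tangent angles at most \(C_{\mathrm{enc}}(\delta_\gamma+\varepsilon)\leq 2C_{\mathrm{enc}}\varepsilon\), using \(\delta_\gamma\leq\varepsilon\).
\item Surface Hausdorff distance at most \(4\varepsilon\), and matched boundary curves \(4\varepsilon\)-close.
\item Surface tangent-plane and boundary-tangent angles at most \(C_{\mathrm{enc}}(\delta_F+\varepsilon/\tau)\).
\end{itemize}

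We set \(\eta:=C_*\varepsilon\) and \(\delta:=C_*(\delta_F+\varepsilon/\tau)\). The hypothesis \(\delta_F\leq\delta_{\mathrm{st}}/(8C_*)\) gives \(C_*\delta_F\leq\delta_{\mathrm{st}}/8\). If we choose \(c_1\leq\min\{c,\ c_{\mathrm{st}}/C_*,\ \delta_{\mathrm{st}}/(8C_*)\}\), then \(\varepsilon\leq c_1\min\{1,\tau\}\) gives \(\eta\leq c_{\mathrm{st}}\min\{1,\tau\}\) and \(C_*\varepsilon/\tau\leq\delta_{\mathrm{st}}/8\). Hence \(\delta\leq\delta_{\mathrm{st}}\), and the core tangent angle \(2C_{\mathrm{enc}}\varepsilon\) is also below \(\eta\).

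The main step is to check that the matched quantities are exactly what clauses (a)--(c) of \Cref{lem:pair-stability} require. The code matching is index-based: the \(i\)-th curve samples, the matched triangle barycenters, and the matched boundary samples. Stability instead uses nearest-point correspondence. I would bridge the two using the reach of the thick objects. For the cores, two unit-thickness curves \(O(\varepsilon)\)-close in Hausdorff distance have nearest-point correspondence that moves along arc parameter only by \(O(\varepsilon)\). By the curvature bound \(1\), tangents at index-matched and at nearest points then differ by \(O(\varepsilon)\). For surfaces, the graphical charts of \Cref{def:relative-surface-thickness} at scale \(\tau\) give tangent-plane variation \(O(\varepsilon/\tau)\) over distances \(O(\varepsilon)\). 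This is absorbed after enlarging \(C_*\) by a universal factor, which is harmless since \(c_1\) is chosen afterwards. Clause (c) requires identifying the two peripheral tori via tube coordinates. The tori are \(O(\varepsilon)\)-close because the cores are, and the explicit boundary layer with matching cyclic labels gives component-by-component closeness.

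I expect the main obstacle to be ensuring the matched components of the two surfaces actually correspond, rather than merely the two sets being close. Positive reach, and the \(2\tau\) separation of distinct components (\Cref{rem:disconnected-thickness}), force the Hausdorff matching at scale \(4\varepsilon\ll\tau\) to be componentwise. Once (a)--(c) hold, \Cref{lem:pair-stability} gives an ambient pair isotopy through proper pairs.

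For the consequences, the first two are immediate. The pair-isotopy class is a function of the code, so distinct classes have distinct codes. Boundary slopes are pair-isotopy invariants, since the isotopy carries \(\partial E(\gamma_0)\) to \(\partial E(\gamma_1)\) along with meridian and longitude. For the intersection pattern with a fixed bounded-geometry characteristic system, apply the same argument to the enlarged system, which is still a pair in a bounded slice. Alternatively, note that the pattern is determined up to isotopy of the pair, and equal codes yield isotopic pairs.
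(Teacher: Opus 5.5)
Your proposal is correct and is essentially the paper's proof. Equal canonical codes give raw realizations with a common word. \Cref{lem:code-proximity} then yields positional error at most $4\varepsilon$ and angular errors $C_{\mathrm{enc}}(\delta_{\gamma}+\varepsilon)$ and $C_{\mathrm{enc}}(\delta_F+\varepsilon/\tau)$. Choosing $c_1$ with $C_*c_1\le c_{\mathrm{st}}$ and $C_*c_1$ small against $\delta_{\mathrm{st}}$ puts both pairs under \Cref{lem:pair-stability}, and slopes and intersection patterns follow as pair-isotopy invariants.

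Two small corrections:
\begin{itemize}
\item Your bridging of index-matched versus nearest-point correspondence is a refinement the paper leaves implicit. The resulting $O(\varepsilon/\tau)$ terms should be absorbed by shrinking $c_1$, not by enlarging $C_*$, because the hypothesis on $\delta_F$ is tied to the given $C_*$.
\item Drop the ``enlarged system'' variant for intersection patterns. $F$ together with a characteristic system it crosses is not a disjoint positive-reach system, and the code of $(\gamma,F)$ does not encode it. Your alternative argument, that the pattern is a pair-isotopy invariant, is the one the paper uses and suffices.
\end{itemize}
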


\begin{proof}
By the canonicalization convention of
\Cref{rem:code-well-defined}, equality of encoded \(\varepsilon\)-types means
that each pair admits an admissible normalization and discretization whose raw
word is the common least word.  Apply the corresponding orientation-preserving
rigid motions and choose those raw realizations.  Rigid motions preserve the
pair-isotopy class, so we may assume that the raw codes agree term by term.
By \Cref{lem:code-proximity}, the positional error is at most
\(4\varepsilon\), the knot-tangent error is at most
\(C_{\mathrm{enc}}(\delta_{\gamma}+\varepsilon)\), and the surface-plane and
boundary-tangent errors are at most
\(C_{\mathrm{enc}}(\delta_F+\varepsilon/\tau)\).  Set
\(\eta=C_*\varepsilon\).  Since \(\delta_{\gamma}\leq\varepsilon\), the first
two bounds are at most \(\eta\), as required in clause~(a) of
\Cref{lem:pair-stability}.  Both pairs also carry the full thickness data
\(\Thi(\gamma_i)=1\), \(\Thi(F_i)\geq\tau\), and
\(\Area(F_i)\leq\Delta\).
Choose \(c_1\) so that
\[
  C_*c_1\leq c_{\mathrm{st}},
  \qquad
  C_*c_1\leq\frac{\delta_{\mathrm{st}}}{4}.
\]
Then \(\eta\leq c_{\mathrm{st}}\min\{1,\tau\}\), while
\(C_{\mathrm{enc}}\varepsilon/\tau\leq C_*c_1\leq
\delta_{\mathrm{st}}/4\).  The imposed bound on \(\delta_F\) contributes at
most \(\delta_{\mathrm{st}}/8\), so the total surface angular error is below
\(\delta_{\mathrm{st}}\).  All choices are universal and independent of the
length and area budgets.  Hence
\Cref{lem:pair-stability} yields an ambient isotopy of \(S^3\) carrying
\((\gamma_0,F_0)\) to \((\gamma_1,F_1)\) through proper pairs.  We emphasize
what is being avoided: an argument that first straightens the curves by a
small ambient diffeomorphism \(\varphi\) and then compares \(\varphi(F_0)\)
with \(F_1\) by a closed-surface-style reach-rigidity argument
(\Cref{rem:no-boundary-reach-rigidity}) would be incomplete, because
\(\Thi(\varphi(F_0))\geq\tau\) does not follow from
\(\Thi(F_0)\geq\tau\) for a merely \(C^1\)-small \(\varphi\)
(\Cref{lem:reach-distortion}), because such a \(\varphi\) need not carry
the tube \(N_{\rho_0}(\gamma_0)\) onto \(N_{\rho_0}(\gamma_1)\), and because
for surfaces with boundary no such rigidity holds without prior boundary and
collar agreement.  The
stability lemma imposes thickness hypotheses only on the two given pairs and
matches the tubes explicitly, so neither issue arises.

Thus equal codes force pair-isotopy, so the pair-isotopy class factors through
the code; equivalently, by contraposition, pairs in distinct classes have
distinct codes.  Boundary slope and essential intersection pattern are
pair-isotopy invariants, hence are functions of the code.  The map is not a
bijection: one class may have many codes, corresponding to different geometric
positions; the content is that codes are fine enough to separate all classes.
\end{proof}

\begin{remark}[What faithfulness adds to the recognition program]
\label{rem:faithfulness-recognition}
Without \Cref{thm:faithfulness}, the finiteness theorems would only bound the
size of a search space.  Faithfulness makes the encoded data a complete
invariant on each bounded slice: the finite characteristic data of
\Cref{sec:finite-recognition} then genuinely determine the pair, not merely a
count.  The resolution requirement \(\varepsilon\leq c_1\min\{1,\tau\}\) is the precise
sense in which the sampling scale must be fine relative to the surface thickness
for reconstruction to hold.
\end{remark}

\subsection{Comparison with known finiteness and reconstruction theorems}
\label{subsec:comparison-finiteness}

It is worth situating the finiteness theorems above relative to the classical
finiteness results of three-dimensional topology, since they are finiteness
statements of a different kind, and relative to the manifold-reconstruction
literature in computational geometry, since the reconstruction ingredient is
not itself new.

On the reconstruction side, recovering a fixed submanifold up to ambient
isotopy from finite data under a positive-reach (or local feature size)
hypothesis is a developed subject: Niyogi--Smale--Weinberger
\cite{NiyogiSmaleWeinberger} recover the homotopy type from a dense sample,
and Boissonnat--Ghosh \cite{BoissonnatGhosh} construct from a sufficiently
dense sample a complex that is ambient-isotopic to the sampled manifold.
\Cref{thm:faithfulness} should accordingly not be read as a new
sampling-implies-isotopy principle.  Its content lies in the objects and the
uniformity: the sampled object is a knot--surface \emph{pair} whose two
constituents constrain each other; the surface is properly embedded with
boundary on the peripheral torus of a \emph{moving} exterior, where
reconstruction requires the boundary and collar alignment of
\Cref{lem:boundary-alignment,lem:relative-collar-alignment} rather than
closed-manifold normal-graph arguments
(\Cref{rem:no-boundary-reach-rigidity}); the code is a single canonical
combinatorial word on a fixed global lattice, shared by all pairs of a
filtered slice, rather than a sample of one fixed manifold; and equality of
words, not merely density of samples, is the hypothesis.  These are the
points at which the present theory goes beyond, and depends on more than,
manifold reconstruction; where the two overlap, the classical results are
used, not reproved.

Kneser and Haken finiteness \cite{Kneser,Haken} concerns a \emph{fixed}
triangulated three-manifold: a system of pairwise disjoint incompressible
surfaces, no two parallel, has boundedly many components, and the fundamental
normal surfaces in a fixed triangulation form a finite generating set for the
normal-surface semigroup.  These are finiteness statements about the
combinatorics of one triangulation.  They do not bound the number of isotopy
classes of essential surfaces: Haken sums \(F+nG\) already produce infinitely
many normal surfaces, and Kakimizu complexes are infinite.  The computational
side of this combinatorics is quantified by Hass--Lagarias--Pippenger
\cite{HassLagariasPippenger}, and effective versions of Haken-theoretic
certificates by Lackenby \cite{LackenbyCertification}.

\Cref{thm:smooth-finiteness-fixed-exterior,thm:smooth-finiteness-pair-space} are
finiteness statements of a complementary, metric kind.  They fix no
triangulation.  Instead they fix a geometric window: a lower bound \(\tau\) on
relative surface thickness and an upper bound \(\Delta\) on area, together with a
ropelength bound \(\Lambda\) on the representative.  Within this window the count
of isotopy classes is finite, even though the same exterior carries infinitely
many essential surfaces once the window is removed.  The mechanism is
compactness of the space of anchored bounded-geometry surfaces --- anchoring
being automatic for essential ones
(\Cref{lem:anchoring,lem:compactness-thick-surfaces}) --- not the
combinatorics of a normal coordinate system.  The two viewpoints meet in the normal-surface implementation
of \Cref{sec:finite-recognition}: a fixed bounded-geometry triangulation turns
the metric window into a bound on normal coordinates
(\Cref{prop:conditional-normal-coefficient-bound}), so that the metric
finiteness is realized inside the combinatorial one.  In this sense the present
theorems refine, rather than reprove, Haken finiteness: they select, among the
infinitely many normal surfaces of a fixed triangulation, the finitely many that
fit a prescribed area--thickness budget.

\section{Ideal knots, ideal surfaces, and ideal pairs}
\label{sec:ideal}

The central objects of the present framework are not only filtered pair
spaces, but also the ideal strata that appear at the bottom of the relevant
optimization problems.  The knot side and the surface side should be treated
in parallel.

\subsection{Ideal knots}

Recall from \Cref{sec:ropelength-sublevel} that the ropelength ideal stratum
\(I(K)=Y_{\Rop(K)}(K)\) is nonempty by the existence theorem for ropelength
minimizers \cite{CantarellaKusnerSullivan}, and that it consists precisely of
the unit-thickness ropelength minimizers of \(K\).  Thus an ideal knot is a
representative that is as short as possible after the thickness normalization
\(\Thi(\gamma)=1\).  The terminology follows the physical knot theory of ideal,
or tight, knots \cite{KatritchBednarNature,StasiakKatritchKauffman}.

\begin{proposition}[Universal ropelength lower bound]
\label{prop:positive-ropelength-lower-bound}
There is a universal constant \(c>0\) such that every unit-thickness
representative \(\gamma\) of every nontrivial knot type \(K\) satisfies
\[
  \Len(\gamma)\geq \Rop(K)\geq c.
\]
One may take \(c=31.32\).
\end{proposition}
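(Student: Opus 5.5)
The inequality \(\Len(\gamma)\geq\Rop(K)\) for a unit-thickness representative is immediate from the definition. Ropelength is the infimum of \(\Len/\Thi\) over representatives, and \(\Thi(\gamma)=1\). The content of the proposition is therefore the universal positive lower bound \(\Rop(K)\geq c\) for nontrivial \(K\). I would prove it by a curvature argument that gives some universal constant, and then import the sharp published constant for the value \(31.32\).

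\emph{A crude universal constant.} Let \(\gamma\) have unit thickness and represent a nontrivial knot.
\begin{itemize}
\item By the thickness characterization \(\Thi=\min\{1/\kappa_{\max},\tfrac12\operatorname{dcsd}\}\) of \cite{LitherlandSimonDurumericRawdon}, already used in \Cref{lem:thickness-stability}, the curvature satisfies \(\kappa\leq1\) almost everywhere.
\item By the Fáry--Milnor theorem, a nontrivial knot has total curvature greater than \(4\pi\). For \(C^{1,1}\) curves this follows by smooth approximation and lower semicontinuity of total curvature, exactly as Fenchel's theorem was applied in \Cref{prop:geometric-topological-boundedness}.
\item Hence \(4\pi<\int_\gamma\kappa\,ds\leq\Len(\gamma)\), so \(\Len(\gamma)>4\pi\).
\end{itemize}
Taking the infimum gives \(\Rop(K)\geq4\pi\approx12.57\). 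This already establishes the existence of a universal \(c>0\).

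\emph{The stated value.} To reach \(c=31.32\), I would not reprove the estimate. I would invoke the Denne--Diao--Sullivan bound \cite{DenneDiaoSullivan}, which says that every nontrivial knot has ropelength at least \(15.66\) when thickness is measured as a diameter. Their proof uses quadrisecants together with thick-tube packing arguments. By \Cref{rem:ropelength-normalization}, the radius convention used here doubles their values, so the bound reads \(31.32\). The finiteness of the infimum and the nonemptiness of \(I(K)\) come from \cite{CantarellaKusnerSullivan}, although only the lower bound is needed here.

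\emph{Main obstacle.} The only delicate point is the normalization. One must check that the diameter-versus-radius conversion goes in the correct direction: their \(\Len/\operatorname{diam}\) equals half of \(\Len/\text{radius}\), so the radius-convention bound is twice theirs. One must also note that their class of \(C^{1,1}\) curves and their definition of thickness coincide with the reach-based definition used here. Reproducing their quadrisecant argument is beyond the intended scope. If a self-contained proof is preferred, the Fáry--Milnor bound \(4\pi\) suffices for every later use of a universal constant \(c\).
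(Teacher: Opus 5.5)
Your proof matches the paper's: the first inequality is the definition of \(\Rop(K)\) as an infimum of \(\Len/\Thi\) with \(\Thi(\gamma)=1\), and \(c=31.32\) is the Denne--Diao--Sullivan constant \(15.66\) doubled when passing from diameter to radius normalization, with the conversion in the direction you verify. Your extra self-contained bound \(\Rop(K)\geq 4\pi\) is a harmless bonus, but its justification needs repair, because lower semicontinuity of total curvature only bounds \(\int_\gamma\kappa\,ds\) from \emph{above} by the \(\liminf\) over the approximants; instead, either invoke Milnor's form of the theorem for arbitrary closed curves, where total curvature is defined through inscribed polygons, or mollify \(\gamma\) and use that the approximants stay in the knot type, have curvature at most \(1+o(1)\), and have length at most \(\Len(\gamma)\).
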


\begin{proof}
For a unit-thickness representative, \(\Len(\gamma)=\Len(\gamma)/\Thi(\gamma)
\geq\Rop(K)\) by the definition of ropelength as an infimum.  The quadrisecant
estimates of Denne--Diao--Sullivan give the universal lower bound
\(15.66\) for every nontrivial knot type in their diameter normalization
\cite{DenneDiaoSullivan}, which is
\(\Rop(K)\geq31.32\) in the radius normalization fixed in
\Cref{rem:ropelength-normalization}.
\end{proof}

\subsection{Ideal surfaces in fixed exteriors}

The surface-side analogue is obtained by fixing a knot representative
\(\gamma\) and minimizing area among thick representatives of a fixed
essential surface class in \(E(\gamma)\).

\begin{definition}[\(\tau\)-ideal surface relative to a fixed exterior]
\label{def:tau-ideal-surface}
Let \(\gamma\) be a unit-thickness representative of \(K\), and let \([F]\) be
an isotopy or homotopy class of essential surfaces in \(E(\gamma)\).  For
\(\tau>0\), define
\[
  A_\tau([F];\gamma)
  =
  \inf\{
  \Area(F')\mid
  F'\in [F],\
  F'\subset E(\gamma),\
  F'\text{ is essential},\
  \Thi(F')\geq \tau
  \}.
\]
A surface \(F\in [F]\) is called a \(\tau\)-ideal surface relative to
\(\gamma\) if
\[
  \Thi(F)\geq \tau
  \quad\text{and}\quad
  \Area(F)=A_\tau([F];\gamma).
\]
The set of all such surfaces is denoted by
\[
  I_\tau([F];\gamma).
\]
\end{definition}

Thus an ideal surface is an area-minimizing thick representative of a fixed
essential surface class.  The thickness parameter \(\tau\) is part of the
definition: it records the scale at which the surface is required to be
visible as a bounded-geometry object.  If the infimum is not known to be
achieved, the set \(I_\tau([F];\gamma)\) may be empty and the ideal layer
should be interpreted as a limiting stratum.  In fact attainment is
automatic whenever the \(\tau\)-admissible class is nonempty
(\Cref{cor:unconditional-ideal-attainment} below), so emptiness of
\(I_\tau([F];\gamma)\) can occur only because no representative of \([F]\)
has relative thickness \(\geq\tau\) at all.

\begin{remark}[Existence versus least-area theory]
The existence theory for ideal knots is unconditional: by
Cantarella--Kusner--Sullivan, every knot or link type admits a ropelength
minimizer \cite{CantarellaKusnerSullivan}.  The surface side is governed by
classical least-area theory.  Freedman--Hass--Scott and Hass--Scott provide
least-area representatives for incompressible surfaces under suitable
geometric and boundary hypotheses \cite{FreedmanHassScott,HassScottLeastArea};
related existence and regularity results were developed by Meeks--Yau
\cite{MeeksYau}.  In the present notation, such a least-area representative
is an ideal surface in the classical sense.  If it is compact, embedded, and
has positive relative thickness, then it is a \(\tau\)-ideal surface for every
sufficiently small \(\tau>0\).  What is not automatic is existence for a
prescribed large value of \(\tau\).
\end{remark}

\begin{proposition}[Least-area representatives give small-\(\tau\) ideal surfaces]
\label{prop:least-area-gives-small-tau-ideal-surface}
Let \(\gamma\) be a unit-thickness representative of \(K\), and let \([F]\)
be an essential surface class in \(E(\gamma)\).  Suppose that \([F]\) admits a
least-area representative \(F_0\) in the appropriate relative sense, and that
\(F_0\) is a compact embedded surface with positive relative thickness.  Then
there exists \(\tau_0>0\) such that, for every \(0<\tau\leq\tau_0\),
\[
  F_0\in I_\tau([F];\gamma).
\]
In particular, the \(\tau\)-ideal surface layer is nonempty for all
sufficiently small \(\tau\).
\end{proposition}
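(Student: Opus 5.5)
The plan is to set \(\tau_0:=\Thi(F_0)>0\), which is positive by hypothesis and is a genuine maximum by \Cref{def:relative-surface-thickness}. Then, for each \(0<\tau\leq\tau_0\), I will check the two defining conditions of \(I_\tau([F];\gamma)\) in \Cref{def:tau-ideal-surface}: the thickness bound and the equality \(\Area(F_0)=A_\tau([F];\gamma)\).

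\emph{Thickness.} By \Cref{lem:thickness-scale-monotonicity}, \(\Thi(F_0)\geq\tau_0\) implies \(\Thi(F_0)\geq\tau\) for every \(\tau\leq\tau_0\). Membership \(F_0\in[F]\) and essentiality are part of the hypothesis that \(F_0\) is a least-area representative of the essential class \([F]\). Hence \(F_0\) is itself an admissible competitor in the infimum defining \(A_\tau([F];\gamma)\), so
\[
  A_\tau([F];\gamma)\leq\Area(F_0).
\]

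\emph{Reverse inequality.} Every competitor \(F'\) in the infimum lies in \([F]\), is properly embedded in \(E(\gamma)\), and is essential. The least-area property of \(F_0\), taken in the relative sense of the hypothesis (among representatives of \([F]\) with boundary on \(\partial E(\gamma)\)), gives \(\Area(F')\geq\Area(F_0)\). Taking the infimum over competitors yields \(A_\tau([F];\gamma)\geq\Area(F_0)\). Combined with the previous step, this gives equality and hence \(F_0\in I_\tau([F];\gamma)\).

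\emph{Main obstacle.} The only delicate point is whether the class over which \(F_0\) minimizes contains every \(\tau\)-admissible competitor. Competitors are \(C^{1,1}\) surfaces in the same isotopy (or homotopy) class, with boundary on the peripheral torus but possibly with different boundary curves. So I will read ``the appropriate relative sense'' as minimization over all maps or embeddings in \([F]\) whose boundary lies in \(\partial E(\gamma)\) (free boundary on the torus), which is the Freedman--Hass--Scott/Hass--Scott setting. If the minimization is instead over a homotopy class of maps, the containment is even easier, since isotopic embeddings are homotopic. The area functional is also finite and well defined on \(C^{1,1}\) competitors, so the comparison is legitimate. With that reading fixed, the argument is the short sandwich above, and nonemptiness of the layer for small \(\tau\) follows immediately.
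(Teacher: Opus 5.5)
Your proof is correct and follows the paper's argument. You set \(\tau_0=\Thi(F_0)\), use \Cref{lem:thickness-scale-monotonicity} to make \(F_0\) admissible for every \(\tau\leq\tau_0\), and conclude that minimizing area over all of \([F]\) implies minimizing over the smaller \(\tau\)-admissible subclass; your two-sided sandwich and your remark on which competitor class the least-area property must cover simply spell out what the paper's one-line containment argument takes for granted.
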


\begin{proof}
Since \(F_0\) is compact, embedded, and has positive relative thickness, set
\[
  \tau_0=\Thi(F_0)>0.
\]
Then \(F_0\) is admissible for every \(0<\tau\leq\tau_0\) by
\Cref{lem:thickness-scale-monotonicity}.  Since \(F_0\)
minimizes area among all representatives of the class \([F]\), it also
minimizes area among the smaller class of representatives satisfying
\(\Thi(F')\geq\tau\).  Hence
\[
  \Area(F_0)=A_\tau([F];\gamma),
\]
and therefore \(F_0\in I_\tau([F];\gamma)\).
\end{proof}

\begin{remark}
The conclusion is deliberately a small-\(\tau\) statement.  The thickness
scale obtained from a least-area representative depends on the representative
and on the metric geometry of the knot exterior.  Thus least-area theory does
not imply nonemptiness of \(I_\tau([F];\gamma)\) for an arbitrarily prescribed
large value of \(\tau\).
\end{remark}

\subsection{Ideal surface layers over \texorpdfstring{\(Y_\Lambda(K)\)}{Y Lambda(K)}}

One can also allow the knot representative to vary while remaining inside a
ropelength level \(Y_\Lambda(K)\).  Define
\[
  A_\tau([F];\Lambda)
  =
  \inf_{\gamma\in Y_\Lambda(K)}
  A_\tau([F];\gamma).
\]
The corresponding ideal surface layer is
\[
  I_{\Lambda,\tau}([F];K)
  =
  \left\{
  (\gamma,F)
  \ \middle|
  \gamma\in Y_\Lambda(K),\
  F\in [F],\
  \Thi(F)\geq\tau,\
  \Area(F)=A_\tau([F];\Lambda)
  \right\}.
\]

When \(\gamma\) varies, the notation \([F]\) should be interpreted
topologically: the surface class is transported under pair isotopies, or
equivalently it represents a specified topological surface class in the knot
exterior of \(K\).  This convention is the same one used in the filtered pair
space \(\calZ_{\Lambda,\Delta,\tau}^{\mathrm{ess}}(K)\).

\subsection{Ideal pairs}

The simultaneous bottom layer is obtained by requiring both the knot and the
surface to be ideal.

\begin{definition}[\(\tau\)-ideal pair stratum]
\label{def:tau-ideal-pair}
Let \(K\) be a knot type and let \([F]\) be an essential surface class in the
exterior.  For \(\tau>0\), define
\[
  I_\tau(K,[F])
  =
  \left\{
  (\gamma,F)
  \ \middle|
  \gamma\in I(K),\
  F\in I_\tau([F];\gamma)
  \right\}.
\]
When nonempty, this stratum consists of pairs in which the knot representative
is ropelength-minimizing and the surface is area-minimizing among
\(\tau\)-thick representatives of its class.
\end{definition}

\begin{corollary}[Small-\(\tau\) ideal pairs]
\label{cor:small-tau-ideal-pairs}
Let \(\gamma\in I(K)\).  Suppose that an essential surface class
\([F]\subset E(\gamma)\) admits a compact embedded least-area representative
\(F_0\) with positive relative thickness.  Then, for every sufficiently small
\(\tau>0\),
\[
  (\gamma,F_0)\in I_\tau(K,[F]).
\]
In particular, the \(\tau\)-ideal pair stratum is nonempty for all sufficiently
small \(\tau\) for this chosen ideal knot exterior and surface class.
\end{corollary}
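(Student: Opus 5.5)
This corollary follows directly from \Cref{prop:least-area-gives-small-tau-ideal-surface} applied in the fixed ideal exterior \(E(\gamma)\), combined with \Cref{def:tau-ideal-pair}.

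First, the knot side is immediate. Since \(\gamma\in I(K)\), we have \(\Thi(\gamma)=1\) and \(\Len(\gamma)=\Rop(K)\). In particular \(\gamma\) is a unit-thickness representative, so \(E(\gamma)\) is a legitimate fixed exterior in the sense of \Cref{not:ambient-metric}. Thus the hypotheses of \Cref{prop:least-area-gives-small-tau-ideal-surface} make sense for this \(\gamma\), without any rescaling.

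Second, for the surface side, set \(\tau_0=\Thi(F_0)>0\). By \Cref{lem:thickness-scale-monotonicity}, \(\Thi(F_0)\geq\tau\) holds for every \(0<\tau\leq\tau_0\). Hence \(F_0\) is admissible in the infimum defining \(A_\tau([F];\gamma)\). Two further facts need checking:
\begin{itemize}
\item \(F_0\in[F]\), which is given.
\item \(F_0\) is essential. This holds because essentiality is an isotopy invariant and \([F]\) is by hypothesis a class of essential surfaces.
\end{itemize}
Since \(F_0\) minimizes area over the whole class, it also minimizes over the smaller \(\tau\)-admissible subclass. Therefore \(\Area(F_0)=A_\tau([F];\gamma)\), which gives \(F_0\in I_\tau([F];\gamma)\). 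This is exactly the content of \Cref{prop:least-area-gives-small-tau-ideal-surface}, which I would simply invoke.

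Finally, \Cref{def:tau-ideal-pair} requires precisely \(\gamma\in I(K)\) and \(F_0\in I_\tau([F];\gamma)\), so \((\gamma,F_0)\in I_\tau(K,[F])\) for all \(0<\tau\leq\tau_0\). Nonemptiness of the stratum follows at once.

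The only point requiring any care is bookkeeping of the class \([F]\). In \Cref{def:tau-ideal-pair}, \([F]\) is a class in the exterior of \(K\), while here it is specified inside the fixed exterior \(E(\gamma)\). I would note that, for this chosen \(\gamma\), the transported topological class restricts to the given class in \(E(\gamma)\), following the convention stated for \(I_{\Lambda,\tau}([F];K)\). Beyond this identification, no genuine obstacle arises. The threshold \(\tau_0\) depends on \(F_0\) and \(\gamma\), which is why the statement is phrased for this chosen ideal exterior and class.
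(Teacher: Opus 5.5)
Your proposal is correct and follows the paper's argument: apply the least-area proposition in the fixed exterior $E(\gamma)$ to get $F_0\in I_\tau([F];\gamma)$ for $0<\tau\leq\Thi(F_0)$, then use $\gamma\in I(K)$ and the definition of $I_\tau(K,[F])$. Your extra checks (essentiality of $F_0$, scale monotonicity, identifying the class $[F]$ in $E(\gamma)$) just spell out what that proposition and its proof already contain.
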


\begin{proof}
Apply Proposition~\ref{prop:least-area-gives-small-tau-ideal-surface} in the
exterior \(E(\gamma)\).  Since \(\gamma\in I(K)\), the resulting pair belongs
to \(I_\tau(K,[F])\).
\end{proof}

The ideal pair stratum may still be empty for a prescribed thickness scale
\(\tau\), for a prescribed surface class, or for a different ideal knot
representative.  This emptiness is meaningful: it indicates that a
ropelength-minimizing representative of \(K\) does not provide enough
geometric room for the class \([F]\) to be realized as a \(\tau\)-thick ideal
surface.  In such cases the relevant surface may appear only after the
ropelength level is increased, or after the surface thickness scale is
decreased.

\begin{proposition}[Conditional existence of ideal pairs at prescribed \(\tau\)]
\label{prop:conditional-existence-prescribed-tau}
Let \(K\) be a knot type and let \([F]\) be an essential surface class.  Assume
that there exists an ideal knot representative \(\gamma\in I(K)\) such that the
class \([F]\) in \(E(\gamma)\) admits at least one representative satisfying
\(\Thi(F)\geq\tau\).  Assume furthermore that the constrained class
\[
  \mathcal A_\tau([F];\gamma)
  =
  \{F'\in [F]\mid \Thi(F')\geq\tau\}
\]
is compact enough for the area functional to attain its infimum on it; for
example, this may be taken as an explicit hypothesis of compactness in the
chosen \(C^{1,1}\) topology.  Then the \(\tau\)-ideal pair stratum
\[
  I_\tau(K,[F])
\]
is nonempty.
\end{proposition}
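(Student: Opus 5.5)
The proof is a direct minimization in the single fixed exterior \(E(\gamma)\) supplied by the hypothesis. Everything needed is already packaged in the fixed-exterior compactness and stability results, so the argument runs inside the admissible class \(\mathcal A_\tau([F];\gamma)\).

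First, the class is nonempty by hypothesis, so \(A_\tau([F];\gamma)\) is a finite number. Choose a minimizing sequence \(F_n\in\mathcal A_\tau([F];\gamma)\), each essential, with \(\Thi(F_n)\geq\tau\) and \(\Area(F_n)\downarrow A_\tau([F];\gamma)\). In particular all areas are bounded by some \(\Delta\).

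Second, extract a limit. The explicit compactness hypothesis would already hand us a minimizer, but I would point out that it is in fact automatic. Since every component of \(F_n\) is essential, \Cref{lem:anchoring} shows the family is anchored in the fixed ball \(B_*\). \Cref{lem:compactness-thick-surfaces} then gives a subsequence converging locally graphically in \(C^1\), uniformly up to the boundary, to a proper \(C^{1,1}\) surface \(F_\infty\subset E(\gamma)\). This limit satisfies \(\Thi(F_\infty)\geq\tau\) and, by lower semicontinuity of area,
\[
  \Area(F_\infty)\leq\liminf_n\Area(F_n)=A_\tau([F];\gamma).
\]
If one prefers to rely literally on the stated hypothesis, this step is simply the assumed attainment of the infimum on \(\mathcal A_\tau([F];\gamma)\).

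Third, and this is the main point, \(F_\infty\) must lie in the class \([F]\); the compactness lemma explicitly does not assert that the limit is essential. I would handle this as in the proof of \Cref{cor:attainment-visibility}. For large \(n\), the pairs \((\gamma,F_n)\) and \((\gamma,F_\infty)\) both have \(\Thi(\gamma)=1\) and surface thickness at least \(\tau\). Their surfaces are arbitrarily close in position and tangent planes, and their boundary curves converge in \(C^1\) by the construction of the limit. So \Cref{lem:pair-stability}, with the core fixed, gives an ambient isotopy of \(E(\gamma)\) carrying \(F_n\) to \(F_\infty\). Hence \(F_\infty\in[F]\), and \(F_\infty\) is essential because essentiality is an isotopy invariant.

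Finally, \(F_\infty\) is admissible for the infimum, so \(\Area(F_\infty)\geq A_\tau([F];\gamma)\), which forces equality. Thus \(F_\infty\in I_\tau([F];\gamma)\). Since \(\gamma\in I(K)\), \Cref{def:tau-ideal-pair} gives \((\gamma,F_\infty)\in I_\tau(K,[F])\), so the stratum is nonempty. If \([F]\) is meant as a homotopy class rather than an isotopy class, the same argument applies, since isotopy implies homotopy.
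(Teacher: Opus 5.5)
Your argument is correct, but it is not the route the paper takes for this proposition.

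\textbf{The paper's proof.} It is purely formal. It takes the ideal representative \(\gamma\in I(K)\) supplied by the hypothesis and notes that \(\mathcal A_\tau([F];\gamma)\) is nonempty. It then invokes the assumed attainment hypothesis directly to obtain \(F\in\mathcal A_\tau([F];\gamma)\) with \(\Area(F)=A_\tau([F];\gamma)\). That \(F\) lies in \(I_\tau([F];\gamma)\) by definition, so \((\gamma,F)\in I_\tau(K,[F])\).

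\textbf{Your proof.} Your main line instead discharges the compactness hypothesis altogether. You anchor the minimizing sequence via \Cref{lem:anchoring}, extract a \(\tau\)-thick limit via \Cref{lem:compactness-thick-surfaces}, and use lower semicontinuity of area. You then apply the fixed-core case of \Cref{lem:pair-stability} to place the limit back in \([F]\). That last step is the one genuinely needed, because the compactness lemma gives neither essentiality nor class membership.

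\textbf{Comparison.} This is precisely the argument the paper gives immediately afterwards for \Cref{cor:unconditional-ideal-attainment}. So what you have actually proved is that stronger statement, in which only nonemptiness of the \(\tau\)-admissible class is assumed. The paper's one-line proof keeps the proposition logically independent of the compactness and stability machinery. Your route removes the attainment hypothesis, at the cost of invoking that machinery.

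\textbf{A small redundancy.} If you rely literally on the stated hypothesis in your second step, your third step becomes unnecessary. The attained minimizer is already an element of \(\mathcal A_\tau([F];\gamma)\subset[F]\).
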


\begin{proof}
Choose \(\gamma\in I(K)\) satisfying the hypotheses.  The admissible class
\(\mathcal A_\tau([F];\gamma)\) is nonempty.  By the stated compactness or
attainment hypothesis, there is a representative
\(F\in\mathcal A_\tau([F];\gamma)\) with
\[
  \Area(F)=\inf\{\Area(F')\mid F'\in\mathcal A_\tau([F];\gamma)\}.
\]
Thus \(F\in I_\tau([F];\gamma)\), and therefore \((\gamma,F)\in I_\tau(K,[F])\).
\end{proof}

\begin{remark}[Why the prescribed-\(\tau\) statement is conditional]
\label{rem:why-conditional}
The added attainment hypothesis is not a formal consequence of
Freedman--Hass--Scott or Hass--Scott.  Those least-area theorems give
unconstrained least-area representatives under their hypotheses; they do not
by themselves solve the nonconvex variational problem obtained by imposing a
positive lower bound \(\Thi(F)\geq\tau\).  Consequently the proposition is a
conditional statement at a prescribed surface-thickness scale.  What follows
unconditionally from an embedded least-area representative \(F_0\) of positive
relative thickness is the small-thickness conclusion of
\Cref{prop:least-area-gives-small-tau-ideal-surface}: if
\(\tau\leq\Thi(F_0)\), then \(F_0\) is \(\tau\)-admissible and realizes the same
area minimum among the larger unconstrained class.  A possible limiting
procedure as \(\tau\to0\) is therefore to study the nonempty layers
\(I_\tau([F];\gamma)\) for sufficiently small \(\tau\), while treating large
prescribed values of \(\tau\) as a separate constrained compactness problem.
\end{remark}

In fact the attainment hypothesis of
\Cref{prop:conditional-existence-prescribed-tau} is automatic: positive reach is
itself the compactness-providing condition, so the constrained minimization
always attains its infimum.

\begin{corollary}[Attainment for nonempty prescribed-thickness classes]
\label{cor:unconditional-ideal-attainment}
Let \(\gamma\) be a unit-thickness representative of \(K\) and \([F]\) an
essential surface class in \(E(\gamma)\).  If the \(\tau\)-admissible class
\[
  \mathcal A_\tau([F];\gamma)=\{F'\in[F]\mid \Thi(F')\geq\tau\}
\]
is nonempty, then the infimum \(A_\tau([F];\gamma)\) is attained, so
\(I_\tau([F];\gamma)\neq\varnothing\).  In particular, if \(\gamma\in I(K)\)
and \(\mathcal A_\tau([F];\gamma)\neq\varnothing\), then
\(I_\tau(K,[F])\neq\varnothing\).  This is an attainment statement, not an
existence statement: nonemptiness of the \(\tau\)-admissible class at the
prescribed \(\tau\) remains a hypothesis, and only the compactness
hypothesis of \Cref{prop:conditional-existence-prescribed-tau} is removed.
\end{corollary}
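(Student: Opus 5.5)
The plan is to run the direct method of the calculus of variations on the nonempty class \(\mathcal A_\tau([F];\gamma)\), with \Cref{lem:compactness-thick-surfaces} supplying compactness and \Cref{lem:pair-stability} supplying closedness of the isotopy class. Let \(A=A_\tau([F];\gamma)\). Nonemptiness gives a representative \(F^{(0)}\) with finite area, so \(A<\infty\). Choose a minimizing sequence \(F_n\in\mathcal A_\tau([F];\gamma)\) with \(\Area(F_n)\downarrow A\). Every \(F_n\) then satisfies \(\Area(F_n)\leq\Delta:=\Area(F^{(0)})\) and \(\Thi(F_n)\geq\tau\). Since every \(F_n\) is essential, \Cref{lem:anchoring} anchors the family in the fixed ball \(B_*\). \Cref{lem:compactness-thick-surfaces} then yields a subsequence converging locally graphically in \(C^1\), uniformly up to the boundary, to a compact properly embedded \(C^{1,1}\) surface \(F_\infty\subset E(\gamma)\). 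The limit satisfies \(\Thi(F_\infty)\geq\tau\) and \(\Area(F_\infty)\leq\liminf\Area(F_n)=A\).

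The key step is to show \(F_\infty\in[F]\), which also makes it essential. The compactness lemma explicitly does not assert this. I would argue as in the proof of \Cref{thm:smooth-finiteness-fixed-exterior} and of \Cref{cor:attainment-visibility}, and compare \(F_\infty\) directly with \(F_n\) for large \(n\). Both pairs \((\gamma,F_n)\) and \((\gamma,F_\infty)\) have \(\Thi(\gamma)=1\) and surface thickness at least \(\tau\), so the stability lemma applies to them. The cores coincide, which gives clause (a) trivially. Hausdorff convergence together with \(C^1\) convergence of the graph functions gives clause (b) with arbitrarily small errors. Clause (d) of \Cref{def:graphical-convergence} gives \(C^1\) convergence of the boundary parametrizations, hence clause (c).

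Consequently, for large \(n\), \Cref{lem:pair-stability} produces an ambient isotopy carrying \(F_n\) to \(F_\infty\) through proper pairs. In Step 1 of that proof the cores are equal, so the tube isotopy can be taken to be the identity and the isotopy stays in \(E(\gamma)\). Thus \(F_\infty\) is isotopic to \(F_n\) in \(E(\gamma)\), and hence \(F_\infty\in[F]\). Essentiality is an isotopy invariant, so \(F_\infty\) is essential.

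If \([F]\) denotes a homotopy class rather than an isotopy class, the same isotopy still shows \(F_\infty\in[F]\). It follows that \(F_\infty\in\mathcal A_\tau([F];\gamma)\), so \(\Area(F_\infty)\geq A\), and combined with the bound above this gives \(\Area(F_\infty)=A\). Therefore \(F_\infty\in I_\tau([F];\gamma)\). For the ideal-pair clause, take \(\gamma\in I(K)\) and use \Cref{def:tau-ideal-pair} directly.

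The main obstacle is the identification of the class of the limit, because essentiality is not closed under \(C^1\) limits in general. The remedy is never to argue about the limit abstractly, but to route the comparison through the pair-stability lemma, whose hypotheses require thickness only of the two compared surfaces. Both of those surfaces have it, and the limit has it at the undistorted scale \(\tau\) by the compactness lemma. A minor point to check is that lower semicontinuity of area and the thickness bound both hold at the same scale \(\tau\). The compactness lemma already provides exactly this.
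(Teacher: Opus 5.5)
Your proposal is correct and follows essentially the same route as the paper's proof. Both take an area-minimizing sequence in \(\mathcal A_\tau([F];\gamma)\), anchor it by \Cref{lem:anchoring}, extract a \(C^1\) limit of thickness at least \(\tau\) from \Cref{lem:compactness-thick-surfaces}, identify the limit's class via the fixed-core case of \Cref{lem:pair-stability}, and close with the squeeze \(A_\tau([F];\gamma)\leq\Area(F_\infty)\leq A_\tau([F];\gamma)\); your extra points (a uniform area bound from one fixed admissible representative, and the trivial tube isotopy when the cores coincide) make explicit what the paper leaves implicit.
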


\begin{proof}
Let \(F_n\in\mathcal A_\tau([F];\gamma)\) be an area-minimizing sequence, so
\(\Area(F_n)\to A_\tau([F];\gamma)\leq\Area(F_1)<\infty\).  The surfaces are
essential, hence the family is anchored by \Cref{lem:anchoring}, and by
\Cref{lem:compactness-thick-surfaces} a subsequence converges in \(C^1\) to
\(F_\infty\) with \(\Thi(F_\infty)\geq\tau\) and
\(\Area(F_\infty)\leq A_\tau([F];\gamma)\).  The \(C^1\)-convergence,
together with the common thickness bound and the fixed peripheral torus,
places \(F_n\) and \(F_\infty\) under the fixed-core case of
\Cref{lem:pair-stability} for all large \(n\); hence they are ambient
isotopic.  Therefore \(F_\infty\in[F]\), \(F_\infty\) is
essential, and \(F_\infty\in\mathcal A_\tau([F];\gamma)\), so
\(\Area(F_\infty)\geq A_\tau([F];\gamma)\).  Therefore
\(\Area(F_\infty)=A_\tau([F];\gamma)\) and \(F_\infty\in I_\tau([F];\gamma)\).
If \(\gamma\in I(K)\) the resulting pair lies in \(I_\tau(K,[F])\).
\end{proof}

\begin{remark}[Positive reach supplies, rather than obstructs, compactness]
\label{rem:reach-gives-compactness}
This resolves the tension noted in \Cref{rem:why-conditional}.  Imposing
\(\Thi(F')\geq\tau\) does not make the variational problem intractable; on the
contrary, positive reach is exactly the condition that supplies
\(C^{1,1}\)-compactness (\Cref{lem:compactness-thick-surfaces}), so the
constrained minimization is better behaved than the unconstrained least-area
problem.  What is genuinely not automatic is the reverse implication --- that an
\emph{unconstrained} least-area representative attains a prescribed thickness
\(\tau\) --- which is a question about the intrinsic geometry of the exterior.
\end{remark}

\subsection{The classical limit \texorpdfstring{\(\tau\to0\)}{tau to 0}}
\label{subsec:classical-limit}

The thickness parameter \(\tau\) is a geometric regularization of the area
functional.  It is important that, as \(\tau\to0\), the filtered theory recovers
the classical least-area theory of essential surfaces, so that the
\(\tau\)-filtration is a genuine deformation of the classical picture rather than
a separate object.

Recall from \Cref{def:tau-ideal-surface} that \(A_\tau([F];\gamma)\) is
the infimum of area over essential representatives of \([F]\) with
\(\Thi(F')\geq\tau\).  Since a smaller thickness threshold enlarges the
admissible class, \(\tau\mapsto A_\tau([F];\gamma)\) is nondecreasing.  Write
\[
  A_{\mathrm{sm}}([F];\gamma)
  =
  \inf\{\Area(F')\mid F'\in[F]\text{ essential, embedded }C^{1,1},\
  \operatorname{reach}(F')>0\}
\]
for the least area among all positive-reach (equivalently, smoothly embedded
bounded-geometry) representatives.

\begin{proposition}[Classical limit of the area filtration]
\label{prop:classical-limit-area}
For every essential surface class \([F]\) in \(E(\gamma)\),
\[
  \lim_{\tau\to0^+}A_\tau([F];\gamma)
  =
  \inf_{\tau>0}A_\tau([F];\gamma)
  =
  A_{\mathrm{sm}}([F];\gamma).
\]
If moreover \([F]\) admits a compact embedded least-area representative \(F_0\)
of positive relative thickness, then \(A_{\mathrm{sm}}([F];\gamma)=\Area(F_0)\)
is the classical least area, and
\[
  A_\tau([F];\gamma)=\Area(F_0)
  \qquad\text{for all }0<\tau\leq\Thi(F_0),
\]
so the filtration is eventually constant as \(\tau\to0\) and attains the
classical value.
\end{proposition}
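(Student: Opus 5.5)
The first equality follows from monotonicity. By \Cref{lem:thickness-scale-monotonicity}, lowering \(\tau\) enlarges the admissible class \(\mathcal A_\tau([F];\gamma)\), so \(\tau\mapsto A_\tau([F];\gamma)\) is nondecreasing on \((0,\infty)\), with values in \([0,\infty]\). A nondecreasing function has right limit at \(0^+\) equal to its infimum over \(\tau>0\), which gives \(\lim_{\tau\to0^+}A_\tau=\inf_{\tau>0}A_\tau\).

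For the second equality we identify the union of the admissible classes with the class defining \(A_{\mathrm{sm}}\):
\[
  \bigcup_{\tau>0}\mathcal A_\tau([F];\gamma)
  =
  \{F'\in[F]\text{ essential},\ \Thi(F')>0\}.
\]
Since the infimum over a union is the infimum of the infima, this yields \(\inf_\tau A_\tau=\inf\{\Area(F')\mid F'\in[F]\text{ essential},\ \Thi(F')>0\}\). It remains to compare this with \(A_{\mathrm{sm}}\), whose class is defined by \(\operatorname{reach}(F')>0\) rather than \(\Thi(F')>0\).

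\begin{itemize}
\item \emph{The inequality \(\geq A_{\mathrm{sm}}\).} Clause~(i) of \Cref{def:relative-surface-thickness} gives \(\operatorname{reach}(\overline{F'})\geq\Thi(F')\), so every \(F'\) with \(\Thi(F')>0\) lies in the class defining \(A_{\mathrm{sm}}\).
\item \emph{The inequality \(\leq A_{\mathrm{sm}}\).} This is the main obstacle. A compact embedded \(C^{1,1}\) surface of positive reach, meeting \(T\) properly, need not satisfy the collar and clearance clauses (iv)--(v) at any scale, for instance if it meets \(T\) tangentially. I would read the parenthetical ``equivalently, smoothly embedded bounded-geometry'' in the definition of \(A_{\mathrm{sm}}\) as building in transversality to \(T\).
\end{itemize}

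Under that reading the inequality \(\leq A_{\mathrm{sm}}\) follows from the closing sentence of clause~(v): an arbitrarily small collar isotopy of a transverse proper surface produces positive relative thickness. Concretely, given \(F'\) in the \(A_{\mathrm{sm}}\) class and \(\eta>0\), I would perform a \(C^1\)-small isotopy supported in a thin collar near \(\partial F'\). This changes the area by less than \(\eta\), since area is continuous under \(C^1\)-small perturbations supported on a set of small area. It keeps the surface in \([F]\) and essential, since an isotopy preserves both. It yields a surface with \(\Thi>0\) and hence admissible for some \(\tau\). Letting \(\eta\to0\) gives the inequality. If the reading above is not intended, I would state transversality as a hypothesis of the class defining \(A_{\mathrm{sm}}\).

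For the second assertion, let \(F_0\) be a least-area representative with \(\Thi(F_0)>0\). Then \(F_0\) lies in the \(A_{\mathrm{sm}}\) class and minimizes area over all of \([F]\), so \(A_{\mathrm{sm}}=\Area(F_0)\). For \(0<\tau\leq\Thi(F_0)\), \(F_0\) is \(\tau\)-admissible. This gives \(A_\tau\leq\Area(F_0)\), while \(A_\tau\geq\Area(F_0)\) because \(F_0\) minimizes over the larger class. This is exactly the argument of \Cref{prop:least-area-gives-small-tau-ideal-surface}. Hence \(A_\tau([F];\gamma)=\Area(F_0)\) for all \(0<\tau\leq\Thi(F_0)\), so the filtration is eventually constant as \(\tau\to0\) and attains the classical value.
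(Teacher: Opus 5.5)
Your proposal is correct and follows the paper's proof essentially step for step. You use monotonicity of $\tau\mapsto A_\tau([F];\gamma)$ for the first equality, clause~(i) of \Cref{def:relative-surface-thickness} for $\inf_{\tau>0}A_\tau\geq A_{\mathrm{sm}}$, the small collar adjustment described in that definition for the reverse inequality, and the argument of \Cref{prop:least-area-gives-small-tau-ideal-surface} for the second assertion. The paper also leaves your tangency caveat implicit, reading the parenthetical in the definition of $A_{\mathrm{sm}}$ as you do, and no extra hypothesis is needed: first push $F'$ slightly off $T$ near $\partial F'$ along a smooth vector field transverse to $T$, which is a proper isotopy of arbitrarily small area cost, after which the collar adjustment applies.
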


\begin{proof}
Monotonicity gives
\(\lim_{\tau\to0^+}A_\tau=\inf_{\tau>0}A_\tau=:L\).  Each admissible class in the
definition of \(A_\tau\) consists of positive-reach representatives, so
\(A_\tau\geq A_{\mathrm{sm}}\) for every \(\tau\), whence \(L\geq
A_{\mathrm{sm}}\).  Conversely, let \(F'\) be a positive-reach essential
representative and let \(\epsilon>0\).  By the final paragraph of \Cref{def:relative-surface-thickness}, an
arbitrarily small
collar adjustment replaces \(F'\) by an isotopic representative \(F''\) with
\(\Area(F'')\leq\Area(F')+\epsilon\) satisfying all five clauses of the
definition at some scale \(\tau'>0\); hence, by
\Cref{lem:thickness-scale-monotonicity}, \(F''\) is admissible for
every \(\tau\leq\tau'\), so \(A_\tau\leq\Area(F')+\epsilon\) for small
\(\tau\) and hence
\(L\leq\Area(F')+\epsilon\); taking the infimum over \(F'\) and
\(\epsilon\to0\) gives \(L\leq
A_{\mathrm{sm}}\).  Thus \(L=A_{\mathrm{sm}}\).

If a compact embedded least-area representative \(F_0\) has
\(\Thi(F_0)=\tau_0>0\), then by
\Cref{prop:least-area-gives-small-tau-ideal-surface} it is \(\tau\)-admissible
and area-minimizing for all \(\tau\leq\tau_0\), so \(A_\tau([F];\gamma)=\Area(F_0)\)
there; and \(A_{\mathrm{sm}}=\Area(F_0)\) since \(F_0\) already minimizes area
among all representatives.
\end{proof}

\begin{remark}[The visible Kakimizu layer exhausts the complex as \(\tau\to0\)]
\label{rem:kakimizu-exhaustion-tau}
The same monotonicity applies to visibility.  The notation
\(\mathrm{MS}_{\Lambda,\Delta,	au}(K)\) is defined in
\Cref{sec:kakimizu}.  By
\Cref{prop:visibility-monotonicity} the level
\(\beta_{\mathfrak S}(K;\Delta,\tau)\) is nondecreasing in \(\tau\), so
\(\lim_{\tau\to0^+}\beta_{\mathfrak S}(K;\Delta,\tau)\) is the thickness-free
birth level, the least ropelength at which a positive-reach representative of
\(\mathfrak S\) of area at most \(\Delta\) appears in some exterior over
\(Y_\Lambda(K)\).  Correspondingly the visible Kakimizu layer
\(\mathrm{MS}_{\Lambda,\Delta,\tau}(K)\) increases as \(\tau\to0\) and as
\(\Delta,\Lambda\to\infty\), exhausting the full complex \(\mathrm{MS}(K)\)
precisely when every minimal genus Seifert class has a positive-reach
representative realizable over \(Y_\Lambda(K)\) within area \(\Delta\).  Thus the
infinite classical objects are recovered as the \(\tau\to0\), \(\Delta\to\infty\)
limits of the finite filtered layers, and the finiteness of each layer is exactly
the assertion that these limits are approached one finite stratum at a time.
\end{remark}

\subsection{From ideal pairs to trade-off frontiers}

The trade-off frontier measures precisely this failure or success of
simultaneous ideality at a prescribed surface thickness scale.  Least-area
theory supplies ideal surfaces for sufficiently small \(\tau\) under the
regularity assumptions above, but it does not determine how much ropelength
room is needed at a fixed \(\tau\).  The frontier records this additional
ropelength cost.  For a fixed class \([F]\) and thickness scale \(\tau\), the
condition
\[
  \Delta_{\min}^{\tau}(\Rop(K);[F])<\infty
\]
means that the surface class is visible over the ideal knot stratum.  If the
corresponding ideal pair stratum is empty, then one asks how far \(\Lambda\)
must be increased beyond \(\Rop(K)\) before the class appears with controlled
area and thickness.

Thus the conceptual order is the following.
\begin{center}
\begin{tikzpicture}[x=1cm,y=1cm,
  stage/.style={draw, rounded corners=2pt, align=center, inner sep=4pt,
                font=\small},
  lbl/.style={font=\scriptsize, text=black!60, align=center},
  arrow/.style={-{Latex[length=2mm]}, thick}]
  \node[stage] (ik) at (0,0) {ideal knot\\$\Lambda=\Rop(K)$};
  \node[stage] (is) at (3.55,0) {ideal surface\\least area at fixed $\tau$};
  \node[stage] (ip) at (7.25,0) {ideal pair\\both optimal at once};
  \node[stage, double] (fr) at (11.15,0)
    {knot--surface\\trade-off frontier};
  \draw[arrow] (ik) -- (is);
  \draw[arrow] (is) -- (ip);
  \draw[arrow] (ip) -- (fr);
  \node[lbl] at (9.15,0.78) {extra ropelength room if empty};
\end{tikzpicture}
\end{center}
This is the main optimization viewpoint of the paper.  The geometric
mechanism behind a positive frontier is pictured in
\Cref{fig:tradeoff-corridor}.

\begin{figure}[t]
\centering
\begin{tikzpicture}[x=1cm,y=1cm,>=Latex,
  panel/.style={draw, rounded corners=2pt},
  title/.style={font=\footnotesize\bfseries},
  note/.style={font=\scriptsize, align=center}]
  \draw[panel] (0,-0.5) rectangle (6.4,4.3);
  \node[title] at (3.2,4.02) {ideal representative:
    $\Len=\Rop(K)$};
  \foreach \xc/\yc in {2.15/2.35, 4.25/2.35}{
    \draw[dashed, thin] (\xc,\yc) circle (1.02);
    \fill[black!15] (\xc,\yc) circle (0.5);
    \draw[thick] (\xc,\yc) circle (0.5);
    \fill (\xc,\yc) circle (1.1pt);
  }
  \draw[black!10, line width=20pt] (3.2,3.75) -- (3.2,0.95);
  \draw[very thick, dashed]
    (3.2,3.9) .. controls (3.2,3.3) and (3.2,2.9) .. (3.2,2.35)
    .. controls (3.2,1.8) and (3.2,1.4) .. (3.2,0.8);
  \node[note, fill=white, inner sep=1pt] at (3.2,0.35)
    {no room for the $\tau$-collar\\clearance of the sheet};
  \node[note] at (0.95,3.65) {$F$?};
  \draw[->, thin] (1.15,3.5) -- (2.95,3.35);
  \node[note] at (3.2,-0.25)
    {class invisible: ideal pair stratum empty};
  \draw[panel] (7.0,-0.5) rectangle (13.4,4.3);
  \node[title] at (10.2,4.02)
    {slack representative: $\Len=\Rop(K)+\delta$};
  \foreach \xc/\yc in {8.55/2.35, 11.85/2.35}{
    \draw[dashed, thin] (\xc,\yc) circle (1.02);
    \fill[black!15] (\xc,\yc) circle (0.5);
    \draw[thick] (\xc,\yc) circle (0.5);
    \fill (\xc,\yc) circle (1.1pt);
  }
  \draw[black!10, line width=20pt] (10.2,3.75) -- (10.2,0.95);
  \draw[very thick]
    (10.2,3.9) .. controls (10.2,3.0) .. (10.2,2.35)
    .. controls (10.2,1.7) .. (10.2,0.8);
  \draw[<->, thin] (10.55,1.30) -- (10.90,1.30);
  \node[note, anchor=west] at (10.90,1.30) {$\asymp\tau$};
  \node[note] at (9.35,3.65) {$F$};
  \draw[->, thin] (9.5,3.5) -- (10.05,3.35);
  \node[note] at (10.2,-0.25)
    {class visible: $\beta_{\mathfrak S}(K;\Delta,\tau)\leq\Rop(K)+\delta$};
\end{tikzpicture}
\caption{Why the knot--surface trade-off frontier can be positive, in
cross-section (schematic).  Left: over an ideal (length-minimizing)
representative, two strands of the unit-thickness tube may leave a corridor
narrower than the scale needed by a \(\tau\)-thick representative of a given
essential class, whose sheets require clearance comparable to \(\tau\); the
ideal pair stratum of that class is then empty.  Right: spending additional
ropelength \(\delta\) separates the strands, the corridor opens to width at
least comparable to \(\tau\), and the class becomes visible.  The least such
\(\delta\) is the additional ropelength room recorded by the frontier.}
\label{fig:tradeoff-corridor}
\end{figure}

\begin{remark}[Programmatic statement: Ideal pair principle]
The essential surface theory of a knot type should be studied not only by
asking which essential surfaces exist in its exterior, but also by asking
whether they occur over ideal knot representatives as ideal surfaces.  When
they do not, the knot--surface trade-off frontier records the additional
ropelength room needed for their appearance.
\end{remark}

\begin{remark}[Closed and relative settings]
The classical least-area theorem is usually stated for incompressible
surfaces in suitable Riemannian 3-manifolds.  Knot exteriors have boundary,
and many surfaces considered in this paper are properly embedded.  In such
cases one should use the appropriate relative or boundary-controlled version,
or pass to a doubled or rounded setting when applicable.  The role of
least-area theory here is to justify the interpretation of the area
filtration as a genuine geometric filtration on essential surface classes,
rather than merely as a formal cutoff.
\end{remark}

\section{Ideal merge scales and admissible-component persistence}
\label{sec:ideal-merge-pair-persistence}

The preceding section defines ideal knots, ideal surfaces, and ideal pairs as
bottom layers of optimization problems.  The deformation-persistence viewpoint
of the ideal stratum of knot types adds a second layer of structure: after
bottom-layer objects are born, one asks when distinct admissible components
merge as the ropelength allowance is increased.  In the knot-only setting this
leads to admissible-component persistence, ideal merge scales, and the pure
merge Vietoris--Rips complex \cite{OzawaIdealStratum}.  We now record the
corresponding pair-space version.

\subsection{Two nearby graph constructions}
\label{subsec:intersection-graphs-and-graphics}

Two classical graph constructions are adjacent to, but distinct from, the
persistence objects of this section.  First, Gordon--Luecke type arguments
place graphs of intersection on punctured surfaces and use their faces,
parity rules, and Scharlemann cycles to constrain the distance between filling
slopes \cite{GordonCombinatorial,GordonLueckeToroidal}.  The boundary-slope
visibility and writhe-window results below instead constrain each visible
slope from the geometry of a single thick surface.  A combined theory would
therefore contain both an absolute geometric window for the slopes and a
relative combinatorial constraint coming from their intersection graphs.

Second, the Rubinstein--Scharlemann graphic is a discriminant in a
parameter square for two sweepouts, obtained by Cerf-theoretic analysis of
nongeneric tangencies \cite{RubinsteinScharlemann,RubinsteinScharlemannBounded}.
The transition complexes defined below have a more modest role: their vertices
are finite pair codes and their edges record elementary code changes under
admissible deformations.  They are finite-resolution quotients of deformation
data, not Cerf graphics.  Recovering either classical construction requires
additional structure: transverse-intersection data for the former and
sweepouts with controlled singular events for the latter.

\subsection{Admissible components of essential pair spaces}

Fix parameters \((\Lambda,\Delta,\tau)\).  The quotient
\(\calZ_{\Lambda,\Delta,\tau}^{\mathrm{ess}}(K)\) is taken modulo pair
isotopy.  Its admissible components are defined using the stronger relation of
admissible thick deformation inside the same filtered level.

\begin{definition}[Admissible components of pair spaces]
For fixed \((\Lambda,\Delta,\tau)\), let
\[
  \Pi^{\mathrm{ad}}_{\Lambda,\Delta,\tau}(K)
\]
be the set of equivalence classes of points of
\(\calZ_{\Lambda,\Delta,\tau}^{\mathrm{ess}}(K)\) under admissible thick
deformations through pairs \((\gamma_t,F_t)\) satisfying
\[
  \Thi(\gamma_t)=1,
  \qquad
  \Len(\gamma_t)\leq \Lambda,
  \qquad
  \Thi(F_t)\geq\tau,
  \qquad
  \Area(F_t)\leq\Delta,
\]
and with \(F_t\) remaining essential in \(E(\gamma_t)\) throughout the
deformation.
\end{definition}

If \(\Lambda\leq\Lambda'\) and \(\Delta\leq\Delta'\), then admissible components
at level \((\Lambda,\Delta,\tau)\) map naturally to admissible components at
level \((\Lambda',\Delta',\tau)\).  Thus, for fixed surface scale
\((\Delta,\tau)\), the family
\[
  \left\{
  \Pi^{\mathrm{ad}}_{\Lambda,\Delta,\tau}(K)
  \right\}_{\Lambda\geq\Rop(K)}
\]
records the zero-dimensional deformation persistence of the essential pair
space.

\begin{remark}[Comparison with the knot-only ideal stratum]
For the knot-only space \(Y_\Lambda(K)\), the first birth time of
admissible-component persistence is exactly \(\Rop(K)\).  In the present
pair-space setting the first birth time for a given surface constraint
\((\Delta,\tau)\) may be larger than \(\Rop(K)\), or may be infinite.  This is
precisely the phenomenon measured by the knot--surface trade-off frontier:
an ideal knot may exist before a prescribed thick essential surface layer is
visible.
\end{remark}

\subsection{Merge scales, forgetful maps, and tameness}

The pair theory is a decorated refinement of the knot-only ropelength
persistence: the forgetful map
\[
  \Phi_{\Lambda,\Delta,\tau}\colon
  \calZ_{\Lambda,\Delta,\tau}^{\mathrm{ess}}(K)
  \longrightarrow
  Y_\Lambda(K),
  \qquad
  (\gamma,F)\longmapsto \gamma,
\]
sends admissible thick deformations of pairs to admissible knot-only
deformations, since \(\gamma_t\) remains unit-thickness of length at most
\(\Lambda\); hence it induces a map
\(\Pi^{\mathrm{ad}}_{\Lambda,\Delta,\tau}(K)\to\pi_0^{\mathrm{ad}}Y_\Lambda(K)\)
on admissible components, compatible with the inclusions in all three
parameters.  Its fiber over a knot-only component records which
essential-surface structures can be transported along thick isotopies within
the same budget.

\begin{definition}[Ropelength merge scale and admissible-component persistence]
Fix \(\Delta,\tau>0\).  For admissible components \(C_1,C_2\) visible at some
ropelength levels, their ropelength merge scale is
\[
  m_{\Delta,\tau}(C_1,C_2)
  =
  \inf\left\{
  \Lambda
  \ \middle|
  C_1 \text{ and } C_2 \text{ have the same image in }
  \Pi^{\mathrm{ad}}_{\Lambda,\Delta,\tau}(K)
  \right\},
\]
with value \(\infty\) if no such \(\Lambda\) exists.  The persistence object
\(\{\Pi^{\mathrm{ad}}_{\Lambda,\Delta,\tau}(K)\}_{\Lambda\geq\Rop(K)}\) is
called the admissible-component persistence of the essential pair space; it
records births of admissible components and merge scales between them as
\(\Lambda\) increases.  For ideal pair components the bottom level is
\(\Lambda=\Rop(K)\), and \(m_{\Delta,\tau}\) measures the additional ropelength
room needed to pass between them.  \Cref{fig:merge-persistence} shows the
resulting merge-tree and barcode picture, together with the boundary-slope
obstruction of \Cref{prop:slope-obstruction-to-merging}.
\end{definition}

\begin{figure}[t]
\centering
\begin{tikzpicture}[x=1cm,y=1cm,>=Latex,
  note/.style={font=\scriptsize, align=center}]
  \draw[->] (1.2,0.30) -- (11.9,0.30)
    node[note, below left, xshift=3mm] {$\Lambda$};
  \draw[thin] (1.2,0.22) -- (1.2,0.38);
  \node[note, below] at (1.2,0.18) {$\Rop(K)$};
  \foreach \x in {2.0,3.2,4.4}{
    \draw[thin] (\x,0.22) -- (\x,0.38);
  }
  \draw[dashed, thin] (2.0,0.38) -- (2.0,2.85);
  \draw[dashed, thin] (3.2,0.38) -- (3.2,1.80);
  \draw[dashed, thin] (4.4,0.38) -- (4.4,1.05);
  \draw[dashed, thin] (6.0,0.38) -- (6.0,2.32);
  \node[note, below] at (3.2,0.18) {births};
  \node[note, below] at (6.0,0.18) {$m_{\Delta,\tau}(C_1,C_2)$};
  \draw[very thick] (2.0,2.85) .. controls (4.6,2.85) and (5.2,2.42) .. (6.0,2.32);
  \draw[very thick] (3.2,1.80) .. controls (4.6,1.80) and (5.2,2.22) .. (6.0,2.32);
  \draw[very thick, ->] (6.0,2.32) -- (11.6,2.32);
  \draw[very thick, ->] (4.4,1.05) -- (11.6,1.05);
  \fill (6.0,2.32) circle (1.5pt);
  \fill (2.0,2.85) circle (1.5pt);
  \fill (3.2,1.80) circle (1.5pt);
  \fill (4.4,1.05) circle (1.5pt);
  \node[note, anchor=east, fill=white, inner sep=1.5pt]
    at (1.88,2.85) {$C_1$ (slope $r$)};
  \node[note, anchor=east, fill=white, inner sep=1.5pt]
    at (3.08,1.80) {$C_2$ (slope $r$)};
  \node[note, anchor=east, fill=white, inner sep=1.5pt]
    at (4.28,1.05) {$C_3$ (slope $r'\neq r$)};
  \node[note, anchor=south] at (8.8,2.42) {merged admissible component};
  \node[note, anchor=south] at (8.8,1.15)
    {$m_{\Delta,\tau}(C_i,C_3)=\infty$ (slope obstruction)};
  \node[note, anchor=east] at (1.88,-0.85) {barcode};
  \draw[line width=1.8pt, ->] (2.0,-0.70) -- (11.6,-0.70);
  \draw[line width=1.8pt] (3.2,-1.05) -- (6.0,-1.05);
  \draw[line width=1.8pt, ->] (4.4,-1.40) -- (11.6,-1.40);
\end{tikzpicture}
\caption{Admissible-component persistence of the essential pair space at a
fixed surface budget \((\Delta,\tau)\).  As the ropelength allowance
\(\Lambda\) increases, admissible components of
\(\calZ^{\mathrm{ess}}_{\Lambda,\Delta,\tau}(K)\) are born at their visibility
levels and merge at their merge scales: here \(C_1\) and \(C_2\) carry the
same boundary slope \(r\) and merge at \(\Lambda=m_{\Delta,\tau}(C_1,C_2)\),
while \(C_3\) carries a different slope \(r'\) and, when admissible
deformations preserve the slope, can never merge with them
(\Cref{prop:slope-obstruction-to-merging}).  The bottom rows show the
associated zero-dimensional barcode; at fixed finite resolution
\(\varepsilon\), the finite-resolution quotient of this persistence is tame on
every bounded interval (\Cref{prop:persistence-tameness}).}
\label{fig:merge-persistence}
\end{figure}

\begin{remark}[Finite-resolution persistence]
At the exact smooth level, the set of admissible components may be difficult
to control.  At fixed finite resolution \(\varepsilon\), however,
\Cref{thm:finite-resolution-finiteness-filtered-pairs} gives local
finiteness: only finitely many encoded \(\varepsilon\)-types are visible at
any bounded level.  Thus the finite-resolution \emph{quotient} of
admissible-component
persistence --- the code image, in the sense made precise in
\Cref{def:admissible-transition-complex} --- can be represented by finite
graphs or finite persistence modules
at each level, in the sense of persistent topology
\cite{EdelsbrunnerHarer,ZomorodianCarlsson}.
\end{remark}

\begin{definition}[Elementary transition rule and admissible transition
complex at scale \(\varepsilon\)]
\label{def:admissible-transition-complex}
Fix \(\Delta,\tau>0\), a resolution \(0<\varepsilon\leq c\min\{1,\tau\}\), and
the encoding scheme of \Cref{con:concrete-encoding}, with its
canonicalization convention (\Cref{rem:code-well-defined}); thus every pair
\((\gamma,F)\) at the given level has a single well-defined canonical
encoded \(\varepsilon\)-type, written
\(\code(\gamma,F)=c_\varepsilon(\gamma,F)\).  For an admissible thick deformation
\((\gamma_t,F_t)_{t\in[0,1]}\) at level \((\Lambda,\Delta,\tau)\), write
\(\code(t)=\code(\gamma_t,F_t)\) for its code trajectory.

\emph{Elementary transition rule.}  Two distinct realized codes \(c,c'\)
are \emph{elementary adjacent} at level \((\Lambda,\Delta,\tau)\) if there
exist an admissible thick deformation \((\gamma_t,F_t)_{t\in[0,1]}\) at that
level and a time \(t^*\in[0,1]\) such that every neighbourhood of \(t^*\) in
\([0,1]\) contains both a time \(t\) with \(\code(t)=c\) and a time \(t'\)
with \(\code(t')=c'\).  This rule is fixed once and for all; it refers only
to the continuum of deformation times and involves no time step, no bound on
the change per step, and no sampling rule.

For \(\Lambda\geq\Rop(K)\), let
\(\calN^{\varepsilon}_{\Lambda,\Delta,\tau}(K)\) be the simplicial complex
defined as follows.  Its vertices are the canonical encoded
\(\varepsilon\)-types
realized by pairs in \(\calZ^{\mathrm{ess}}_{\Lambda,\Delta,\tau}(K)\).  Two
vertices are joined by an edge if and only if they are elementary adjacent
at level \((\Lambda,\Delta,\tau)\).
Higher simplices are those of the flag (clique) complex on this graph.  Since
the vertex set and the elementary adjacencies are monotone in \(\Lambda\) and
in \(\Delta\) --- an admissible deformation at a lower level is one at every
higher level, and codes at different levels are compared literally on the
fixed lattice of \Cref{con:concrete-encoding} --- the complexes form an
increasing family in these parameters.

The relation between this complex and the smooth admissible components must
be stated with care.  Sending a pair to its code induces a natural
surjection
\[
  \Pi^{\mathrm{ad}}_{\Lambda,\Delta,\tau}(K)
  \ \longrightarrow\
  \pi_0\bigl(\calN^{\varepsilon}_{\Lambda,\Delta,\tau}(K)\bigr),
\]
but this surjection need not be injective: two pairs with the same code are
pair-isotopic by \Cref{thm:faithfulness}, yet they need not be joined by an
\emph{admissible thick deformation} --- one respecting
\(\Thi(\gamma_t)=1\), \(\Len(\gamma_t)\leq\Lambda\), \(\Thi(F_t)\geq\tau\),
\(\Area(F_t)\leq\Delta\) throughout --- so a single code, hence a single
vertex, may collect several distinct admissible components.  The complex
\(\calN^{\varepsilon}\) is therefore the \emph{finite-resolution quotient} of
admissible-component persistence, not an exact copy of it: it is the code
image of the smooth persistence, in precisely the same way the monotone
diagram--code image \(\calH^{\mathrm{surf}}\) of
\Cref{sec:finite-recognition} is the quotient of the lifted graph
\(\calG^{\mathrm{surf,lift}}\), whose vertices \((D,c,C)\) retain the fiber
component \(C\).  A lifted refinement of \(\calN^{\varepsilon}\), with
vertices the pairs \((c,C)\) of a realized code and an admissible component
of its code fiber, restores the exact persistence; but the set of such
vertices is not known to be finite at a fixed level, which is exactly why the
tameness statements below are proved for the quotient.  This complex is what
was informally called the ``nerve of
\(\Pi^{\mathrm{ad},\varepsilon}_{\Lambda,\Delta,\tau}(K)\)''; we use the
explicit transition complex to avoid ambiguity, and all persistence
statements below refer to it and are statements about the
finite-resolution quotient persistence.
\end{definition}

\begin{lemma}[Sampling independence of the transition complex]
\label{lem:sampling-independence}
Fix a level \((\Lambda,\Delta,\tau)\) and a resolution
\(0<\varepsilon\leq c\min\{1,\tau\}\).
\begin{enumerate}[label=(\roman*),leftmargin=2em]
\item \emph{Chain property.}  For every admissible thick deformation
\((\gamma_t,F_t)_{t\in[0,1]}\) at level \((\Lambda,\Delta,\tau)\), the codes
\(\code(0)\) and \(\code(1)\) are joined in
\(\calN^{\varepsilon}_{\Lambda,\Delta,\tau}(K)\) by a finite edge path all
of whose vertices are codes realized along the deformation.
\item \emph{Sampled steps refine to edge paths.}  For every time partition
\(0=t_0<t_1<\cdots<t_N=1\) of every admissible thick deformation, each
consecutive sampled pair \(\code(t_{i-1}),\code(t_i)\) is joined by a finite
edge path in \(\calN^{\varepsilon}_{\Lambda,\Delta,\tau}(K)\).  No claim is
made for an individual sampled description taken alone: one finite sampling
of one deformation may miss intermediate codes, and therefore need not see
every vertex or every elementary adjacency of the fixed complex.
\item \emph{Sampling-free identification of the components.}  Let
\(\sim\) be the equivalence relation on the realized codes generated by all
sampled steps of all admissible thick deformations at level
\((\Lambda,\Delta,\tau)\), that is, generated by the relations
\(\code(t_{i-1})\sim\code(t_i)\) over every such deformation and every time
partition.  Then \(\sim\) coincides with the partition of the vertex set
into path components of \(\calN^{\varepsilon}_{\Lambda,\Delta,\tau}(K)\).
Consequently the path components of the transition complex, the surjection
from admissible components, and the zero-dimensional persistence module and
barcode of \Cref{prop:persistence-tameness} are intrinsic to the fixed
elementary transition rule: no time step, admissible change bound, or
sampling rule enters \Cref{def:admissible-transition-complex}, and the
totality of sampled descriptions generates exactly the component partition
of the fixed complex.
\end{enumerate}
\end{lemma}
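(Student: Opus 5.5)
The heart of the argument is part (i). Parts (ii) and (iii) will follow from it formally.

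For (i), fix an admissible thick deformation \((\gamma_t,F_t)_{t\in[0,1]}\). Let \(V\subset\mathcal C(\Lambda,\Delta,\tau,\varepsilon)\) be the set of codes realized along it. This set is finite by \Cref{thm:finite-resolution-finiteness-filtered-pairs}. For each \(c\in V\) put \(T_c=\{t\mid\code(t)=c\}\); these finitely many sets partition \([0,1]\). Declare \(c\approx c'\) in \(V\) if \(c=c'\) or \(c,c'\) are elementary adjacent, in which case they are joined by an edge of \(\calN^{\varepsilon}_{\Lambda,\Delta,\tau}(K)\). The key step is a connectedness argument. Let \(W\) be the path component of \(\code(0)\) in the subgraph of \(\calN^{\varepsilon}\) spanned by \(V\), and set \(A=\bigcup_{c\in W}T_c\) and \(B=[0,1]\setminus A\). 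I claim \(\overline A\cap\overline B=\varnothing\). Suppose \(t^*\) lies in both closures. Every neighbourhood of \(t^*\) then meets \(A\) and meets \(B\). Since \(W\) and \(V\setminus W\) are finite, a pigeonhole argument on a shrinking sequence of neighbourhoods yields fixed codes \(c\in W\) and \(c'\in V\setminus W\) such that every neighbourhood of \(t^*\) contains times with code \(c\) and times with code \(c'\). This is exactly the elementary transition rule, so \(c\) and \(c'\) are adjacent and \(c'\in W\), a contradiction. Hence \(A\) and \(B\) are disjoint closed sets covering \([0,1]\). By connectedness of \([0,1]\), and since \(0\in A\), we get \(B=\varnothing\). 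So \(\code(1)\in W\), giving an edge path through realized codes. No continuity of \(\code\) is needed, only finiteness of \(V\). That finiteness is the point to check carefully: the pairs along the deformation stay in the fixed slice, so their codes lie in the finite set \(\mathcal C(\Lambda,\Delta,\tau,\varepsilon)\) of \Cref{con:concrete-encoding}.

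For (ii), restrict the deformation to \([t_{i-1},t_i]\) and reparametrize affinely to \([0,1]\). The result is again an admissible thick deformation at the same level. Part (i) then joins \(\code(t_{i-1})\) to \(\code(t_i)\). The caveat about a single sampling is only a remark and needs no proof.

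For (iii), two inclusions are needed. First, every generating relation \(\code(t_{i-1})\sim\code(t_i)\) is realized by an edge path by (ii). Hence each \(\sim\)-class lies in one path component. Conversely, consider an edge \(c\)–\(c'\) of \(\calN^{\varepsilon}\). It comes from a deformation and a time \(t^*\) with codes \(c\) and \(c'\) occurring at times \(s,s'\) arbitrarily near \(t^*\). Take the two-point partition of the restriction to the interval between \(s\) and \(s'\), reparametrized. This is a sampled step, so \(c\sim c'\). Components are generated by edges, so each path component lies in one \(\sim\)-class. The "consequently" clause is then immediate, since \(\calN^{\varepsilon}\) and its components were defined with no sampling data. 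I expect the only delicate step to be the pigeonhole and closure argument in (i). Note that (i) also shows the surjection \(\Pi^{\mathrm{ad}}\to\pi_0(\calN^{\varepsilon})\) is well defined.
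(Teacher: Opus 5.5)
Your proposal is correct and follows the paper's proof essentially step for step: the same finite partition of \([0,1]\) into code level sets, the same closure and pigeonhole argument at a common boundary point to produce an elementary adjacency and a contradiction, restriction and reparametrization for (ii), and realization of each edge as a single sampled step for (iii). The only difference is cosmetic. You take the component of \(\code(0)\) in the full subgraph on the realized codes, whose edges may be witnessed by other deformations. The paper instead uses chains of adjacencies witnessed by the given deformation. Both versions work.
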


\begin{proof}
For (i), let \(\mathcal C\) denote the set of codes realized along the
deformation.  All pairs \((\gamma_t,F_t)\) lie at level
\((\Lambda,\Delta,\tau)\), so by
\Cref{thm:finite-resolution-finiteness-filtered-pairs} the set \(\mathcal C\)
is finite.  For \(c\in\mathcal C\) put
\(A_c=\{t\in[0,1]:\code(t)=c\}\); the sets \(A_c\) are nonempty and
partition \([0,1]\).  Let \(\mathcal C_0\subset\mathcal C\) be the set of
codes joined to \(\code(0)\) by a finite chain of elementary adjacencies
realized along this deformation, and let
\(U=\bigcup_{c\in\mathcal C_0}A_c\).  Suppose \(U\neq[0,1]\).  Both \(U\)
and \([0,1]\setminus U\) are nonempty, and since \([0,1]\) is connected they
cannot both be closed; hence there is a point
\(t^*\in\overline{U}\cap\overline{[0,1]\setminus U}\).  Choose sequences
\(t_n\to t^*\) with \(t_n\in U\) and \(s_n\to t^*\) with
\(s_n\notin U\).  Each \(t_n\) lies in some \(A_c\) with
\(c\in\mathcal C_0\) and each \(s_n\) in some \(A_{c'}\) with
\(c'\notin\mathcal C_0\); since \(\mathcal C\) is finite, after passing to
subsequences there are fixed codes \(c\in\mathcal C_0\) and
\(c'\notin\mathcal C_0\) realized at times accumulating at \(t^*\) from
both families.  Then every neighbourhood of \(t^*\) contains times realizing
\(c\) and times realizing \(c'\), so \(c\) and \(c'\) are elementary
adjacent, contradicting \(c'\notin\mathcal C_0\).  Hence \(U=[0,1]\); in
particular \(\code(1)\in\mathcal C_0\), which is the chain property.

For (ii), apply (i) to the restriction of the deformation to
\([t_{i-1},t_i]\), reparametrized to \([0,1]\); the restriction is again an
admissible thick deformation at the same level.

For (iii), compare the two relations in both directions.  In one direction,
by (ii) each generating relation
\(\code(t_{i-1})\sim\code(t_i)\) joins two codes lying in one path
component of the fixed complex, so the generated equivalence relation
\(\sim\) is contained in the path-component relation.  In the other
direction, every elementary adjacency of codes \(c,c'\), witnessed by a
deformation and times \(t_n\to t^*\), \(s_n\to t^*\) with
\(\code(t_n)=c\) and \(\code(s_n)=c'\), is realized as a single sampled
step by restricting the witness deformation to \([t_n,s_n]\) ---
admissible deformations are closed under restriction and monotone
reparametrization --- and sampling its endpoints; hence every edge of the
fixed complex is a generating relation of \(\sim\), and the path-component
relation, which is generated by the edges, is contained in \(\sim\).  The
two relations therefore coincide.  The remaining assertions follow, since
the edge set of the fixed complex is defined by the elementary transition
rule alone and the higher simplices are determined by the edge set through
the flag condition.
\end{proof}

\begin{remark}[Role of sampling after the fixed rule]
\label{rem:sampling-role}
Time sampling, as in the discrete-deformation remark of \Cref{sec:dof},
is thereby demoted to an implementation device: it produces finite-state
approximations whose transitions always refine to edge paths of the fixed
complex, and whose totality --- but not any individual finite sampling ---
generates exactly the component partition of that complex.  In particular the pathology of coarse sampling --- a single
sampled step passing between the endpoints of an arbitrary deformation and
thereby inserting an edge that depends on the sampling --- cannot arise,
because \Cref{def:admissible-transition-complex} never mentions samples:
an edge requires elementary adjacency, and a coarse sampled step certifies
only what \Cref{lem:sampling-independence} proves, namely an edge
\emph{path}.
\end{remark}

\begin{proposition}[Tameness and barcodes of the finite-resolution quotient
persistence]
\label{prop:persistence-tameness}
Fix \(\Delta,\tau>0\) and \(0<\varepsilon\leq c\min\{1,\tau\}\).  For each field
\(\mathbb F\) and each degree \(i\geq0\), consider the finite-resolution
quotient persistence module
\[
  \Lambda\longmapsto
  H_i\bigl(\calN^{\varepsilon}_{\Lambda,\Delta,\tau}(K);\mathbb F\bigr),
  \qquad \Lambda\geq\Rop(K).
\]
\begin{enumerate}[label=(\roman*),leftmargin=2em]
\item The module is pointwise finite-dimensional, and on every bounded
parameter interval \([\Rop(K),\Lambda_0]\) it changes at only finitely many
values of \(\Lambda\); in particular its restriction to any bounded interval
decomposes as a finite direct sum of interval modules and has a finite
barcode.
\item On the full ray \([\Rop(K),\infty)\) the set of critical values is
locally finite but need not be finite: new encoded types may keep becoming
visible as \(\Lambda\to\infty\).  The module is \(q\)-tame
\cite{ChazalDeSilvaGlisseOudot} and, being pointwise finite-dimensional,
decomposes by the structure theorem of Crawley-Boevey \cite{CrawleyBoevey}
into a direct sum of interval modules; this decomposition is in general
countable and locally finite, not finite.
\end{enumerate}
The same statements hold for the finite-resolution Kakimizu persistence
\(\mathrm{MS}^\varepsilon_{\Lambda,\Delta,\tau}(K)\).
\end{proposition}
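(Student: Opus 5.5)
The plan is to reduce everything to finiteness of the vertex sets on bounded intervals, which \Cref{thm:finite-resolution-finiteness-filtered-pairs} supplies. For each \(\Lambda\) the vertex set of \(\calN^{\varepsilon}_{\Lambda,\Delta,\tau}(K)\) is contained in the explicit finite code space \(\mathcal C(\Lambda,\Delta,\tau,\varepsilon)\). Hence \(\calN^{\varepsilon}_{\Lambda,\Delta,\tau}(K)\) is a finite flag complex, and each \(H_i(\calN^{\varepsilon}_{\Lambda,\Delta,\tau}(K);\mathbb F)\) is finite-dimensional. This gives pointwise finite-dimensionality. The structure maps of the module come from the simplicial inclusions recorded in \Cref{def:admissible-transition-complex}. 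These are monotone in \(\Lambda\) because codes are compared literally on the fixed global lattice (\Cref{rem:code-well-defined}) and admissible deformations at a lower level remain admissible at higher levels.

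For (i), fix \(\Lambda_0\). Every complex with \(\Lambda\le\Lambda_0\) is a subcomplex of the single finite complex \(\calN^{\varepsilon}_{\Lambda_0,\Delta,\tau}(K)\). An increasing family of subcomplexes of a finite complex takes only finitely many distinct values, so the family is piecewise constant with finitely many jumps on \([\Rop(K),\Lambda_0]\). Consequently the homology module changes only at finitely many critical values. The restricted module is then isomorphic to a finitely presented module over a finite totally ordered set of critical values, and it decomposes into finitely many interval modules, giving a finite barcode.

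The main subtlety is semicontinuity at a jump: whether each complex is constant on half-open intervals \([\Lambda_k,\Lambda_{k+1})\) or \((\Lambda_k,\Lambda_{k+1}]\). I would not try to settle this. The decomposition theorem for pointwise finite-dimensional modules over \(\R\) does not need it, and interval endpoints may be open or closed. For the finite-barcode claim I only need finitely many distinct subcomplexes, which already forces finitely many constancy intervals, each an interval of \(\R\) by monotonicity.

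For (ii), local finiteness of the critical set follows by applying (i) to every bounded interval. Finiteness on the whole ray can fail, because the available code set grows with the window size \(S(\Lambda,\Delta,\tau)\); this half of (ii) is an assertion of possibility, so nothing beyond noting it is needed. Pointwise finite-dimensionality implies \(q\)-tameness, since every structure map then has finite rank. Crawley-Boevey's theorem then gives an interval decomposition. Because only finitely many bars start in any bounded interval, the decomposition is countable and locally finite. The Kakimizu statement is proved verbatim: \(\mathrm{MS}^\varepsilon_{\Lambda,\Delta,\tau}(K)\) is built from the same code vertices restricted to taut Seifert types, so the same containment in \(\mathcal C(\Lambda,\Delta,\tau,\varepsilon)\) and the same monotonicity apply.
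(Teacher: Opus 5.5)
Your proposal is correct and follows essentially the same route as the paper. Finiteness of the code set at level \(\Lambda_0\) (\Cref{thm:finite-resolution-finiteness-filtered-pairs}) makes every \(\calN^{\varepsilon}_{\Lambda,\Delta,\tau}(K)\) with \(\Lambda\leq\Lambda_0\) a subcomplex of one finite flag complex, which gives finitely many changes and a finite barcode on bounded intervals; on the full ray this yields local finiteness, \(q\)-tameness from pointwise finite dimensionality, and Crawley-Boevey's decomposition, with the Kakimizu case handled verbatim. Your observation that a monotone chain of subcomplexes of a finite complex takes only finitely many values is a slightly cleaner justification of the finitely-many-jumps step than the paper's wording, and leaving the open-versus-closed endpoint question unresolved is harmless, exactly as you say.
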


\begin{proof}
Fix \(\Lambda_0\).  By
\Cref{thm:finite-resolution-finiteness-filtered-pairs} the set of encoded
\(\varepsilon\)-types visible at level \((\Lambda_0,\Delta,\tau)\) is finite,
so the complex \(\calN^{\varepsilon}_{\Lambda,\Delta,\tau}(K)\) has uniformly
boundedly many simplices for \(\Lambda\leq\Lambda_0\) (a flag complex on a
bounded vertex set is bounded); its homology is therefore finite-dimensional
at every \(\Lambda\).  As \(\Lambda\) increases through
\([\Rop(K),\Lambda_0]\), a simplex is added only when a new encoded type or a
new elementary adjacency (\Cref{def:admissible-transition-complex}) first
becomes visible, and by finiteness this
happens at only finitely many values of \(\Lambda\) in that interval.  Thus
the restricted module is constant between consecutive critical values and is
pointwise finite-dimensional, so it decomposes into finitely many interval
modules by the structure theorem \cite{CrawleyBoevey}; this proves (i).

For (ii), the critical values form a subset of \([\Rop(K),\infty)\) whose
intersection with every bounded interval is finite by (i), that is, a locally
finite set; nothing bounds their total number, since the vertex bound of
\Cref{cor:explicit-encoded-bound} grows with \(\Lambda\).  Pointwise finite
dimensionality gives \(q\)-tameness directly (every connecting map has finite
rank), and the Crawley-Boevey structure theorem applies to pointwise
finite-dimensional modules over the totally ordered set
\([\Rop(K),\infty)\) without any finiteness of critical values, yielding an
interval decomposition indexed by an at most countable set, locally finite by
(i).  The Kakimizu case is identical, using the vertex bound of
\Cref{cor:explicit-encoded-bound}.
\end{proof}

Merge scales add dynamic information to the static finiteness results:
boundary slopes become labels on admissible components, births of components
are generated by least-area representatives (\Cref{cor:unconditional-ideal-attainment}),
and the associated flag complexes and merge trees are the simplicial form of
this zero-dimensional persistence \cite{EdelsbrunnerHarer,OzawaIdealStratum}.
The basic obstruction is topological.

\begin{proposition}[Boundary slope obstruction to merging]
\label{prop:slope-obstruction-to-merging}
Suppose admissible deformations are required to preserve the boundary slope of
surfaces.  If two admissible components \(C_1\) and \(C_2\) are represented by
surfaces with different boundary slopes, then
\[
  m_{\Delta,\tau}(C_1,C_2)=\infty.
\]
\end{proposition}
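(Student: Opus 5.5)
The proof is a direct unwinding of the definition of the merge scale, together with the observation that the boundary slope is constant on every admissible component. I will argue by contradiction. Suppose \(m_{\Delta,\tau}(C_1,C_2)<\infty\). By definition of the infimum, there is then some finite \(\Lambda\) at which \(C_1\) and \(C_2\) have the same image in \(\Pi^{\mathrm{ad}}_{\Lambda,\Delta,\tau}(K)\). Admissible components at one level are equivalence classes under the relation generated by admissible thick deformations. So there is a finite chain of pairs
\[
(\gamma^{(0)},F^{(0)}),\dots,(\gamma^{(k)},F^{(k)}),
\]
beginning at a representative of \(C_1\) and ending at a representative of \(C_2\), in which consecutive pairs are joined by admissible thick deformations \((\gamma_t,F_t)\) at level \((\Lambda,\Delta,\tau)\). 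The inclusion maps between levels only compose such chains, so working at the single level \(\Lambda\) loses nothing.

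The core step is to show that the boundary slope is constant along one admissible thick deformation, and hence along the whole chain. Two routes are available.

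\emph{Hypothesis route.} The statement imposes slope preservation on admissible deformations as a hypothesis. Under that hypothesis each link of the chain preserves the slope by fiat, and the result follows at once.

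\emph{Intrinsic route.} I would also record why the hypothesis is automatic, so that it is not vacuous. An admissible thick deformation is in particular a pair isotopy \(h_t\) with \(h_t(N_{\rho_0}(\gamma_0))\) moved together with the tube. The slope is the isotopy class of \(\partial F\) in \(\partial E(\gamma)\), measured against the meridian–longitude basis, and \(h_t\) carries meridian to meridian and preferred longitude to preferred longitude, since the linking number with the core is invariant under ambient isotopy. So the slope, viewed as an element of \(\mathbb{Q}\cup\{\infty\}\), is a pair-isotopy invariant; this is already used in \Cref{thm:faithfulness}.

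If the components of \(\partial F\) carry a multiset of slopes, I would compare componentwise. Parallel boundary components on the torus share a slope, so the slope of \(F\) is well defined.

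Chaining the links gives that the representatives of \(C_1\) and \(C_2\) have equal slope, contradicting the hypothesis that the slopes differ. Hence the defining set of levels is empty and \(m_{\Delta,\tau}=\infty\) by the stated convention.

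The only delicate point is the second route: justifying that the slope of \(F_t\) is well defined and continuous along a moving peripheral torus. I expect to handle this by transporting the meridian–longitude pair with the tube isotopy of \Cref{lem:parametric-tube-isotopy}, so that the slope becomes a locally constant, integer-valued function of \(t\) on the connected interval \([0,1]\).
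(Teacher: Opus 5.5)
Your proof is correct and follows the paper's argument. The boundary slope is constant along every admissible (in particular pair-isotopic) deformation, so no chain at any level \(\Lambda\) can join representatives of \(C_1\) and \(C_2\); hence the set defining \(m_{\Delta,\tau}(C_1,C_2)\) is empty and its value is \(\infty\).

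The one imprecision is in your intrinsic route. A pair isotopy as defined in the paper need not carry \(N_{\rho_0}(\gamma_0)\) onto \(N_{\rho_0}(\gamma_t)\), so your sentence about \(h_t\) carrying meridians to meridians is not literally available. The argument in your last paragraph is the right justification instead: \(F_t\) stays properly embedded at every time, and you transport along the moving tube via \Cref{lem:parametric-tube-isotopy}. That argument in fact shows that admissible thick deformations preserve the slope automatically, without needing it as a hypothesis.
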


\begin{proof}
Boundary slope is invariant under pair isotopy preserving the peripheral
structure.  A slope-preserving admissible thick deformation is, in particular,
a pair isotopy through surfaces of the same boundary slope.  Thus components
with distinct boundary slopes cannot map to the same admissible component at
any higher ropelength level.
\end{proof}

\subsection{Boundary-slope visibility spectrum}

Boundary slopes are a natural test case because they are topological labels of
essential surfaces, while visibility assigns geometric costs to those labels.
For a slope \(r\) on \(\partial E(K)\), let \(\mathfrak S_r\) denote the
filtered surface type consisting of essential surfaces with nonempty
boundary whose boundary components all have slope \(r\).  Define
\[
  \beta_r(K;\Delta,\tau)
  :=
  \beta_{\mathfrak S_r}(K;\Delta,\tau).
\]
The function
\[
  r\longmapsto \beta_r(K;\Delta,\tau)
\]
is the boundary-slope visibility spectrum at the surface budget
\((\Delta,\tau)\).

\begin{proposition}[Topological support of the slope spectrum]
\label{prop:boundary-slope-visibility-support}
For a fixed knot type \(K\), the finite values of
\(r\mapsto \beta_r(K;\Delta,\tau)\) occur only on the ordinary boundary slopes
of incompressible surfaces in \(E(K)\).  In particular, the support of the
boundary-slope visibility spectrum is contained in Hatcher's finite boundary
slope set.
\end{proposition}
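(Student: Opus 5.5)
The plan is to unwind the definition of \(\beta_r\) and reduce the claim to the classical finiteness theorem of Hatcher. Suppose \(\beta_r(K;\Delta,\tau)<\infty\). By \Cref{def:surface-visibility-level}, the infimum is over a nonempty set, so there is some \(\Lambda\) with \(\calZ^{\mathfrak S_r}_{\Lambda,\Delta,\tau}(K)\neq\varnothing\). Choose a pair \((\gamma,F)\) in this slice. Then \(\gamma\) is a unit-thickness representative of \(K\), and \(F\subset E(\gamma)\) is a properly embedded surface. Every component of \(F\) is essential, \(\partial F\neq\varnothing\), and every boundary component of \(F\) has slope \(r\) on \(\partial E(\gamma)=\partial N_{\rho_0}(\gamma)\). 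The geometric bounds \(\Area\leq\Delta\) and \(\Thi\geq\tau\) play no further role; only the topology of the visible pair is used.

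Next we pass from the geometric exterior to the topological one. The closed tube \(N_{\rho_0}(\gamma)\) is a regular neighbourhood of \(\gamma\) (\Cref{not:ambient-metric}), so \(E(\gamma)\) is homeomorphic to \(E(K)\). This homeomorphism carries meridian to meridian, and it carries the preferred longitude to the preferred longitude (linking number zero). Hence numerical slopes \(r\in\mathbb{Q}\cup\{1/0\}\) correspond, and \(F\) maps to a properly embedded surface in \(E(K)\) with boundary slope \(r\).

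Then pick a component \(F'\) of \(F\) meeting \(\partial E\). It is essential, hence incompressible and boundary-incompressible. In the one-sided case, replace \(F'\) by the boundary of a regular neighbourhood of \(F'\). This is two-sided, incompressible and boundary-incompressible by the standard argument, and it has the same boundary slope. So \(r\) is the boundary slope of an incompressible, boundary-incompressible two-sided surface in \(E(K)\), i.e.\ an ordinary boundary slope. Hatcher's theorem then gives that the set of such slopes for a knot in \(S^3\) is finite, and \(r\) belongs to it.

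The only point needing care, and the one I expect to be the main obstacle, is the convention matching. First, ``boundary slope'' in \(\mathfrak S_r\) must be read as the isotopy class of the essential curves \(\partial F'\) on the torus. These curves are nontrivial on the torus because \(F'\) is incompressible and not a disk bounded by a trivial curve (knot exteriors are irreducible, and a disk with inessential boundary compresses or is boundary-parallel). Second, Hatcher's statement must be invoked in its boundary-incompressible form. Both are standard, so the proof is essentially a transcription of definitions followed by one citation.
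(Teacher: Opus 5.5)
Your argument follows the paper's proof: you unwind $\beta_r<\infty$ to a visible pair $(\gamma,F)$ whose surface is essential with boundary slope $r$, transfer it to $E(K)$, and cite Hatcher. The extra bookkeeping you supply is correct and is only implicit in the paper: the homeomorphism $E(\gamma)\cong E(K)$ preserving $\mu$ and the preferred longitude, the choice of a component with nonempty boundary, and the essentiality of its boundary curves on the torus.

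The one step that does not hold as stated is the one-sided reduction. The paper's essential subspace treats one-sided surfaces by the \emph{geometric-disk} convention, and for that notion there is no standard argument that $\partial N(F')$ is incompressible.
\begin{itemize}
\item A compressing disk $D$ for $\partial N(F')$ must lie outside $N(F')$.
\item Adding the half-fibres of the $I$-bundle over $\partial D$ yields a compressing disk for $F'$ only if $\partial D$ is disjoint from its image under the covering involution of $\partial N(F')\to F'$.
\item In general the two curves meet. Then $\partial D$ projects to an \emph{immersed} curve in $F'$, and geometric incompressibility of $F'$ gives no information.
\end{itemize}
The implication genuinely fails in general. A Klein bottle in the lens space $L(4,1)$ admits no compressing disk, yet the torus bounding its regular neighbourhood compresses, because $\pi_1(L(4,1))$ is finite.

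Your doubling step is the standard one if incompressibility of a one-sided surface is read as $\pi_1$-injectivity: then $\pi_1(\partial N(F'))\to\pi_1(F')\to\pi_1(E)$ is injective. Alternatively, restrict $\mathfrak S_r$ to two-sided surfaces. The paper's one-line proof (``by definition $F$ is essential with slope $r$; thus $r$ is an ordinary boundary slope'') relies on the same identification without comment. So rather than calling the doubling standard, your write-up should say explicitly which convention makes the appeal to Hatcher legitimate.
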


\begin{proof}
If \(\beta_r(K;\Delta,\tau)<\infty\), then there is a pair \((\gamma,F)\) in
\(\calZ_{\Lambda,\Delta,\tau}^{\mathfrak S_r}(K)\) for some \(\Lambda\).  By
definition, \(F\) is an essential surface in the knot exterior with boundary
slope \(r\).  Thus \(r\) is an ordinary boundary slope of \(K\).  Hatcher's
boundary-slope theorem gives the finiteness of the set of such slopes
\cite{HatcherBoundarySlopes}.
\end{proof}

\begin{remark}[What the spectrum adds]
Hatcher finiteness says that only finitely many slopes can occur.  The
visibility spectrum asks a different question: among the slopes that occur,
which ones occur at low ropelength, low area, and positive relative surface
thickness?  Thus it refines the finite boundary-slope set by assigning a
geometric birth level to each slope.
\end{remark}

Thus the deformation-persistence viewpoint imported from the ideal stratum of
knot types turns the present theory from a static finite-resolution framework
into a dynamic theory of births, mergers, and completion scales for essential
surface layers.

\subsection{Boundary twisting diameter and ropelength-normalized slope spread}
\label{subsec:boundary-twisting-diameter}

The boundary-slope visibility spectrum records when individual slopes become
visible.  A complementary invariant measures how far apart the visible slopes
are on the boundary torus.  This is closer to a measure of boundary twisting
than the unordered slope set alone.  It is related to the established numerical
boundary-slope diameter and its comparisons with crossing number and peripheral
geometry \cite{MattmanMaybrunRobinson,IchiharaSlopeLengths}; the determinant
distance used here is a variant adapted to arbitrary rational slopes.

Let \(T=\partial E(K)\) be equipped with the standard meridian--longitude
basis \((\mu,\lambda)\).  A slope will be regarded as an unoriented primitive
class \(r=a\mu+b\lambda\), with \(\gcd(a,b)=1\), up to sign.  For two slopes
\(r=a\mu+b\lambda\) and \(r'=a'\mu+b'\lambda\), define their distance by
\[
  \Delta_T(r,r')=|ab'-a'b|.
\]
This is the minimal geometric intersection number of the two slope classes on
\(T\).

\begin{definition}[Boundary twisting diameter]
\label{def:boundary-twisting-diameter}
Let \(B(K)\) be the set of boundary slopes of connected essential surfaces in
\(E(K)\).  Define
\[
  \operatorname{Tw}_{\partial}(K)
  =
  \max\{\Delta_T(r,r')\mid r,r'\in B(K)\},
\]
with the convention that the maximum is \(0\) if \(|B(K)|\leq 1\).  We call
\(\operatorname{Tw}_{\partial}(K)\) the \emph{boundary twisting diameter} of
\(K\).
\end{definition}

\begin{remark}[Why intersection distance is the twisting measure]
If all relevant slopes are integral in the chosen meridian--longitude framing,
then \(\Delta_T(p/1,p'/1)=|p-p'|\), so
\(\operatorname{Tw}_{\partial}(K)\) agrees with the ordinary numerical diameter
\(\max B(K)-\min B(K)\).  For rational slopes, however,
\(\Delta_T\) is more intrinsic: it measures the actual intersection distance of
the two boundary directions on the peripheral torus.  Thus
\(\operatorname{Tw}_{\partial}(K)\) records the spread of boundary twisting
rather than only the spread of rational numbers.
\Cref{fig:slope-intersection} shows the intersection-number picture.
\end{remark}

\begin{figure}[t]
\centering
\begin{tikzpicture}[x=1cm,y=1cm,>=Latex,
  note/.style={font=\scriptsize, align=center}]
  \draw[step=1.0, black!22, very thin] (0,0) grid (4.0,4.0);
  \draw[thick] (0,0) rectangle (4.0,4.0);
  \draw[->] (0,-0.45) -- (4.0,-0.45);
  \node[note, below] at (2.0,-0.55) {longitude $\lambda$};
  \draw[->] (-0.45,0) -- (-0.45,4.0);
  \node[note, rotate=90, above] at (-0.60,2.0) {meridian $\mu$};
  \draw[very thick] (0,0) -- (4.0,4.0);
  \node[note, fill=white, inner sep=1.5pt, rotate=45] at (2.72,3.02)
    {$r=\mu+\lambda$};
  \draw[thick, dashed] (0,0) -- (1.3333,4.0);
  \draw[thick, dashed] (1.3333,0) -- (2.6667,4.0);
  \draw[thick, dashed] (2.6667,0) -- (4.0,4.0);
  \node[note, fill=white, inner sep=1.5pt, rotate=71.6] at (0.52,2.50)
    {$r'=3\mu+\lambda$};
  \fill (0,0) circle (2pt);
  \fill (2.0,2.0) circle (2pt);
  \fill (4.0,4.0) circle (2pt);
  \node[note, anchor=north west] at (2.08,1.97) {};
  \node[note, align=center, anchor=west] at (5.1,2.55)
    {two essential boundary slopes\\on $T=\partial E(K)$:\\[1mm]
     $\Delta_T(r,r')=|ab'-a'b|$\\[0.5mm]
     $=|1\cdot1-3\cdot1|=2$\\[1mm]
     minimal intersection points\\(marked dots, $(0,0)\sim(4,4)$)};
  \node[note, align=center, anchor=west] at (5.1,0.55)
    {$\operatorname{Tw}_{\partial}(K)=
      \max_{r,r'}\Delta_T(r,r')$};
\end{tikzpicture}
\caption{The boundary twisting diameter measures intersection distance on the
peripheral torus.  In the flat picture of \(T=\partial E(K)\) (square with
opposite sides identified), the slopes \(r=\mu+\lambda\) and
\(r'=3\mu+\lambda\) intersect in \(\Delta_T(r,r')=|ab'-a'b|=2\) points,
marked by dots (the two corner dots are the same point of \(T\)).  The
twisting diameter \(\operatorname{Tw}_{\partial}(K)\) is the largest such
distance over pairs of essential boundary slopes; its filtered version
restricts to slopes visible at a bounded
\((\Lambda,\Delta,\tau)\)-budget.}
\label{fig:slope-intersection}
\end{figure}

\begin{proposition}[Finiteness of boundary twisting diameter]
\label{prop:boundary-twisting-finite}
For every knot \(K\subset S^3\), the invariant
\(\operatorname{Tw}_{\partial}(K)\) is finite.
\end{proposition}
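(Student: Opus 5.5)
The plan is to reduce finiteness of \(\operatorname{Tw}_{\partial}(K)\) to finiteness of the slope set \(B(K)\), and to get the latter from Hatcher's boundary-slope theorem \cite{HatcherBoundarySlopes}, which was already used in \Cref{prop:boundary-slope-visibility-support}. No geometric input is needed. In particular, none of the bounded-geometry machinery (reach, area, thickness) enters, because the invariant in \Cref{def:boundary-twisting-diameter} is defined for the topological exterior \(E(K)\) alone.

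First, I will check that \(B(K)\) sits inside the set to which Hatcher's theorem applies. A connected essential surface \(F\) in \(E(K)\) with nonempty boundary has all boundary components parallel, essential curves on \(T=\partial E(K)\). Boundary-compressing disks are excluded by essentiality, and inessential boundary curves would yield compressions or boundary-parallelism, contradicting essentiality in a knot exterior. So \(F\) has a well-defined slope \(r\in\mathbb Q\cup\{1/0\}\). Since \(F\) is incompressible and boundary-incompressible, Hatcher's theorem applies, and the set of such slopes is finite. For one-sided surfaces I will pass to the boundary of a regular neighbourhood, which is a two-sided essential surface with the same boundary slope; this keeps the argument uniform across orientable and non-orientable cases. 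Closed surfaces contribute no slope and are discarded.

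Second, once \(B(K)\) is a finite set of unoriented primitive classes in \(H_1(T)\cong\Z^2\), the function \(\Delta_T(r,r')=|ab'-a'b|\) takes finitely many nonnegative integer values on \(B(K)\times B(K)\). The maximum is therefore attained and finite. The convention \(\operatorname{Tw}_{\partial}(K)=0\) covers \(|B(K)|\le1\), including the case \(B(K)=\varnothing\); that case cannot actually occur, since every knot has a Seifert surface of slope \(0\). It remains to check that \(\Delta_T\) is well defined on unoriented slopes: it is invariant under \((a,b)\mapsto(-a,-b)\) in either argument.

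The only step needing care is the applicability of Hatcher's theorem. That theorem is stated for incompressible, boundary-incompressible surfaces in compact orientable irreducible manifolds with torus boundary, and \(E(K)\) is irreducible by Alexander's theorem. Hatcher's finiteness is phrased for slopes of such surfaces, possibly disconnected and possibly one-sided via the usual double. So the main obstacle is bookkeeping: confirming that the paper's notion of essential, which also excludes boundary-parallel surfaces, gives a subset of Hatcher's set. Being a subset is enough, since a subset of a finite set is finite.
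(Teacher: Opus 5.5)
Your proposal is correct and follows the paper's proof: Hatcher's theorem makes \(B(K)\) finite, and \(\operatorname{Tw}_{\partial}(K)\) is then the maximum of finitely many integers \(\Delta_T(r,r')\). The one overstatement is in your remark on one-sided surfaces: \(\partial N(F)\) is incompressible exactly when \(F\) is \(\pi_1\)-injective, and the geometric-disk convention does not guarantee this by itself. This does not affect the argument, since the paper's one-line proof likewise simply takes \(B(K)\) to be a subset of Hatcher's finite slope set.
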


\begin{proof}
By Hatcher's boundary-slope theorem, the set \(B(K)\) of boundary slopes of
incompressible surfaces in a knot exterior is finite
\cite{HatcherBoundarySlopes}.  The maximum of the finite set of distances
\(\Delta_T(r,r')\) is therefore finite.
\end{proof}

The filtered version is obtained by restricting to those slopes realized by
pairs at bounded ropelength, area, and surface thickness.

\begin{definition}[Filtered boundary twisting diameter]
\label{def:filtered-boundary-twisting-diameter}
For parameters \((\Lambda,\Delta,\tau)\), set
\[
  B_{\Lambda,\Delta,\tau}(K)
  =
  \{\,\slope(\partial F)\mid
  (\gamma,F)\in\calZ^{\mathrm{ess}}_{\Lambda,\Delta,\tau}(K),\
  F\text{ connected with }\partial F\neq\varnothing\,\}.
\]
Here \(\slope(\partial F)\) denotes the common boundary slope of the
components of \(\partial F\), which is well-defined for a connected essential
surface, so that \(B_{\Lambda,\Delta,\tau}(K)\subset B(K)\).
Define
\[
  \operatorname{Tw}_{\partial,\Lambda,\Delta,\tau}(K)
  =
  \max\{\Delta_T(r,r')\mid r,r'\in B_{\Lambda,\Delta,\tau}(K)\},
\]
again with value \(0\) if the visible slope set has at most one element.
\end{definition}

\begin{proposition}[Filtered boundary twisting is monotone]
\label{prop:filtered-boundary-twisting-monotonicity}
The quantity \(\operatorname{Tw}_{\partial,\Lambda,\Delta,\tau}(K)\) is
nondecreasing in \(\Lambda\) and \(\Delta\), and nonincreasing in \(\tau\).
Moreover,
\[
  \operatorname{Tw}_{\partial,\Lambda,\Delta,\tau}(K)
  \leq
  \operatorname{Tw}_{\partial}(K).
\]
\end{proposition}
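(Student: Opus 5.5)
The plan is to derive everything from containments between the visible slope sets, since \(\operatorname{Tw}_{\partial,\Lambda,\Delta,\tau}(K)\) is, by \Cref{def:filtered-boundary-twisting-diameter}, a maximum of the fixed function \(\Delta_T\) over pairs drawn from \(B_{\Lambda,\Delta,\tau}(K)\). We use the convention that this maximum is \(0\) when the set has at most one element. The key elementary observation is then that \(X\subseteq Y\) implies \(\max_{X\times X}\Delta_T\leq\max_{Y\times Y}\Delta_T\). When \(|X|\leq1\) this holds because \(\Delta_T\geq0\). Otherwise every pair in \(X\times X\) is also a pair in \(Y\times Y\).

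First we establish monotonicity. Suppose \(\Lambda\leq\Lambda'\), \(\Delta\leq\Delta'\) and \(\tau\geq\tau'\). The natural inclusion \(\calZ^{\mathrm{ess}}_{\Lambda,\Delta,\tau}(K)\into\calZ^{\mathrm{ess}}_{\Lambda',\Delta',\tau'}(K)\) holds by \Cref{lem:thickness-scale-monotonicity}, together with the fact that \(Y_\Lambda(K)\subseteq Y_{\Lambda'}(K)\) and that the area bound is preserved. This inclusion sends a pair \((\gamma,F)\) to the same pair, so connectedness of \(F\), nonemptiness of \(\partial F\), and \(\slope(\partial F)\) are unchanged. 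Hence \(B_{\Lambda,\Delta,\tau}(K)\subseteq B_{\Lambda',\Delta',\tau'}(K)\), and the observation gives the three monotonicity statements. Varying one parameter at a time gives each of them separately.

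Second we prove the comparison with the unfiltered invariant, which amounts to showing \(B_{\Lambda,\Delta,\tau}(K)\subseteq B(K)\). Let \((\gamma,F)\) be visible, with \(F\) connected and \(\partial F\neq\varnothing\). Then \(F\) is a connected essential surface in \(E(\gamma)\). Since \(E(\gamma)\) is homeomorphic to \(E(K)\) by a homeomorphism that preserves meridian and longitude, its slope is a boundary slope of a connected essential surface, so it lies in \(B(K)\). By \Cref{prop:boundary-twisting-finite}, \(B(K)\) is finite and \(\operatorname{Tw}_{\partial}(K)\) is a genuine finite maximum. The observation then yields \(\operatorname{Tw}_{\partial,\Lambda,\Delta,\tau}(K)\leq\operatorname{Tw}_{\partial}(K)\).

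The only point needing care, and the expected obstacle, is that the slope must be well defined and independent of the representative. The common slope of the boundary components of a connected essential surface is standard: the components are disjoint essential curves on the torus, hence parallel. The slope must also be computed in the meridian--longitude basis transported from \(E(K)\); it is a pair-isotopy invariant, as already used in \Cref{prop:slope-obstruction-to-merging}. This ensures that \(\Delta_T\) is computed in the same basis on both sides. Once this is in place, everything else is set containment.
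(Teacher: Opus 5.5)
Your proposal is correct and follows the paper's proof: the filtered inclusions of the pair spaces give inclusions of the visible slope sets, a maximum of \(\Delta_T\) over a larger set cannot be smaller, and \(B_{\Lambda,\Delta,\tau}(K)\subseteq B(K)\) gives the comparison with \(\operatorname{Tw}_{\partial}(K)\). Your handling of the convention for sets with at most one element and your remarks on why slopes are well defined supply details that the paper leaves implicit.
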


\begin{proof}
The filtered inclusions
\(\calZ^{\mathrm{ess}}_{\Lambda,\Delta,\tau}(K)\subset
\calZ^{\mathrm{ess}}_{\Lambda',\Delta',\tau'}(K)\) for
\(\Lambda\leq\Lambda'\), \(\Delta\leq\Delta'\), and \(\tau\geq\tau'\) imply the
corresponding inclusion of visible slope sets.  Taking the maximum of
\(\Delta_T\) over a larger set cannot decrease the value.  The final inequality
follows from
\(B_{\Lambda,\Delta,\tau}(K)\subset B(K)\).
\end{proof}

\begin{definition}[Boundary-twisting visibility level]
\label{def:boundary-twisting-visibility-level}
For \(R\geq 0\) and fixed surface budget \((\Delta,\tau)\), define
\[
  \lambda^{\mathrm{tw}}_{\partial}(K;R,\Delta,\tau)
  =
  \inf\left\{
  \Lambda\ \middle|
  \operatorname{Tw}_{\partial,\Lambda,\Delta,\tau}(K)\geq R
  \right\}.
\]
Thus \(\lambda^{\mathrm{tw}}_{\partial}(K;R,\Delta,\tau)\) is the ropelength
level at which boundary-slope spread at least \(R\) first becomes visible
under the prescribed area and surface-thickness bounds.
\end{definition}

The numerical comparison with ropelength is then encoded by the following
scale-free quantities.

\begin{definition}[Ropelength-normalized boundary twisting density]
\label{def:boundary-twisting-density}
For \(\alpha>0\), define
\[
  \rho^{\mathrm{tw}}_{\partial,\alpha}(K)
  =
  \frac{\operatorname{Tw}_{\partial}(K)^\alpha}{\Rop(K)}.
\]
Equivalently, define the boundary-twisting compression ratio by
\[
  \kappa^{\mathrm{tw}}_{\partial,\alpha}(K)
  =
  \frac{\Rop(K)}{\operatorname{Tw}_{\partial}(K)^\alpha},
\]
whenever \(\operatorname{Tw}_{\partial}(K)>0\).
\end{definition}

\begin{problem}[Boundary twisting versus ropelength]
\label{prob:boundary-twisting-versus-ropelength}
Find natural classes of knots \(\mathcal K\) and exponents \(\alpha>0\) for
which there exists a constant \(C_{\mathcal K,\alpha}>0\) such that
\[
  \Rop(K)
  \geq
  C_{\mathcal K,\alpha}\,\operatorname{Tw}_{\partial}(K)^\alpha
\]
for all \(K\in\mathcal K\).  Equivalently, determine when
\(\rho^{\mathrm{tw}}_{\partial,\alpha}\) is uniformly bounded above on
\(\mathcal K\).
\end{problem}

\begin{remark}[Relation with tabulated boundary slopes]
For many Montesinos and two-bridge knots, boundary slopes are computable from
continued-fraction or edgepath data.  In such families, tables of boundary
slopes and the phenomenon of repeated slopes provide concrete test cases for
\Cref{prob:boundary-twisting-versus-ropelength}
\cite{DunfieldBoundarySlopesMontesinos,CurtisFranczakLeiserManheimer}.
The invariant \(\operatorname{Tw}_{\partial}\) deliberately forgets the number
and genera of surfaces realizing each slope; those refinements belong to the
decorated essential-surface complex, while the twisting diameter is a coarse
quantity designed for comparison with ropelength.
\end{remark}

\subsection{Finite-length peripheral twist and boundary slopes}
\label{subsec:finite-length-peripheral-twist}

The invariant \(\operatorname{Tw}_{\partial}(K)\) is a topological diameter of
boundary slopes.  We now record a finite-length differential-geometric version
which lives on the boundary of a thick tube.  This supplies a local mechanism
behind the slogan that large boundary-slope spread should force large
ropelength.

Let \(\gamma:S^1_L\to\mathbb R^3\) be an arclength-parametrized representative
with positive thickness, and let \((e_1,e_2)\) be an orthonormal frame
of the normal bundle of \(\gamma\), oriented so that \((\gamma',e_1,e_2)\) is
a positively oriented frame of \(\R^3\), that is, \(e_1\times e_2=\gamma'\).  For a tube of radius \(r<\Thi(\gamma)\), use
coordinates
\[
  X(s,\theta)
  =
  \gamma(s)+r\bigl(\cos\theta\,e_1(s)+\sin\theta\,e_2(s)\bigr).
\]
The meridians are the curves \(\theta\mapsto X(s,\theta)\).  The normal
connection one-form is
\[
  \Omega_\gamma(s)\,ds,
  \qquad
  \Omega_\gamma(s)=\langle e_1'(s),e_2(s)\rangle.
\]
Equivalently, the invariant angular form on the tube is
\[
  \alpha_\gamma=d\theta+\Omega_\gamma(s)\,ds.
\]
Changing the normal frame by a rotation changes \(d\theta\) and
\(\Omega_\gamma ds\) separately, but not their sum.

\begin{definition}[Preferred-longitude twist density]
\label{def:preferred-longitude-twist-density}
Let \(\lambda\) be a smooth preferred longitude on \(\partial N_r(\gamma)\),
written as a section
\[
  \lambda(s)=X(s,\theta_\lambda(s)).
\]
Its peripheral twist density is
\[
  \omega_\lambda(s)
  =
  \theta_\lambda'(s)+\Omega_\gamma(s).
\]
For \(0<\ell\leq L\) and \(1\leq p<\infty\), define the finite-length
\(L^p\)-twist amount by
\[
  \Theta^{\partial}_{p,\ell}(\gamma,\lambda)
  =
  \sup_{I\subset S^1_L,\ |I|=\ell}
  \left(
  \frac{1}{(2\pi)^p}\int_I |\omega_\lambda(s)|^p\,ds
  \right)^{1/p}.
\]
For \(p=\infty\), set
\[
  \Theta^{\partial}_{\infty,\ell}(\gamma,\lambda)
  =
  \frac{1}{2\pi}
  \sup_{s\in S^1_L}|\omega_\lambda(s)|.
\]
\end{definition}

The case \(p=1\) is the total absolute twist seen in a longitudinal window of
length \(\ell\).  The case \(p=\infty\) records the maximal local twist density.
For a preferred longitude, the signed total twist is related to the writhe of
\(\gamma\) by the classical Calugareanu--White--Fuller relation
\cite{Calugareanu1959,White1969,Fuller1971}; the finite-length quantities above
are deliberately absolute and local, so that cancellation between oppositely
oriented twisting regions is not allowed.

\begin{definition}[Ropelength-windowed peripheral twist]
\label{def:ropelength-windowed-peripheral-twist}
For a knot type \(K\), length bound \(\Lambda\), window length \(\ell\), and
\(1\leq p\leq\infty\), define
\[
  \Theta^{\partial}_{p,\ell}(K;\Lambda)
  =
  \inf
  \left\{
  \Theta^{\partial}_{p,\ell}(\gamma,\lambda)
  \ \middle|\
  \begin{array}{l}
  \gamma\in K,\ \Thi(\gamma)=1,\ \Len(\gamma)\leq\Lambda,\\
  \lambda\subset\partial N_{\rho_0}(\gamma)\text{ is a preferred longitude}
  \end{array}
  \right\},
\]
where representatives with \(\Len(\gamma)<\ell\) are omitted.  The knot-type
finite-length twist profile is the function
\[
  (p,\ell,\Lambda)
  \longmapsto
  \Theta^{\partial}_{p,\ell}(K;\Lambda).
\]
\end{definition}

We next relate finite-length twist to boundary slopes.  Fix a preferred
longitude \(\lambda(s)=X(s,\theta_\lambda(s))\).  If a curve \(c\) on the tube is
written on a longitudinal cover as
\[
  c(s)=X(s,\theta_c(s)),
\]
then its angular coordinate relative to the preferred longitude is
\[
  \phi_c(s)=\theta_c(s)-\theta_\lambda(s).
\]
The one-form
\[
  \eta_\lambda=d\phi=d\theta-\theta_\lambda'(s)\,ds
\]
measures meridional winding relative to the preferred longitude, while
\(\alpha_\gamma\) measures geometric rotation of the chosen curve relative to
parallel transport in the normal bundle.

\begin{definition}[Finite-length slope twist]
\label{def:finite-length-slope-twist}
Let \(c\) be a curve on \(\partial N_{\rho_0}(\gamma)\) which is a finite-sheeted graph
over the longitudinal parameter, and let \(J\) be an interval in its
longitudinal cover.  Define the relative meridional twist of \(c\) over \(J\) by
\[
  \nu_\lambda(c;J)
  =
  \frac{1}{2\pi}\int_J |\phi_c'(s)|\,ds.
\]
For a window length \(\ell>0\), define
\[
  \nu_{\lambda,\ell}(c)
  =
  \sup_{|J|=\ell}\nu_\lambda(c;J),
\]
where the supremum is taken over all longitudinal intervals of length \(\ell\)
in the relevant cover.
\end{definition}

\begin{proposition}[Boundary slope as total relative twist]
\label{prop:boundary-slope-total-relative-twist}
Let \(r=a\mu+b\lambda\) be a finite slope with \(b>0\), and let \(c\) be a
representative of this slope on \(\partial N_{\rho_0}(\gamma)\), written on the
\(b\)-fold longitudinal cover.  Then
\[
  \frac{1}{2\pi}\int_c \eta_\lambda
  =
  a.
\]
Consequently, for every \(0<\ell\leq bL\),
\[
  \nu_{\lambda,\ell}(c)
  \geq
  \frac{|a|\ell}{bL}.
\]
\end{proposition}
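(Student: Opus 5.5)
The plan is to compute the integral of \(\eta_\lambda=d\phi\) directly on the \(b\)-fold longitudinal cover, and then get the window bound by averaging.

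First, the identity. On the \(b\)-fold cover \(S^1_{bL}\to S^1_L\), the curve \(c\) is a single-valued graph \(s\mapsto X(s,\theta_c(s))\), \(s\in[0,bL]\). The relative angle \(\phi_c=\theta_c-\theta_\lambda\) is well defined modulo \(2\pi\), because \(\theta_c\) and \(\theta_\lambda\) are angles in the same normal frame \((e_1,e_2)\). Any frame change rotates both by the same amount and leaves \(\phi_c\) unchanged, so the frame's own winding around the knot plays no role.

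Lift \(\phi_c\) to a real-valued function on \([0,bL]\). Then \(\frac1{2\pi}\int_c\eta_\lambda=\frac1{2\pi}(\phi_c(bL)-\phi_c(0))\) is an integer. It is the algebraic intersection number of \(c\) with \(\lambda\), in the basis where \(\lambda\) has \(\phi\equiv0\) and \(\mu\) is a full turn in \(\phi\).

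In homology \(c=a\mu+b\lambda\). Going once around the cover advances the longitudinal coordinate \(b\) times, and \(\lambda\) contributes no \(\phi\)-winding. Hence the total \(\phi\)-winding is \(2\pi a\). This is the same as \(\langle\lambda,c\rangle=a\) up to the orientation convention \(e_1\times e_2=\gamma'\). That convention has to be checked so that the sign comes out \(+a\) rather than \(-a\); with the stated positive orientation I expect \(+a\).

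The one point needing care is that \(\lambda\) is a preferred longitude, so \((\mu,\lambda)\) is exactly the coordinate basis in which \(\phi\) measures meridional winding.

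Second, the window bound. Since \(\nu_\lambda(c;J)\ge\frac1{2\pi}\bigl|\int_J\phi_c'\bigr|\), it suffices to control \(\int|\phi_c'|\) over windows. Let \(f=|\phi_c'|/2\pi\) on the circle of length \(bL\). Then \(\int_0^{bL}f\ge|a|\) by the identity.

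Now average over all windows \(J_t=[t,t+\ell]\) with \(t\in[0,bL]\). By Fubini,
\[
\frac1{bL}\int_0^{bL}\!\!\int_{J_t}f\,ds\,dt=\frac{\ell}{bL}\int_0^{bL}f\ \ge\ \frac{\ell|a|}{bL},
\]
since each point lies in windows for a set of \(t\) of measure \(\ell\), using \(\ell\le bL\). Some window therefore reaches at least the average, so the supremum satisfies \(\nu_{\lambda,\ell}(c)\ge|a|\ell/(bL)\).

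The main obstacle is the sign and normalization in the first step, namely identifying the \(\phi\)-winding with \(a\) under the paper's frame orientation. The averaging step is routine.
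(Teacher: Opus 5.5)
Your proposal is correct and takes essentially the same approach as the paper, which also reads off the total $\phi$-winding $2\pi a$ in meridian--preferred-longitude coordinates on the $b$-fold cover and then gets the window bound by averaging; your Fubini computation is a more explicit version of that averaging step. The sign you flagged does come out as $+a$: with $e_1\times e_2=\gamma'$, increasing $\theta$ traverses the meridian with $\operatorname{lk}(\mu,\gamma)=+1$ (the convention of \Cref{rem:cwf-conventions}), and $\lambda$ is oriented parallel to $\gamma$.
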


\begin{proof}
In the meridian--preferred-longitude coordinates determined by \(\lambda\), the
function \(\phi_c\) is precisely the meridional angle.  A curve in the class
\(a\mu+b\lambda\) winds \(a\) times in the meridional direction while running
\(b\) times longitudinally.  Hence the signed integral of \(d\phi\) over the
\(b\)-fold longitudinal cover is \(2\pi a\).  The second inequality follows by
averaging: the total variation of \(\phi_c\) is at least \(2\pi |a|\) over an
interval of total longitudinal length \(bL\), so some subinterval of length
\(\ell\) carries at least the corresponding average amount of variation.
\end{proof}

For two slopes, the same argument applied on a common longitudinal cover gives
the intersection distance.

\begin{corollary}[Finite-length form of boundary-slope distance]
\label{cor:finite-length-boundary-slope-distance}
Let
\[
  r=a\mu+b\lambda,
  \qquad
  r'=a'\mu+b'\lambda
\]
be finite slopes with \(b,b'>0\).  On the common \(bb'\)-fold longitudinal
cover, the relative meridional angle between representatives of slopes \(r\)
and \(r'\) has total degree
\[
  ab'-a'b.
\]
Therefore, for every \(0<\ell\leq bb'L\), any pair of representatives satisfies
an averaged finite-window lower bound of the form
\[
  \sup_{|J|=\ell}
  \frac{1}{2\pi}\int_J
  |(\phi_c-\phi_{c'})'(s)|\,ds
  \geq
  \frac{\Delta_T(r,r')\,\ell}{bb'L}.
\]
\end{corollary}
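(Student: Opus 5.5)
The plan is to reduce the statement to \Cref{prop:boundary-slope-total-relative-twist}, applied on a common cover to the difference of the two angular functions. First I will fix representatives \(c\) and \(c'\) of \(r\) and \(r'\) on \(\partial N_{\rho_0}(\gamma)\). Each is a finite-sheeted graph over the longitudinal parameter, on its \(b\)-fold, respectively \(b'\)-fold, longitudinal cover. I then lift both to the common \(bb'\)-fold cover \(S^1_{bb'L}\). On this cover \(c\) is traversed \(b'\) times and \(c'\) is traversed \(b\) times.

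Next I compute degrees. By \Cref{prop:boundary-slope-total-relative-twist}, \(\phi_c\) increases by \(2\pi a\) over the \(b\)-fold cover, so on the \(bb'\)-fold cover it increases by \(2\pi ab'\). Likewise \(\phi_{c'}\) increases by \(2\pi a'b\). Hence the difference \(\psi=\phi_c-\phi_{c'}\) is a well-defined function on \(\R\) with
\[
  \psi(s+bb'L)=\psi(s)+2\pi(ab'-a'b).
\]
So \(\psi\) descends to a circle-valued map of degree \(ab'-a'b\); this is the first assertion. The quantity \(\eta_\lambda\) cancels the \(\theta_\lambda'\) term in the difference, so in fact \(\psi=\theta_c-\theta_{c'}\), and the difference does not depend on the longitude. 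This is consistent with \(\Delta_T\) being framing-invariant up to the sign convention.

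For the window bound I use averaging. The total variation of \(\psi\) over one period is at least \(|\int\psi'|=2\pi\,\Delta_T(r,r')\). I cover the circle \(S^1_{bb'L}\) by windows \(J_k=[k\ell,(k+1)\ell]\) for \(k=0,\dots,m-1\), with \(m=\lceil bb'L/\ell\rceil\). The last window is allowed to wrap around, so the windows cover the circle, possibly with overlap. Then
\[
  \sum_k\int_{J_k}|\psi'|\geq 2\pi\Delta_T ,
\]
so some window carries at least \(2\pi\Delta_T/m\) of variation.

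This pigeonhole step gives only \(\ell/(bb'L)\) up to the ceiling, and that is where the only real care is needed. To get the exact constant I will use the continuous averaging argument instead. Integrate
\[
  F(t)=\int_t^{t+\ell}|\psi'|
\]
over \(t\in[0,bb'L]\). By Fubini and periodicity, \(\int_0^{bb'L}F(t)\,dt=\ell\int_0^{bb'L}|\psi'|\), because each point is covered by exactly length \(\ell\) of windows. Hence
\[
  \sup_t F(t)\geq \frac{\ell}{bb'L}\int_0^{bb'L}|\psi'|\geq \frac{2\pi\,\Delta_T\,\ell}{bb'L}.
\]
Dividing by \(2\pi\) gives the stated inequality. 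The same Fubini identity also supplies a clean proof of the averaging step in \Cref{prop:boundary-slope-total-relative-twist}.

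The main obstacle is regularity: \(\psi\) must be absolutely continuous so that \(\int|\psi'|\) dominates the net increment. This holds because the representatives are \(C^{1}\) graphs over the longitudinal parameter; for merely rectifiable graphs I will read \(\int_J|\psi'|\) as total variation, and the argument above goes through unchanged.
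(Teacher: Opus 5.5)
Your proposal is correct and follows the paper's route. You lift both representatives to the common $bb'$-fold cover, read off the degree $ab'-a'b$ of $\phi_c-\phi_{c'}$ from \Cref{prop:boundary-slope-total-relative-twist}, and average over windows of length $\ell$; your Fubini identity $\int_0^{bb'L}\int_t^{t+\ell}|\psi'|\,ds\,dt=\ell\int_0^{bb'L}|\psi'|$ makes rigorous the paper's informal ``some subinterval carries at least the average'' step, and your absolute-continuity caveat is exactly the regularity that the integral form of the bound requires.
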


This motivates a denominator-normalized version of the boundary twisting
diameter, adapted to finite-length slope rates.

\begin{definition}[Boundary slope-rate diameter]
\label{def:boundary-slope-rate-diameter}
For finite slopes \(r=a\mu+b\lambda\) and \(r'=a'\mu+b'\lambda\) with
\(b,b'>0\), set
\[
  \Delta_T^{\mathrm{rate}}(r,r')
  =
  \frac{\Delta_T(r,r')}{bb'}.
\]
Define
\[
  \operatorname{Tw}^{\mathrm{rate}}_{\partial}(K)
  =
  \max\left\{
  \Delta_T^{\mathrm{rate}}(r,r')
  \ \middle|\
  r,r'\in B(K)\text{ are finite slopes}
  \right\}.
\]
When all relevant slopes are integral, one has
\[
  \operatorname{Tw}^{\mathrm{rate}}_{\partial}(K)
  =
  \operatorname{Tw}_{\partial}(K).
\]
\end{definition}

\begin{principle}[Finite-length twisting mechanism]
\label{prin:finite-length-twisting-mechanism}
Let \(\gamma\) be a unit-thickness representative of length \(L\).  If two
boundary slopes of essential surfaces differ by a large value of
\(\Delta_T(r,r')\), then any boundary representatives of these slopes on the
tube must accumulate a large amount of relative meridional rotation on some
finite longitudinal window.  Quantitatively, the finite-length boundary-slope distance estimate gives
the lower bound
\[
  \text{finite-window relative twisting}
  \ \gtrsim\
  \frac{\Delta_T(r,r')\,\ell}{bb'L}.
\]
Thus an a priori geometric upper bound on finite-window peripheral twisting
would imply a ropelength lower bound in terms of
\(\operatorname{Tw}^{\mathrm{rate}}_{\partial}(K)\), and in integral-slope
families in terms of \(\operatorname{Tw}_{\partial}(K)\).
\end{principle}

\begin{problem}[Finite-length boundary twisting versus ropelength]
\label{prob:finite-length-boundary-twisting-versus-ropelength}
Find geometric hypotheses, preferably consequences of unit thickness and
controlled essential-surface geometry, which give upper bounds for the
finite-window peripheral twisting quantities
\(\Theta^{\partial}_{p,\ell}(\gamma,\lambda)\) and
\(\nu_{\lambda,\ell}(c)\).  Under such hypotheses, determine constants
\(C>0\) and \(\alpha>0\) for which
\[
  \Rop(K)
  \geq
  C\,\operatorname{Tw}^{\mathrm{rate}}_{\partial}(K)^\alpha,
\]
or, in integral-slope families,
\[
  \Rop(K)
  \geq
  C\,\operatorname{Tw}_{\partial}(K)^\alpha.
\]
\end{problem}

\begin{remark}[How this refines the slope diameter]
The topological invariant \(\operatorname{Tw}_{\partial}(K)\) records the
largest peripheral slope distance.  The finite-length invariants
\(\Theta^{\partial}_{p,\ell}\) and \(\nu_{\lambda,\ell}\) record where that
peripheral rotation must occur along a thick representative.  Thus they turn a
static boundary-slope diameter into a local geometric constraint on a tube of
finite length.
\end{remark}

\subsection{A conditional boundary-slope height bound}
\label{subsec:conditional-slope-height}

We now close the twisting mechanism into a genuine, if conditional, inequality:
under explicit control of the peripheral geometry of
\(\partial N_{\rho_0}(\gamma)\), large boundary-slope height forces large
ropelength.  The unconditional core is a length lower bound for a slope curve on
the \(\rho_0\)-tube, obtained directly from the invariant angular form
\(\alpha_\gamma=d\theta+\Omega_\gamma\,ds\) of
\Cref{subsec:finite-length-peripheral-twist}.  Fix the normal frame so that its
zero section is the preferred (Seifert) longitude, and set
\[
  W(\gamma)=\frac{1}{2\pi}\oint_\gamma\Omega_\gamma(s)\,ds ,
\]
the total peripheral twist of the Seifert framing.  By the
C\u alug\u areanu--White--Fuller relation, \(W(\gamma)=-\operatorname{Wr}(\gamma)\),
where \(\operatorname{Wr}\) is the writhe
\cite{Calugareanu1959,White1969,Fuller1971}; the precise regularity and sign
conventions under which this relation is used are fixed in
\Cref{rem:cwf-conventions} below.  Let
\[
  \ell^{\mathrm{st}}_\lambda(\gamma)
  =
  \inf\left\{\frac{\Len(c)}{|b|}\ \middle|\
  c\subset\partial N_{\rho_0}(\gamma)\text{ of class }
  a\mu+b\lambda,\ a\in\Z,\ b\in\Z\setminus\{0\}\right\}
\]
be the peripheral stable longitudinal systole.  By definition, every curve
of class \(a\mu+b\lambda\), \(b\neq0\), satisfies
\(|b|\ell^{\mathrm{st}}_\lambda(\gamma)\leq\Len(c)\).

\begin{remark}[C\u alug\u areanu--White--Fuller for \(C^{1,1}\) curves, and sign
conventions]
\label{rem:cwf-conventions}
The representatives of this paper are \(C^{1,1}\), not smooth, so we record
once the conventions and the regularity under which the
C\u alug\u areanu--White--Fuller (CWF) relation is applied.

\emph{Conventions.}  Orient \(S^3=\R^3\cup\{\infty\}\) by the standard
right-handed orientation of \(\R^3\) and orient \(\gamma\).  On
\(\partial N_{\rho_0}(\gamma)\) the meridian \(\mu\) is oriented so that
\(\operatorname{lk}(\mu,\gamma)=+1\), and the preferred longitude \(\lambda\)
is the unique slope with \(\operatorname{lk}(\lambda,\gamma)=0\), oriented
parallel to \(\gamma\).  For a framing given by a unit normal field \(v\)
along \(\gamma\), the twist is
\(\operatorname{Tw}(\gamma,v)=\frac{1}{2\pi}\oint\langle v'(s),
\gamma'(s)\times v(s)\rangle\,ds\), the writhe
\(\operatorname{Wr}(\gamma)\) is the Gauss double integral, and CWF reads
\(\operatorname{lk}(\gamma,\gamma_v)=\operatorname{Tw}(\gamma,v)
+\operatorname{Wr}(\gamma)\), where \(\gamma_v\) is the push-off along \(v\).
With these signs, the Seifert framing has linking number zero, so its total
twist is \(W(\gamma)=-\operatorname{Wr}(\gamma)\), which is the form used in
\Cref{lem:meridional-length-bound}.

\emph{Regularity.}  For a unit-thickness \(C^{1,1}\) curve, the tangent
\(\gamma'\) is Lipschitz, so a rotation-minimizing (parallel) normal frame
\((e_1,e_2)\) exists with Lipschitz regularity and essentially bounded
connection form \(\Omega_\gamma\in L^\infty\); all framings used here differ
from it by a Lipschitz angle function, so twist densities are defined almost
everywhere and integrable.  The Gauss integral defining
\(\operatorname{Wr}(\gamma)\) converges absolutely, because unit thickness
separates the curve from itself and bounds the integrand near the diagonal by
the curvature bound \(\kappa\leq1\).  The CWF relation for this class follows
from the smooth case by approximation: mollifying \(\gamma\) gives smooth
curves \(\gamma_\delta\to\gamma\) in \(C^1\) with uniformly bounded curvature
and thickness bounded below, the integer \(\operatorname{lk}\) is constant for
small \(\delta\), and \(\operatorname{Tw}\) and \(\operatorname{Wr}\) converge
by dominated convergence; for \(\operatorname{Tw}\) the approximating framing
is obtained by mollifying \(v\), projecting to the normal bundle of
\(\gamma_\delta\), and renormalizing, so that \(v_\delta'\to v'\) almost
everywhere with a uniform \(L^\infty\) bound.  All statements below use CWF
only in this \(C^{1,1}\) form.
\end{remark}

\begin{lemma}[Meridional length bound on the \(\rho_0\)-tube]
\label{lem:meridional-length-bound}
Let \(\gamma\) be a unit-thickness representative and let \(c\subset\partial
N_{\rho_0}(\gamma)\) be a closed curve of Seifert slope \(r=a\mu+b\lambda\) with
\(\gcd(a,b)=1\).  Then
\[
  \Len(c)\ \geq\ 2\pi\rho_0\,\bigl|\,a+b\,W(\gamma)\,\bigr|
  \ =\ 2\pi\rho_0\,\bigl|\,a-b\,\operatorname{Wr}(\gamma)\,\bigr|.
\]
\end{lemma}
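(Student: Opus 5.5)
We integrate the invariant angular form \(\alpha_\gamma=d\theta+\Omega_\gamma\,ds\) of \Cref{subsec:finite-length-peripheral-twist} along \(c\) and compare the result with the length of \(c\). Fix a normal frame \((e_1,e_2)\) whose zero section \(\theta=0\) is the preferred longitude \(\lambda\). Such a frame exists with Lipschitz regularity and \(\Omega_\gamma\in L^\infty\), by rotating the parallel frame of \Cref{rem:cwf-conventions} by a Lipschitz angle. In these coordinates \(\theta_\lambda\equiv0\), so \(\eta_\lambda=d\theta\). By density of smooth curves and lower semicontinuity of length, we may assume \(c\) is smooth, or at least rectifiable; a homotopy in the torus then changes neither the class nor the integral of the closed form \(\alpha_\gamma\).

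\emph{Step 1: compute the integral of \(\alpha_\gamma\) topologically.} The form \(\alpha_\gamma\) is closed on the torus \(\partial N_{\rho_0}(\gamma)\), because \(\Omega_\gamma\) depends only on \(s\). Hence \(\frac1{2\pi}\int_c\alpha_\gamma\) depends only on the homology class \(a\mu+b\lambda\), and it is linear in that class.
\begin{itemize}
\item On \(\mu\), the term \(d\theta\) contributes \(2\pi\) and \(\Omega_\gamma\,ds\) contributes nothing.
\item On \(\lambda\) (the curve \(\theta=0\)), the term \(d\theta\) contributes \(0\) and \(\Omega_\gamma\,ds\) contributes \(2\pi W(\gamma)\).
\end{itemize}
Therefore \(\frac1{2\pi}\int_c\alpha_\gamma=a+bW(\gamma)\), consistent with \Cref{prop:boundary-slope-total-relative-twist}. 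The CWF relation in the form of \Cref{rem:cwf-conventions} gives \(W=-\operatorname{Wr}\), which yields the second expression in the statement.

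\emph{Step 2: bound \(\alpha_\gamma\) pointwise by arclength.} This is the main step: we must show \(|\alpha_\gamma(\dot c)|\le|\dot c|/\rho_0\) for every tangent vector to the tube. Compute \(\partial_\theta X=\rho_0(-\sin\theta\,e_1+\cos\theta\,e_2)=:\rho_0 n_\theta\) and
\[
\partial_sX=(1-\rho_0\kappa_\theta)\gamma'+\rho_0\bigl(\cos\theta\,(e_1'){}^{\perp}+\sin\theta\,(e_2'){}^{\perp}\bigr),
\]
where \(\perp\) denotes the normal component and \(\kappa_\theta\) is the curvature component in the direction of the position vector. The normal part of \(e_1'\) is \(\Omega_\gamma e_2\) and that of \(e_2'\) is \(-\Omega_\gamma e_1\). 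Hence the normal component of \(\partial_sX\) equals \(\rho_0\Omega_\gamma n_\theta\). For a tangent vector \(v=\dot s\,\partial_sX+\dot\theta\,\partial_\theta X\), this gives
\[
\langle v,n_\theta\rangle=\rho_0(\dot\theta+\Omega_\gamma\dot s)=\rho_0\,\alpha_\gamma(v),
\]
because \(\gamma'\perp n_\theta\) and the radial direction is perpendicular to \(n_\theta\). Consequently \(|\alpha_\gamma(v)|\le|v|/\rho_0\). The identity is pure linear algebra once the normal connection is identified, and it holds almost everywhere for the Lipschitz frame.

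\emph{Step 3: conclude.} Integrating Step 2 along \(c\) gives \(2\pi|a+bW|=\bigl|\int_c\alpha_\gamma\bigr|\le\Len(c)/\rho_0\), which is the claimed inequality.

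The delicate points are the ones in Step 2: the sign and orientation conventions (so that \(\alpha_\gamma\), rather than \(d\theta-\Omega_\gamma\,ds\), is the form that pairs with \(n_\theta\)), and the a.e. validity of the computation at \(C^{1,1}\) regularity. For the latter we rely on the fact that \(\rho_0<1\) keeps \(X\) a bi-Lipschitz parametrization of the tube (\Cref{not:ambient-metric}).
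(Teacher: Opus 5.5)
Your proposal is correct and follows the paper's proof essentially step for step. The $u$-component of $\partial_sX$ is $\rho_0\Omega_\gamma$, so $\Len(c)\geq\rho_0\bigl|\oint_c\alpha_\gamma\bigr|$; closedness of $\alpha_\gamma$ together with the degree-$b$ projection to the $s$-circle then gives $\oint_c\alpha_\gamma=2\pi(a+bW(\gamma))$.

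One small correction concerns your preliminary reduction. Lower semicontinuity of length runs the wrong way for passing a lower bound from smooth approximants back to $c$. No reduction is needed, however: a non-rectifiable $c$ makes the inequality trivial, and a rectifiable $c$ can be handled directly through a Lipschitz lift of its arclength parametrization, exactly as the paper does.
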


\begin{proof}
We stress that \(c\) is an arbitrary embedded curve in its slope class: it is
\emph{not} assumed to be a graph, or section, over the longitudinal
parameter, and the proof must not presume such a structure (a slope curve on
the torus can backtrack longitudinally, and for \(b=0\) no section exists).

The tube coordinates
\(X(s,\theta)=\gamma(s)+\rho_0\bigl(\cos\theta\,e_1(s)+\sin\theta\,e_2(s)\bigr)\)
give a bi-Lipschitz parametrization of \(\partial N_{\rho_0}(\gamma)\) by the
\((s,\theta)\)-torus, because \(\rho_0<1=\Thi(\gamma)\)
(\Cref{not:ambient-metric}).  Parametrize \(c\) by its own arclength
parameter \(t\) and take a Lipschitz lift \(t\mapsto(s(t),\theta(t))\).
From \(|X_\theta|=\rho_0\) and
\(X_s=(1-\rho_0\kappa_1\cos\theta-\rho_0\kappa_2\sin\theta)\gamma'
+\rho_0\Omega_\gamma\,u\), where \(u=X_\theta/\rho_0\) is the unit meridional
direction and \(\gamma'\perp u\), the velocity
\(c'=X_s\,s'+X_\theta\,\theta'\) has \(u\)-component
\(\rho_0\bigl(\Omega_\gamma(s(t))\,s'(t)+\theta'(t)\bigr)\).  Hence
\[
  \Len(c)=\int|c'|\,dt
  \geq\rho_0\int\bigl|\Omega_\gamma(s)\,s'+\theta'\bigr|\,dt
  \geq\rho_0\Bigl|\oint_c\alpha_\gamma\Bigr| ,
\]
where \(\alpha_\gamma=d\theta+\Omega_\gamma(s)\,ds\).  The form
\(\alpha_\gamma\) is closed --- \(\Omega_\gamma(s)\,ds\) is pulled back from
the \(s\)-circle --- so its integral over \(c\) depends only on the homology
class \(a\mu+b\lambda\):
\(\oint_c d\theta=2\pi a\), and
\(\oint_c\Omega_\gamma\,ds
=b\oint_\gamma\Omega_\gamma\,ds=2\pi bW(\gamma)\), since the
\(s\)-projection of \(c\) has degree \(b\).  Thus
\(\oint_c\alpha_\gamma=2\pi(a+bW(\gamma))\) for every representative of the
class, and the CWF relation in the form of \Cref{rem:cwf-conventions}
gives the second equality.
\end{proof}

\begin{theorem}[Conditional boundary-slope height bound]
\label{thm:conditional-slope-height}
Fix \(\Delta,\tau>0\) and \(\Lambda\geq\Rop(K)\).  Suppose the peripheral
geometry over \(Y_\Lambda(K)\) is controlled in the sense that there are
constants \(W_0(\Lambda)\) and \(\ell_0(\Lambda)>0\) with
\[
  |\operatorname{Wr}(\gamma)|\leq W_0(\Lambda),
  \qquad
  \ell^{\mathrm{st}}_\lambda(\gamma)\geq\ell_0(\Lambda)
  \qquad\text{for all }\gamma\in Y_\Lambda(K).
\]
Then every pair \((\gamma,F)\in\calZ^{\mathrm{ess}}_{\Lambda,\Delta,\tau}(K)\)
with \(F\) connected and \(\partial F\neq\varnothing\) of slope \(r\) satisfies
\[
  \Delta_T(r,\lambda)
  \ \leq\
  \frac{\Delta}{2\pi\rho_0\,c_{\mathrm{col}}(\tau)}
  \ +\
  \frac{\Delta\,W_0(\Lambda)}{c_{\mathrm{col}}(\tau)\,\ell_0(\Lambda)},
\]
where \(\lambda\) is the Seifert longitude and \(c_{\mathrm{col}}(\tau)\) is the
collar constant of \Cref{prop:geometric-topological-boundedness}.  Equivalently,
if some essential surface of slope \(r\) is visible at level
\((\Lambda,\Delta,\tau)\), then \(\Lambda\) must be large enough that the
right-hand side reaches \(\Delta_T(r,\lambda)\); this is a ropelength lower bound
in terms of boundary-slope height.
\end{theorem}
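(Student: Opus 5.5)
The plan is to combine three earlier ingredients: the collar estimate bounding \(\Len(\partial F)\) by area, the meridional length bound of \Cref{lem:meridional-length-bound}, and the definition of the stable longitudinal systole. Write \(r=a\mu+b\lambda\) with \(\gcd(a,b)=1\), so that \(\Delta_T(r,\lambda)=|a|\). Let \(c\) be one boundary component of \(F\). Because \(F\) is connected and essential, every component of \(\partial F\) carries the same slope \(r\). The collar estimate preceding \Cref{prop:geometric-topological-boundedness} then gives
\[
  \Len(c)\leq\Len(\partial F)\leq c_{\mathrm{col}}(\tau)^{-1}\Delta .
\]

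First treat the case \(b=0\). Then \(r=\mu\) and \(|a|=1\). Here \Cref{lem:meridional-length-bound} gives \(2\pi\rho_0\leq\Len(c)\leq\Delta/c_{\mathrm{col}}\), so \(|a|=1\) is bounded by the first term on the right-hand side. For \(b\neq0\), use the triangle inequality \(|a|\leq|a-b\operatorname{Wr}(\gamma)|+|b|\,|\operatorname{Wr}(\gamma)|\). Bound the first term by \Cref{lem:meridional-length-bound}:
\[
  |a-b\operatorname{Wr}(\gamma)|\leq\frac{\Len(c)}{2\pi\rho_0}\leq\frac{\Delta}{2\pi\rho_0\,c_{\mathrm{col}}(\tau)} .
\]
Bound the second term with the systole hypothesis. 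Since \(c\) has class \(a\mu+b\lambda\) with \(b\neq0\),
\[
  |b|\,\ell_0(\Lambda)\leq|b|\,\ell^{\mathrm{st}}_\lambda(\gamma)\leq\Len(c)\leq\frac{\Delta}{c_{\mathrm{col}}(\tau)} .
\]
Combined with \(|\operatorname{Wr}(\gamma)|\leq W_0(\Lambda)\), this gives \(|b|\,|\operatorname{Wr}(\gamma)|\leq\Delta W_0(\Lambda)/(c_{\mathrm{col}}(\tau)\ell_0(\Lambda))\). Adding the two bounds yields the displayed inequality. The ropelength reformulation is just the contrapositive: the right-hand side depends on \(\Lambda\) only through \(W_0\) and \(\ell_0\), so visibility of slope \(r\) at level \(\Lambda\) forces it to be at least \(|a|\).

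The main point needing care is bookkeeping rather than analysis. The tube coordinates used in \Cref{lem:meridional-length-bound} must be set up with the zero section as the Seifert longitude, so that the class \(a\mu+b\lambda\) of the boundary curve is the one in the statement. The CWF sign convention of \Cref{rem:cwf-conventions} must also be applied consistently, since that lemma's bound involves \(a-b\operatorname{Wr}\) rather than \(a+b\operatorname{Wr}\). I would also state explicitly that each boundary curve is embedded and essential on the torus, so \(\gcd(a,b)=1\) holds. Connectedness of \(F\) is used only to make the slope well defined. The collar constant \(c_{\mathrm{col}}(\tau)\) is the one from the estimate established before \Cref{prop:geometric-topological-boundedness}.
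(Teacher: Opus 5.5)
Your proposal is correct and follows essentially the same route as the paper's proof. You write \(r=a\mu+b\lambda\), bound \(\Len(c)\) by \(\Delta/c_{\mathrm{col}}(\tau)\) through the collar estimate, split \(|a|\leq|a-b\operatorname{Wr}(\gamma)|+|b|\,|\operatorname{Wr}(\gamma)|\), and control the two terms by \Cref{lem:meridional-length-bound} and the systole inequality \(|b|\,\ell^{\mathrm{st}}_\lambda(\gamma)\leq\Len(c)\). Your separate treatment of \(b=0\) is harmless but unnecessary: the systole inequality holds trivially when \(b=0\), so the paper's single chain of inequalities already covers the meridional case.
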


\begin{proof}
Write \(r=a\mu+b\lambda\); then \(\Delta_T(r,\lambda)=|a|\).  Each of the
\(|\partial F|\) boundary components has the common class \(r\), and by
\Cref{prop:geometric-topological-boundedness}, \(\Len(\partial F)\leq
c_{\mathrm{col}}(\tau)^{-1}\Delta\), so a single component \(c\) has
\(\Len(c)\leq c_{\mathrm{col}}(\tau)^{-1}\Delta\).
\Cref{lem:meridional-length-bound} gives
\(2\pi\rho_0|a-b\operatorname{Wr}(\gamma)|\leq
\Len(c)\), while the definition of stable longitudinal systole gives
\(|b|\,\ell^{\mathrm{st}}_\lambda(\gamma)\leq
\Len(c)\).  Therefore
\[
  |a|
  \leq
  |a-b\operatorname{Wr}(\gamma)|+|b|\,|\operatorname{Wr}(\gamma)|
  \leq
  \frac{\Len(c)}{2\pi\rho_0}+\frac{\Len(c)}{\ell^{\mathrm{st}}_\lambda(\gamma)}|\operatorname{Wr}(\gamma)| .
\]
Substituting the peripheral bounds and \(\Len(c)\leq
c_{\mathrm{col}}(\tau)^{-1}\Delta\) yields the claim.
\end{proof}

\begin{remark}[Instantiating the peripheral hypotheses]
\label{rem:instantiate-peripheral}
The writhe hypothesis always holds with an explicit exponent: for a
unit-thickness curve of length at most \(\Lambda\), the writhe is bounded in
absolute value by the average crossing number, since the two are the
direction-averages of the signed and unsigned crossing counts of the planar
projections, and the average crossing number is at most \(C_{\mathrm{BS}}\Lambda^{4/3}\) by
the thickness--crossing-number estimate of Buck--Simon \cite{BuckSimon}.  Thus
one may always take \(W_0(\Lambda)=C_{\mathrm{BS}}\Lambda^{4/3}\).  The stable longitudinal-systole
hypothesis, by contrast, is genuine.  On the \(\rho_0\)-tube the core-direction
component of an arbitrary slope curve has coefficient
\(1-\rho_0\kappa_N\geq1-\rho_0\kappa\); integrating over longitudinal degree
\(|b|\) gives
\[
  \ell^{\mathrm{st}}_\lambda(\gamma)\ \geq\ \Len(\gamma)-\rho_0\operatorname{TC}(\gamma),
\]
where \(\operatorname{TC}\) is the total curvature.  Unit thickness implies
\(\kappa\leq1\) almost everywhere, hence
\(\operatorname{TC}(\gamma)\leq\Len(\gamma)\), and therefore
\[
  \ell^{\mathrm{st}}_\lambda(\gamma)
  \geq (1-\rho_0)\Len(\gamma)
  =\tfrac12\Len(\gamma)
\]
for the convention \(\rho_0=\tfrac12\).  Thus a positive longitudinal
systole bound is automatic; only the stronger hypothesis that it grows
linearly with the upper budget \(\Lambda\) is additional.  Under that linear
hypothesis \(\ell_0(\Lambda)=c_2\Lambda\),
\Cref{thm:conditional-slope-height} gives
\(\Delta_T(r,\lambda)\leq C(\tau)\Delta(1+\Lambda^{1/3})\), so realizing an
intersection height \(H\) requires
\(\Lambda\gtrsim\bigl(H/(C(\tau)\Delta)-1\bigr)^{3}\).  The numerical-slope counterpart of
this bound requires no peripheral hypothesis at all; this is the content of the
writhe window proved next (\Cref{thm:writhe-window}).
\end{remark}

\subsection{The writhe window: an unconditional slope bound}
\label{subsec:writhe-window}

The conditional theorem above bounds the intersection-number height of a slope,
and its systole hypothesis is exactly what controls the denominator.  If slopes
are measured \emph{numerically} instead, the denominator divides out and every
hypothesis can be discharged.  The result is the strongest unconditional
statement of this section: boundary slopes of bounded-geometry essential
surfaces cluster around the writhe of the representative.

Throughout, a non-meridional slope is written in Seifert-framed
meridian--longitude coordinates as \(p\mu+q\lambda\) with \(\gcd(p,q)=1\) and
\(q\geq1\), and its numerical value is the rational number \(r=p/q\).  The
framing comparison and the resulting numerical window are illustrated in
\Cref{fig:writhe-window}.

\begin{figure}[t]
\centering
\resizebox{0.98\textwidth}{!}{%
\begin{tikzpicture}[
  x=1cm,y=1cm,>=Latex,
  panel/.style={draw, rounded corners=2pt},
  title/.style={font=\footnotesize\bfseries},
  note/.style={font=\scriptsize, align=center}
]
  \draw[panel] (0,0.15) rectangle (6.55,4.45);
  \node[title] at (3.275,4.18) {framing comparison};
  \node[note] at (3.275,3.92) {universal cover of $\partial E(\gamma)$};
  \draw[step=0.58, black!22, very thin] (0.72,0.92) grid (5.63,3.55);
  \draw[->] (0.72,0.92) -- (5.72,0.92);
  \draw[->] (0.72,0.92) -- (0.72,3.70);
  \node[note, align=center] at (6.02,1.24) {longitude\\$\lambda$};
  \node[note, rotate=90] at (0.43,3.04) {meridian $\mu$};
  \coordinate (a) at (0.90,1.14);
  \coordinate (b) at (5.05,3.22);
  \coordinate (c) at (5.05,2.20);
  \draw[very thick] (a) -- (b);
  \draw[thick, dashed] (a) -- (c);
  \node[note, above, sloped, fill=white, inner sep=1.5pt] at ($(a)!0.55!(b)$)
    {$p\mu+q\lambda$};
  \node[note, below, sloped, fill=white, inner sep=1.5pt] at ($(a)!0.55!(c)$)
    {parallel framing, rate $\operatorname{Wr}(\gamma)$};
  \draw[decorate, decoration={brace, amplitude=4pt}]
    (5.18,2.20) -- (5.18,3.22);
  \node[note, anchor=west] at (5.30,2.71)
    {$|p-q\operatorname{Wr}(\gamma)|$};
  \draw[decorate, decoration={brace, amplitude=4pt, mirror}]
    (0.90,0.83) -- (5.05,0.83);
  \node[note] at (2.98,0.50) {$q$ longitudinal turns};

  \draw[panel] (6.95,0.15) rectangle (13.50,4.45);
  \node[title] at (10.225,4.18) {numerical slope window};
  \node[note] at (10.225,3.55) {all visible numerical boundary slopes};
  \draw[->] (7.55,2.50) -- (12.95,2.50);
  \draw[line width=2.2pt] (8.20,2.50) -- (12.30,2.50);
  \foreach \x in {8.20,10.25,12.30}
    \draw[thick] (\x,2.39) -- (\x,2.61);
  \node[note, below] at (8.20,2.33)
    {$\operatorname{Wr}(\gamma)-w$};
  \node[note, below] at (10.25,2.33)
    {$\operatorname{Wr}(\gamma)$};
  \node[note, below] at (12.30,2.33)
    {$\operatorname{Wr}(\gamma)+w$};
  \fill (9.08,2.50) circle (1.35pt);
  \fill (11.43,2.50) circle (1.35pt);
  \node[note, above] at (9.08,2.63) {$r$};
  \node[note, above] at (11.43,2.63) {$r'$};
  \draw[decorate, decoration={brace, amplitude=4pt, mirror}]
    (8.20,1.66) -- (12.30,1.66);
  \node[note] at (10.25,1.35) {total width $2w(\Delta,\tau)$};
  \node[draw, rounded corners=2pt, note, inner sep=4pt] at (10.25,0.67)
    {$\displaystyle
      w(\Delta,\tau)=
      \frac{\Delta}{2\pi\rho_0c_{\mathrm{col}}(\tau)}$};
\end{tikzpicture}%
}
\caption{The mechanism of the writhe window.  In the universal cover of the
peripheral torus (left), after \(q\) longitudinal turns the boundary slope
\(p\mu+q\lambda\) and the parallel framing are separated by
\(|p-q\operatorname{Wr}(\gamma)|\) meridional turns.  The collar--area
estimate bounds the length needed to create this separation.  After division
by \(q\), every visible numerical slope lies in the interval of half-width
\(w(\Delta,\tau)\) centered at \(\operatorname{Wr}(\gamma)\) (right).  The
picture displays the positive-separation case; reversing orientation changes
signs but not the absolute-value estimate.}
\label{fig:writhe-window}
\end{figure}

\begin{theorem}[Writhe window for visible boundary slopes]
\label{thm:writhe-window}
Let \((\gamma,F)\in\calZ^{\mathrm{ess}}_{\Lambda,\Delta,\tau}(K)\) with \(F\)
connected, \(\partial F\neq\varnothing\), and non-meridional boundary slope of
numerical value \(r=p/q\).  Then
\[
  \bigl|\,r-\operatorname{Wr}(\gamma)\,\bigr|
  \ \leq\
  \frac{\Area(F)}{2\pi\rho_0\,c_{\mathrm{col}}(\tau)\,q}
  \ \leq\
  \frac{\Delta}{2\pi\rho_0\,c_{\mathrm{col}}(\tau)}
  \ =:\ w(\Delta,\tau),
\]
where \(\rho_0\in(0,1)\) is the fixed tube radius of
\Cref{not:ambient-metric}.
Consequently, for a fixed representative \(\gamma\), all numerical boundary
slopes of connected essential surfaces visible at the \((\Delta,\tau)\)-budget
lie in the window
\(\bigl[\operatorname{Wr}(\gamma)-w,\ \operatorname{Wr}(\gamma)+w\bigr]\), and
any two such slopes \(r,r'\) satisfy
\[
  \Delta_T^{\mathrm{rate}}(r,r')=\Bigl|\frac{p}{q}-\frac{p'}{q'}\Bigr|
  \ \leq\ 2\,w(\Delta,\tau),
\]
independently of \(\Lambda\).
\end{theorem}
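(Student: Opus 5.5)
The plan is to combine the meridional length bound of \Cref{lem:meridional-length-bound} with the collar--area estimate of \Cref{prop:geometric-topological-boundedness}, exactly as in the proof of \Cref{thm:conditional-slope-height}. The difference is that we now divide by \(q\) instead of invoking the systole. Write the common slope of the components of \(\partial F\) as \(p\mu+q\lambda\) with \(\gcd(p,q)=1\) and \(q\geq1\). This is well defined since \(F\) is connected and essential, so all boundary components are parallel on the torus. Let \(m=|\partial F|\geq1\) and let \(c_1,\dots,c_m\) be the boundary components.

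First, each \(c_i\) is an embedded closed curve of class \(\pm(p\mu+q\lambda)\) on \(\partial N_{\rho_0}(\gamma)\). \Cref{lem:meridional-length-bound}, applied to each, gives \(\Len(c_i)\geq2\pi\rho_0|p-q\operatorname{Wr}(\gamma)|\). The lemma is stated for an arbitrary embedded representative, and the orientation sign disappears in the absolute value. Summing over the components gives
\[
  2\pi\rho_0\,m\,|p-q\operatorname{Wr}(\gamma)|\leq\Len(\partial F).
\]

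Second, the collar estimate preceding \Cref{prop:geometric-topological-boundedness} gives
\[
  c_{\mathrm{col}}(\tau)\Len(\partial F)\leq\Area(\operatorname{Col}(\partial F))\leq\Area(F).
\]
Combining this with the previous display and using \(m\geq1\), we get \(|p-q\operatorname{Wr}(\gamma)|\leq\Area(F)/(2\pi\rho_0c_{\mathrm{col}}(\tau))\). Dividing by \(q\geq1\) yields the first inequality of the theorem, since \(|r-\operatorname{Wr}(\gamma)|=|p-q\operatorname{Wr}(\gamma)|/q\). Then \(q\geq1\) together with \(\Area(F)\leq\Delta\) gives the bound \(w(\Delta,\tau)\). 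The window statement is a restatement, because \(\operatorname{Wr}(\gamma)\) depends only on \(\gamma\). The two-slope bound follows from the triangle inequality through \(\operatorname{Wr}(\gamma)\), together with the identity \(\Delta_T^{\mathrm{rate}}(r,r')=|pq'-p'q|/(qq')=|p/q-p'/q'|\) from \Cref{def:boundary-slope-rate-diameter}. No quantity involving \(\Lambda\) enters.

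The main point needing care is the sign and framing bookkeeping. First, the slope must be expressed in the Seifert-framed basis in which \Cref{lem:meridional-length-bound} uses \(W(\gamma)=-\operatorname{Wr}(\gamma)\), which holds by the conventions of \Cref{rem:cwf-conventions}. Second, \Cref{lem:meridional-length-bound} as stated assumes \(\gcd(a,b)=1\), which is satisfied here. Third, the collar constant must be the one from the collar estimate, so that \(\Len(\partial F)\) is controlled by \(\Area(F)\) itself and not merely by \(\Delta\); this is what yields the sharper intermediate form \(\Area(F)/(\cdots q)\). Beyond checking these conventions, the argument is a direct combination of earlier results.
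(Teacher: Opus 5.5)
Your proposal is correct and follows the paper's proof: combine \Cref{lem:meridional-length-bound} with the collar estimate \(c_{\mathrm{col}}(\tau)\Len(\partial F)\leq\Area(F)\), divide by \(2\pi\rho_0 q\), and then use the triangle inequality together with \(|p/q-p'/q'|=\Delta_T(r,r')/(qq')\). The only difference is that you sum the length bound over all \(m=|\partial F|\) boundary components before discarding the factor \(m\geq1\), whereas the paper applies it to a single component; keeping \(m\) would sharpen the first inequality by a further factor \(1/|\partial F|\).
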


\begin{proof}
Let \(c\) be one component of \(\partial F\); it is a closed curve of Seifert
slope \(p\mu+q\lambda\) on \(\partial N_{\rho_0}(\gamma)\).
\Cref{lem:meridional-length-bound} gives
\(\Len(c)\geq2\pi\rho_0|p-q\operatorname{Wr}(\gamma)|\).  The collar estimate of
\Cref{prop:geometric-topological-boundedness} gives
\(c_{\mathrm{col}}(\tau)\Len(\partial F)\leq\Area(F)\), hence
\(\Len(c)\leq\Area(F)/c_{\mathrm{col}}(\tau)\).  Dividing by
\(2\pi\rho_0 q\geq2\pi\rho_0\)
gives the window; the diameter statement is the triangle inequality, and
\(|p/q-p'/q'|=|pq'-p'q|/(qq')=\Delta_T(r,r')/(qq')\) is the rate distance of
\Cref{def:boundary-slope-rate-diameter}.
\end{proof}

\begin{corollary}[Seifert surfaces on high-writhe representatives are expensive]
\label{cor:seifert-writhe-cost}
If \(F\subset E(\gamma)\) is a connected Seifert surface with
\(\Thi(F)\geq\tau\), then
\[
  \Area(F)\ \geq\ 2\pi\rho_0\,c_{\mathrm{col}}(\tau)\,\bigl|\operatorname{Wr}(\gamma)\bigr| .
\]
Thus a taut Seifert surface is visible at the budget \((\Delta,\tau)\) only over
representatives whose writhe satisfies
\(|\operatorname{Wr}(\gamma)|\leq w(\Delta,\tau)\).
\end{corollary}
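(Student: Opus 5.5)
The plan is to specialize \Cref{thm:writhe-window}, or more directly \Cref{lem:meridional-length-bound}, to the Seifert slope. A connected Seifert surface \(F\) has boundary equal to a single preferred longitude. In the coordinates of \Cref{rem:cwf-conventions} its slope is \(p\mu+q\lambda\) with \(p=0\) and \(q=1\), so its numerical value is \(r=0\).

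First, apply \Cref{lem:meridional-length-bound} to the boundary curve \(c=\partial F\) with \(a=0\), \(b=1\). This gives
\[
  \Len(\partial F)\geq 2\pi\rho_0\,|\operatorname{Wr}(\gamma)|.
\]
Second, use the collar estimate preceding \Cref{prop:geometric-topological-boundedness}, which holds under \(\Thi(F)\geq\tau\) alone:
\[
  c_{\mathrm{col}}(\tau)\Len(\partial F)\leq\Area(\operatorname{Col}(\partial F))\leq\Area(F).
\]
Chaining the two inequalities gives the displayed area lower bound. Equivalently, it is \Cref{thm:writhe-window} with \(r=0\), \(q=1\), multiplied through.

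The second claim then follows from the first. A taut Seifert surface visible at budget \((\Delta,\tau)\) has \(\Area(F)\leq\Delta\). The lower bound then forces
\[
  |\operatorname{Wr}(\gamma)|\leq \frac{\Delta}{2\pi\rho_0 c_{\mathrm{col}}(\tau)}=w(\Delta,\tau).
\]

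The one point needing care is that essentiality is not needed for the first inequality. Neither the collar estimate nor \Cref{lem:meridional-length-bound} uses it, so the bound holds for any connected Seifert surface of relative thickness at least \(\tau\), essential or not. We should therefore invoke those two results directly rather than \Cref{thm:writhe-window}, whose hypotheses include membership in the essential subspace. The second claim concerns taut surfaces, which are essential, so there \Cref{thm:writhe-window} applies as well. The only other thing to check is that the sign conventions of \Cref{rem:cwf-conventions} identify the Seifert slope as \(a=0,b=1\). This is immediate, since \(\operatorname{lk}(\lambda,\gamma)=0\).
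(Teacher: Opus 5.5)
Your proposal is correct and is essentially the paper's own argument. Like the paper, you specialize the writhe window to the Seifert slope $r=0$ with $q=1$, and you observe that only the collar estimate and \Cref{lem:meridional-length-bound} enter, so essentiality is not needed for the area bound; the visibility consequence then follows from $\Area(F)\leq\Delta$ exactly as you describe.
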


\begin{proof}
A Seifert surface has numerical slope \(r=0\) with \(q=1\); apply the first
inequality of \Cref{thm:writhe-window}.  Essentiality is not needed here: the
proof of the window uses only the collar estimate of
\Cref{prop:geometric-topological-boundedness} and
\Cref{lem:meridional-length-bound}, neither of which involves essentiality,
so the conclusion holds for every connected Seifert surface of relative
thickness at least \(\tau\).
\end{proof}

\begin{corollary}[Unconditional slope-height--ropelength inequality]
\label{cor:unconditional-slope-height}
There is a universal constant \(C_{\mathrm{BS}}>0\) such that every visible non-meridional
numerical slope \(r\) as in \Cref{thm:writhe-window} satisfies
\[
  |r|\ \leq\ C_{\mathrm{BS}}\Lambda^{4/3}+w(\Delta,\tau).
\]
Hence, whenever \(|r|>w(\Delta,\tau)\), the visibility level of the slope-\(r\)
surface type obeys
\[
  \beta_{\mathfrak S_r}(K;\Delta,\tau)
  \ \geq\
  \left(\frac{|r|-w(\Delta,\tau)}{C_{\mathrm{BS}}}\right)^{3/4},
\]
and dually the area visibility level obeys
\(a_{\mathfrak S_r}(K;\Lambda,\tau)\geq
2\pi\rho_0 c_{\mathrm{col}}(\tau)\bigl(|r|-C_{\mathrm{BS}}\Lambda^{4/3}\bigr)_+\).
\end{corollary}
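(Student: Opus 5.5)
The proof will combine the writhe window of \Cref{thm:writhe-window} with the Buck--Simon writhe bound recorded in \Cref{rem:instantiate-peripheral}. First, fix a visible pair \((\gamma,F)\in\calZ^{\mathrm{ess}}_{\Lambda,\Delta,\tau}(K)\) with \(F\) connected, \(\partial F\neq\varnothing\), and non-meridional numerical slope \(r=p/q\). By \Cref{thm:writhe-window} we have \(|r-\operatorname{Wr}(\gamma)|\leq w(\Delta,\tau)\). For the writhe itself, the remark shows that \(|\operatorname{Wr}(\gamma)|\) is at most the average crossing number, because both are direction-averages of signed and unsigned crossing counts. For a unit-thickness curve of length \(\Len(\gamma)\leq\Lambda\), the average crossing number is at most \(C_{\mathrm{BS}}\Lambda^{4/3}\) by \cite{BuckSimon}. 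The only point to check is that Buck--Simon applies to \(C^{1,1}\) unit-thickness curves. Their estimate uses only the thickness lower bound and length, so it does; if needed, one can also approximate by smooth curves as in \Cref{rem:cwf-conventions}, since the Gauss integral converges under \(C^1\) convergence with uniform thickness. The triangle inequality then gives \(|r|\leq C_{\mathrm{BS}}\Lambda^{4/3}+w(\Delta,\tau)\). The constant \(C_{\mathrm{BS}}\) is universal because \(\Thi(\gamma)=1\) is normalized.

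Next comes the visibility bound. Suppose \(|r|>w(\Delta,\tau)\) and \(\Lambda\) is any level with \(\calZ^{\mathfrak S_r}_{\Lambda,\Delta,\tau}(K)\neq\varnothing\). Pick a representative pair. If \(F\) is disconnected, apply the window to a single component. That component still has slope \(r\), thickness at least \(\tau\) (reach and the other clauses restrict to a component, with the collar estimate applied componentwise), and area at most \(\Delta\). If the window is proved only for connected surfaces, one should note that its proof uses just one boundary curve \(c\) together with \(\Len(c)\leq\Area(F)/c_{\mathrm{col}}(\tau)\), which holds for the whole system. The first part then gives \(|r|-w\leq C_{\mathrm{BS}}\Lambda^{4/3}\), hence \(\Lambda\geq((|r|-w)/C_{\mathrm{BS}})^{3/4}\). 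Taking the infimum over such \(\Lambda\) yields the bound on \(\beta_{\mathfrak S_r}\); if the set is empty, the bound holds trivially since the infimum is \(\infty\).

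Finally, for the dual area statement, fix \(\Lambda\) and \(\tau\), and take any pair at area level \(\Delta'\) realizing \(\mathfrak S_r\). Using the sharper first inequality of \Cref{thm:writhe-window} with \(q\geq1\), we get \(\Area(F)\geq2\pi\rho_0c_{\mathrm{col}}(\tau)\,|r-\operatorname{Wr}(\gamma)|\geq2\pi\rho_0c_{\mathrm{col}}(\tau)(|r|-C_{\mathrm{BS}}\Lambda^{4/3})\). Since area is nonnegative we may pass to the positive part, and then \(\Delta'\geq\Area(F)\); taking the infimum over \(\Delta'\) gives the bound on \(a_{\mathfrak S_r}\).

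The main obstacle is justifying the Buck--Simon input at \(C^{1,1}\) regularity in the radius normalization. Everything else is a triangle inequality plus inversion of the monotone function \(\Lambda\mapsto C_{\mathrm{BS}}\Lambda^{4/3}\).
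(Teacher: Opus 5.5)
Your proposal is correct and follows the paper's own proof. The paper inserts the Buck--Simon bound \(|\operatorname{Wr}(\gamma)|\leq C_{\mathrm{BS}}\Lambda^{4/3}\) into the writhe window and then either inverts for \(\Lambda\) or bounds \(\Area(F)\) from below via the first inequality of \Cref{thm:writhe-window} with \(q\geq1\); your observation that the window's proof needs only one boundary curve and the system-wide collar estimate, so that it covers possibly disconnected representatives of \(\mathfrak S_r\), is a point the paper leaves implicit.
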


\begin{proof}
As in \Cref{rem:instantiate-peripheral},
\(|\operatorname{Wr}(\gamma)|\leq C_{\mathrm{BS}}\Lambda^{4/3}\) unconditionally
\cite{BuckSimon}; insert this into the window and solve for \(\Lambda\), or for
\(\Area(F)\) via the first inequality of \Cref{thm:writhe-window}.
\end{proof}

\begin{corollary}[Unconditional invisibility gap]
\label{cor:unconditional-gap}
Fix \((\Delta,\tau)\).  If \(K\) has a boundary slope of numerical value \(r\)
with
\[
  |r|\ >\ w(\Delta,\tau)+C_{\mathrm{BS}}\Rop(K)^{4/3},
\]
then \(\beta_{\mathfrak S_r}(K;\Delta,\tau)>\Rop(K)\): the slope-\(r\) surface
type is invisible over the ideal stratum at that budget, and the knot--surface
trade-off frontier of its class is nonconstant, unconditionally and with
explicit constants.
\end{corollary}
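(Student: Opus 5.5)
The plan is to derive the gap from \Cref{cor:unconditional-slope-height} by contraposition, applied at the ideal level \(\Lambda=\Rop(K)\).

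First I will settle the inequality \(\beta_{\mathfrak S_r}(K;\Delta,\tau)\geq\Rop(K)\). Every representative in \(Y_\Lambda(K)\) has length at least \(\Rop(K)\), so \(\calZ^{\mathfrak S_r}_{\Lambda,\Delta,\tau}(K)\) is empty whenever \(\Lambda<\Rop(K)\). The only remaining question is whether equality can hold.

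Suppose, for contradiction, that \(\beta_{\mathfrak S_r}(K;\Delta,\tau)=\Rop(K)\). The infimum need not be attained, and \Cref{cor:attainment-visibility}(iii) settles this. It gives \(\overline\beta_{\mathfrak S_r}=\beta_{\mathfrak S_r}=\Rop(K)\). It also produces an attaining pair \((\gamma,F)\) with \(\Thi(\gamma)=1\) and \(\Len(\gamma)=\Rop(K)\), that is, a pair in \(\calZ^{\mathfrak S_r}_{\Rop(K),\Delta,\tau}(K)\).

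To invoke that corollary I must check that \(\mathfrak S_r\) is closed under pair isotopy and stable under \(C^1\)-limits of thick representatives. Slope \(r\) is listed there as an admissible example. Stability holds because the limit pair is pair-isotopic to the tail of the sequence by \Cref{lem:pair-stability}, so it is essential and keeps its slope.

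The attaining surface \(F\) is essential with nonempty boundary of slope \(r\), but it may be disconnected. In that case I pass to one component. Every component is essential and has all its boundary components of slope \(r\). A single component still satisfies \(\Thi\geq\tau\), since the five clauses restrict to a union of components, and its area is at most \(\Delta\). So it is a connected visible surface as required in \Cref{thm:writhe-window}. The slope \(r\) is non-meridional, because it exceeds \(w\) in absolute value and is therefore finite.

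Applying \Cref{cor:unconditional-slope-height} with \(\Lambda=\Rop(K)\) gives \(|r|\leq C_{\mathrm{BS}}\Rop(K)^{4/3}+w(\Delta,\tau)\). This contradicts the hypothesis, so \(\beta_{\mathfrak S_r}(K;\Delta,\tau)>\Rop(K)\).

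Invisibility over the ideal stratum is the statement \(\calZ^{\mathfrak S_r}_{\Rop(K),\Delta,\tau}(K)=\varnothing\), and this follows from the same corollary directly. Non-constancy of the frontier follows because the slope is visible at some larger budget or never. In either case the level \(\beta\) strictly exceeds the bottom value \(\Rop(K)\), at which it would sit if the class appeared over ideal knots.

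The main obstacle is the attainment step, that is, excluding a strict infimum equal to \(\Rop(K)\) that is never realized. I rely on the compactness of \Cref{cor:attainment-visibility}(iii) for this. The point to verify is that item (iii) is stated conditionally on \(\beta=\Rop(K)\), which is exactly the case assumed for contradiction.
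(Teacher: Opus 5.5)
Your argument is correct, but it routes through compactness where the paper needs none. The paper's proof is immediate from \Cref{cor:unconditional-slope-height}. For \(|r|>w(\Delta,\tau)\) that corollary already gives \(\beta_{\mathfrak S_r}(K;\Delta,\tau)\geq\Lambda_*:=\bigl((|r|-w(\Delta,\tau))/C_{\mathrm{BS}}\bigr)^{3/4}\). The hypothesis \(|r|>w(\Delta,\tau)+C_{\mathrm{BS}}\Rop(K)^{4/3}\) is exactly the statement \(\Lambda_*>\Rop(K)\). So the ``main obstacle'' you identify is not there. Every level at which the slice is nonempty is at least \(\Lambda_*\), hence so is the infimum. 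An unattained infimum equal to \(\Rop(K)\) is therefore already excluded by a uniform lower bound lying strictly above \(\Rop(K)\). The strictness comes from the hypothesis, not from attainment.

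Your contradiction argument via \Cref{cor:attainment-visibility}(iii) is still valid. You correctly check closure under pair isotopy and \(C^1\)-limit stability of the slope-\(r\) type via \Cref{lem:pair-stability}, and (iii) does apply exactly when \(\beta=\Rop(K)\). But this route buys nothing. It makes an ``unconditional'' gap depend on the moving-exterior compactness and pair-stability machinery, rather than only on the writhe window and Buck--Simon. It also returns only the qualitative \(\beta>\Rop(K)\), not the explicit gap \(\beta\geq\Lambda_*\).

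Two small points. When \(F\) is disconnected you must pass to a component with nonempty boundary. Alternatively, note that the proof of \Cref{thm:writhe-window} uses only one boundary curve and the collar estimate, neither of which needs connectedness. Your frontier sentence also conflates \(\beta\) with \(\Delta^{\tau}_{\min}\). The same estimate gives directly \(\Delta^{\tau}_{\min}(\Rop(K);[F])\geq 2\pi\rho_0c_{\mathrm{col}}(\tau)\bigl(|r|-C_{\mathrm{BS}}\Rop(K)^{4/3}\bigr)>\Delta\). This is what exceeds the frontier's value at any level where the class is visible within budget \(\Delta\).
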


\begin{proof}
Immediate from \Cref{cor:unconditional-slope-height}.
\end{proof}

\begin{remark}[Geometric meaning, and relation with the conditional bound]
\label{rem:writhe-window-meaning}
The mechanism is a rigidity of framings: the Seifert framing of a thick
representative rotates relative to the tube at total rate
\(-\operatorname{Wr}(\gamma)\), by the C\u alug\u areanu--White--Fuller relation,
and a surface of bounded area cannot afford boundary longer than
\(\Area/c_{\mathrm{col}}(\tau)\), hence cannot wind meridionally at a rate far
from the framing rate.  In slogan form: \emph{cheap essential surfaces have
boundary slope near the writhe}.  The theorem complements Hatcher's finiteness
\cite{HatcherBoundarySlopes}: topologically the slope set is finite but its
numerical spread is unbounded over knot types, whereas geometrically the visible
slopes over one representative lie in an interval of explicit width
\(2w(\Delta,\tau)\), uniformly in \(\Lambda\).  Measured by intersection number
\(\Delta_T\) instead of numerically, the denominator \(q\) must be controlled
and the statement reverts to the conditional
\Cref{thm:conditional-slope-height}; for integral slopes (\(q=1\)) the two
coincide and the window bounds \(\Delta_T(r,\lambda)\) unconditionally.
\end{remark}

\begin{remark}[Comparison with Bennequin-type inequalities]
\label{rem:bennequin-comparison}
It is instructive to compare \Cref{cor:seifert-writhe-cost} with the Bennequin
inequality \cite{Bennequin}, which bounds the self-linking number of a
transverse representative by the negative Euler characteristic of a Seifert
surface.  Both statements bound a framing-type quantity of the knot
representative by the complexity of a spanning surface, but the quantities are
different in kind: Bennequin's inequality is contact-topological, bounds the
integer self-linking number, and is insensitive to the metric size of the
surface, whereas the writhe window is metric, bounds the real-valued writhe of
a \(C^{1,1}\) thick representative, and is insensitive to the topology of the
surface except through its area.  The two are linked by
\Cref{prop:geometric-topological-boundedness}: at relative thickness \(\tau\)
the area controls \(|\chi(F)|\), so the window may be read as a metric
counterpart in which area at fixed thickness plays the role of Euler
characteristic.  Neither statement implies the other.
\end{remark}

\begin{example}[The writhe window for torus knots]
\label{ex:writhe-window-torus}
Let \(K=T(p,q)\) with \(p,q\geq2\) and \(\gcd(p,q)=1\), so that \(T(p,q)\)
is a knot.  Two natural essential surface
types are an essential Seifert surface, of numerical slope \(0\), and a
cabling annulus, of integral slope \(pq\).  For every unit-thickness
representative \(\gamma\) and every \(\tau\)-thick realization in
\(E(\gamma)\), \Cref{thm:writhe-window} gives
\[
  \Area(F_0)\ \geq\ 2\pi\rho_0 c_{\mathrm{col}}(\tau)\,|\operatorname{Wr}(\gamma)|,
  \qquad
  \Area(A)\ \geq\ 2\pi\rho_0 c_{\mathrm{col}}(\tau)\,\bigl|pq-\operatorname{Wr}(\gamma)\bigr| .
\]
Thus the writhe of the representative arbitrates between the two cheapest
essential surfaces of the exterior: the Seifert surface is cheap only over
representatives of small writhe, which makes the cabling annulus expensive,
and conversely.  These two displayed inequalities are proved.  The following
sharpness discussion, by contrast, rests on numerical inputs that we cite but
do not prove.  Numerical computations of tight \(T(2,n)\) representatives
suggest that the trade-off is resolved strictly between the two slopes: the
computed writhe of the tight trefoil is \(\operatorname{Wr}\approx3.41\)
\cite{KatritchBednarNature}, and the computed ideal writhes of the \(T(2,n)\)
family are consistent with a quasi-quantization in units of \(4/7\), hence
with an asymptotically linear growth of order \(8n/7\)
\cite{StasiakKatritchKauffman}; these values lie between the Seifert slope
\(0\) and the annulus slope \(2n\).  If, as these computations suggest, the
writhes of tight \(T(2,n)\) representatives grow linearly with slope
\(8/7\), and if the cabling annulus of a tight representative can be realized
at some fixed relative thickness with area bounded by a constant multiple of
the representative's length --- a realization we expect but do not prove ---
then both sides of the window inequality for the annulus grow linearly in
\(n\), and the window would be sharp up to constants on this family.  Neither
numerical input has been established rigorously; sharpness of the window is
posed as \Cref{prob:writhe-window-sharpness}.
\end{example}

\begin{corollary}[Linear joint area cost for torus knots]
\label{cor:torus-joint-cost}
Let \(\gamma\) be any unit-thickness representative of \(T(p,q)\),
\(p,q\geq2\), \(\gcd(p,q)=1\),
and let \(F_0,A\subset E(\gamma)\) be an essential Seifert surface and a
cabling annulus, each of relative thickness at least \(\tau\).  Then
\[
  \Area(F_0)+\Area(A)\ \geq\ 2\pi\rho_0\,c_{\mathrm{col}}(\tau)\,pq .
\]
Since the standard closed-braid diagram gives the elementary bound
\(\mathrm{Cr}(T(p,q))\leq q(p-1)<pq\), the joint cost is at least
\(2\pi\rho_0 c_{\mathrm{col}}(\tau)\) times the crossing number.  In particular
the taut layer and the cabling-annulus layer of a torus knot are never
simultaneously visible below the area budget
\(\Delta=\pi\rho_0 c_{\mathrm{col}}(\tau)\,pq\), over any representative and at
any ropelength level.
\end{corollary}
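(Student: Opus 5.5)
The plan is to apply the first inequality of \Cref{thm:writhe-window} twice, once to \(F_0\) and once to \(A\), in the same exterior \(E(\gamma)\), and then let the writhe cancel by the triangle inequality.

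\textbf{The Seifert surface.} The surface \(F_0\) is connected, has nonempty boundary, and has slope \(0\mu+1\lambda\). So in the notation of the theorem \(p_{\mathrm{slope}}=0\) and \(q_{\mathrm{slope}}=1\). The first inequality gives
\(\Area(F_0)\geq 2\pi\rho_0 c_{\mathrm{col}}(\tau)|\operatorname{Wr}(\gamma)|\); this is \Cref{cor:seifert-writhe-cost}, which does not even need essentiality.

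\textbf{The cabling annulus.} The annulus \(A\) is connected, and its boundary components have the integral cabling slope \(pq\mu+\lambda\). In particular the slope is non-meridional with denominator \(1\). The first inequality of the window gives
\(\Area(A)\geq 2\pi\rho_0 c_{\mathrm{col}}(\tau)|pq-\operatorname{Wr}(\gamma)|\). This is exactly the second display of \Cref{ex:writhe-window-torus}.

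The one point needing care is that the cabling slope is exactly \(pq\) in the Seifert framing of \Cref{rem:cwf-conventions}, with the sign matching the conventions under which \(W=-\operatorname{Wr}\). I would cite the standard fact that the essential annulus in a torus knot exterior has boundary slope \(pq\), with \(p,q\) the torus-knot parameters. I would check the sign by noting that the linking number of a boundary curve of \(A\) with \(\gamma\) equals the framing coefficient \(pq\). If the orientation convention instead produced \(-pq\), then replacing \(\gamma\) by its mirror convention changes nothing in the final estimate, since only \(|pq|\) enters. Note also that \(\tau\)-thickness is required of each surface separately, so no joint-system hypothesis is needed.

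\textbf{Combining.} Summing the two bounds and using \(|\operatorname{Wr}|+|pq-\operatorname{Wr}|\geq pq\) gives
\(\Area(F_0)+\Area(A)\geq 2\pi\rho_0c_{\mathrm{col}}(\tau)pq\). The crossing-number remark follows from \(\mathrm{Cr}(T(p,q))\leq q(p-1)<pq\), which comes from the closed \(q\)-braid \((\sigma_1\cdots\sigma_{p-1})^q\) after relabeling so that the braid index is the appropriate parameter. Finally, simultaneous visibility at budget \(\Delta\) would mean both areas are at most \(\Delta\), so their sum is at most \(2\Delta\). If \(\Delta<\pi\rho_0c_{\mathrm{col}}(\tau)pq\), this contradicts the joint bound, uniformly over \(\gamma\) and \(\Lambda\).
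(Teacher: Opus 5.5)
Your proposal is correct and matches the paper's proof: apply the first inequality of \Cref{thm:writhe-window} in the common exterior \(E(\gamma)\) to the two denominator-one slopes \(0\) and \(pq\), add the two bounds, and use \(|\operatorname{Wr}(\gamma)|+|pq-\operatorname{Wr}(\gamma)|\geq pq\). Your worry about the sign of the cabling slope is unnecessary, since \(|\operatorname{Wr}(\gamma)|+|r-\operatorname{Wr}(\gamma)|\geq|r|\) holds for either sign of \(r\); also, as a small slip, \((\sigma_1\cdots\sigma_{p-1})^q\) is a \(p\)-strand braid, not a \(q\)-braid, though it still has \(q(p-1)\) crossings.
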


\begin{proof}
Apply \Cref{thm:writhe-window} in the common exterior \(E(\gamma)\) to the
slopes \(0\) and \(pq\) (both of denominator \(1\)) and add:
\(|0-\operatorname{Wr}(\gamma)|+|pq-\operatorname{Wr}(\gamma)|\geq pq\).
\end{proof}

\begin{problem}[Sharpness of the writhe window]
\label{prob:writhe-window-sharpness}
Three sharpness questions remain.  First, is the linear dependence of the
window width \(w(\Delta,\tau)\) on \(\Delta\) optimal, that is, are there
families realizing slopes at distance comparable to
\(\Delta/(2\pi\rho_0 c_{\mathrm{col}}(\tau))\) from the writhe with area
\(\Delta\)?  The torus-knot family of \Cref{ex:writhe-window-torus} is a
natural candidate, conditionally on the numerically observed writhe
asymptotics and on a linear-area realization of the cabling annulus, neither
of which is proved.  Second, the composite bound
\(|r|\leq C_{\mathrm{BS}}\Lambda^{4/3}+w(\Delta,\tau)\) is sharp only if unit-thickness
representatives can attain writhe of order \(\Lambda^{4/3}\) while carrying a
\(\tau\)-thick essential surface of the corresponding slope; by
\Cref{cor:seifert-writhe-cost} such extremal writhe excludes cheap Seifert
surfaces, so extremal examples must have all their cheap slopes far from
zero.  Determine whether the exponent \(4/3\) is attained, or can be improved
for essential-surface-carrying representatives.  Third, determine the extremal
representatives: for fixed \((\Delta,\tau)\), which \(\gamma\) maximize the
number of boundary slopes visible in their window.
\end{problem}

\subsection{A dual area lower bound and a frontier gap}
\label{subsec:frontier-gap}

Read in the other direction, the length bound of
\Cref{lem:meridional-length-bound} bounds \emph{area from below} in terms of
boundary-slope height.  This is the first place where the trade-off frontier is
shown to be a genuinely nonconstant and positive function of the surface class,
rather than only bounded above by explicit realizations.

\begin{proposition}[Area lower bound from slope height]
\label{prop:area-lower-slope}
Let \((\gamma,F)\in\calZ^{\mathrm{ess}}_{\Lambda,\Delta,\tau}(K)\) with \(F\)
connected and \(\partial F\neq\varnothing\) of slope \(r=a\mu+b\lambda\).  Then
\[
  \Area(F)\ \geq\ 2\pi\rho_0\,c_{\mathrm{col}}(\tau)\,
  \bigl|\,a-b\operatorname{Wr}(\gamma)\,\bigr| ,
\]
where \(a-b\operatorname{Wr}(\gamma)\) is the parallel-frame meridional winding
of \(\partial F\).  Consequently, if \(|\operatorname{Wr}(\gamma)|\leq
W_0(\Lambda)\) on \(Y_\Lambda(K)\), the area visibility level of the slope \(r\)
satisfies
\[
  a_{\mathfrak S_r}(K;\Lambda,\tau)
  \ \geq\
  2\pi\rho_0\,c_{\mathrm{col}}(\tau)\,
  \bigl(\Delta_T(r,\lambda)-|b|\,W_0(\Lambda)\bigr)_+ ,
\]
where \((x)_+=\max\{x,0\}\).
\end{proposition}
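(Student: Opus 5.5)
The plan is to combine the two estimates already used for the writhe window, one component of \(\partial F\) at a time, and then optimize over representatives for the visibility statement.

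First, choose any component \(c\) of \(\partial F\). It is a closed curve of Seifert slope \(a\mu+b\lambda\) on \(\partial N_{\rho_0}(\gamma)\). \Cref{lem:meridional-length-bound} gives
\[
\Len(c)\geq 2\pi\rho_0\,|a-b\operatorname{Wr}(\gamma)|.
\]
This bound makes no graph or section assumption on \(c\), so it also covers \(b=0\). Second, the collar estimate preceding \Cref{prop:geometric-topological-boundedness} gives
\[
c_{\mathrm{col}}(\tau)\Len(c)\leq c_{\mathrm{col}}(\tau)\Len(\partial F)\leq\Area(\operatorname{Col}(\partial F))\leq\Area(F).
\]
Chaining the two inequalities yields the first display. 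Neither step uses essentiality, only \(\Thi(F)\geq\tau\). The phrase "parallel-frame meridional winding" is just the interpretation of \(\frac{1}{2\pi}\oint_c\alpha_\gamma=a+bW(\gamma)\) from that lemma's proof, combined with the sign convention \(W=-\operatorname{Wr}\) of \Cref{rem:cwf-conventions}.

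For the visibility consequence, recall that \(\Delta_T(r,\lambda)=|a|\). Take any pair \((\gamma,F)\in\calZ^{\mathfrak S_r}_{\Lambda,\Delta,\tau}(K)\), and apply the first inequality to a connected essential component carrying slope \(r\). Such a component exists because surfaces of type \(\mathfrak S_r\) have nonempty boundary of slope \(r\), and the area of a component is at most \(\Area(F)\). The reverse triangle inequality together with \(|\operatorname{Wr}(\gamma)|\leq W_0(\Lambda)\) gives
\[
|a-b\operatorname{Wr}(\gamma)|\geq |a|-|b|W_0(\Lambda).
\]
Since the left side is nonnegative, this can be replaced by the positive part. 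Hence every \(\Delta\) admitting a nonempty slice satisfies
\[
\Delta\geq\Area(F)\geq 2\pi\rho_0c_{\mathrm{col}}(\tau)\bigl(\Delta_T(r,\lambda)-|b|W_0(\Lambda)\bigr)_+.
\]
Taking the infimum over such \(\Delta\) gives the bound on \(a_{\mathfrak S_r}\), and the bound holds vacuously when the slice is always empty (infimum \(\infty\)).

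The only point needing care is the reduction from a possibly disconnected surface of type \(\mathfrak S_r\) to a connected component. If \(\mathfrak S_r\) is read as connected surfaces, as in \Cref{def:filtered-boundary-twisting-diameter}, this reduction is immediate. In general, any component with nonempty boundary has all boundary curves of slope \(r\) and inherits the relative-thickness clauses and the area bound from the union, so the same argument applies. I expect no genuine obstacle beyond this bookkeeping.
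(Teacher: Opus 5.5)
Your proof is correct and follows the paper's argument: the collar estimate \(\Area(F)\geq c_{\mathrm{col}}(\tau)\Len(\partial F)\geq c_{\mathrm{col}}(\tau)\Len(c)\) for one boundary component \(c\) is combined with \Cref{lem:meridional-length-bound}, and then \(|a-b\operatorname{Wr}(\gamma)|\geq|a|-|b|W_0(\Lambda)\) and an infimum over the slice give the visibility bound. Your reduction to a connected component is harmless but unnecessary, because the collar estimate already applies to a single boundary curve of the whole, possibly disconnected, surface.
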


\begin{proof}
The collar estimate of \Cref{prop:geometric-topological-boundedness} gives
\(\Area(F)\geq c_{\mathrm{col}}(\tau)\Len(\partial F)\geq
c_{\mathrm{col}}(\tau)\Len(c)\) for any single boundary component \(c\), and
\Cref{lem:meridional-length-bound} gives
\(\Len(c)\geq 2\pi\rho_0|a-b\operatorname{Wr}(\gamma)|\).  Since \(\Delta_T(r,\lambda)=|a|\) and
\(|a-b\operatorname{Wr}(\gamma)|\geq|a|-|b|\,|\operatorname{Wr}(\gamma)|\geq
|a|-|b|W_0(\Lambda)\), infimizing over \(\gamma\in Y_\Lambda(K)\) gives the
visibility bound.
\end{proof}

\begin{remark}[A nonconstant, unbounded frontier]
\label{rem:frontier-gap}
\Cref{prop:area-lower-slope} is the exact dual of
\Cref{thm:conditional-slope-height}: the theorem bounds slope height from above
given area and ropelength; the proposition bounds area from below given slope
height.  It shows unconditionally that the trade-off frontier
\(\Delta_{\min}^\tau(\Lambda;[F])\), defined in
\Cref{def:trade-off-frontier}, is bounded below by a positive quantity
whenever the parallel-frame meridional winding cannot be made to vanish over
\(Y_\Lambda(K)\) --- that is, whenever the slope \(a/b\) lies outside the band of
writhes achievable at ropelength \(\Lambda\).  In that regime visibility
genuinely costs area, and the area frontier is bounded away from zero at that level.

The gap grows without bound across families, in the following level-wise
sense.  Consider knot types \(K_n\) and slopes
\(r_n=a_n\mu+b_n\lambda\) of essential surfaces in \(E(K_n)\), with
\(\Delta_T(r_n,\lambda)=|a_n|\to\infty\) while \(|b_n|\) stays bounded.
Then at every fixed level \(\Lambda\) one has
\(a_{\mathfrak S_{r_n}}(K_n;\Lambda,\tau)\to\infty\).  Equivalently, at a
fixed area budget \(\Delta\) and a \emph{fixed} ropelength level
\(\Lambda\), the slope-\(r_n\) surfaces satisfying
\(2\pi\rho_0 c_{\mathrm{col}}(\tau)(|a_n|-|b_n|W_0(\Lambda))>\Delta\) are
invisible at that level, so \(\beta_{r_n}(K_n;\Delta,\tau)>\Lambda\).  It must be
stressed that this is a statement about one fixed level at a time, not about
all levels simultaneously: the achievable writhe band
\(W_0(\Lambda)=C_{\mathrm{BS}}\Lambda^{4/3}\) grows with \(\Lambda\), so a fixed slope
\(r_n\) that is invisible at one level may well become visible at a larger
one.  What the estimate yields uniformly in the family is the quantitative
lower bound
\(\beta_{r_n}(K_n;\Delta,\tau)\geq
\bigl((|a_n|/|b_n|-w(\Delta,\tau))/C_{\mathrm{BS}}\bigr)^{3/4}\to\infty\)
of \Cref{cor:unconditional-slope-height}, hence
finite-but-divergent visibility levels rather than outright invisibility.
This is a proven instance of the phenomenon the trade-off frontier was
designed to detect: an essential surface that is invisible within a prescribed
geometric budget and appears only once the budget is enlarged.  By
\Cref{cor:unconditional-slope-height} the writhe band is always available with
\(W_0(\Lambda)=C_{\mathrm{BS}}\Lambda^{4/3}\), so for numerical slope heights all of the
above holds unconditionally, with the explicit finite-gap criterion of
\Cref{cor:unconditional-gap}.  Whether a \emph{finite} gap
\(\Rop(K)<\beta_{\mathfrak S}(K;\Delta,\tau)<\infty\) occurs for a specific
low-crossing knot is then a computation of its boundary slopes against
\(C_{\mathrm{BS}}\Rop(K)^{4/3}\), and sharpening the exponent \(4/3\) is the quantitative
content of
\Cref{prob:finite-length-boundary-twisting-versus-ropelength}.
\end{remark}


\section{Characteristic essential surface systems}
\label{sec:characteristic}

\subsection{Classical carriers and decompositions}
\label{subsec:classical-carriers}

The characteristic systems in this section have several complementary
classical descriptions.  Once a triangulation is fixed, normal surface theory
represents candidate essential surfaces by integral solutions of the matching
equations, and the Jaco--Oertel algorithm tests for the existence of
incompressible and boundary-incompressible representatives
\cite{Haken,JacoOertel}.  Efficient and ideal triangulations, together with
crushing, reduce inessential normal phenomena and simplify the ambient input
\cite{JacoRubinstein0Efficient}.  In a different direction, Floyd--Oertel and
Oertel package incompressible surfaces into finitely many branched-surface
carriers with integral weights \cite{FloydOertel,OertelBranched}; special
spines provide dual finite descriptions of the pieces and their gluings
\cite{MatveevBook}.

The geometric filtration does not supply a new normal form or a new carrying
theorem.  Its role is to select a finite, faithfully recoverable portion of
these classical parameter spaces.  For the bounded-geometry triangulations
used below, \Cref{prop:conditional-normal-coefficient-bound} gives a
conditional normal-coefficient cutoff.  The analogous assertion for branch
weights is left as \Cref{prob:geometric-carrying}.  Similarly, dual decorated
spines are an implementation viewpoint rather than an ingredient of the JSJ
or reconstruction proofs.

\subsection{Classical JSJ and characteristic skeleton}

The first finite objects in the theory are purely topological.  Three levels
must be distinguished: the canonical torus JSJ system, the fuller
Jaco--Shalen--Johannson characteristic frontier, and separate prime or Conway
sphere systems.  None of them is a finite list of all essential surfaces.

\begin{proposition}[Canonical torus and characteristic systems]
\label{prop:finite-characteristic-surface-system}
For every nontrivial knot exterior $E(K)$ there are finite, knot-type
invariant collections
\[
  \mathcal T_{\mathrm{JSJ}}(K)
  \quad\text{and}\quad
  \mathcal E_{\mathrm{char}}(K),
\]
with the following properties.
\begin{enumerate}[label=(\roman*),leftmargin=2em]
\item $\mathcal T_{\mathrm{JSJ}}(K)$ is the set of isotopy classes of the
canonical JSJ tori.  Cutting along them gives pieces which are Seifert
fibered or atoroidal; after geometrization the non-Seifert pieces are
finite-volume hyperbolic.
\item $\mathcal E_{\mathrm{char}}(K)$ is the union of
$\mathcal T_{\mathrm{JSJ}}(K)$ with the isotopy classes of the essential
annuli and tori in the frontier of the characteristic submanifold.
It records the Seifert and $I$--bundle regions and determines the JSJ torus
system together with its characteristic incidence data.
\item Both systems are unique up to isotopy and are preserved by every
self-homeomorphism of the exterior.
\end{enumerate}
\end{proposition}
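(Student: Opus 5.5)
This proposition is essentially a repackaging of the classical Jaco--Shalen--Johannson theory, so the plan is to assemble it from standard inputs rather than prove anything new. First I would record the preliminary facts about $E(K)$ for nontrivial $K$. It is compact, orientable, irreducible (by Alexander's theorem), and has incompressible torus boundary. Since $K$ is nontrivial, it is boundary-irreducible, hence Haken. These are exactly the hypotheses of the JSJ decomposition theorem, and I would use them in that form. As in \Cref{lem:anchoring}, irreducibility also shows that no essential sphere appears, which is why the sphere systems are set aside.

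For (i), I would invoke the torus decomposition theorem of Jaco--Shalen and Johannson. It gives a minimal family of pairwise disjoint, pairwise non-parallel essential tori, unique up to isotopy, whose complementary pieces are either Seifert fibered or atoroidal. Finiteness of this family would be justified by Kneser--Haken finiteness \cite{Kneser,Haken}, since a disjoint non-parallel family of incompressible surfaces has boundedly many members. This lets me define $\mathcal T_{\mathrm{JSJ}}(K)$ as the resulting finite set of isotopy classes. For the geometrization clause, the atoroidal pieces have nonempty torus boundary, so they are Haken, and Thurston's hyperbolization for Haken manifolds makes the non-Seifert ones finite-volume hyperbolic. A small point needs care here: $T^2\times I$ and twisted $I$-bundle pieces are Seifert fibered, so they land in the correct class.

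For (ii), I would take the characteristic submanifold $\Sigma\subset E(K)$ of Jaco--Shalen--Johannson, the union of the Seifert fibered and $I$-bundle pieces, and define $\mathcal E_{\mathrm{char}}(K)$ as the isotopy classes of the components of $\operatorname{Fr}\Sigma$ together with $\mathcal T_{\mathrm{JSJ}}(K)$. The frontier components are essential annuli and tori, and there are finitely many because $\Sigma$ is compact. The claim that this system determines the torus system comes from the characteristic submanifold theorem. After discarding frontier annuli and the parallel duplicates they create, the torus frontier components, one per parallel class, recover the minimal JSJ tori. Recording which pieces are Seifert or $I$-bundle, and how they are glued, supplies the incidence data.

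For (iii), uniqueness up to isotopy is part of both theorems: the JSJ tori are unique up to isotopy, and $\Sigma$ is unique up to isotopy. Invariance under a self-homeomorphism $h$ follows because $h(\Sigma)$ satisfies the same characterizing property, so it is isotopic to $\Sigma$. Knot-type invariance follows because a homeomorphism of exteriors carries one canonical system to the other.

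I expect the main obstacle to be bookkeeping in (ii) rather than anything conceptual. The characteristic frontier can contain tori parallel to JSJ tori or to $\partial E(K)$, and it can contain annuli that are boundary-parallel in the ambient manifold. To state ``determines'' precisely I would pass to isotopy classes, discard parallel copies, and cite Johannson's enclosing property and the relationship between $\Sigma$ and the torus decomposition, rather than reprove them.
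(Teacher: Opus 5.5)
Your proposal is correct and follows the same route as the paper, which proves the proposition simply by invoking the Jaco--Shalen--Johannson characteristic-submanifold and JSJ theorems for the compact, orientable, irreducible exterior with incompressible torus boundary, together with geometrization for the atoroidal pieces. Your additions are sound refinements of that same citation-based argument: Kneser--Haken finiteness for the number of tori, Thurston's Haken hyperbolization (sufficient because every piece has nonempty torus boundary), and discarding boundary-parallel and duplicate frontier tori so that $\mathcal E_{\mathrm{char}}(K)$ contains only essential classes, as clause (ii) requires.
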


\begin{proof}
A nontrivial knot exterior is compact, orientable, irreducible, and has
incompressible torus boundary.  The assertions are the
Jaco--Shalen--Johannson characteristic-submanifold and JSJ theorems
\cite{JacoShalen,Johannson,NeumannSwarup}; the geometric description of the
atoroidal pieces follows from geometrization.  Finiteness and invariance are
part of the canonical decomposition theory.
\end{proof}

\begin{definition}[Rooted JSJ tree]
\label{def:rooted-jsj-tree}
The rooted JSJ tree $\Gamma_{\mathrm{JSJ}}(K)$ is the dual graph of the
splitting of $E(K)$ along $\mathcal T_{\mathrm{JSJ}}(K)$, rooted at the piece
meeting $\partial E(K)$ and labelled by the Seifert or hyperbolic type of each
piece and by peripheral incidence data.
\end{definition}

\begin{proposition}
\label{prop:jsj-dual-tree}
The graph $\Gamma_{\mathrm{JSJ}}(K)$ is a finite tree and is invariant, as a
rooted labelled graph, under symmetries of the knot.
\end{proposition}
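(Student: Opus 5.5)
The plan is to read the statement as the standard fact that the dual graph of a canonical decomposition of a knot exterior along a finite incompressible torus system is a finite tree. I will use three inputs: the finiteness and canonicity in \Cref{prop:finite-characteristic-surface-system}, the homology of $S^3$, and the root and labels of \Cref{def:rooted-jsj-tree}.

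\emph{Finiteness.} The vertices of $\Gamma_{\mathrm{JSJ}}(K)$ are the components of $E(K)$ cut along $\mathcal T_{\mathrm{JSJ}}(K)$, and the edges are the tori. Since $\mathcal T_{\mathrm{JSJ}}(K)$ is finite by \Cref{prop:finite-characteristic-surface-system}(i), the edge set is finite. Each torus has exactly two sides, so the number of pieces is at most one more than the number of tori. This gives a finite graph.

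\emph{Tree property.} This is the one step with content. A graph is a tree exactly when it is connected and every edge disconnects it. Connectedness follows because $E(K)$ is connected. For separation, each torus $T$ in $E(K)\subset S^3$ is a closed embedded surface in $S^3$, so $H_1(S^3)=0$ together with Alexander duality shows that $T$ separates $S^3$. Since $N_{\rho_0}$ of the knot is connected and disjoint from $T$, it lies on one side of $T$. Hence $T$ also separates $E(K)$, and its edge is a bridge in the dual graph. A graph in which every edge is a bridge has no cycles. Loops, where both sides of $T$ meet the same piece, are excluded for the same reason.

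\emph{Invariance.} A symmetry of $K$ induces a self-homeomorphism $h$ of $E(K)$. By \Cref{prop:finite-characteristic-surface-system}(iii), we may isotope $h$ so that $h$ permutes the JSJ tori. It then permutes the pieces and preserves adjacency, so it induces a graph automorphism of $\Gamma_{\mathrm{JSJ}}(K)$. The root is preserved because $h(\partial E(K))=\partial E(K)$. The Seifert or hyperbolic type of each piece is a homeomorphism invariant, so the labels are preserved. Peripheral incidence data, such as the boundary tori of each piece, are carried along by $h$ as well.

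The only point requiring care is separation, and the Alexander duality argument settles it. Everything else is bookkeeping on top of the JSJ theorem.
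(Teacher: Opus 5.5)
Your proof is correct and follows essentially the paper's route: every JSJ torus separates, so every edge of the dual graph is a bridge and the graph is a tree, and invariance follows from the uniqueness up to isotopy in \Cref{prop:finite-characteristic-surface-system}. The only difference is cosmetic. You obtain separation from duality in $S^3$ together with connectedness of the tube, whereas the paper cites $H_2(E(K);\mathbb Z)=0$ directly. You also spell out the finiteness and labelling bookkeeping that the paper leaves implicit.
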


\begin{proof}
Since $H_2(E(K);\mathbb Z)=0$, every JSJ torus separates.  Hence the dual
graph is a tree.  Invariance follows from the canonicality in
\Cref{prop:finite-characteristic-surface-system}.
\end{proof}

\Cref{fig:jsj-tree} illustrates the decomposition and its rooted dual tree.

\begin{figure}[t]
\centering
\begin{tikzpicture}[x=1cm,y=1cm,>=Latex,
  panel/.style={draw, rounded corners=2pt},
  title/.style={font=\footnotesize\bfseries},
  note/.style={font=\scriptsize, align=center},
  vtx/.style={circle, fill, inner sep=1.6pt}]
  \draw[panel] (0,-0.45) rectangle (6.6,4.6);
  \node[title] at (3.3,4.32) {cutting $E(K)$ along
    $\mathcal T_{\mathrm{JSJ}}(K)$};
  \draw[thick] (0.55,0.05) rectangle (6.05,3.95);
  \node[note, anchor=south west] at (0.62,3.55) {$\partial E(K)$};
  \draw[very thick] (3.45,1.95) ellipse (2.05 and 1.45);
  \node[note] at (3.45,3.62) {};
  \node[note, anchor=west] at (5.62,3.30) {$T_1$};
  \draw[->, thin] (5.60,3.22) -- (5.05,2.85);
  \draw[very thick] (2.45,1.95) ellipse (0.72 and 0.62);
  \draw[very thick] (4.45,1.95) ellipse (0.72 and 0.62);
  \node[note] at (2.45,2.82) {$T_2$};
  \node[note] at (4.45,2.82) {$T_3$};
  \node[note] at (1.25,0.55) {$P_0$};
  \node[note] at (3.45,0.85) {$P_1$};
  \node[note] at (2.45,1.95) {$P_2$};
  \node[note] at (4.45,1.95) {$P_3$};
  \draw[panel] (7.2,-0.45) rectangle (13.4,4.6);
  \node[title] at (10.3,4.32) {rooted JSJ tree $\Gamma_{\mathrm{JSJ}}(K)$};
  \node[vtx] (p0) at (10.3,3.45) {};
  \node[vtx] (p1) at (10.3,2.25) {};
  \node[vtx] (p2) at (9.2,1.05) {};
  \node[vtx] (p3) at (11.4,1.05) {};
  \draw (p0) -- node[note, right=0.5mm] {$T_1$} (p1);
  \draw (p1) -- node[note, above left=-0.5mm] {$T_2$} (p2);
  \draw (p1) -- node[note, above right=-0.5mm] {$T_3$} (p3);
  \node[note, anchor=west] at (10.5,3.5)
    {root $P_0$: piece meeting $\partial E(K)$};
  \node[note, anchor=west] at (10.5,2.25) {$P_1$: Seifert fibered};
  \node[note, anchor=north] at (9.2,0.9) {$P_2$:\\hyperbolic};
  \node[note, anchor=north] at (11.4,0.9) {$P_3$:\\Seifert};
\end{tikzpicture}
\caption{The canonical torus decomposition and its rooted dual tree, for a
schematic satellite exterior.  Left: cutting \(E(K)\) along the JSJ tori
\(\mathcal T_{\mathrm{JSJ}}(K)=\{T_1,T_2,T_3\}\) produces pieces \(P_i\) that
are Seifert fibered or hyperbolic; every torus separates because
\(H_2(E(K))=0\).  Right: the dual graph is therefore a finite tree, rooted at
the piece meeting \(\partial E(K)\) and labelled by piece types and
peripheral incidence data (\Cref{def:rooted-jsj-tree,prop:jsj-dual-tree}).
The geometric filtration adds visibility levels to this classical finite
skeleton: it asks at which \((\Lambda,\Delta,\tau)\) the tori of the system
become visible as thick surfaces.}
\label{fig:jsj-tree}
\end{figure}

\begin{remark}[Companionship graphs]
The unlabelled tree alone does not determine the knot.  Budney enriches the
JSJ graph by companion-link labels and peripheral splicing data and obtains a
bijective encoding of links in $S^3$ by finite labelled acyclic graphs
\cite{BudneyJSJ}.  For knots the graph is naturally rooted.  Connected sum,
cabling, Whitehead doubling, and general satellite operations are all
instances of splicing.  Thus finite characteristic data should retain piece
labels and attachments, not only the set of visible tori.
\end{remark}

\begin{remark}[Mapping classes and the group-theoretic JSJ shadow]
Every knot symmetry acts on $\Gamma_{\mathrm{JSJ}}(K)$.  The kernel of this
action can still contain nontrivial patterned mapping classes of the pieces
and Dehn twists along frontier tori or annuli, as made precise by Johannson's
deformation theory \cite{Johannson}.  Van Kampen's theorem expresses
$\pi_1E(K)$ as a graph of groups with $\mathbb Z^2$ edge groups, and the
canonical splittings of Scott--Swarup recover a group-theoretic decomposition
closely related to the topological JSJ structure \cite{ScottSwarup}.  The
peripheral meridian remains essential when this algebraic skeleton is used to
recognize the knot rather than only its group.
\end{remark}

\begin{remark}[Prime and Conway layers are different]
A decomposing sphere gives a meridional essential annulus in the knot exterior
and belongs to the prime-decomposition layer, not automatically to the
frontier of the characteristic submanifold.  Likewise, an essential Conway
sphere gives an essential four-punctured sphere in the exterior.  The
Bonahon--Siebenmann splitting treats it as the Euclidean orbifold
$S^2(2,2,2,2)$ in the pair $(S^3,K)$
\cite{BonahonSiebenmannToric,BonahonSiebenmannKnots}.  These finite canonical
or auxiliary layers complement the ordinary torus JSJ system but should not
be identified with it.
\end{remark}

\begin{remark}[Two complementary finitenesses]
\label{rem:topological-skeleton-finite-resolution}
The finite systems $\mathcal T_{\mathrm{JSJ}}(K)$ and
$\mathcal E_{\mathrm{char}}(K)$ provide a topological skeleton of the knot
exterior, while
\Cref{thm:finite-resolution-finiteness-filtered-pairs} controls geometric
realizations at fixed parameters
\[
  (\Lambda,\Delta,\tau,\varepsilon).
\]
The former is classical topological finiteness of a canonical decomposition;
the latter is finite-resolution geometric finiteness of filtered knot--surface
pairs.
\end{remark}

\subsection{JSJ and characteristic visibility levels}

The canonical torus system and the fuller characteristic frontier have
separate geometric birth levels.  For a surface class $[F]$, let
$\mathfrak S_{[F]}$ denote the filtered surface type consisting of
representatives of that class.

\begin{definition}[JSJ and characteristic visibility]
For fixed $(\Delta,\tau)$, define
\[
  \beta_{\mathrm{JSJ}}(K;\Delta,\tau)
  =
  \max_{[T]\in\mathcal T_{\mathrm{JSJ}}(K)}
  \beta_{\mathfrak S_{[T]}}(K;\Delta,\tau)
\]
and
\[
  \beta_{\mathrm{char}}(K;\Delta,\tau)
  =
  \max_{[F]\in\mathcal E_{\mathrm{char}}(K)}
  \beta_{\mathfrak S_{[F]}}(K;\Delta,\tau).
\]
For an empty indexing set the corresponding maximum is defined to be
$\Rop(K)$.
\end{definition}

\begin{proposition}[Finite characteristic visibility under bounded realization]
\label{prop:finite-jsj-visibility-under-bounded-realization}
Assume that every class in $\mathcal E_{\mathrm{char}}(K)$ has a representative
in the exterior of some unit-thickness curve in $Y_\Lambda(K)$, with area at
most $\Delta$ and relative thickness at least $\tau$.  Then
\[
  \beta_{\mathrm{JSJ}}(K;\Delta,\tau)
  \leq
  \beta_{\mathrm{char}}(K;\Delta,\tau)
  \leq \Lambda.
\]
Consequently the rooted JSJ tree is visible at a finite level once its tori,
piece labels, and cutting incidence are included in the finite code.
\end{proposition}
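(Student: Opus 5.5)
The plan is to unwind the definitions and use monotonicity of the pair-space filtration. The two inequalities are proved separately.

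For \(\beta_{\mathrm{char}}\leq\Lambda\), fix a class \([F]\in\mathcal E_{\mathrm{char}}(K)\). By hypothesis there is a pair \((\gamma,F)\) with \(\gamma\in Y_\Lambda(K)\), \(F\) in the class \([F]\), \(\Area(F)\leq\Delta\) and \(\Thi(F)\geq\tau\). Each class in \(\mathcal E_{\mathrm{char}}(K)\) is a class of essential tori or annuli by \Cref{prop:finite-characteristic-surface-system}. Hence this pair lies in \(\calZ^{\mathfrak S_{[F]}}_{\Lambda,\Delta,\tau}(K)\), which is therefore nonempty. By \Cref{def:surface-visibility-level}, \(\beta_{\mathfrak S_{[F]}}(K;\Delta,\tau)\leq\Lambda\). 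Since \(\mathcal E_{\mathrm{char}}(K)\) is finite, the maximum over it is also at most \(\Lambda\). If the indexing set is empty, the convention gives the value \(\Rop(K)\). This is at most \(\Lambda\) because \(Y_\Lambda(K)\neq\varnothing\) forces \(\Lambda\geq\Rop(K)\), and \(Y_\Lambda(K)\) is nonempty because it contains the curve \(\gamma\) used to realize the surfaces.

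For \(\beta_{\mathrm{JSJ}}\leq\beta_{\mathrm{char}}\), note that \(\mathcal T_{\mathrm{JSJ}}(K)\subset\mathcal E_{\mathrm{char}}(K)\) by \Cref{prop:finite-characteristic-surface-system}(ii). A maximum over a subset is at most the maximum over the whole set. The one subtle case is an empty \(\mathcal T_{\mathrm{JSJ}}(K)\) with nonempty \(\mathcal E_{\mathrm{char}}(K)\). Then the left side is \(\Rop(K)\), and we need every \(\beta_{\mathfrak S_{[F]}}\geq\Rop(K)\). This holds because every \(\gamma\in Y_\Lambda(K)\) satisfies \(\Len(\gamma)\geq\Rop(K)\), so no pair space below \(\Rop(K)\) is nonempty.

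For the consequence about the rooted JSJ tree, take for each JSJ torus its realizing pair at level \((\Lambda,\Delta,\tau)\). Then include the tori, the piece labels of \Cref{def:rooted-jsj-tree} and the incidence data in the code. \Cref{thm:faithfulness} says the code determines each torus up to pair isotopy. The tree \(\Gamma_{\mathrm{JSJ}}(K)\), being canonical by \Cref{prop:jsj-dual-tree}, is therefore recorded at a finite level.

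The main obstacle is a possible reading of the hypothesis in which different classes are realized over different curves \(\gamma\). Visibility of each class separately needs only one curve per class, so the argument above stands under that reading. A simultaneous disjoint realization of all the tori in one exterior is not required, and I would state this explicitly. Visibility of the tree as a single object is the one place where one might want a common exterior. There I would interpret the statement as classwise visibility plus the faithful code for each class.
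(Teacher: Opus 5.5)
Your argument is the same as the paper's. Each class of $\mathcal E_{\mathrm{char}}(K)$ gives a nonempty slice $\calZ^{\mathfrak S_{[F]}}_{\Lambda,\Delta,\tau}(K)$, so its visibility level is at most $\Lambda$. The first inequality comes from $\mathcal T_{\mathrm{JSJ}}(K)\subset\mathcal E_{\mathrm{char}}(K)$, and the tree is read off from the recorded cutting and incidence data. Your treatment of the case $\mathcal T_{\mathrm{JSJ}}(K)=\varnothing\neq\mathcal E_{\mathrm{char}}(K)$ is correct and more careful than the paper, which does not address it. The key fact there is that every $\beta_{\mathfrak S_{[F]}}\geq\Rop(K)$, because a nonempty slice at level $\Lambda'$ forces $Y_{\Lambda'}(K)\neq\varnothing$ and hence $\Lambda'\geq\Rop(K)$.

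The one step that fails is the case $\mathcal E_{\mathrm{char}}(K)=\varnothing$. There the hypothesis is vacuous, so no curve $\gamma$ is supplied, and you cannot conclude $Y_\Lambda(K)\neq\varnothing$ from ``the curve used to realize the surfaces.'' This cannot be repaired from the stated hypothesis alone. For a hyperbolic knot both systems are empty, as the paper remarks just after the proposition. Taking $\Lambda<\Rop(K)$, the hypothesis then holds while the conclusion $\beta_{\mathrm{char}}(K;\Delta,\tau)=\Rop(K)\leq\Lambda$ fails. You need the standing assumption $\Lambda\geq\Rop(K)$, equivalently $Y_\Lambda(K)\neq\varnothing$, which the paper uses implicitly. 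State it as an explicit assumption rather than deriving it from a realizing curve that need not exist.
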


\begin{proof}
Every class in either finite system contributes a nonempty filtered pair
space by hypothesis.  Hence each birth level is at most $\Lambda$, and the
first inequality follows from
$\mathcal T_{\mathrm{JSJ}}(K)\subset\mathcal E_{\mathrm{char}}(K)$.
The labelled tree is then recovered from the finite cutting and incidence
data.
\end{proof}

\begin{remark}[Geometric regimes]
For a hyperbolic knot both systems are empty, so the visibility level carries
no additional decomposition data.  For a torus knot the JSJ torus system is
empty because the whole exterior is Seifert fibered.  For cable and satellite
knots canonical tori give nontrivial JSJ levels.  For composite knots the
prime annuli form a separate low-area layer; tubing them produces the
swallow--follow tori adjacent to the composing-space pieces in the JSJ tree.
\end{remark}

\subsection{The finite essential surface principle as a finite-resolution principle}

We now formulate the finite essential surface principle in a deliberately
finite-resolution form.  It should not be read as a theorem asserting that all
essential surface isotopy classes in a knot exterior form a finite set.
Indeed, Kakimizu complexes may be infinite.

\begin{definition}[Finite characteristic data at scale \(\varepsilon\)]
Let \(K\) be a knot type and fix \((\Lambda,\Delta,\tau,\varepsilon)\).  A
finite characteristic data set at this scale consists of:
\begin{enumerate}[label=(\roman*),leftmargin=2em]
\item encoded \(\varepsilon\)-types of finitely many pairs
\((\gamma,F_i)\in\calZ_{\Lambda,\Delta,\tau}^{\mathrm{ess}}(K)\);
\item the peripheral meridian--longitude data on \(\partial E(\gamma)\),
recorded at the same resolution;
\item the rooted labelled JSJ tree and the finite decomposition data obtained
by cutting the encoded exterior along the visible surfaces \(F_i\), including
piece labels and peripheral attachment data;
\item labels specifying which visible surfaces are JSJ tori, characteristic-frontier
annuli or tori, prime-decomposition annuli, Conway surfaces, taut Seifert
surfaces, boundary-slope detectors, or other prescribed characteristic
surfaces.
\end{enumerate}
Two knot types have the same characteristic data at this scale if these finite
encoded objects agree up to the allowed finite-resolution moves and peripheral
identifications.
\end{definition}

\begin{remark}[Programmatic statement: Finite essential surface principle, finite-resolution form]
For every knot type \(K\), there should exist parameters
\((\Lambda,\Delta,\tau,\varepsilon)\) and a finite characteristic data set at
that scale which determines \(K\) among all knot types.  The geometric content
is the existence of such a detecting data set inside a bounded
length--area--thickness window; the finite-resolution finiteness theorem then
says that, once this window is fixed, only finitely many candidate codes have
to be searched.
\end{remark}

\begin{remark}[Difference from classical topological finiteness]
Classical Haken and JSJ theory supplies finite topological decompositions of a
fixed exterior.  The present principle is different in two ways.  First, it is
posed before the exterior is known: the data are carried by bounded-ropelength
representatives and must determine the knot type together with the peripheral
meridian.  Second, the data are constrained by explicit geometric budgets
\((\Lambda,\Delta,\tau)\) and by a resolution \(\varepsilon\).  Thus the
principle is not the tautological assertion that a triangulated exterior has a
finite topological description; it asks for a bounded-geometric detecting
window in which characteristic essential surfaces are visible.
\end{remark}

\subsection{Normal surface implementation viewpoint}

\begin{remark}[Classical algorithmic baseline]
Given a triangulation, normal-surface algorithms compute the prime and JSJ
decompositions and detect the relevant characteristic pieces
\cite{JacoOertel,JacoTollefson}.  This is an external topological algorithm.
The purpose of the present section is different: to understand how the same
finite skeleton becomes visible inside a bounded geometric window and how it
can be encoded at finite resolution.
\end{remark}

Let \(\calT\) be a bounded-geometry triangulation of \(E(K)\), chosen as part of
the finite-resolution model.  For instance, \(\calT\) may be obtained after
putting the exterior into a controlled cubical grid and then applying a fixed
subdivision rule.  The constants below depend on this choice; no canonical
triangulation is being asserted.  Normal surface theory gives a combinatorial
implementation of the finite-resolution framework: normal coordinates supply
finite codes, fundamental normal surfaces supply basic generators, and Haken
sums describe higher-complexity layers.  We use this viewpoint only as an
implementation guide, not as a claim that all essential surface isotopy classes
are represented by a fixed finite list.

\begin{remark}[Programmatic statement: Normal-surface implementation viewpoint]
Fundamental normal surfaces are combinatorial atoms for Haken sums.  In the
present geometric filtration, large Haken-sum coefficients should be interpreted
as large numbers of parallel normal sheets.  Area and thickness bounds then cut
off the high-coefficient part of the normal surface semigroup at finite
resolution.
\end{remark}

\begin{remark}[Fundamental surfaces and thickness]
A fundamental normal surface is indecomposable with respect to Haken sum.  This
is a combinatorial property, not a metric thickness condition.  Nevertheless,
if a surface contains a summand \(nG\), then the normal pieces of \(G\) appear
as \(n\) parallel families of sheets.  To realize the sum with
\(\Thi\geq\tau\), these sheets require transverse room proportional to
\(n\tau\), up to constants depending on the bounded geometry of \(\calT\);
see \Cref{fig:parallel-sheets}.
\end{remark}

\begin{figure}[t]
\centering
\begin{tikzpicture}[x=1cm,y=1cm,>=Latex,
  note/.style={font=\scriptsize, align=center}]
  \draw[thick] (0.6,0.3) -- (6.4,0.3) -- (3.1,4.3) -- cycle;
  \node[note] at (6.15,3.55)
    {tetrahedron of the\\bounded-geometry\\triangulation $\calT$};
  \foreach \s in {0.30,0.45,0.60,0.75}{
    \draw[very thick]
      ({0.6+\s*(3.1-0.6)},{0.3+\s*(4.3-0.3)}) --
      ({0.6+\s*(6.4-0.6)},0.3);
  }
  \node[note] at (1.15,3.55) {$n$ parallel copies\\of a normal disk};
  \draw[->, thin] (1.75,3.15) -- (3.05,2.45);
  \draw[<->, thin] (2.63,1.42) -- (3.05,1.72);
  \node[note, fill=white, inner sep=1pt] at (3.35,1.35) {$\geq c_0\tau$};
  \draw[<->, thin] (0.78,0.05) -- (3.02,3.72);
  \node[note, fill=white, inner sep=1pt, rotate=58.5] at (1.55,1.55)
    {width $\leq d_{\calT}$};
  \draw[->, thin] (5.50,1.42) -- (4.30,1.58);
  \node[note, anchor=west] at (5.55,1.40) {$\Area\geq a_0$ each};
  \node[note, draw, rounded corners=2pt, inner sep=4pt] at (9.9,1.0)
    {$\displaystyle n_i\ \leq\
      \min\Bigl\{\frac{\Delta}{a_0},\
      \frac{d_{\calT}}{c_0\tau}+1\Bigr\}$};
  \node[note, align=center] at (9.9,2.35)
    {area budget counts the disks;\\
     thickness budget spaces the sheets};
\end{tikzpicture}
\caption{The thickness interpretation of Haken sums
(\Cref{prop:conditional-normal-coefficient-bound}), in a cross-section of one
normal block.  A summand \(nG\) of a Haken sum appears as \(n\) parallel
copies of the same normal disk inside a tetrahedron of the fixed
bounded-geometry triangulation.  Realizing the sum with \(\Thi\geq\tau\)
forces consecutive sheets to be at transverse distance at least
\(c_0\tau\), while the block only offers width \(d_{\calT}\); and each disk
costs area at least \(a_0\) out of the budget \(\Delta\).  Together these cut
off the high-coefficient part of the normal-surface semigroup at finite
resolution.}
\label{fig:parallel-sheets}
\end{figure}

\begin{proposition}[Conditional normal coefficient bound]
\label{prop:conditional-normal-coefficient-bound}
Fix a bounded-geometry triangulation \(\calT\) of a knot exterior.  Suppose
that each normal disk type has area at least \(a_0>0\), each normal block has
transverse width at most \(d_{\calT}\), and \(\tau\)-separated parallel sheets
require transverse spacing at least \(c_0\tau\).  If a normal surface
\[
  H=\sum_i n_i F_i
\]
is realized with
\[
  \Area(H)\leq\Delta,
  \qquad
  \Thi(H)\geq\tau,
\]
then every coefficient contributing a repeated family of parallel sheets
satisfies the coarse estimate
\[
  n_i
  \leq
  \min\left\{
  \frac{\Delta}{a_0},
  \frac{d_{\calT}}{c_0\tau}+1
  \right\}.
\]
The constants depend on the chosen bounded-geometry realization of
\(\calT\).
\end{proposition}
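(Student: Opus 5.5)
The coefficient bound is the minimum of two separate estimates, one from area and one from thickness. I will prove each and then take the minimum. Fix an index \(i\) whose coefficient \(n_i\) contributes a repeated family of parallel sheets. By the hypothesis on the realization, there is a normal disk type \(D\) of \(F_i\) in some tetrahedron \(\Delta_0\) of \(\calT\) such that \(H\cap\Delta_0\) contains at least \(n_i\) pairwise disjoint, parallel copies of \(D\). These copies are ordered transversally across the normal block, as in \Cref{fig:parallel-sheets}.

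\emph{Area bound.} The \(n_i\) parallel disks are pairwise disjoint subsets of \(H\). By hypothesis each has area at least \(a_0\), so additivity of area over disjoint pieces gives
\[
  n_i a_0\leq\Area(H)\leq\Delta ,
\]
that is, \(n_i\leq\Delta/a_0\). This step uses only the disjointness of the normal pieces, which holds because \(H\) is embedded.

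\emph{Thickness bound.} Choose a transverse segment \(\sigma\) across the normal block that meets each of the \(n_i\) parallel sheets, ordered \(x_1,\dots,x_{n_i}\) along \(\sigma\). By hypothesis \(\sigma\) has length at most \(d_{\calT}\). The stated spacing hypothesis is that \(\tau\)-separated parallel sheets have transverse spacing at least \(c_0\tau\); this is where \(\Thi(H)\geq\tau\) enters. Clause (i) of \Cref{def:relative-surface-thickness}, as read in \Cref{rem:disconnected-thickness}, keeps distinct sheets at least \(2\tau\) apart when they lie on different components. When the sheets lie on one component, the embedded normal \(\tau\)-tube together with the graphical clause (ii) gives the same conclusion. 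Either way, consecutive intersection points satisfy \(|x_{k+1}-x_k|\geq c_0\tau\). Summing the \(n_i-1\) gaps along \(\sigma\) gives \((n_i-1)c_0\tau\leq d_{\calT}\), hence \(n_i\leq d_{\calT}/(c_0\tau)+1\).

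The only delicate point is ensuring that a single transverse segment of length at most \(d_{\calT}\) meets all \(n_i\) sheets, with the spacing measured along it. I would take the segment to be a normal arc to the middle sheet inside the block. Parallel normal disks of one type separate the tetrahedron in a fixed linear order, so this arc crosses every copy, and its length is bounded by the width hypothesis on \(\calT\). I would then state explicitly that all constants depend on the chosen bounded-geometry realization of \(\calT\).
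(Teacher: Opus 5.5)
Your proof is correct and follows the paper's argument: the area bound comes from the \(n_i\) disjoint parallel normal disks, each of area at least \(a_0\), and the thickness bound comes from fitting \(n_i-1\) gaps of size at least \(c_0\tau\) into a transverse width of at most \(d_{\calT}\), which is exactly the endpoint convention the paper absorbs into the additive \(+1\). One small repair concerns the choice of \(\sigma\): take it to be a fiber of the product region between the two outermost copies (this is what the width hypothesis should be read as bounding) rather than a normal arc to the middle sheet, because the side boundary of that product region lies on faces of the tetrahedron and a normal arc can leave through it before crossing every copy; also, your reach discussion is unnecessary, since the spacing is assumed in the statement.
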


\begin{proof}
The area estimate follows because each repeated normal disk contributes at
least \(a_0\) area.  The thickness estimate follows because \(n_i\) parallel
sheets require total transverse width at least comparable to \(n_i c_0\tau\),
while the available width of the relevant block is at most \(d_{\calT}\).
Combining these two estimates gives the stated bound, after absorbing endpoint
and smoothing conventions into the additive constant.
\end{proof}

\begin{problem}[Sharp normal coefficient bounds]
\label{prob:normal-coefficient-bounds}
For a fixed triangulation \(\calT\), determine explicit bounds on the normal
coordinates of a \(\tau\)-thick essential surface \(F\) with
\(\Area(F)\leq\Delta\).  Relate these coefficient bounds to
\[
  \chi(F),\qquad |\partial F|,
  \qquad \text{boundary slopes},
  \qquad \text{and admissible merge scales}.
\]
\end{problem}

\subsection{Behaviour under standard operations}
\label{subsec:standard-operations}

The framework is compatible with the standard structural operations on knots.
We record the connected-sum, symmetry, and link versions, since each provides
both a test of the theory and a source of low-complexity characteristic
surfaces.

\paragraph{Connected sum.}
If \(K=K_1\# K_2\), a connected-sum sphere \(S\) meets \(K\) in two points, so
\[
  A=S\cap E(K)
\]
is a properly embedded annulus with \(\partial A\) two meridians on
\(\partial E(K)\).

\begin{proposition}[Decomposing annulus of a connected sum]
\label{prop:connected-sum-annulus}
If \(K=K_1\# K_2\) with both factors nontrivial, then \(A=S\cap E(K)\) is an
essential annulus of meridional boundary slope and belongs to the separate
prime-decomposition layer.  The two associated swallow--follow tori are
incident to the composing-space vertex of the rooted JSJ tree.  Any
bounded-geometry
realization \((\gamma_0,A_0)\) gives
\[
  \beta_{\mathfrak S_A}(K;\Delta,\tau)\leq\Len(\gamma_0)
  \qquad\text{for }\Delta\geq\Area(A_0),\ \tau\leq\Thi(A_0),
\]
so the connected-sum structure is visible as a low-area annular layer.
\end{proposition}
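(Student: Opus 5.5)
The statement has three parts: essentiality of \(A\), the JSJ incidence of the swallow--follow tori, and the visibility bound. I will treat them in that order, since the first two are classical and the third is a direct consequence of the definitions.

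\emph{Essentiality.} The boundary of \(A\) consists of two meridians, so the slope is meridional. Suppose \(A\) is compressible. Since \(E(K)\) is irreducible, compressing \(A\) gives two disks bounded by meridians of \(\partial E(K)\). That is impossible, because a meridian is nontrivial in \(H_1(E(K))\). Suppose instead \(A\) is boundary-parallel. Then one side of \(S\) in \((S^3,K)\) is a ball meeting \(K\) in an unknotted arc, which makes one of \(K_1,K_2\) trivial, contrary to hypothesis. Boundary-incompressibility then follows from incompressibility together with non-boundary-parallelism for annuli in an irreducible manifold with incompressible torus boundary; this is the standard lemma. By the remark on prime and Conway layers, \(A\) belongs to the prime-decomposition layer and not to the characteristic frontier.

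\emph{JSJ incidence.} Here I will cite the Budney and JSJ description of composite exteriors \cite{BudneyJSJ}. The root piece is the composing space, which is an \(n\)-punctured disk times \(S^1\). Its interior boundary tori are the swallow--follow tori, which are obtained by tubing \(A\) along either side of \(\partial E(K)\). This matches the earlier remark on geometric regimes. I expect this to be the main point needing care. The annulus \(A\) is not itself a JSJ torus, so the incidence claim relies on the known structure of the composing space rather than on anything proved in this paper. I would state it as a citation, and note the case of more than two prime factors, where \(A\) lies in the Seifert-fibered composing space.

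\emph{Visibility bound.} This part is formal. Let \((\gamma_0,A_0)\) be a bounded-geometry realization, so \(\gamma_0\in Y_{\Len(\gamma_0)}(K)\) and \(A_0\) is an essential representative of the class of \(A\). Then for \(\Delta\geq\Area(A_0)\) and \(\tau\leq\Thi(A_0)\), \Cref{lem:thickness-scale-monotonicity} shows that \(A_0\) has \(\Thi(A_0)\geq\tau\). Hence \((\gamma_0,A_0)\in\calZ^{\mathfrak S_A}_{\Len(\gamma_0),\Delta,\tau}(K)\). The definition of \(\beta\) as an infimum (\Cref{def:surface-visibility-level}) then gives \(\beta_{\mathfrak S_A}(K;\Delta,\tau)\leq\Len(\gamma_0)\). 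This settles the only quantitative claim; ``low-area'' is interpretive and needs no further proof.
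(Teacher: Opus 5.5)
Your proposal is correct and follows the paper's proof. The paper likewise obtains essentiality of \(A\) from essentiality of the decomposing sphere; you unpack this into the meridian-in-\(H_1\) argument, the trivial-arc argument and the standard annulus lemma. It also cites the classical composing-space and swallow--follow description for the JSJ incidence, and reads the visibility bound directly off the definition of \(\beta\) applied to \((\gamma_0,A_0)\). Your caveat about non-prime factors is a fair refinement. When \(K_1\) or \(K_2\) is itself composite, one of the two tubed tori is a vertical torus inside the composing space rather than a JSJ torus, so ``incident to the composing-space vertex'' must then be read as ``contained in the composing space''.
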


\begin{proof}
For nontrivial factors the sphere \(S\) is essential in \((S^3,K)\), so its
restriction \(A\) is incompressible and not boundary-parallel, hence essential;
its two boundary curves are meridians.  This is the exterior form of prime
decomposition.  Tubing \(A\) along either complementary annulus in
\(\partial E(K)\setminus\partial A\) gives the associated swallow--follow
tori, which meet the composing-space vertex of the JSJ tree.  The visibility
bound is the definition of \(\beta\) applied to the realization
\((\gamma_0,A_0)\).
\end{proof}

\begin{remark}[Connected sum and the twisting diameter]
\label{rem:connected-sum-twisting}
The meridional slope of the decomposing annulus lies at intersection distance
one from every integral surface slope, so a connected sum always contributes the
meridian to the visible slope set.  Because ropelength is subadditive up to
controlled error under connected sum, the annular layer typically appears at a
ropelength level comparable to \(\Rop(K_1)+\Rop(K_2)\); making this precise is an
instance of the trade-off frontier for the class \([A]\).
\end{remark}

\paragraph{Symmetry-group equivariance.}
Let \(\operatorname{Sym}(K)\) be the symmetry group of \(K\), that is, the
mapping class group of the pair \((S^3,K)\), which acts on the exterior
\(E(K)\) by restriction.

\begin{proposition}[Equivariance of the visible structures]
\label{prop:symmetry-equivariance}
The group \(\operatorname{Sym}(K)\) acts on the set of isotopy classes of
essential surfaces in \(E(K)\), on the boundary-slope set \(B(K)\) preserving the
intersection distance \(\Delta_T\), on the canonical systems
\(\mathcal T_{\mathrm{JSJ}}(K)\) and
\(\mathcal E_{\mathrm{char}}(K)\), on the rooted labelled JSJ tree
\(\Gamma_{\mathrm{JSJ}}(K)\), and on the Kakimizu complex
\(\mathrm{MS}(K)\).  Consequently \(\operatorname{Rop}(K)\) and
\(\operatorname{Tw}_{\partial}(K)\) are \(\operatorname{Sym}(K)\)-invariant, and
the visibility spectrum is \(\operatorname{Sym}(K)\)-equivariant: for every
\(g\in\operatorname{Sym}(K)\) and surface type \(\mathfrak S\),
\[
  \beta_{g\mathfrak S}(K;\Delta,\tau)=\beta_{\mathfrak S}(K;\Delta,\tau)
\]
whenever \(g\) is realized by an isometry of \(\R^3\); in general
\(g\) permutes the surface types and hence the visibility spectrum as a labelled
structure.
\end{proposition}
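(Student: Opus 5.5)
The plan is to let \(g\), as a self-homeomorphism of the pair \((S^3,K)\) restricted to the exterior, act on every topological object listed, and then to handle the geometric invariants separately, only for isometric realizations. There are three steps.

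\emph{Step 1: the purely topological actions.} A homeomorphism of \(E(K)\) carries properly embedded surfaces to properly embedded surfaces. It preserves incompressibility, boundary-incompressibility and non-boundary-parallelism, since these are defined by the existence of compressing disks and parallelism regions, and such disks and regions are carried along as well. Isotopic surfaces go to isotopic surfaces. So \(g\) acts on isotopy classes of essential surfaces, and isotopic representatives of \(g\) give the same action. On \(\partial E(K)\) the map \(g\) induces an automorphism of \(H_1(T)\cong\Z^2\). It preserves the meridian up to sign, because \(g\) extends over the tube. It also preserves the preferred longitude up to sign, since this is the unique slope that is null-homologous in \(E(K)\). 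Hence the induced automorphism is diagonal with entries \(\pm1\), and it has determinant \(\pm1\). Therefore \(|ab'-a'b|\) is preserved and \(B(K)\) is mapped to itself, so \(\Delta_T\) is preserved. The invariance of \(\mathcal T_{\mathrm{JSJ}}\), \(\mathcal E_{\mathrm{char}}\) and the rooted labelled tree is part (iii) of \Cref{prop:finite-characteristic-surface-system} together with \Cref{prop:jsj-dual-tree}. For the Kakimizu complex, \(g\) sends minimal genus Seifert surfaces to minimal genus Seifert surfaces (genus is topological). It preserves the disjointness relation defining edges up to isotopy, so it acts simplicially on \(\mathrm{MS}(K)\).

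\emph{Step 2: invariance of the numerical invariants.} \(\Rop(K)\) depends only on the knot type of \(K\), and \(g\) preserves that type, so \(\Rop(K)\) is invariant. For \(\operatorname{Tw}_\partial(K)\), Step 1 shows that \(g\) permutes \(B(K)\) and preserves \(\Delta_T\). The maximum defining \(\operatorname{Tw}_\partial(K)\) is then taken over the same set of values, so it is invariant.

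\emph{Step 3: equivariance of the visibility levels.} Suppose \(g\) is realized by an isometry \(\phi\) of \(\R^3\). Given a pair \((\gamma,F)\in\calZ^{\mathfrak S}_{\Lambda,\Delta,\tau}(K)\), apply \(\phi\). Then \(\phi\) preserves \(\Thi(\gamma)\), \(\Len(\gamma)\) and \(\Area(F)\). It preserves the Federer reach, since it preserves metric projections. It also preserves the graphical, collar and clearance clauses of \Cref{def:relative-surface-thickness}, because the constants \(c_0,L_0\) are isometry-invariant, and it maps \(N_{\rho_0}(\gamma)\) onto \(N_{\rho_0}(\phi\gamma)\). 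The curve \(\phi\gamma\) still represents \(K\): if \(\phi\) is orientation-reversing, the phrase ``realized'' is read as meaning that \(\phi\) carries \(K\) to itself as a symmetry. The surface \(\phi F\) is of type \(g\mathfrak S\), once \(g\) is identified with \(\phi\) up to the pair isotopy fixing the type. This gives an injection \(\calZ^{\mathfrak S}_{\Lambda,\Delta,\tau}\to\calZ^{g\mathfrak S}_{\Lambda,\Delta,\tau}\) at each \(\Lambda\), and \(\phi^{-1}\) gives the inverse, so the infima coincide. For a general \(g\) the levels need not match, because \(g\) does not preserve the metric budgets. Still, by Step 1, \(g\) permutes the topological types \(\mathfrak S\mapsto g\mathfrak S\), and this is the labelled permutation claimed.

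\emph{The main obstacle} is Step 3, specifically the identification of the type of \(\phi F\) with \(g\mathfrak S\). The surface type is defined through transport under pair isotopy, while \(\phi\) is only isotopic to a representative of the mapping class \(g\). The fix is to use that types are invariant under pair isotopy, and to note that any two representatives of \(g\) differ by a pair isotopy. A secondary point to check is that the quotient by \(\Isom^+\) in \(Y_\Lambda(K)\) does not interfere, since \(\phi\) normalizes \(\Isom^+\).
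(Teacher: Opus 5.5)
Your proposal is correct and follows the paper's route. The topological actions come from $g$ acting as a self-homeomorphism of the exterior, with the JSJ parts quoted from canonicality; invariance of $\Rop$ and $\operatorname{Tw}_\partial$ then follows; and an isometric realization preserves length, area, reach and the collar and clearance clauses. Your levelwise bijection $\calZ^{\mathfrak S}_{\Lambda,\Delta,\tau}(K)\leftrightarrow\calZ^{g\mathfrak S}_{\Lambda,\Delta,\tau}(K)$ is slightly cleaner than the paper's push-forward of ``a pair realizing the infimum'', because it does not tacitly assume that the exact-slice level $\beta_{\mathfrak S}$ is attained.
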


\begin{proof}
A symmetry acts by a self-homeomorphism of \((S^3,K)\), hence permutes isotopy
classes of essential surfaces and preserves all topological structures built from
them, including \(B(K)\), \(\Delta_T\),
\(\mathcal T_{\mathrm{JSJ}}(K)\),
\(\mathcal E_{\mathrm{char}}(K)\), the rooted labelled JSJ tree, and
\(\mathrm{MS}(K)\); the numerical knot invariants \(\operatorname{Rop}\) and
\(\operatorname{Tw}_\partial\) are therefore fixed.  If \(g\) is realized by an
ambient Euclidean isometry \(\Phi\), then \(\Phi\) preserves length, area, and reach, so it
carries a pair realizing the infimum defining \(\beta_{\mathfrak S}\) to one
realizing \(\beta_{g\mathfrak S}\) with the same parameters, giving equality.
\end{proof}

\begin{remark}[Recognition is up to symmetry]
\label{rem:recognition-up-to-symmetry}
The finite characteristic data of \Cref{sec:finite-recognition} carry the
\(\operatorname{Sym}(K)\)-action, so a filtered-pair recognition statement
determines \(K\) up to the symmetry group.  This is the correct target: no
finite surface datum can distinguish a knot from its symmetric images, and
\(\operatorname{Sym}(K)\) is exactly the ambiguity that a complete invariant may
retain.
\end{remark}

\paragraph{Extension to links.}

\begin{remark}[Multi-component links]
\label{rem:links}
Everything in the paper extends to a link \(L\subset S^3\) with the expected
changes.  Ropelength minimizers exist for links \cite{CantarellaKusnerSullivan},
so \(Y_\Lambda(L)\) and its ideal stratum are defined verbatim; the exterior has
one peripheral torus per component, so boundary slopes, twisting diameters, and
peripheral twist densities are recorded componentwise, subject to the linking
constraints among components.  The Kakimizu complex is in fact defined for links
in its original form \cite{Kakimizu}.  The metric results transfer without
change: \Cref{prop:geometric-topological-boundedness} is a local
bounded-geometry statement insensitive to the number of components, and
\Cref{lem:compactness-thick-surfaces} applies since its anchoring hypothesis
is again automatic for essential surfaces in link exteriors, by the same
argument as \Cref{lem:anchoring}; thus the smooth finiteness,
faithfulness, and attainment theorems hold for links, with the surface budget
counting total area and the peripheral data indexed by the components.
\end{remark}


\section{Filtered essential-surface rigidity}
\label{sec:filtered-rigidity}

The preceding sections produce two kinds of finite data: finite lists of
bounded-geometry isotopy classes and faithful finite codes for individual
pairs.  We now assemble the fixed-exterior classes into finite simplicial
objects.  This makes precise a third role of geometric bounds: they provide a
finite exhaustion of the topological essential-surface complex and hence a
finite-stage setting for symmetry and rigidity questions.  The terminology
and the image--kernel--reconstruction viewpoint are developed to the extent
needed here in \Cref{def:filtered-rigidity-problems} and the surrounding
remarks; the rigidity problems themselves are stated as open questions.

Throughout this section, fix a unit-thickness representative \(\gamma\) and
write
\[
  E=E(\gamma)=S^3\setminus\operatorname{int}N_{\rho_0}(\gamma).
\]
The fixed tube determines a distinguished meridian slope \(\mu\) on
\(\partial E\).  All area, reach, collar, and clearance conditions are measured
in this fixed geometric exterior.

\begin{convention}[Compact ambient metric for this section]
\label{conv:compact-metric}
The convention of \Cref{not:ambient-metric} measures all quantities in the
Euclidean metric of \(\R^3=S^3\setminus\{\infty\}\).  The exterior \(E\) is
a compact subset of \(S^3\) containing \(\infty\), hence unbounded in that
metric.  For the fixed-window finiteness theorems of
\Cref{sec:bounded-complexity} this causes no difficulty, because anchoring
and the diameter bound confine all visible surfaces at one level to a fixed
compact subset of \(\R^3\).  In the present section, however, the objects of
interest are \emph{self-maps} of \(E\) --- isometries and meridian-preserving
diffeomorphisms --- for which the Euclidean convention is unnatural: it
gives the point \(\infty\) an artificial special role and makes ``isometry
of \(E\)'' a statement about an unbounded set.  For this section only, we
therefore fix once and for all a smooth compact Riemannian metric \(g\) on
\(S^3\) (for definiteness, the round metric of a fixed stereographic
identification \(S^3=\R^3\cup\{\infty\}\)) and define the visible complexes
\(\ES_{\Delta,\tau}(E)\) of this section using \(g\)-area, \(g\)-reach
(single-valuedness of the \(g\)-nearest-point projection at distance
\(\tau\)), and the \(g\)-collar and clearance conditions.  The compactness
and finiteness arguments of
\Cref{sec:bounded-complexity} carry over to this setting with only
simplifications and notational changes: the ambient \((S^3,g)\) is compact,
so Blaschke selection applies directly and no anchoring lemma or diameter
estimate is needed, while the chart, collar, and limit-reach arguments are
local and use only the fixed bounded geometry of \(g\), with Euclidean rigid
motions replaced by \(g\)-normal coordinates.  Consequently
\Cref{thm:visible-complex-finiteness,thm:visible-complex-exhaustion} below
hold for the \(g\)-visible complexes, which is how they are to be read.  The
direct limit \(\ES(E)\) is metric-free, but the individual levels
\(\ES_{\Delta,\tau}(E)\) of this section depend on \(g\), and no
identification with the Euclidean-window levels of the earlier sections is
asserted or used.  With this convention \(\Isom^{\pm}(E,\mu)\) is the group
of meridian-slope-preserving isometries of a compact Riemannian manifold
with boundary, the point \(\infty\) is an ordinary point of \(E\), and the
distortion quantities \(A_2(h)\), \(\operatorname{Lip}(h)\),
\(\operatorname{Lip}(h^{-1})\), \(\|\nabla Dh\|_{L^\infty}\) of
\Cref{prop:controlled-visible-functoriality} are measured with respect to
\(g\).
\end{convention}

\begin{remark}[Filtered complexes, graphics, and spines]
\label{rem:filtered-graphics-spines}
The visible complex records which disjoint essential systems occur inside a
geometric window and how exterior symmetries act on their isotopy classes.  It
does not record the Cerf discriminant of a pair of sweepouts and therefore is
not a Rubinstein--Scharlemann graphic.  Nor is it, by definition, a complex of
special spines and local spine moves.  These structures are complementary:
a fine pair code may be dualized to a decorated spine, while a controlled
two-parameter family may yield a graphic.  No equivalence among these three
objects is used in the image--kernel--reconstruction results below.
\end{remark}

\subsection{Finite visible essential-surface complexes}

Complexes whose vertices are isotopy classes of essential or incompressible
surfaces have been studied in several forms
\cite{SchultensSurfaceComplex,ZhangGuo,CharitosPapadoperakisTsapogas}.  The
underlying complex in the next definition is therefore not claimed as new;
the additional structure used here is its area--relative-thickness filtration.

\begin{definition}[Essential-surface complex]
\label{def:essential-surface-complex-geometric}
The essential-surface complex \(\ES(E)\) is the simplicial complex whose
vertices are ambient-isotopy classes \([F]\) of connected essential surfaces
in \(E\).  A finite set of distinct vertices spans a simplex if it admits a
simultaneously disjoint representative system.
\end{definition}

The simultaneity condition is part of the definition; no flag-complex
assertion is needed.  It is also compatible with the finite-system convention
used throughout this paper.

\begin{definition}[Finite visible essential-surface complex]
\label{def:finite-visible-complex}
For \(\Delta,\tau>0\), the \((\Delta,\tau)\)-visible essential-surface complex
\[
  \ES_{\Delta,\tau}(E)
  \subset \ES(E)
\]
consists of the simplices
\(\sigma=\{[F_0],\ldots,[F_k]\}\) for which there are simultaneously disjoint
representatives such that the union
\[
  F=F_0\sqcup\cdots\sqcup F_k
\]
satisfies
\[
  \Area(F)\leq\Delta,
  \qquad
  \Thi(F)\geq\tau.
\]
Here area is total area, and thickness is the relative thickness of the union
in the sense of \Cref{rem:disconnected-thickness}.  In particular, passing to
a subsystem preserves the bounds, so this is a simplicial subcomplex.

A \emph{decorated visible complex} is obtained by attaching any prescribed
combination of the following labels to the vertices and simplices: surface
homeomorphism type, boundary slope relative to \(\mu\), relative homology
class, taut or Kakimizu status, JSJ support, cutting data, and incidence with
the characteristic frontier.  We write
\(\ES^{\mathfrak D}_{\Delta,\tau}(E,\mu)\) when the decoration package is
\(\mathfrak D\).
\end{definition}

\begin{theorem}[Finiteness of every visible complex]
\label{thm:visible-complex-finiteness}
For every fixed geometric exterior \(E=E(\gamma)\) and every
\(\Delta,\tau>0\), the complex \(\ES_{\Delta,\tau}(E)\) is finite.  The same
holds for every decorated visible complex with finitely recorded labels.
Moreover, if \(\Delta\leq\Delta'\) and \(\tau\geq\tau'\), then there is a
natural simplicial inclusion
\[
  \ES_{\Delta,\tau}(E)
  \into
  \ES_{\Delta',\tau'}(E).
\]
\end{theorem}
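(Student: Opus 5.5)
The plan has three parts: bound the vertices, bound the simplices, then check the decorations and inclusions. The core is finiteness of the vertex set of \(\ES_{\Delta,\tau}(E)\). A vertex \([F_0]\) of \(\ES_{\Delta,\tau}(E)\) lies in some visible simplex, so it has a representative that is a component of a disjoint union \(F\) with \(\Area(F)\leq\Delta\) and \(\Thi(F)\geq\tau\). Every component of such a union inherits these bounds: area is additive, and the reach and the collar, chart and clearance clauses of \Cref{def:relative-surface-thickness} restrict to a union of components (\Cref{rem:disconnected-thickness}). Hence every vertex is the isotopy class of a connected essential surface \(F_0\) with \(\Area(F_0)\leq\Delta\) and \(\Thi(F_0)\geq\tau\).

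For vertex finiteness I would invoke \Cref{thm:smooth-finiteness-fixed-exterior}, read in the \(g\)-metric of \Cref{conv:compact-metric}. Concretely, I would rerun its proof by contradiction. Take pairwise non-isotopic such surfaces \(F_n\). By Blaschke selection in the compact \((S^3,g)\), together with the local chart and collar arguments of \Cref{lem:compactness-thick-surfaces}, a subsequence converges locally graphically in \(C^1\) up to the boundary. Then the fixed-core case of \Cref{lem:pair-stability} makes all far-out members of the sequence isotopic, which is a contradiction.

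Next I would bound simplex dimension. By \Cref{lem:area-lower-bound}, each component has area at least \(a_{\min}(\tau)\). So a visible simplex has at most \(\Delta/a_{\min}(\tau)\) vertices. With finitely many vertices and bounded dimension there are finitely many candidate vertex sets, so \(\ES_{\Delta,\tau}(E)\) is finite. It is a subcomplex because a subsystem of a visible system is again visible; this is already noted in \Cref{def:finite-visible-complex}.

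For a decorated complex with finitely recorded labels, each label is a function of the isotopy class of the vertex or simplex. Examples are slope relative to \(\mu\), homology class, taut status, and JSJ incidence. So a finite complex carries only finitely many label values, and the decorated complex is finite. Monotonicity follows from \Cref{lem:thickness-scale-monotonicity}. If \(\Delta\leq\Delta'\) and \(\tau\geq\tau'\), a witnessing system for a simplex at level \((\Delta,\tau)\) also satisfies \(\Area\leq\Delta'\) and \(\Thi\geq\tau'\). The identity on vertices is therefore a simplicial inclusion, and it respects decorations because they depend only on isotopy classes.

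The main obstacle is the transfer of the smooth finiteness and pair-stability machinery to the \(g\)-metric setting. I would handle it as the convention suggests. All compactness and stability inputs are local statements about reach, graphical charts and collars, and they hold in \(g\)-normal coordinates with constants depending only on the bounded geometry of \(g\) at scale \(\tau\). Compactness of \(S^3\) removes the need for \Cref{lem:anchoring} and \Cref{lem:diameter-bound}. The only delicate point is the Hausdorff closedness of the \(g\)-reach bound, that is, the analogue of \Cref{lem:reach-hausdorff-closed}. If a direct Riemannian version is awkward, I would instead replace it by a comparison: on a region of \(g\)-diameter comparable to \(\tau\), \(g\)-reach and Euclidean reach in normal coordinates agree up to uniform factors. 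This would lower \(\tau\) by a controlled factor, which is harmless for a finiteness conclusion.
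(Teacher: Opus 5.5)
Your proposal is correct and follows the paper's proof. Vertex finiteness comes from \Cref{thm:smooth-finiteness-fixed-exterior}, read in the compact metric $g$ of \Cref{conv:compact-metric}; finiteness of simplices follows from finiteness of vertices; labels are functions of isotopy classes; and monotonicity comes from \Cref{lem:thickness-scale-monotonicity}. The dimension bound via \Cref{lem:area-lower-bound} is superfluous, since a finite vertex set has only finitely many subsets, and dropping it also spares you from transferring that Euclidean, Fenchel-based lemma to the metric $g$.
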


\begin{proof}
Every vertex of \(\ES_{\Delta,\tau}(E)\) is represented by a connected
essential surface with area at most \(\Delta\) and relative thickness at least
\(\tau\), both measured in the compact metric of
\Cref{conv:compact-metric}.  By \Cref{thm:smooth-finiteness-fixed-exterior}
--- whose compactness proof applies in this compact ambient with only
simplifications, as noted in \Cref{conv:compact-metric} --- only finitely many
ambient-isotopy classes of such surfaces occur.  A simplicial complex with a
finite vertex set has only finitely many simplices.  Adding finitely recorded
labels does not change this conclusion.  The monotonicity follows directly
from weakening the area bound and, via
\Cref{lem:thickness-scale-monotonicity}, the thickness bound.
\end{proof}

\begin{theorem}[Finite exhaustion of the full essential-surface complex]
\label{thm:visible-complex-exhaustion}
The visible complexes form a directed exhaustion of \(\ES(E)\):
\[
  \ES(E)
  =
  \bigcup_{\Delta>0,\ \tau>0}
  \ES_{\Delta,\tau}(E)
  =
  \varinjlim_{\Delta\to\infty,\ \tau\downarrow0}
  \ES_{\Delta,\tau}(E).
\]
Equivalently, every finite simplex of \(\ES(E)\) occurs in some bounded
area--thickness window.  A countable exhaustion is obtained by restricting to
positive rational values of \(\Delta\) and \(\tau\).
\end{theorem}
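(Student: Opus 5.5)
The plan is to prove the nontrivial inclusion \(\ES(E)\subseteq\bigcup_{\Delta,\tau}\ES_{\Delta,\tau}(E)\). The reverse inclusion holds because each \(\ES_{\Delta,\tau}(E)\) is a subcomplex of \(\ES(E)\) by \Cref{def:finite-visible-complex}. That the union is a direct limit of the directed system follows from the natural inclusions of \Cref{thm:visible-complex-finiteness}. Given two windows \((\Delta,\tau)\) and \((\Delta',\tau')\), both sit inside the window \((\max\{\Delta,\Delta'\},\min\{\tau,\tau'\})\). So the system is directed, and its colimit in simplicial complexes is the union. For the countable version I replace \(\Delta\) by a rational \(\Delta_\mathbb{Q}\geq\Delta\) and \(\tau\) by a rational \(\tau_\mathbb{Q}\in(0,\tau]\). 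Monotonicity then shows that the rational levels are cofinal.

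For the main inclusion, fix a simplex \(\sigma=\{[F_0],\dots,[F_k]\}\) of \(\ES(E)\). By \Cref{def:essential-surface-complex-geometric} it has simultaneously disjoint representatives \(F_0,\dots,F_k\). These are a priori only topological, or smooth, properly embedded essential surfaces. The goal is a single isotopic disjoint system \(F'=F'_0\sqcup\cdots\sqcup F'_k\) that is a compact \(C^{1,1}\) surface meeting \(\partial E\) transversally. Once such an \(F'\) exists, \(\Area(F')<\infty\) by compactness. I then take \(\Delta=\Area(F')\) and \(\tau=\Thi(F')\), and it remains to show \(\tau>0\). This places \(\sigma\) in \(\ES_{\Delta,\tau}(E)\).

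I carry out the construction in the following steps.
\begin{enumerate}
\item Smooth the union \(F_0\sqcup\cdots\sqcup F_k\) by a small ambient isotopy. The ambient isotopy keeps the components disjoint and does not change any isotopy class \([F_i]\).
\item Make the union meet the torus \(\partial E\) transversally, again by a small isotopy fixing the classes.
\item Apply the remark at the end of \Cref{def:relative-surface-thickness}: after an arbitrarily small collar isotopy, a compact properly embedded \(C^{1,1}\) surface meeting the torus transversely satisfies the five clauses at some positive scale. In \Cref{conv:compact-metric} these clauses are measured in the metric \(g\).
\end{enumerate}

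Applying that remark to the whole disconnected union needs a compactness check.
\begin{itemize}
\item \emph{Reach.} Each component is a compact smooth submanifold with boundary meeting the torus transversally, so it has positive reach. Distinct components are disjoint compacta, so they lie at positive mutual distance. Hence the reach of the union is positive.
\item \emph{Graphical charts and collar bounds.} These hold at a small uniform scale by compactness and by the smoothness of each component up to its boundary.
\item \emph{Clearance.} Transversality gives the linear clearance \(\operatorname{dist}(C_F(s,p),T)\geq c_0s\) near \(\partial F\), possibly after shrinking the scale. Away from the half-collar, the surface is a compact set disjoint from \(T\), so it lies at positive distance from \(T\).
\end{itemize}
The minimum of these finitely many positive scales is then admissible, and \Cref{lem:thickness-scale-monotonicity} lets me pass to any smaller scale.

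The main obstacle is this last verification for a surface with boundary. \Cref{rem:two-point-criterion} warns that positive reach at boundary points must be checked through the tangent half-plane, not the full tangent plane. The relevant quantity is the reach of the closed set near a boundary curve sitting on \(T\). I expect positive reach there to follow from the local picture: the closed set is locally a \(C^{1,1}\) half-plane graph, and the boundary curve is itself a \(C^{1,1}\) curve. If the collar isotopy is chosen to make the surface meet \(T\) orthogonally near \(\partial F\), this becomes a standard fact about compact \(C^{1,1}\) manifolds with \(C^{1,1}\) boundary, and I would cite \cite{RatajZahle,LieutierWintraecken}.
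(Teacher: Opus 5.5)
Your proposal is correct and follows essentially the same route as the paper's proof: choose smooth, simultaneously disjoint proper representatives; adjust them by a small proper isotopy near \(\partial E\) so the collar and clearance conventions hold; obtain \(\Thi(F)>0\) from compactness, embeddedness, and the positive separation of the finitely many components; and conclude with \Cref{lem:thickness-scale-monotonicity}. Your treatment of directedness and of the cofinal rational levels is a harmless addition that the paper leaves implicit.

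One small correction concerns your clearance bullet. The constant \(c_0\) in clause (v) is universal, so transversality together with shrinking the scale does not yield \(\operatorname{dist}(C_F(s,p),T)\geq c_0 s\) when the intersection angle with \(T\) is small. That bound comes from the collar isotopy itself, for example by making \(F\) meet \(T\) orthogonally, as you suggest at the end.
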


\begin{proof}
Let \(\sigma=\{[F_0],\ldots,[F_k]\}\) be a simplex of \(\ES(E)\).  Choose a
smooth simultaneously disjoint proper representative system and set
\(F=F_0\sqcup\cdots\sqcup F_k\).  Its total area is finite.  After an
arbitrarily small proper isotopy near the boundary, the system satisfies the
uniform graphical collar and peripheral-clearance conventions of
\Cref{def:relative-surface-thickness}.  Compactness, embeddedness, and the
positive separation of the finitely many components then give
\(\Thi(F)>0\).  Thus, by the scale monotonicity of
\Cref{lem:thickness-scale-monotonicity},
\(\sigma\in\ES_{\Delta,\tau}(E)\) for every
\(\Delta\geq\Area(F)\) and every
\(0<\tau\leq\Thi(F)\).  The reverse inclusion is immediate from the
definition.
\end{proof}

\begin{remark}[What the exhaustion does and does not say]
The theorem does not assert that one bounded level contains every essential
surface, nor that the full complex is finite.  It says that every finite
configuration of simultaneously disjoint essential surfaces is detected at a
finite geometric stage.  Infinite Kakimizu complexes and Haken-sum families
therefore reappear only through passage to larger area or smaller thickness.
\Cref{fig:visible-exhaustion} illustrates the first stages of the exhaustion.
\end{remark}

\begin{figure}[t]
\centering
\begin{tikzpicture}[x=1cm,y=1cm,>=Latex,
  panel/.style={draw, rounded corners=2pt},
  note/.style={font=\scriptsize, align=center},
  vtx/.style={circle, fill, inner sep=1.3pt}]
  \draw[panel] (0,0) rectangle (3.4,3.1);
  \node[vtx] (a1) at (1.0,1.5) {};
  \node[vtx] (a2) at (2.4,1.5) {};
  \draw (a1) -- (a2);
  \node[note, below] at (1.0,1.38) {$[F_1]$};
  \node[note, below] at (2.4,1.38) {$[F_2]$};
  \node[note] at (1.7,-0.35) {$\ES_{\Delta_1,\tau_1}(E)$};
  \draw[panel] (4.2,0) rectangle (7.6,3.1);
  \fill[black!10] (5.0,1.5) -- (6.4,1.5) -- (5.7,2.55) -- cycle;
  \node[vtx] (b1) at (5.0,1.5) {};
  \node[vtx] (b2) at (6.4,1.5) {};
  \node[vtx] (b3) at (5.7,2.55) {};
  \node[vtx] (b4) at (7.0,2.45) {};
  \draw (b1) -- (b2) -- (b3) -- (b1);
  \node[note, below] at (5.0,1.38) {$[F_1]$};
  \node[note, below] at (6.4,1.38) {$[F_2]$};
  \node[note, above] at (5.7,2.66) {$[F_3]$};
  \node[note, above] at (7.0,2.56) {$[F_4]$};
  \node[note] at (5.9,-0.35) {$\ES_{\Delta_2,\tau_2}(E)$};
  \draw[panel] (8.4,0) rectangle (11.8,3.1);
  \fill[black!10] (9.2,1.5) -- (10.6,1.5) -- (9.9,2.55) -- cycle;
  \node[vtx] (c1) at (9.2,1.5) {};
  \node[vtx] (c2) at (10.6,1.5) {};
  \node[vtx] (c3) at (9.9,2.55) {};
  \node[vtx] (c4) at (11.2,2.45) {};
  \node[vtx] (c5) at (11.1,0.85) {};
  \draw (c1) -- (c2) -- (c3) -- (c1);
  \draw (c3) -- (c4);
  \draw (c2) -- (c5) -- (c4);
  \node[note, below] at (9.2,1.38) {$[F_1]$};
  \node[note, below] at (10.45,1.38) {$[F_2]$};
  \node[note, above] at (9.9,2.66) {$[F_3]$};
  \node[note, above] at (11.2,2.56) {$[F_4]$};
  \node[note, below] at (11.1,0.73) {$[F_5]$};
  \node[note] at (10.1,-0.35) {$\ES_{\Delta_3,\tau_3}(E)$};
  \node[note] at (3.8,1.55) {$\into$};
  \node[note] at (8.0,1.55) {$\into$};
  \node[note] at (12.65,1.55) {$\into\ \cdots\ \ES(E)$};
  \draw[->, thin] (0.2,-0.95) -- (11.6,-0.95);
  \node[note, below] at (5.9,-1.05)
    {$\Delta$ increases, $\tau$ decreases; each stage is finite};
\end{tikzpicture}
\caption{The finite exhaustion of the essential-surface complex of a fixed
geometric exterior \(E=E(\gamma)\)
(\Cref{thm:visible-complex-finiteness,thm:visible-complex-exhaustion}).
Vertices are ambient-isotopy classes of connected essential surfaces; a
simplex, such as the shaded triangle \(\{[F_1],[F_2],[F_3]\}\), records a
simultaneously disjoint system whose union satisfies the area and thickness
bounds of the current window.  Enlarging the window
(\(\Delta_1\leq\Delta_2\leq\Delta_3\), \(\tau_1\geq\tau_2\geq\tau_3\)) makes
new vertices and simplices visible, each finite stage includes into the next,
and the directed union over all windows is the full complex \(\ES(E)\), which
may be infinite.  Exterior symmetries act on this filtered system levelwise
for isometries and with controlled reindexing in general
(\Cref{prop:levelwise-isometric-action,prop:controlled-visible-functoriality}).}
\label{fig:visible-exhaustion}
\end{figure}

\subsection{Exact and controlled equivariance}

The finite complexes depend on the chosen metric representative of the
exterior.  Consequently, a general mapping class need not preserve one
\((\Delta,\tau)\)-level.  Isometries do preserve levels, while smooth
self-homeomorphisms act after a controlled change of parameters.

\begin{proposition}[Levelwise isometric action]
\label{prop:levelwise-isometric-action}
Let \(\Isom^{\pm}(E,\mu)\) denote the group of isometries of the fixed
geometric exterior which preserve the meridian slope.  For every
\(\Delta,\tau>0\), there is a natural homomorphism
\[
  \Isom^{\pm}(E,\mu)
  \longrightarrow
  \Aut\bigl(\ES_{\Delta,\tau}(E)\bigr),
  \qquad
  [F]\longmapsto[h(F)].
\]
The same statement holds for every decoration package preserved by the
isometries.
\end{proposition}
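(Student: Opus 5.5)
The plan is to check that an isometry \(h\in\Isom^{\pm}(E,\mu)\) carries every visible simplex to a visible simplex at the same level \((\Delta,\tau)\). From this I will define the map on vertices, show that it gives a simplicial automorphism, and then verify that it is well defined and a homomorphism.

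\textbf{Step 1: invariance of the geometric bounds.} Let \(F=F_0\sqcup\cdots\sqcup F_k\) be a disjoint representative system of a simplex of \(\ES_{\Delta,\tau}(E)\), measured in the metric \(g\) of \Cref{conv:compact-metric}. An isometry of \((E,g)\) maps \(\partial E\) to itself, since \(\partial E\) is the manifold boundary. The area of \(h(F)\) equals that of \(F\), because area is defined by \(g\).

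Each of the five clauses of \Cref{def:relative-surface-thickness} is formulated through \(g\)-distances, \(g\)-nearest-point projections, \(g\)-normal coordinates, intrinsic arclength, and the distance to the peripheral torus. Every one of these quantities is preserved by \(h\). Concretely:
\begin{itemize}
\item reach is preserved because \(h\) conjugates nearest-point projection;
\item graphical and half-chart bounds transfer through the \(g\)-normal coordinates centered at \(x\) and \(h(x)\);
\item the collar map satisfies \(C_{h(F)}=h\circ C_F\circ(\mathrm{id}\times h^{-1})\);
\item the clearance inequalities transfer because \(\operatorname{dist}(h(y),\partial E)=\operatorname{dist}(y,\partial E)\).
\end{itemize}
Hence \(\Thi(h(F))\geq\tau\).

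\textbf{Step 2: topological preservation.} Essentiality is a topological property of a surface in \(E\), and \(h\) is a self-homeomorphism of \(E\), so \(h(F_i)\) is again essential and connected. The surfaces \(h(F_i)\) remain pairwise disjoint. Therefore \(\{[h(F_0)],\ldots,[h(F_k)]\}\) is a simplex of \(\ES_{\Delta,\tau}(E)\).

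\textbf{Step 3: well-definedness.} Suppose \(F\) and \(F'\) are ambient isotopic through \(\phi_t\). Then \(h\phi_t h^{-1}\) is an ambient isotopy from \(h(F)\) to \(h(F')\). So the vertex map \([F]\mapsto[h(F)]\) depends only on the isotopy class. Importantly, no thickness is required along the isotopy, since vertices are ambient-isotopy classes.

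\textbf{Step 4: automorphism and homomorphism.} By Steps 1--2 the vertex map sends simplices to simplices. The same argument applied to \(h^{-1}\) gives an inverse, so the map is an automorphism. Functoriality \((h_1h_2)(F)=h_1(h_2(F))\) shows that \(h\mapsto(\text{vertex map})\) is a homomorphism. Naturality with respect to the inclusions of \Cref{thm:visible-complex-finiteness} is immediate, because the formula does not depend on the level.

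\textbf{Step 5: decorations.} For a decoration package \(\mathfrak D\) preserved by the isometries, labels transport by assumption. Slope labels relative to \(\mu\) transport because \(h\) preserves the meridian slope, and therefore acts on \(H_1(\partial E)\) compatibly with the slope labelling.

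\textbf{Main obstacle.} The only delicate point is the bookkeeping in Step 1. The relative-thickness clauses were phrased with Euclidean rigid motions, and they have to be read in the \(g\)-normal-coordinate form of \Cref{conv:compact-metric}, so that isometry invariance is literal rather than approximate. I would handle this by noting that each clause is stated intrinsically in terms of \(g\) and \(\partial E\); isometries preserve every such datum exactly, so no constants degrade.
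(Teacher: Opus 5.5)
Your proposal is correct and is the paper's own argument: the paper's proof just notes that an isometry preserves area, reach, the collar and clearance conditions, essentiality, and simultaneous disjointness, so visible simplices go to visible simplices at the same level. Your Steps 3--4 (well-definedness on ambient-isotopy classes, the inverse from \(h^{-1}\), functoriality) only make explicit what the paper leaves implicit.

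One small refinement to your ``main obstacle'' paragraph: reach and the chart balls are ambient notions, not intrinsic to \(E\), since near \(\partial F\) the \(\tau\)-neighbourhood enters the tube. The cleanest justification is that, for the round metric of \Cref{conv:compact-metric}, every isometry of \(E\) is the restriction of an isometry of \(S^3\) preserving \(E\); after that, invariance is literal.
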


\begin{proof}
An isometry preserves area, reach, the boundary collar conditions, peripheral
clearance, essentiality, and simultaneous disjointness.  It therefore sends
every visible simplex to a visible simplex at the same level.
\end{proof}

\begin{proposition}[Controlled functoriality]
\label{prop:controlled-visible-functoriality}
Let \(h:E\to E\) be a meridian-preserving \(C^{1,1}\) diffeomorphism, extended
to a \(C^{1,1}\) diffeomorphism of a neighbourhood of \(E\).  All metric
quantities are taken with respect to the fixed compact metric \(g\) of
\Cref{conv:compact-metric}: \(\operatorname{Lip}(h)\) and
\(\operatorname{Lip}(h^{-1})\) are \(g\)-Lipschitz constants, the
second-order distortion of \(h\) is measured by
\(\|\nabla Dh\|_{L^\infty}\), the essential supremum of the covariant
derivative of \(Dh\) in \(g\), and
\[
  A_2(h)=\sup_{x\in E}\|\wedge^2Dh_x\|_g .
\]
There is a positive function
\(\vartheta_{h,g}:(0,\infty)\to(0,\infty)\), depending
only on the \(C^{1,1}\) distortion data
\(\operatorname{Lip}(h)\), \(\operatorname{Lip}(h^{-1})\),
\(\|\nabla Dh\|_{L^\infty}\), on the metric \(g\) --- through its curvature
bounds and a lower bound for its injectivity radius on a neighbourhood of
\(E\) --- and on the boundary
collar conventions, such that \(h\) induces a simplicial map
\[
  h_*:
  \ES_{\Delta,\tau}(E)
  \longrightarrow
  \ES_{A_2(h)\Delta,\,\vartheta_{h,g}(\tau)}(E).
\]
No closed formula for \(\vartheta_{h,g}\) is asserted: the explicit
Euclidean reach-distortion expression of \Cref{lem:reach-distortion} is
derived from the flat tangent-cone inequality and does not transfer
verbatim to a curved metric, where comparison constants coming from the
curvature and injectivity radius of \(g\) enter.  Applying the same
statement to
\(h^{-1}\) shows that the induced maps are mutually inverse after cofinal
changes of the parameters.
\end{proposition}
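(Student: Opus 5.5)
The map will be defined on representatives: a simplex \(\sigma=\{[F_0],\ldots,[F_k]\}\) of \(\ES_{\Delta,\tau}(E)\) is sent to \(\{[h(F_0)],\ldots,[h(F_k)]\}\). The work is to check three things. First, this is well defined on isotopy classes. Second, images of visible representatives are visible at the new level. Third, the vertex map is simplicial.

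Well-definedness and simplicial structure are topological. If \(F\) and \(F'\) are ambient isotopic in \(E\) via \(\phi_t\), then \(h\phi_th^{-1}\) is an ambient isotopy from \(h(F)\) to \(h(F')\). Since \(h\) is a diffeomorphism of \(E\) preserving \(\partial E\), it preserves incompressibility, boundary-incompressibility and non-boundary-parallelism. It also preserves connectedness and disjointness. Hence \(h\) maps vertices of \(\ES(E)\) to vertices, simultaneously disjoint systems to simultaneously disjoint systems, and distinct classes to distinct classes, using \(h^{-1}\). Meridian preservation is needed only for the decorated versions, where slopes relative to \(\mu\) must be transported.

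The metric content is to fix a disjoint visible representative system \(F=F_0\sqcup\cdots\sqcup F_k\) with \(\Area(F)\le\Delta\) and \(\Thi(F)\ge\tau\), and to show that \(h(F)\) satisfies both bounds at the new level. The area bound follows from the area formula, since the Jacobian of \(h|_F\) is at most \(\|\wedge^2Dh\|\le A_2(h)\). This gives \(\Area(h(F))\le A_2(h)\Delta\).

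For thickness, I would verify the five clauses of \Cref{def:relative-surface-thickness} for \(h(F)\) at a scale \(\vartheta_{h,g}(\tau)\). I would obtain this scale as the minimum of clause-by-clause scales, each depending only on \(\operatorname{Lip}(h)\), \(\operatorname{Lip}(h^{-1})\), \(\|\nabla Dh\|_{L^\infty}\), the geometry of \(g\), and \(c_0,L_0\).
\begin{enumerate}[label=(\roman*),leftmargin=2em]
\item \emph{Reach.} I would work in \(g\)-normal coordinates at scale below the injectivity radius, where \(g\) is \(C^{1,1}\)-comparable to Euclidean with constants set by its curvature bounds. I would then repeat the tangent-cone computation of \Cref{lem:reach-distortion} locally. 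Pairs of points at distance larger than this local scale are handled globally by the bi-Lipschitz bound: they are separated by a definite amount, and \(h\) distorts separations by at most \(\operatorname{Lip}(h^{-1})\).
\item \emph{Graphical and half-chart clauses.} Composing the graph map of \(F\) with \(h\) and reparametrizing over the new tangent plane gives \(C^{1,1}\) graph bounds on a disk of radius comparable to \(\tau/\operatorname{Lip}(h^{-1})\). The inverse function theorem with a Lipschitz derivative supplies the reparametrization.
\item \emph{Collar clauses (iv-a)--(iv-c).} These transfer with constants multiplied by the Lipschitz constants. The new inward conormal collar differs from \(h\circ C_F\), so it must be re-derived from the half-charts of \(h(F)\), again at a shrunken scale.
\item \emph{Clearance clauses.} These transfer because \(h\) preserves \(\partial E\), so distances to \(T\) are distorted by at most \(\operatorname{Lip}(h^{\pm1})\).
\end{enumerate}
Rescaling \(\tau\) absorbs the constants \(L_0\) and \(c_0\). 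Note that these are fixed conventions, so the loss must be placed entirely in the scale and not in \(c_0,L_0\).

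I expect the main obstacle to be this thickness clause verification, specifically keeping the universal constants \(c_0,L_0\) unchanged. The truncated lower bound (iv-b) and the transversality in (v) may hold only with degraded constants. The remedy I would try first is to take a much smaller scale: by \Cref{lem:thickness-scale-monotonicity}-type restriction arguments, shrinking the scale improves the scaled Lipschitz constants of the derivative, and the \(C^1\)-distortion of \(h\) only affects angles boundedly. If this fails, I would at least obtain a scale \(\vartheta_{h,g}(\tau)>0\) by compactness and a contradiction argument. No closed formula is claimed in the statement, so this weaker conclusion would suffice.

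For the final assertion I would apply the above to \(h^{-1}\). Since \(h^{-1}h=\mathrm{id}\) on classes, the composite \(\ES_{\Delta,\tau}\to\ES_{A_2(h)A_2(h^{-1})\Delta,\ \vartheta_{h^{-1}}(\vartheta_h(\tau))}\) is the inclusion of \Cref{thm:visible-complex-finiteness}. Hence the induced maps are inverse in the directed system.
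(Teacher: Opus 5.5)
Your route is the paper's route. You use the area formula for \(A_2(h)\Delta\), rerun the tangent-cone estimate of \Cref{lem:reach-distortion} in \(g\)-normal coordinates for the reach, transfer the other clauses by bi-Lipschitz and \(C^{1,1}\) bounds, note that essentiality and disjointness are topological, and use \(h^{-1}\) for the cofinal inverse. Your treatment of clauses (i)--(iv) is sound in outline: re-deriving the collar intrinsically and shrinking the scale does work there. The step that fails is clause~(v), and neither of your remedies repairs it.

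As \(s\to0\), the inequality \(\operatorname{dist}(C(s,p),T)\geq c_0 s\) is a lower bound, with a universal constant, on the normal component relative to \(T\) of the inward conormal at \(p\). It is a contact-angle condition, homogeneous in \(s\), so passing to a smaller scale does not improve it. Near the collar, the second clearance inequality is governed by the same angle.

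A meridian-preserving \(h\) can break this condition. Given a visible \(F\) with \(\partial F\neq\varnothing\), let \(h\) be the time-one map of a \(C^{1,1}\) field that vanishes on \(T\) and whose differential along \(T\) is the shear \(n_T\mapsto n_T+aV\). Here \(V\) is tangent to \(T\), orthogonal to \(\partial F\), and supported near a point \(p\in\partial F\). Such an \(h\) fixes \(T\) pointwise and is isotopic to the identity, hence meridian-preserving. It sends the conormal \(\cos\alpha\,V+\sin\alpha\,n_T\) at \(p\) to \((\cos\alpha+a\sin\alpha)V+\sin\alpha\,n_T\), whose normal component tends to \(0\) as \(a\to\infty\). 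So for large \(a\), \(h(F)\) fails (v) at \(p\) at every scale, and \(\Thi(h(F))=0\).

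Your observation that \(C^1\)-distortion affects angles only boundedly is exactly the difficulty, because \(c_0\) is fixed once and for all and may not degrade. A compactness/contradiction argument cannot help either, since the failure occurs for one fixed surface rather than along a sequence.

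What is missing is a change of representative. \(\ES_{A_2(h)\Delta,\vartheta}(E)\) asks only for \emph{some} simultaneously disjoint system in the image classes. So you need a boundary re-normalization lemma of the following kind. Take a proper system satisfying the five clauses at scale \(r\) with \(c_0,L_0\) replaced by degraded constants depending on \(\operatorname{Lip}(h)\) and \(\operatorname{Lip}(h^{-1})\). It can be isotoped inside a thin neighbourhood of \(T\), \emph{without increasing area}, to a system satisfying the clauses with the universal \(c_0,L_0\) at a scale depending only on \(r\) and the degraded constants.

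One natural construction cuts the corner where the contact angle \(\alpha'\) is too small. Remove a boundary strip of intrinsic width \(\delta\ll r\), replace the thin wedge between it and \(T\) by a wall transverse to \(T\), and smooth the junction. To first order the strip costs \(\delta\) and the wall \(\delta\sin\alpha'\) per unit boundary length, so the area drops. Disjointness from other sheets comes from the degraded clearance and (iv-c). The second-order and transition terms still have to be controlled.

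Area non-increase is essential, because the target level \(A_2(h)\Delta\) has no slack. Finiteness of \(\ES_{\Delta,\tau}(E)\) together with \Cref{thm:visible-complex-exhaustion} would give \emph{some} receiving level, but not one of the stated form. The paper's own proof covers this step with the single claim that peripheral clearances ``remain uniformly positive, with constants depending only on the stated distortion data,'' which has the same defect.
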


\begin{proof}
The area formula gives
\(\Area(h(F))\leq A_2(h)\Area(F)\).  For the reach part, work in
\(g\)-normal coordinates on balls of a fixed radius \(r_g>0\) determined by
the curvature bounds and injectivity radius of \(g\) on a neighbourhood of
\(E\).  On such a ball the metric coefficients are \(C^{1,1}\)-controlled
by the same curvature data, so the tangent-cone argument of
\Cref{lem:reach-distortion} applies with constants depending on \(g\), and
yields a positive lower bound
\(\vartheta_{h,g}(\tau)\) for the \(g\)-reach of the image surface,
depending only on the stated distortion data of \(h\), \(h^{-1}\), and
\(g\); the scale is further capped by \(r_g\), which is harmless since only
positivity and the stated dependence are claimed.  Since \(h\) and \(h^{-1}\) are bi-Lipschitz and
\(C^{1,1}\) on a compact neighbourhood of the boundary, graphical boundary
charts, collar widths, and peripheral clearances remain uniformly positive,
with constants depending only on the stated distortion data.  Essentiality,
simultaneous disjointness, and the meridian marking are topologically
preserved.  The assertion for the inverse gives cofinal invertibility.
\end{proof}

\begin{definition}[Filtered geometric essential-surface system]
\label{def:filtered-geometric-ES-system}
Write
\[
  \mathbf{ES}^{\mathrm{geom}}(E,\mu)
  =
  \bigl\{\ES_{\Delta,\tau}(E,\mu)\bigr\}_{\Delta>0,\tau>0}
\]
for the directed system under
\((\Delta,\tau)\preceq(\Delta',\tau')\) when
\(\Delta\leq\Delta'\) and \(\tau\geq\tau'\).  A \emph{cofinal automorphism}
of this system is an automorphism represented by compatible simplicial maps
after cofinal changes of the two parameters.  Its group is denoted
\[
  \Aut_{\mathrm{cof}}
  \bigl(\mathbf{ES}^{\mathrm{geom}}(E,\mu)\bigr).
\]
\end{definition}

\begin{corollary}[Controlled mapping-class action]
\label{cor:controlled-mapping-class-action}
Choosing \(C^{1,1}\) representatives gives a natural homomorphism
\[
  \Phi^{\mathrm{geom}}:
  \Mod^{\pm}(E,\mu)
  \longrightarrow
  \Aut_{\mathrm{cof}}
  \bigl(\mathbf{ES}^{\mathrm{geom}}(E,\mu)\bigr).
\]
On the direct limit, this is the usual action of the meridian-preserving
mapping class group on \(\ES(E)\).
\end{corollary}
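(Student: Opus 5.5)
The plan is to build \(\Phi^{\mathrm{geom}}\) out of \Cref{prop:controlled-visible-functoriality}, and the proof splits into four steps. First I check that the construction is well defined on mapping classes. Next I check compatibility with the directed system, so that each class gives a cofinal automorphism. Then I check the homomorphism property. Last, I identify the limit with the usual action on \(\ES(E)\).

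For a class \(\phi\in\Mod^{\pm}(E,\mu)\), I choose a meridian-preserving \(C^{1,1}\) representative \(h\), extended to a neighbourhood of \(E\). Smooth approximation provides one, since every homeomorphism class of a compact \(3\)-manifold contains diffeomorphisms, and the boundary marking is kept by a collar adjustment. \Cref{prop:controlled-visible-functoriality} then gives simplicial maps
\[
  h_*\colon\ES_{\Delta,\tau}(E)\to\ES_{A_2(h)\Delta,\,\vartheta_{h,g}(\tau)}(E),
  \qquad [F]\mapsto[h(F)].
\]
These maps commute with the inclusions of \Cref{thm:visible-complex-finiteness}, because on vertices they are just \([F]\mapsto[h(F)]\) followed by an inclusion. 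Hence they form a morphism of directed systems after the monotone reindexing \((\Delta,\tau)\mapsto(A_2(h)\Delta,\vartheta_{h,g}(\tau))\). One point needs care here. The proposition gives no monotonicity for \(\vartheta_{h,g}\), so I replace it by \(\tilde\vartheta(\tau)=\min\{\vartheta_{h,g}(\tau),\tau\}\), made monotone by taking an infimum over \(\tau'\geq\tau\). The scale monotonicity of \Cref{lem:thickness-scale-monotonicity} lets me decrease the thickness target freely, so this change is harmless. The proposition applied to \(h^{-1}\) gives a reindexed inverse, so \(h_*\) is a cofinal automorphism.

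For independence of the representative, take two representatives \(h,h'\) of \(\phi\). They are isotopic, so \(h(F)\) and \(h'(F)\) are ambient isotopic in \(E\). The two maps therefore agree on vertices, and hence on simplices, at every level where both are defined. Since morphisms of cofinal systems are identified when they agree after cofinal reindexing, this makes \(\Phi^{\mathrm{geom}}(\phi)\) well defined. The homomorphism property follows because \((h\circ k)_*\) and \(h_*\circ k_*\) both send \([F]\) to \([hk(F)]\). The composite \(h\circ k\) is again \(C^{1,1}\), with distortion data controlled by those of \(h\) and \(k\), so both sides land in a common level after a further cofinal enlargement.

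For the direct limit, \Cref{thm:visible-complex-exhaustion} identifies \(\varinjlim\ES_{\Delta,\tau}(E)\) with \(\ES(E)\). On the union of the levels, \(h_*\) is the map \([F]\mapsto[h(F)]\), which is exactly the standard action. The step I expect to be the main obstacle is making the notion of a cofinal automorphism precise enough that the equalities above hold on the nose. Concretely, I would define a cofinal automorphism as an equivalence class of pairs of level-raising reindexings with compatible simplicial maps whose composites, in both orders, equal the structure inclusions. The rest is routine bookkeeping of reindexing functions.
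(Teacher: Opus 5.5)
Your proposal is correct and follows the paper's proof essentially step for step: the paper obtains mutually inverse cofinal maps from \Cref{prop:controlled-visible-functoriality} applied to \(h\) and \(h^{-1}\), gets independence of the representative because isotopic maps induce the same permutation of isotopy classes, notes that the parameter distortions compose cofinally, and reads off the limit statement from \Cref{thm:visible-complex-exhaustion}. The one detour you add, monotonizing \(\vartheta_{h,g}\), is unnecessary; moreover, an infimum over \(\tau'\geq\tau\) is not guaranteed to stay positive from the stated properties of \(\vartheta_{h,g}\) alone. Since every level is a subcomplex of \(\ES(E)\) and each \(h_*\) is the restriction of the single map \([F]\mapsto[h(F)]\), compatibility with the structure inclusions holds for any admissible choice of target level, and that is all the cofinal notion requires.
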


\begin{proof}
By \Cref{prop:controlled-visible-functoriality}, a representative and its
inverse give mutually inverse cofinal maps.  Isotopic representatives induce
the same permutation of ambient-isotopy classes of essential surfaces, so the
map depends only on the mapping class.  Composition is respected because the
parameter distortions compose cofinally.  The final assertion follows from
\Cref{thm:visible-complex-exhaustion}.
\end{proof}

\subsection{Image, kernel, reconstruction, and economy}

The filtered action isolates four logically distinct questions.

\begin{definition}[Filtered rigidity problems]
\label{def:filtered-rigidity-problems}
For a chosen decoration package \(\mathfrak D\), consider the action on
\(\mathbf{ES}^{\mathrm{geom},\mathfrak D}(E,\mu)\).
\begin{enumerate}[label=(\roman*),leftmargin=2em]
\item The \emph{geometric image problem} asks which cofinal automorphisms are
induced by meridian-preserving mapping classes.
\item The \emph{invisible-kernel problem} asks which mapping classes act
trivially on every finite stage, equivalently on the filtered system.
\item The \emph{intrinsic reconstruction problem} asks whether the
characteristic frontier, its pieces, the meridian, and the gluing data can be
recovered from the filtered combinatorics and labels.
\item The \emph{economy problem} asks for the weakest natural decoration
package for which the preceding tasks can be solved.
\end{enumerate}
\end{definition}

\begin{remark}[Faithful pair codes are not automorphism rigidity]
\label{rem:faithfulness-versus-rigidity}
\Cref{thm:faithfulness} is an object-recognition theorem:
\[
  \text{equal sufficiently fine pair codes}
  \quad\Longrightarrow\quad
  \text{pair-isotopic realizations}.
\]
Filtered rigidity asks a different question:
\[
  \begin{gathered}
  \text{an automorphism of the collection of visible classes}\\
  \stackrel{?}{\Longrightarrow}\\
  \text{a homeomorphism of the exterior}.
  \end{gathered}
\]
The second implication does not follow formally from the first.  In
particular, raw lattice-code automorphisms may contain symmetries of the
encoding scheme which have no geometric meaning.  The natural rigidity object
is therefore the visible essential-surface complex, while the layered codes
serve as finite certificates for its vertices and incidence data.
\end{remark}

\begin{remark}[Reduced decorations and forgetful maps]
The finite characteristic data of \Cref{sec:characteristic} naturally produce
a hierarchy of forgetful maps
\[
  \ES^{\mathrm{full}}_{\Delta,\tau}(E,\mu)
  \longrightarrow
  \ES^{\mathrm{type,slope}}_{\Delta,\tau}(E,\mu)
  \longrightarrow
  \ES^{\mathrm{type}}_{\Delta,\tau}(E)
  \longrightarrow
  \ES_{\Delta,\tau}(E).
\]
Comparing the automorphism groups along these maps separates indispensable
labels from redundant ones.  Surface type may eliminate exchanges between
homeomorphically different surfaces, boundary slope may eliminate peripheral
false symmetries, and JSJ support or twist coordinates may be required to
detect gluing across characteristic annuli and tori.
\end{remark}

\subsection{Frontier-crossing compatibility}

The characteristic system gives a finite decomposition of the exterior, but
geometric realization is not purely piecewise.  A visible surface may cross
several JSJ pieces.  Even when an automorphism is realized on each piece, the
local realizations may differ by annular or toral twisting on the frontier and
may therefore fail to induce the prescribed action on crossing surfaces.
This is the \emph{frontier-crossing compatibility problem}.

For this reason, a frontier-aware finite characteristic data set should record,
in addition to the items of \Cref{sec:characteristic},
\begin{enumerate}[label=(\alph*),leftmargin=2em]
\item the JSJ pieces met by each visible surface;
\item the curves and slopes in which it meets the characteristic frontier;
\item the pairing of the cut surface pieces across each frontier component;
\item relative annular or toral twist coordinates whenever the gluing has a
twist ambiguity.
\end{enumerate}
These are finite data at each bounded level.  Determining when they are
intrinsically recoverable, and when they force piecewise realizations to glue,
is a rigidity problem rather than a consequence of smooth finiteness.

\begin{remark}[Fixed exterior versus moving exterior]
The complexes in this section belong to one fixed geometric exterior.  When
\(\gamma\) varies in \(Y_\Lambda(K)\), the correct global object is not a
single complex mixing vertices from different exteriors, but the family
\[
  \bigl\{\mathbf{ES}^{\mathrm{geom}}(E(\gamma),\mu_\gamma)
  \bigr\}_{\gamma\in Y_\Lambda(K)}
\]
together with pair-isotopy transport.  This separates fixed-exterior rigidity
from geometric persistence under motion of the knot representative.
\end{remark}


\section{Compression, tubing, and incompressibility}
\label{sec:essential-intersections-compression-order}

The finiteness and reconstruction theorems of
\Cref{sec:bounded-complexity,sec:finite-recognition} use essentiality only
through the anchoring lemma (\Cref{lem:anchoring}): positive reach and area
bound the topology and, for anchored families --- in particular for all
properly embedded surfaces every component of which has nonempty boundary,
each boundary sitting on the
peripheral torus, and for all essential surfaces --- pin down the isotopy type
whether or not the surface is incompressible.  Incompressibility and
boundary-incompressibility are detected by the absence of compressions;
essentiality additionally excludes boundary-parallel components and
inessential sphere or disk components
(\Cref{rem:compression-detects-essentiality}).  Thus the operation that certifies a visible surface as
essential, and that relates a compressible visible surface to its essential
reduction, is compression; its inverse, tubing, generally costs area and ambient
room.  This makes compression and tubing the basic transition mechanism of the
filtration, and the one whose finite-resolution refinement is most likely to be
needed.  The filtered viewpoint is not that compression gives an absolute order
on existence, but that it compares the geometric budgets needed to realize
surfaces related by compression and inverse tubing.

\subsection{Relation to sutured hierarchies and generalized Heegaard splittings}
\label{subsec:sutured-ghs-interface}

Compression and tubing place the present filtration next to two hierarchy
formalisms.  In a taut sutured-manifold hierarchy, incompressible decomposing
surfaces reduce the sutured manifold while retaining norm-minimizing
information \cite{GabaiFoliationsI,ScharlemannSuturedNorms}.  In a generalized
Heegaard splitting, incompressible thin levels alternate with compressible
thick levels and compression bodies \cite{ScharlemannThompson}.  The essential
surfaces considered here therefore model possible decomposing surfaces and
thin-level components, while the compression order records part of the local
transition mechanism.

The present pair space does not encode an entire hierarchy or generalized
splitting.  A geometric sutured hierarchy must retain the order of
successive decompositions and control the accumulated area and minimum
thickness of all decomposing surfaces.  A geometric generalized Heegaard
splitting must additionally retain thick levels, compression-body incidence,
and the two disk sets whose curve-complex separation is Hempel distance
\cite{HempelCurveComplex}.  The local compression and tubing estimates below
are intended as the first input for those larger filtered objects; no
finiteness or distance theorem for whole generalized splittings is asserted
here.

\begin{remark}[Compression detects essentiality]
\label{rem:compression-detects-essentiality}
Bounded geometry sees a surface but not, by itself, its essentiality: a visible
pair \((\gamma,F)\) may be compressible.  Minimality for the order
\(\geq_{\mathrm{cb}}\) below --- no compression or boundary-compression being
available --- is necessary for essentiality but not equivalent to it: a
boundary-parallel annulus and a \(2\)-sphere bounding a ball admit no
compressions, yet are inessential.  A surface is essential precisely when it
is \(\geq_{\mathrm{cb}}\)-minimal \emph{and}, in addition, has no
boundary-parallel component and no component that is an inessential sphere
or disk, as in the essential-subspace convention of
\Cref{sec:pair-spaces}.  Hence the reconstruction of
\Cref{thm:faithfulness}, which recovers the isotopy type of \(F\) from its
finite code, must be complemented by a compression analysis to decide whether
the recovered surface is essential; in the filtration a compressible visible
surface lies above its essential reduction in \(\geq_{\mathrm{cb}}\), and at
no larger filtered budget, in the sense of the birth-region inclusion of
\Cref{prop:controlled-compression-budget}.  Deciding whether a
bounded-geometry surface can be compressed is therefore the central
finite-resolution question left open by the present method, and the reason the
compression mechanism is retained here.
\end{remark}

\begin{definition}[Compression order]
For properly embedded surfaces \(F,F'\subset E(K)\), write
\[
  F\geq_{\mathrm{cb}}F'
\]
if \(F'\) is obtained from \(F\) by a finite sequence of compressions and
boundary-compressions, followed by deletion of inessential sphere, disk, and
boundary-parallel components.  After passing to isotopy classes and using this
deletion convention, \(\geq_{\mathrm{cb}}\) is a partial order, since the
lexicographic complexity built from \(-\chi\), the number of components, and
boundary data strictly decreases along every nontrivial move.
\end{definition}

\begin{definition}[Birth region and birth frontier of a surface class]
\label{def:birth-region}
For fixed \(\tau>0\), the \emph{birth region} of a surface class \([F]\) is
the realizable set
\[
  B_\tau([F])=
  \left\{(\Lambda,\Delta)\;\middle|\;
  \begin{array}{l}
  \text{there exist }\gamma\in Y_\Lambda(K)\text{ and }F'\in[F]\subset E(\gamma)\\
  \text{with }\Thi(F')\geq\tau\text{ and }\Area(F')\leq\Delta
  \end{array}\right\}
  \subset(0,\infty)^2 .
\]
By definition \(B_\tau([F])\) is an upper set for the product order on the
\((\Lambda,\Delta)\)-plane: enlarging either budget preserves
realizability.  The \emph{birth frontier} \(b_\tau([F])\) is the set of
minimal elements of the closure of \(B_\tau([F])\) for the product order,
that is, its Pareto frontier.  The product order on the plane is not total,
and the two budgets can genuinely trade off against each other, so
\(B_\tau([F])\) need not have a least element: the birth data of a class is
in general a frontier curve, not a single birth point, and statements about
births are statements about the region \(B_\tau([F])\) or its frontier,
never about a single distinguished level.
\end{definition}

\begin{proposition}[Controlled compression decreases the filtered budget]
\label{prop:controlled-compression-budget}
Let \(\tau'\leq\tau\), and suppose the class \([F']\) is obtained from
\([F]\) by a compression or boundary-compression that is \emph{uniformly
realizable at thickness loss \(\tau\to\tau'\)}: for every representative
\(\gamma\in Y_\Lambda(K)\) and every representative \(F\in[F]\) in
\(E(\gamma)\) with \(\Thi(F)\geq\tau\), the compression can be realized in
\(E(\gamma)\) so that, after
smoothing and deleting inessential components, the resulting representative
\(F'\in[F']\) satisfies
\[
  \Area(F')\leq \Area(F),\qquad \Thi(F')\geq \tau' .
\]
The hypothesis quantifies over all realizing exteriors and representatives;
a compression realizable only over one exterior yields only the
corresponding budgets.  Then
\[
  B_\tau([F])\subseteq B_{\tau'}([F'])
\]
as upper sets in the \((\Lambda,\Delta)\)-plane: every budget that realizes
\([F]\) at thickness \(\tau\) realizes \([F']\) at thickness \(\tau'\).  We
deliberately state the conclusion as an inclusion of birth regions; a
comparison of the two birth frontiers would require a domination relation
between Pareto frontiers, which is not needed here.
\end{proposition}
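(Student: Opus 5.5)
The inclusion is essentially an unwinding of the definition of the birth region together with the uniform realizability hypothesis. Fix an arbitrary \((\Lambda,\Delta)\in B_\tau([F])\). By \Cref{def:birth-region}, there are a representative \(\gamma\in Y_\Lambda(K)\) and a surface \(F_1\in[F]\) in \(E(\gamma)\) with \(\Thi(F_1)\geq\tau\) and \(\Area(F_1)\leq\Delta\). The goal is to produce, over the \emph{same} exterior \(E(\gamma)\), a representative of \([F']\) satisfying \(\Thi\geq\tau'\) and \(\Area\leq\Delta\). This is exactly the membership \((\Lambda,\Delta)\in B_{\tau'}([F'])\).

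The key step is to apply the hypothesis to the pair \((\gamma,F_1)\). The hypothesis quantifies over every \(\gamma\in Y_\Lambda(K)\) and every representative with \(\Thi\geq\tau\), so it applies directly. It gives a realization of the compression or boundary-compression inside \(E(\gamma)\) which, after smoothing and deletion of inessential components, yields \(F'_1\in[F']\) with \(\Thi(F'_1)\geq\tau'\) and \(\Area(F'_1)\leq\Area(F_1)\leq\Delta\).

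Two bookkeeping points must be checked. First, \(F'_1\) must lie in \(E(\gamma)\) with the same \(\gamma\), so that the \(\Lambda\)-budget is unchanged. The hypothesis states this explicitly, so no further argument is needed. Second, the thickness \(\tau'\) is the scale at which \(B_{\tau'}\) is defined. Here the condition \(\tau'\leq\tau\) plays no logical role beyond making the statement natural. If one prefers, \Cref{lem:thickness-scale-monotonicity} shows that \(F'_1\) is admissible at every smaller scale as well.

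Since \((\Lambda,\Delta)\) was arbitrary, the inclusion \(B_\tau([F])\subseteq B_{\tau'}([F'])\) follows. Both sets are upper sets by definition, so the inclusion is an inclusion of upper sets. There is no analytic obstacle: all of the geometric difficulty, namely performing a compression along a thick disk with controlled area and thickness loss, has been placed into the hypothesis. The only point needing care is to avoid claiming anything about Pareto frontiers, as the statement itself stresses.
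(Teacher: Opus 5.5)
Your proposal is correct and follows the paper's proof exactly. Fix a realizing pair \((\gamma,F_1)\) for \((\Lambda,\Delta)\in B_\tau([F])\), apply the uniform realizability hypothesis in the same exterior \(E(\gamma)\) to obtain \(F_1'\in[F']\) with \(\Thi(F_1')\geq\tau'\) and \(\Area(F_1')\leq\Area(F_1)\leq\Delta\), and conclude \((\Lambda,\Delta)\in B_{\tau'}([F'])\); your side remark that neither \(\tau'\leq\tau\) nor the scale-monotonicity lemma is logically needed is also accurate.
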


\begin{proof}
Any representative of \([F]\) realizing a pair budget \((\Lambda,\Delta)\) gives,
by the controlled compression hypothesis, a representative of \([F']\) in the
same knot exterior with area at most \(\Delta\) and thickness at least
\(\tau'\).  Hence \((\Lambda,\Delta)\in B_{\tau'}([F'])\), which is the
inclusion of birth regions.
\end{proof}

\begin{remark}[Why the converse has a cost]
If \(F'\) is obtained from \(F\) by compression, then \(F\) is recovered from
\(F'\) by tubing.  Thus the existence of \(F'\) already gives the topological
possibility of producing \(F\).  What the filtration records is the extra
geometric cost of inserting the tube with the prescribed area and thickness.
A \(\tau\)-thick tube with core length \(\ell\) has area at least comparable to
\(\tau\ell\), and it also requires a corridor in the exterior wide enough to
accommodate a \(\tau\)-neighbourhood.  The asymmetry of the two directions is
illustrated in \Cref{fig:compression-tubing}.
\end{remark}

\begin{figure}[t]
\centering
\begin{tikzpicture}[x=1cm,y=1cm,>=Latex,
  note/.style={font=\scriptsize, align=center}]
  \draw[very thick]
    (0.4,3.45) .. controls (1.5,3.62) and (2.1,3.52) .. (2.4,3.10)
    .. controls (2.58,2.60) and (2.58,1.60) .. (2.4,1.10)
    .. controls (2.1,0.68) and (1.5,0.58) .. (0.4,0.75);
  \draw[very thick]
    (5.3,3.45) .. controls (4.2,3.62) and (3.6,3.52) .. (3.3,3.10)
    .. controls (3.12,2.60) and (3.12,1.60) .. (3.3,1.10)
    .. controls (3.6,0.68) and (4.2,0.58) .. (5.3,0.75);
  \draw[dashed] (2.85,2.10) ellipse (0.30 and 0.10);
  \node[note] at (3.72,2.42) {$D$};
  \draw[->, thin] (3.60,2.34) -- (3.18,2.16);
  \node[note, align=center] at (1.30,2.10) {$\tau$-thick\\tube};
  \draw[->, thin] (1.78,2.10) -- (2.46,2.10);
  \node[note] at (0.85,3.95) {$F$};
  \draw[very thick]
    (8.3,3.45) .. controls (9.4,3.62) and (10.1,3.52) .. (10.4,3.10)
    .. controls (10.62,2.72) and (11.18,2.72) .. (11.4,3.10)
    .. controls (11.7,3.52) and (12.4,3.62) .. (13.5,3.45);
  \draw[very thick]
    (8.3,0.75) .. controls (9.4,0.58) and (10.1,0.68) .. (10.4,1.10)
    .. controls (10.62,1.48) and (11.18,1.48) .. (11.4,1.10)
    .. controls (11.7,0.68) and (12.4,0.58) .. (13.5,0.75);
  \draw[dashed, thin] (10.9,2.82) -- (10.9,1.38);
  \fill (10.9,2.82) circle (1.1pt);
  \fill (10.9,1.38) circle (1.1pt);
  \node[note, anchor=west] at (11.0,2.10) {$a$};
  \draw[decorate, decoration={brace, amplitude=3.5pt}]
    (10.62,1.38) -- (10.62,2.82);
  \node[note, anchor=east] at (10.42,2.10) {$\ell(a)$};
  \node[note] at (8.75,3.95) {$F'$};
  \draw[->, thick] (5.85,2.65) -- (7.85,2.65);
  \node[note, above] at (6.85,2.72) {compression along $D$};
  \node[note, below] at (6.85,2.58) {area does not increase};
  \draw[->, thick] (7.85,1.35) -- (5.85,1.35);
  \node[note, above] at (6.85,1.42) {tubing along $a$};
  \node[note, below] at (6.85,1.28)
    {$\Delta_\tau(a)\gtrsim 2\pi\tau\,\ell(a)$};
\end{tikzpicture}
\caption{Compression and tubing as the transition mechanism of the
filtration, in cross-section.  Left: a visible surface \(F\) with a
\(\tau\)-thick tube; the dashed compression disk \(D\) spans the tube.
Compressing along \(D\) produces the surface \(F'\) (right) and never
increases the filtered budget
(\Cref{prop:controlled-compression-budget}).  The inverse operation, tubing
\(F'\) along an embedded arc \(a\), is topologically always available but
carries a geometric cost: a \(\tau\)-thick tube of core length \(\ell(a)\)
adds area at least comparable to \(2\pi\tau\,\ell(a)\) and needs a
\(\tau\)-wide corridor in the exterior, possibly forcing a larger ropelength
level.  The filtration records exactly this asymmetry between the two
directions.}
\label{fig:compression-tubing}
\end{figure}

\begin{definition}[Tubing cost]
Let \(F\) be obtained from \(F'\) by attaching a tube along an embedded arc
\(a\) in the knot exterior, or by the corresponding relative tubing operation.
The \(\tau\)-tubing cost is the pair
\[
  \operatorname{Cost}_\tau(a)=\bigl(\Delta_\tau(a),\Lambda_\tau(a)\bigr),
\]
where \(\Delta_\tau(a)\) is the additional area required by the tube and
\(\Lambda_\tau(a)\) is the additional ropelength level needed, if any, to make a
\(\tau\)-thick corridor for the tube.  In model bounded-geometry situations one
expects
\[
  \Delta_\tau(a)\ \gtrsim\ 2\pi\tau\,\ell(a),
\]
up to smoothing and endpoint constants.
\end{definition}

\begin{problem}[Quantitative tubing cost]
\label{prob:tubing-cost}
Estimate \(\operatorname{Cost}_\tau(a)\) in terms of the length of the tubing
arc, available collar width, local injectivity radius of the complement, and
the area and thickness of \(F'\).  Such estimates would turn the
compression--tubing duality into explicit inequalities between birth levels.
\end{problem}

\section{Seifert surfaces, tautness, and ropelength-filtered Kakimizu persistence}
\label{sec:kakimizu}

\subsection{The sutured-manifold interpretation of taut visibility}
\label{subsec:kakimizu-sutured}

Cutting a knot exterior along a minimal-genus Seifert surface produces a taut
sutured manifold.  Consequently, a vertex in a visible Kakimizu layer may be
viewed as a geometrically controlled first decomposing surface in a Gabai
hierarchy \cite{GabaiFoliationsI}.  The area--thickness window measures the
cost of making this initial taut decomposition visible, whereas the Kakimizu
complex records disjointness among the resulting isotopy classes.

This interpretation does not imply that every bounded-geometry minimal-genus
Seifert surface extends to a sutured hierarchy with a uniform geometric
budget.  Later decomposing surfaces may require more area or less thickness.
A persistence invariant for entire taut hierarchies is therefore a genuine
extension of the first-stage Kakimizu persistence studied here, not a
consequence of it.

\begin{remark}[Diagrammatic sources of thick essential surfaces]
Regular diagrams provide a basic source of surfaces in the filtered pair
space.  Given a diagram \(D\) of \(K\), Seifert's algorithm produces a
Seifert surface \(F_D\).  This surface need not be incompressible or of
minimal genus.  However, after performing compressions, boundary-compressions,
and discarding inessential or boundary-parallel components, one obtains an
essential part \(F_D^{\mathrm{ess}}\) whenever such a part remains.

After smoothing and putting the surface in bounded-geometry position, this
essential part gives a positive-thickness representative for some
\(\tau>0\).  Thus regular projections, via Seifert's algorithm and essential
reduction, provide concrete diagrammatic sources of points in the essential
filtered surface space.
\end{remark}

\subsection{The taut Seifert layer}

For a knot in \(S^3\), a connected Seifert surface \(F\) satisfies
\[
  -\chi(F)=2g(F)-1.
\]
Thus Thurston norm minimization is equivalent to genus minimization.  The
taut subfiltration therefore isolates minimal genus Seifert surfaces by
definition, while the ropelength, area, and thickness parameters refine their
geometric realization.

\begin{definition}[Ropelength--area--thickness filtered Kakimizu layer]
Let \(\mathrm{MS}(K)\) denote the Kakimizu complex of \(K\)
\cite{Kakimizu}.  For
\(\Lambda,\Delta,\tau>0\), define
\[
  \mathrm{MS}_{\Lambda,\Delta,\tau}(K)
\]
to be the full subcomplex of \(\mathrm{MS}(K)\) spanned by isotopy classes of
minimal genus Seifert surfaces \(F\) for which there exists a representative
\(\gamma\in Y_\Lambda(K)\) such that
\[
  (\gamma,F)\in \calZ_{\Lambda,\Delta,\tau}^{\mathrm{ess}}(K).
\]
\end{definition}

Thus \(\mathrm{MS}_{\Lambda,\Delta,\tau}(K)\) is the visible full subcomplex of the Kakimizu
complex: its vertices are the minimal genus Seifert surface classes visible at
that level, and its simplices are the simplices of \(\mathrm{MS}(K)\) spanned
by those visible vertices.  It is the part of the Kakimizu complex visible at
ropelength level \(\Lambda\), surface area level \(\Delta\), and surface
thickness scale \(\tau\).

\subsection{Non-orientable spanning surfaces and the crosscap layer}
\label{subsec:nonorientable}

The Seifert layer uses orientable spanning surfaces.  Non-orientable spanning
surfaces are equally essential objects, and the whole bounded-geometry machinery
applies to them without change, because neither
\Cref{prop:geometric-topological-boundedness} nor
\Cref{lem:compactness-thick-surfaces} uses orientability.  Recording this layer
completes the spanning-surface side of the theory.

\begin{definition}[Spanning-surface subspace]
\label{def:spanning-subspace}
A spanning surface for \(\gamma\) is a compact connected properly embedded
surface \(F\subset E(\gamma)\), possibly non-orientable, whose boundary
\(\partial F\) is a single longitudinal curve on \(\partial E(\gamma)\).  Let
\[
  \calS^{\pm}_{\Lambda,\Delta,\tau}(K)\subset\calZ^{\mathrm{ess}}_{\Lambda,\Delta,\tau}(K)
\]
be the subspace of pairs \((\gamma,F)\) with \(F\) an essential spanning surface,
and let \(\calS^{-}_{\Lambda,\Delta,\tau}(K)\) be the further subspace on which
\(F\) is non-orientable.
\end{definition}

For an orientable spanning surface the boundary slope is the Seifert longitude.
For a non-orientable spanning surface \(F\) the boundary curve
\(\partial F\) has nonzero even integral slope; with the standard
Gordon--Litherland normalization it equals \(-e(F)/2\), where \(e(F)\) is
the normal Euler number.  The associated Gordon--Litherland form computes the
signature of \(K\)
\cite{GordonLitherland}.  The crosscap number \(\gamma_c(K)\), the minimal number
of crosscaps of a non-orientable spanning surface, is the non-orientable
analogue of genus \cite{Clark}.

\begin{proposition}[Boundedness and finiteness for spanning surfaces]
\label{prop:nonorientable-boundedness}
Fix \((\Lambda,\Delta,\tau)\).  For a non-orientable connected spanning surface
\(F\) with \(k\) crosscaps and one boundary component,
\(\chi(F)=1-k\), and \Cref{prop:geometric-topological-boundedness} gives
\[
  k\ \leq\ 1+C(\tau)\,\Delta .
\]
Consequently the crosscap numbers of visible spanning surfaces are bounded, and
\Cref{thm:smooth-finiteness-fixed-exterior,thm:smooth-finiteness-pair-space}
apply verbatim: at fixed \((\Lambda,\Delta,\tau)\) there are only finitely many
pair-isotopy classes of essential spanning surfaces, orientable or not.
\end{proposition}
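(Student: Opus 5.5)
The proof has two parts: an Euler-characteristic computation that turns the Gauss--Bonnet bound into a crosscap bound, and a check that the finiteness theorems apply to non-orientable spanning surfaces without change.

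\emph{The crosscap bound.} By the classification of compact surfaces, a connected non-orientable surface with $k$ crosscaps and $|\partial F|$ boundary components has $\chi(F)=2-k-|\partial F|$. For a spanning surface $|\partial F|=1$, so $\chi(F)=1-k$. Since $k\geq1$, this gives $k=1-\chi(F)\leq 1+|\chi(F)|$. A visible spanning surface is connected, satisfies $\Thi(F)\geq\tau$ and $\Area(F)\leq\Delta$, and so \Cref{prop:geometric-topological-boundedness} applies. The Gauss--Bonnet step in its proof gives the explicit form $|\chi(F)|\leq C(\tau)\Delta$. Combining the two yields $k\leq 1+C(\tau)\Delta$. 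I would stress that the proof of that proposition never used orientability: the collar estimate, Fenchel's theorem on each boundary curve, and Gauss--Bonnet with curvature bounds are all local. The only orientation-sensitive data, the normals, are recorded as unoriented lines in \Cref{con:concrete-encoding}. Minimizing $k$ over visible spanning surfaces then bounds the crosscap numbers that are visible at the level $(\Lambda,\Delta,\tau)$.

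\emph{Finiteness.} The set of pair-isotopy classes of essential spanning surfaces, orientable or not, is a subset of the classes in $\calZ^{\mathrm{ess}}_{\Lambda,\Delta,\tau}(K)$. That space is finite by \Cref{thm:smooth-finiteness-pair-space}, and in a fixed exterior the corresponding set is finite by \Cref{thm:smooth-finiteness-fixed-exterior}. For ``verbatim'' to be honest, I would check the one-sided case in each ingredient those proofs use. Essentiality for one-sided surfaces follows the geometric-disk convention of the essential-subspace definition. \Cref{lem:anchoring} needs only the boundary-meets-$B_*$ case, since spanning surfaces have nonempty boundary. The reach clause and \Cref{lem:compactness-thick-surfaces} concern closed subsets and local graphs, with no normal orientation. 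In \Cref{lem:pair-stability}, Step~3 uses collar shears in local product coordinates, which are orientation-free. Step~4 uses \Cref{lem:interior-normal-graph}.

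\emph{Main obstacle.} The delicate point is \Cref{lem:interior-normal-graph} for non-orientable $G$: the normal line bundle of $G$ is nontrivial, so the section $u$ is a section of a twisted line bundle rather than a scalar function. The fix I would try first is to observe that the proof only needs nearest-point projection $\pi_G$ to be a diffeomorphism. It then gives the section $u(x)=\pi_G^{-1}(x)-x$ as a vector field normal to $G$, and the linear homotopy $x+su(x)$ is well defined without choosing any orientation. The compactness limit also preserves non-orientability, because $C^1$-close surfaces are homeomorphic by the same stability argument. Hence every step goes through unchanged, and finiteness follows.
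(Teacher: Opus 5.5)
Your proposal is correct and follows essentially the paper's own argument: $\chi(F)=2-k-1=1-k$ combined with the orientation-insensitive bound $|\chi(F)|\leq C(\tau)\Delta$ of \Cref{prop:geometric-topological-boundedness}, and then the observation that the finiteness theorems never use orientability. Your ingredient-by-ingredient check of the one-sided case goes beyond what the paper writes out, in particular your reading of the section in \Cref{lem:interior-normal-graph} as a section of a possibly twisted normal line bundle, which is already how that lemma is phrased; the route is unchanged.
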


\begin{proof}
The Euler-characteristic estimate of
\Cref{prop:geometric-topological-boundedness} is proved from thickness, area,
collar, and curvature control and uses the identity \(2\pi\chi(F)=\int_F
K_F+\int_{\partial F}k_g\); it is insensitive to orientability.  For a
non-orientable surface with \(k\) crosscaps and \(b\) boundary components
\(\chi(F)=2-k-b\), so with \(b=1\) one gets \(k=1-\chi(F)\leq1+C(\tau)\Delta\).
The finiteness theorems never used orientability either, so they hold for the
spanning-surface subspace.
\end{proof}

\begin{remark}[The crosscap and non-orientable taut layers]
\label{rem:crosscap-layer}
One may filter spanning surfaces by minimal complexity exactly as for Seifert
surfaces.  The non-orientable taut subspace consists of pairs \((\gamma,F)\) with
\(F\) of minimal crosscap number \(\gamma_c(K)\), and the disjointness relation
of Kakimizu defines a spanning-surface complex whose vertices are the
minimal-complexity spanning-surface classes and whose ropelength--area--thickness
filtration is defined as in
\Cref{def:relative-surface-thickness} and the definitions following it.  The
visibility levels, height
functions, and merge scales of the Seifert case carry over verbatim; the taut
Seifert layer is the orientable part, and the crosscap layer is the
non-orientable part, of a single spanning-surface filtration.  Because a
non-orientable spanning surface fixes a nonzero even boundary slope, these layers
occupy distinct boundary strata, so by
\Cref{prop:slope-obstruction-to-merging} they have infinite merge scale under
slope-preserving deformations.
\end{remark}

\subsection{Ropelength height and completion levels}

Fix \(\Delta,\tau>0\).  The ropelength parameter gives a natural height
function on the vertices of the Kakimizu complex.

\begin{definition}[Ropelength height on the Kakimizu complex]
For a vertex \([F]\in \mathrm{MS}(K)\), define
\[
  h_{\Delta,\tau}([F])
  =
  \inf\left\{
  \Lambda
  \mid
  [F]\in \mathrm{MS}_{\Lambda,\Delta,\tau}(K)
  \right\}.
\]
We call \(h_{\Delta,\tau}\) the ropelength height function on the Kakimizu
complex.
\end{definition}

The term ``height function'' is used in a combinatorial filtered sense.  It is
not a smooth Morse function.  Its sublevel complexes are precisely the
ropelength-filtered Kakimizu layers.

Several numerical thresholds are natural.

\begin{definition}[Kakimizu birth, connectivity, and completion levels]
For fixed \(\Delta,\tau>0\), define
\[
  \Lambda_{\mathrm{first}}^{\Delta,\tau}(K)
  =
  \inf\left\{
  \Lambda
  \mid
  \mathrm{MS}_{\Lambda,\Delta,\tau}(K)\neq\emptyset
  \right\},
\]
\[
  \Lambda_{\mathrm{conn}}^{\Delta,\tau}(K)
  =
  \inf\left\{
  \Lambda
  \middle|
  \begin{array}{l}
  \mathrm{MS}_{\Lambda',\Delta,\tau}(K)\text{ is nonempty and connected}\\
  \text{for every }\Lambda'\geq\Lambda
  \end{array}
  \right\},
\]
and
\[
  \Lambda_{\mathrm{MS}}^{\Delta,\tau}(K)
  =
  \inf\left\{
  \Lambda
  \mid
  \mathrm{MS}_{\Lambda,\Delta,\tau}(K)=\mathrm{MS}(K)
  \right\}.
\]
The last quantity is called the Kakimizu completion level.
\end{definition}

The empty complex is regarded as not connected.  These quantities measure,
respectively, when the first minimal genus Seifert surface becomes visible,
when the visible taut layer is connected at every subsequent level, and when
the full Kakimizu complex has appeared.  The eventual formulation is needed
because adding a new isolated vertex can temporarily destroy connectedness.  They satisfy
\[
  \Rop(K)
  \leq
  \Lambda_{\mathrm{first}}^{\Delta,\tau}(K)
  \leq
  \Lambda_{\mathrm{conn}}^{\Delta,\tau}(K)
  \leq
  \Lambda_{\mathrm{MS}}^{\Delta,\tau}(K),
\]
with the convention that some quantities may be infinite if the
\((\Delta,\tau)\)-surface constraint is too restrictive.
\Cref{fig:kakimizu-height} shows the three thresholds as levels of the
height function.

\begin{figure}[t]
\centering
\begin{tikzpicture}[x=1cm,y=1cm,>=Latex,
  note/.style={font=\scriptsize, align=center},
  vtx/.style={circle, fill, inner sep=1.4pt}]
  \draw[->] (0.55,0.35) -- (0.55,4.35) node[note, left] {$\Lambda$};
  \draw[thin] (0.47,0.75) -- (0.63,0.75);
  \node[note, left] at (0.45,0.75) {$\Rop(K)$};
  \draw[dashed, thin] (0.55,1.20) -- (11.6,1.20);
  \node[note, anchor=west] at (11.7,1.20)
    {$\Lambda_{\mathrm{first}}^{\Delta,\tau}$};
  \draw[dashed, thin] (0.55,2.60) -- (11.6,2.60);
  \node[note, anchor=west] at (11.7,2.60)
    {$\Lambda_{\mathrm{conn}}^{\Delta,\tau}$};
  \draw[dashed, thin] (0.55,3.60) -- (11.6,3.60);
  \node[note, anchor=west] at (11.7,3.60)
    {$\Lambda_{\mathrm{MS}}^{\Delta,\tau}$};
  \node[vtx] (v1) at (2.2,1.20) {};
  \node[vtx] (v2) at (4.0,1.75) {};
  \node[vtx] (v3) at (5.8,1.45) {};
  \node[vtx] (v4) at (7.6,2.60) {};
  \node[vtx] (v5) at (9.6,3.60) {};
  \draw (v1) -- (v2) -- (v3);
  \draw (v3) -- (v4) -- (v5);
  \draw (v2) -- (v4);
  \node[note, below] at (2.2,1.08) {$[F_1]$};
  \node[note, below] at (4.0,1.63) {$[F_2]$};
  \node[note, below, fill=white, inner sep=1pt] at (5.8,1.33) {$[F_3]$};
  \node[note, below right] at (7.7,2.52) {$[F_4]$};
  \node[note, below right] at (9.7,3.52) {$[F_5]$};
  \node[note, fill=white, inner sep=1.5pt] at (3.05,4.05)
    {vertices of $\mathrm{MS}(K)$ at their\\ropelength heights
     $h_{\Delta,\tau}([F])$};
\end{tikzpicture}
\caption{The ropelength height function on the Kakimizu complex, at a fixed
surface budget \((\Delta,\tau)\), drawn schematically with each taut vertex
placed at its height \(h_{\Delta,\tau}([F])\).  The sublevel complexes are
the filtered layers \(\mathrm{MS}_{\Lambda,\Delta,\tau}(K)\): the first
vertex appears at \(\Lambda_{\mathrm{first}}^{\Delta,\tau}\), the visible
layer is connected from \(\Lambda_{\mathrm{conn}}^{\Delta,\tau}\) onward, and the
full complex \(\mathrm{MS}(K)\) is present from the completion level
\(\Lambda_{\mathrm{MS}}^{\Delta,\tau}\) on.  Since the limit complex is
contractible, all homology of the intermediate layers is transient, so the
barcode of this persistence is an invariant of the geometric filtration
itself.}
\label{fig:kakimizu-height}
\end{figure}

The completion level has the following interpretation: it is the least
ropelength allowance needed to see the entire taut Seifert-surface theory of
\(K\) at the prescribed surface scale.

\subsection{Kakimizu persistence and its finite-resolution form}

The filtration \(\Lambda\mapsto\mathrm{MS}_{\Lambda,\Delta,\tau}(K)\), for
\(\Lambda\geq\Rop(K)\), defines a persistence module
\(H_i(\mathrm{MS}_{\Lambda,\Delta,\tau}(K))\), \(i\geq0\), in the sense of
persistent topology \cite{EdelsbrunnerHarer,ZomorodianCarlsson}: the
ropelength persistence of the Kakimizu layer.  It records how the taut Seifert-surface complex grows with the
ropelength allowance, from the first birth level
\(\Lambda_{\mathrm{first}}^{\Delta,\tau}\) to the completion level
\(\Lambda_{\mathrm{MS}}^{\Delta,\tau}\).  Two refinements are worth recording
briefly.  First, a merge-theoretic refinement: for vertices \([F_1],[F_2]\) of
a visible layer, the merge scale
\(m_{\Delta,\tau}^{\mathrm{MS}}([F_1],[F_2])\) of the corresponding admissible
pair components refines the combinatorial Kakimizu distance, since two classes
may be adjacent as surfaces yet require additional ropelength room to be
joined by an admissible thick deformation; the associated merge completion
level \(\Lambda_{\mathrm{merge}}^{\Delta,\tau}(K)\), the least level at which
all visible components have merged, may differ from
\(\Lambda_{\mathrm{MS}}^{\Delta,\tau}(K)\), because appearance and
deformation-connectivity are different events.  Second, a finite-resolution
form: identifying classes with the same \(\varepsilon\)-scale model gives the
layer \(\mathrm{MS}^{\varepsilon}_{\Lambda,\Delta,\tau}(K)\), whose vertex
count is bounded by the explicit encoded-type bound of
\Cref{cor:explicit-encoded-bound}, so its quotient persistence, in the sense
of \Cref{def:admissible-transition-complex}, has a well-defined
finite barcode on every bounded parameter interval, and a locally finite
interval decomposition on the full ray, by \Cref{prop:persistence-tameness}.

\begin{remark}[Contractibility and transient persistence]
The full Kakimizu complex is contractible \cite{PrzytyckiSchultens}.  Hence, if
the filtered layers exhaust \(\mathrm{MS}(K)\), the limiting homotopy type is
trivial, and all homology of the intermediate layers is transient: classes born
at finite levels must die by the completion level.  The barcode of the
Kakimizu persistence is therefore a genuine invariant of the geometric
filtration, not of the limit complex.
\end{remark}

\begin{problem}[Geometric Kakimizu persistence]
\label{prob:kakimizu-persistence}
Compute or estimate
\[
  h_{\Delta,\tau},\quad
  \Lambda_{\mathrm{first}}^{\Delta,\tau}(K),\quad
  \Lambda_{\mathrm{conn}}^{\Delta,\tau}(K),\quad
  \Lambda_{\mathrm{MS}}^{\Delta,\tau}(K)
\]
for natural classes of knots.  In particular, determine how these quantities
are related to fiberedness, satellite structure, and density--compression
profiles.
\end{problem}

\begin{example}[Concrete test cases and a trefoil window]
The following examples are meant as computable bounds rather than sharp
optimizations.

\emph{The trefoil.}  Let \(K=3_1\).  The trefoil is fibered and its minimal
genus Seifert surface is unique up to isotopy, so the Kakimizu complex is a
single vertex.  Let \(\mathfrak S_{\mathrm{taut}}\) denote this taut surface
class, and set
\[
  \beta_{\mathrm{taut}}(3_1;\Delta,\tau)
  :=
  \beta_{\mathfrak S_{\mathrm{taut}}}(3_1;\Delta,\tau).
\]
Denne--Diao--Sullivan's quadrisecant estimates give the rigorous lower
bound
\[
  31.32 < \Rop(3_1)
\]
in the radius normalization of \Cref{rem:ropelength-normalization} (their
published constant is \(15.66\) in diameter units)
\cite{DenneDiaoSullivan}.  Hence every visible taut-surface level satisfies
\[
  31.32
  <
  \Rop(3_1)
  \leq
  \beta_{\mathrm{taut}}(3_1;\Delta,\tau).
\]
Conversely, choose any explicit smooth thick trefoil representative
\(\gamma_0\) and its fiber surface \(F_0\subset E(\gamma_0)\), scale so that
\(\Thi(\gamma_0)=1\), and set
\[
  L_0=\Len(\gamma_0),\qquad A_0=\Area(F_0),\qquad
  \tau_0=\Thi(F_0)>0.
\]
Then for every \(\Delta\geq A_0\) and \(0<\tau\leq\tau_0\),
\[
  31.32
  <
  \beta_{\mathrm{taut}}(3_1;\Delta,\tau)
  \leq
  L_0.
\]
If \(\gamma_0\) is taken from a numerical tight-trefoil model with
\(L_0<32.75\), as in the polygonal tightening computations of
\cite{BaranskaPieranskiPrzybylRawdon}, this gives the concrete window
\[
  31.32
  <
  \beta_{\mathrm{taut}}(3_1;\Delta,\tau)
  <
  32.75
\]
for the corresponding measured surface budget \((A_0,\tau_0)\).  Since the
visible Kakimizu layer has one vertex in this case, the first, connected, and
completion levels agree whenever the taut surface is visible:
\[
  \Lambda_{\mathrm{first}}^{\Delta,\tau}(3_1)
  =
  \Lambda_{\mathrm{conn}}^{\Delta,\tau}(3_1)
  =
  \Lambda_{\mathrm{MS}}^{\Delta,\tau}(3_1)
  =
  \beta_{\mathrm{taut}}(3_1;\Delta,\tau).
\]
Thus even the first nontrivial example produces a genuine quantitative
visibility interval, rather than merely an existence statement.  Improving the
interval means either improving the ropelength bounds for the trefoil or
improving the simultaneous bounded-geometry realization of the trefoil and its
fiber surface.

\emph{Other fibered knots with one visible taut surface.}  Let \(K\) be a
fibered knot whose minimal genus Seifert surface is unique up to isotopy, such
as the figure-eight knot.  The same argument gives
\[
  \Rop(K)
  \leq
  \beta_{\mathrm{taut}}(K;\Delta,\tau)
  \leq
  L_0
\]
whenever \(\Delta\geq A_0\) and \(0<\tau\leq\tau_0\) for a chosen thick model
\((\gamma_0,F_0)\).  The Kakimizu thresholds again collapse to the same single
visible taut-surface level.

\emph{Torus knots.}  For the torus knot \(T(p,q)\), with
\(\gcd(p,q)=1\), the standard fiber has
genus \((p-1)(q-1)/2\).  A thickened closed braid representative and its fiber
surface give constants \(L_{p,q},A_{p,q},\tau_{p,q}>0\).  Hence
\[
  \beta_{\mathrm{taut}}(T(p,q);\Delta,\tau)
  \leq L_{p,q}
\]
for \(\Delta\geq A_{p,q}\) and \(\tau\leq\tau_{p,q}\).  The dependence of these
bounds on \(p\) and \(q\) is a concrete test of the density--compression
frontier: increasing braid complexity raises the length budget, while the
fiber genus and area raise the surface budget.

\emph{A satellite test.}  Let \(K\) be a cable or more general satellite knot
with companion torus \(T\).  If \((\gamma_0,T_0)\) is one bounded-geometry
realization of the companion torus, then
\[
  \beta_{\mathrm{JSJ}}(K;\Delta,\tau)
  \leq \Len(\gamma_0)
\]
for \(\Delta\geq\Area(T_0)\) and \(\tau\leq\Thi(T_0)\).  Twisting a Seifert
surface around the companion torus gives a family of Kakimizu vertices whose
normal coefficients, and hence their visible area at fixed thickness, grow
with the amount of twisting.  Consequently, fixed \((\Delta,\tau)\) sees only
a finite initial portion of this direction, while the full Kakimizu complex may
be infinite.  This is precisely the kind of phenomenon that the visible
Kakimizu filtration is designed to record.
\end{example}


\subsection{Knot--surface trade-off frontier}

The trade-off frontier continues the ideal-pair viewpoint introduced above.
It measures how the surface-side ideal area changes when one allows the knot
representative to move away from the ropelength ideal stratum.

The coupled filtration can also be viewed as an optimization device.  It asks
how short and thick a knot representative can be while an essential surface in
its exterior is simultaneously realized with small area and positive
thickness.  This leads to the following frontier.

\begin{definition}[Knot--surface trade-off frontier]
\label{def:trade-off-frontier}
Fix a knot type \(K\), an essential surface class \([F]\), and a surface
thickness scale \(\tau>0\).  Define
\[
  \Delta_{\min}^{\tau}(\Lambda;[F])
  =
  \inf\{\Area(F')\mid
  (\gamma,F')\in
  \calZ_{\Lambda,\infty,\tau}^{\mathrm{ess}}(K),\ F'\in [F]\}.
\]
The graph of \(\Delta_{\min}^{\tau}(\Lambda;[F])\), when the infimum is finite,
is called the knot--surface trade-off frontier for the class \([F]\).
\end{definition}

It is often useful to use the scale-invariant normalized version
\[
  \mathfrak F^\tau(\Lambda;[F])
  =
  \frac{\Delta_{\min}^{\tau}(\Lambda;[F])}{\tau^2}.
\]
This records the least normalized surface area cost at surface thickness
scale \(\tau\).

\begin{remark}[Programmatic statement: Trade-off principle]
The geometry of a knot exterior is reflected in the frontier between
ropelength efficiency of knot representatives and area--thickness efficiency
of essential surfaces in their exteriors.  Sharp changes in this frontier are
expected to detect significant topological features of the knot exterior.
\end{remark}

\subsection{Trade-off frontiers and density--compression profiles}

The knot--surface trade-off frontier is not merely an area-minimization
function.  It records how the ropelength budget of the knot and the
area--thickness budget of the surface are distributed between density and
compression.

Let \(D(\gamma)\) be a length-scale measuring the spatial size of the curve
\(\gamma\), such as its diameter or minimal enclosing radius.  Under the
normalization \(\Thi(\gamma)=1\), the curve-side factorization gives
\[
  \Len(\gamma)
  =
  \rho_D(\gamma)\,\CRad_D(\gamma).
\]
Thus the same length level \(\Lambda\) can arise from different geometric
profiles: a representative may be dense in a small region, or more diffuse in
a larger region.

Similarly, let \(D(F)\) be a length-scale measuring the spatial size of the
surface \(F\), such as its diameter or minimal enclosing radius.  Since
\(\Area(F)\) has length dimension two, the surface-side factorization takes
the form
\[
  \frac{\Area(F)}{\Thi(F)^2}
  =
  \rho_D^{\mathrm{surf}}(F)
  \left(\CRad_D^{\mathrm{surf}}(F)\right)^2.
\]
In the level \(\calZ_{\Lambda,\Delta,\tau}(K)\), this normalized area is
bounded by
\[
  \frac{\Area(F)}{\Thi(F)^2}
  \leq
  \frac{\Delta}{\tau^2}.
\]
Consequently, the frontier
\[
  \Delta_{\min}^{\tau}(\Lambda;[F])
\]
measures not only the least area at which the class \([F]\) appears, but also
how the available geometric room is divided between surface density and
surface compression, while the knot representative itself has its own
density--compression profile.

\begin{remark}[Programmatic statement: Density--compression interpretation of the frontier]
\label{rem:frontier-density-compression}
Sharp changes in the knot--surface trade-off frontier should reflect changes
in the density--compression profiles of knot representatives and essential
surfaces.  In this sense, the frontier is a geometric shadow of how positional
room in the knot exterior is converted into surface area, thickness, and
compression.
\end{remark}

\section{Geometric density, compression, and packing}
\label{sec:density-compression}

This section develops the density--compression viewpoint of the companion
paper \cite{OzawaDensityCompression} on the surface and pair sides.  The core
finiteness and recognition theorems do not depend on any optimized density
invariant; the point here is structural.  It explains why the
length--area--thickness budget is a packing constraint, builds the surface
analogue of the curve-side density, compression radius, and packing ratio, and
shows that positive reach makes the constrained least-area ideal, and the
area-truncated density and compression ideals, attained
(\Cref{prop:surface-attainment}) --- exactly where the curve side requires
approximation hypotheses.  For the untruncated ratio functionals an area
bound along a minimizing sequence remains a genuine hypothesis.

\subsection{The curve-side factorization}
\label{subsec:curve-density}

Let \(D(\gamma)\) be a \emph{size functional}: a Euclidean-invariant,
scale-covariant, positive function of an embedded curve, such as the diameter,
the minimal enclosing radius \(R_{\min}\), the radius of gyration, an
\(L^p\)-radial size, or a regularized convex-hull size
\cite{OzawaDensityCompression}.  Define the \(D\)-density, the \(D\)-compression
radius, and the \(D\)-packing ratio by
\[
  \rho_D(\gamma)=\frac{\Len(\gamma)}{D(\gamma)},
  \qquad
  \CRad_D(\gamma)=\frac{D(\gamma)}{\Thi(\gamma)},
  \qquad
  \operatorname{Pack}_D(\gamma)=\frac{\Thi(\gamma)}{D(\gamma)}=\frac{1}{\CRad_D(\gamma)}.
\]
At the representative level these cancel to the ropelength,
\[
  \Rop(\gamma)=\frac{\Len(\gamma)}{\Thi(\gamma)}=\rho_D(\gamma)\CRad_D(\gamma),
\]
so under \(\Thi(\gamma)=1\) the ropelength bound is the product constraint
\(\rho_D(\gamma)\CRad_D(\gamma)\leq\Lambda\).  After optimizing over the knot
type the identity becomes only an inequality,
\[
  \Rop(K)\geq\rho_D(K)\CRad_D(K),
  \qquad
  \rho_D(K)=\inf_{\gamma\in K}\rho_D(\gamma),
  \quad
  \CRad_D(K)=\inf_{\gamma\in K}\CRad_D(\gamma),
\]
and it is strict in general: the density, compression, and ropelength ideal
strata --- the minimizers of the three functionals --- need not share a common
minimizing sequence \cite{OzawaDensityCompression}.  The factorization alone
therefore produces no new ropelength bound; it isolates two factors whose
separate behaviour can be studied.  \Cref{fig:density-compression} shows the
two factors on the curve side and their surface analogues.

\begin{figure}[t]
\centering
\begin{tikzpicture}[x=1cm,y=1cm,>=Latex,
  panel/.style={draw, rounded corners=2pt},
  title/.style={font=\footnotesize\bfseries},
  note/.style={font=\scriptsize, align=center},
  fbox/.style={draw, rounded corners=2pt, align=center, inner sep=3.5pt,
               font=\scriptsize, fill=white}]
  \draw[panel] (0,-0.55) rectangle (6.3,4.55);
  \node[title] at (3.15,4.28) {curve side};
  \draw[dashed, thin] (2.55,2.45) circle (1.45);
  \begin{scope}
    \draw[black!15, line width=4.6pt]
      plot[variable=\t, domain=40:1150, samples=160, smooth]
      ({2.55+(0.18+0.00095*\t)*cos(\t)}, {2.45+(0.18+0.00095*\t)*sin(\t)});
    \draw[thin]
      plot[variable=\t, domain=40:1150, samples=160, smooth]
      ({2.55+(0.18+0.00095*\t)*cos(\t)}, {2.45+(0.18+0.00095*\t)*sin(\t)});
  \end{scope}
  \draw[->, thin] (2.55,2.45) -- (3.58,3.47);
  \node[note, fill=white, inner sep=1pt] at (3.35,3.05) {$D(\gamma)$};
  \node[note] at (5.35,3.9) {$\Thi=1$};
  \draw[->, thin] (5.0,3.75) -- (4.05,3.10);
  \node[fbox] at (3.15,0.10)
    {$\rho_D=\Len/D$,\quad $\CRad_D=D/\Thi$\\[0.5mm]
     $\Rop=\rho_D\,\CRad_D$};
  \draw[panel] (6.9,-0.55) rectangle (13.2,4.55);
  \node[title] at (10.05,4.28) {surface side};
  \draw[dashed, thin] (9.55,2.45) circle (1.45);
  \fill[black!12]
    (8.35,2.05) .. controls (8.9,2.85) and (9.4,1.75) .. (10.0,2.55)
    .. controls (10.45,3.12) and (10.75,2.35) .. (10.72,2.18)
    -- (10.66,1.92)
    .. controls (10.35,2.05) and (10.4,2.62) .. (10.05,2.25)
    .. controls (9.5,1.5) and (8.95,2.6) .. (8.42,1.83) -- cycle;
  \draw[thick]
    (8.35,2.05) .. controls (8.9,2.85) and (9.4,1.75) .. (10.0,2.55)
    .. controls (10.45,3.12) and (10.75,2.35) .. (10.72,2.18);
  \draw[thick]
    (8.42,1.83) .. controls (8.95,2.6) and (9.5,1.5) .. (10.05,2.25)
    .. controls (10.4,2.62) and (10.35,2.05) .. (10.66,1.92);
  \draw[->, thin] (9.55,2.45) -- (10.58,3.47);
  \node[note, fill=white, inner sep=1pt] at (10.38,3.05) {$D(F)$};
  \node[note] at (12.3,3.9) {$\Thi(F)\geq\tau$};
  \draw[->, thin] (11.9,3.72) -- (10.55,2.35);
  \node[fbox] at (10.05,0.10)
    {$\rho^{\mathrm{surf}}_D=\Area/D^2$,\quad
     $\CRad^{\mathrm{surf}}_D=D/\Thi$\\[0.5mm]
     $\Area/\Thi^2=\rho^{\mathrm{surf}}_D
       \bigl(\CRad^{\mathrm{surf}}_D\bigr)^2$};
\end{tikzpicture}
\caption{The density--compression factorization.  Left: a unit-thickness
representative inside its minimal enclosing size \(D(\gamma)\) (dashed).  The
density \(\rho_D\) measures how much length is packed per unit size, the
compression radius \(\CRad_D\) measures how large the configuration is
relative to its thickness, and their product is the ropelength.  Right: the
surface analogue, with area in the role of length and the squared size in the
role of the linear scale; the normalized area budget
\(\Area/\Thi^2\leq\Delta/\tau^2\) factors in the same way.  The ideal strata
of the three functionals need not coincide, which is why the factorization
isolates two separately studied factors rather than producing a new
ropelength bound.}
\label{fig:density-compression}
\end{figure}

\subsection{The surface-side factorization}
\label{subsec:surface-density}

The surface side is parallel, with area in the role of length and the squared
spatial scale in the role of the linear scale.  Let \(D(F)\) be a size
functional for surfaces, for instance \(\operatorname{diam}(F)\) or the minimal
enclosing radius \(R_{\min}(F)\).  Define
\[
  \rho_D^{\mathrm{surf}}(F)=\frac{\Area(F)}{D(F)^2},
  \qquad
  \CRad_D^{\mathrm{surf}}(F)=\frac{D(F)}{\Thi(F)},
  \qquad
  \operatorname{Pack}_D^{\mathrm{surf}}(F)=\frac{\Thi(F)}{D(F)}.
\]
The representative-level decomposition is
\[
  \frac{\Area(F)}{\Thi(F)^2}
  =\rho_D^{\mathrm{surf}}(F)\bigl(\CRad_D^{\mathrm{surf}}(F)\bigr)^2 ,
\]
so the normalized area budget is \(\Area(F)/\Thi(F)^2\leq\Delta/\tau^2\).  Fix a
representative \(\gamma\) and an essential class \([F]\) in \(E(\gamma)\), and
minimize over its \(\tau\)-thick members:
\[
  \rho_D^{\mathrm{surf}}([F];\gamma)=\inf\rho_D^{\mathrm{surf}}(F'),
  \qquad
  \CRad_D^{\mathrm{surf}}([F];\gamma)=\inf\CRad_D^{\mathrm{surf}}(F'),
\]
\[
  \mathfrak a_\tau([F];\gamma)=\inf\frac{\Area(F')}{\Thi(F')^2},
\]
the infima over essential \(F'\in[F]\) with \(\Thi(F')\geq\tau\).

\begin{proposition}[Optimized surface inequality]
\label{prop:surface-optimized-inequality}
For every essential class \([F]\) with nonempty \(\tau\)-admissible subclass,
\[
  \mathfrak a_\tau([F];\gamma)
  \geq
  \rho_D^{\mathrm{surf}}([F];\gamma)\bigl(\CRad_D^{\mathrm{surf}}([F];\gamma)\bigr)^2,
\]
with equality if and only if there is a \(\tau\)-thick minimizing sequence in
\([F]\) that simultaneously optimizes the normalized area, the surface density,
and the surface compression radius.
\end{proposition}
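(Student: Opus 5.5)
The inequality comes from the representative-level identity, and the equality characterization is a limiting argument along minimizing sequences.

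Write \(\mathcal A\) for the \(\tau\)-admissible subclass, assumed nonempty. For every \(F'\in\mathcal A\), the factorization of \Cref{subsec:surface-density} gives
\[
  \frac{\Area(F')}{\Thi(F')^2}
  =\rho_D^{\mathrm{surf}}(F')\bigl(\CRad_D^{\mathrm{surf}}(F')\bigr)^2 .
\]
Since \(F'\in\mathcal A\), we have \(\rho_D^{\mathrm{surf}}(F')\geq\rho_D^{\mathrm{surf}}([F];\gamma)\) and \(\CRad_D^{\mathrm{surf}}(F')\geq\CRad_D^{\mathrm{surf}}([F];\gamma)\). Both factors are nonnegative, so the map \((x,y)\mapsto xy^2\) is monotone on \([0,\infty)^2\). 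Hence the right-hand side is at least \(\rho_D^{\mathrm{surf}}([F];\gamma)\,\CRad_D^{\mathrm{surf}}([F];\gamma)^2\). Taking the infimum over \(F'\) gives the inequality. Positivity of the size functional and of the thickness (\(\Thi\geq\tau>0\)) keeps every quotient finite.

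For the equality clause, first take a \(\tau\)-thick sequence \(F_n\) with \(\rho_D^{\mathrm{surf}}(F_n)\to\rho_D^{\mathrm{surf}}([F];\gamma)\) and \(\CRad_D^{\mathrm{surf}}(F_n)\to\CRad_D^{\mathrm{surf}}([F];\gamma)\). By the identity, \(\Area(F_n)/\Thi(F_n)^2\) converges to the product \(\rho\,\CRad^2\) of the two infima. Since this value is at least \(\mathfrak a_\tau\) by definition of the infimum, and at most \(\mathfrak a_\tau\) by the inequality just proved, it equals \(\mathfrak a_\tau\). So the sequence also minimizes the normalized area, and equality holds.

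Conversely, suppose equality holds and let \(F_n\) be a minimizing sequence for the normalized area, so \(\rho_n\CRad_n^2\to\rho_*\CRad_*^2\), where \(\rho_n\geq\rho_*\) and \(\CRad_n\geq\CRad_*\). The main obstacle is showing that both factors converge to their infima. This works when \(\rho_*>0\) and \(\CRad_*>0\): we have \(\rho_*\CRad_n^2\leq\rho_n\CRad_n^2\to\rho_*\CRad_*^2\), which gives \(\limsup\CRad_n\leq\CRad_*\), and symmetrically \(\limsup\rho_n\leq\rho_*\). Both positivity conditions should follow from earlier results, and I would check them first. For \(\CRad_*>0\): \(D(F')\) is bounded below by a multiple of \(\tau\), because \(F'\) contains a graphical disk of radius comparable to \(c_0\tau\) (clause (ii) of \Cref{def:relative-surface-thickness}), while \(\Thi(F')\) is at most comparable to \(D(F')\). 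For \(\rho_*>0\): the area lower bound of \Cref{lem:area-lower-bound} gives \(\Area\gtrsim\tau^2\), but \(D\) could a priori be large. If \(\rho_*=0\), the product of the infima is \(0\), so equality forces \(\mathfrak a_\tau=0\), which is impossible since \(\Area(F')/\Thi(F')^2\) is bounded below by a positive constant. So \(\rho_*=0\) cannot occur together with equality, and the characterization holds in all cases.
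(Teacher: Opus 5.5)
Your proof is correct and follows the paper's route. The pointwise identity $\Area(F')/\Thi(F')^2=\rho_D^{\mathrm{surf}}(F')\,\CRad_D^{\mathrm{surf}}(F')^2$ gives the inequality after taking infima. The equality clause is then the squeeze of both factors along a normalized-area minimizing sequence, and your squeeze makes this more precise than the paper's one-line appeal to ``the two positive infima.''

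One simplification: justify $\mathfrak a_\tau>0$ by the graphical chart clause at scale $\Thi(F')$, which gives $\Area(F')\geq\pi c_0^2\Thi(F')^2/4$; \Cref{lem:area-lower-bound} only gives a $\tau$-dependent bound and does not suffice. Under equality, $\mathfrak a_\tau>0$ alone forces both $\rho_D^{\mathrm{surf}}([F];\gamma)$ and $\CRad_D^{\mathrm{surf}}([F];\gamma)$ to be positive. This makes your separate size-functional argument for the compression infimum unnecessary, which is useful because that argument needs $D$ to be comparable to the diameter.
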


\begin{proof}
For each admissible \(F'\) the representative-level decomposition gives
\[
  \frac{\Area(F')}{\Thi(F')^2}
  =\rho_D^{\mathrm{surf}}(F')\bigl(\CRad_D^{\mathrm{surf}}(F')\bigr)^2
  \geq\rho_D^{\mathrm{surf}}([F];\gamma)\bigl(\CRad_D^{\mathrm{surf}}([F];\gamma)\bigr)^2 ;
\]
take the infimum over \(F'\).  The equality clause is the surface analogue of the
curve equality criterion \cite{OzawaDensityCompression}: if the product tends to
the product of the two positive infima, neither positive factor can diverge, so
both tend to their infima.
\end{proof}

We call the minimizers of \(\rho_D^{\mathrm{surf}}\),
\(\CRad_D^{\mathrm{surf}}\), and \(\Area/\Thi^2\) the surface density,
compression, and area ideal strata of \([F]\) relative to \(\gamma\); as on the
curve side they need not coincide.

\subsection{Attainment via positive reach}
\label{subsec:surface-attainment}

On the curve side, existence of density and compression minimizers, and their
polygonal approximation, require analytic hypotheses verified only for
particular size functionals \cite{OzawaDensityCompression}.  On the surface
side positive reach supplies the compactness; what it does not supply by
itself is a uniform area bound along a minimizing sequence, which must be
assumed for the ratio functionals.

\begin{proposition}[Attainment of the surface density and compression ideals]
\label{prop:surface-attainment}
Let \(D\) be a size functional continuous under \(C^1\)-convergence of thick
surfaces (for example \(\operatorname{diam}\) or \(R_{\min}\), which are
continuous under Hausdorff convergence).  Fix \(\gamma\) and an
essential class \([F]\) with nonempty \(\tau\)-admissible subclass.
\begin{enumerate}[label=(\roman*),leftmargin=2em]
\item If the infimum defining the normalized area
\(\mathfrak a_\tau([F];\gamma)\) admits a minimizing sequence with uniformly
bounded area, then it is attained.  Equivalently, its area-truncated version,
obtained by restricting the competition to \(\Area\leq\Delta\), is attained
for every \(\Delta\).
\item The same conclusion holds for the infimum defining
\(\rho_D^{\mathrm{surf}}([F];\gamma)\) or
\(\CRad_D^{\mathrm{surf}}([F];\gamma)\): a uniformly area-bounded minimizing
sequence gives an attained infimum, and every area-truncated ideal is attained.
\end{enumerate}
The area-boundedness proviso in (ii) is not removable in general: a
\(\tau\)-thick representative may grow a long finger of extrinsic diameter
\(D\) at area cost of order \(\tau D\), so a minimizing sequence for the ratio
\(\Area/D^2\) can have unbounded area, and the compactness lemma does not
apply to it.
\end{proposition}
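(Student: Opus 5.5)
The argument is the direct method, with \Cref{lem:compactness-thick-surfaces} supplying compactness. I treat (i) and (ii) together. Let \(\mathcal J\) be one of \(\Area/\Thi^2\), \(\rho_D^{\mathrm{surf}}\), or \(\CRad_D^{\mathrm{surf}}\), and let \(F_n\in[F]\) be essential with \(\Thi(F_n)\geq\tau\), \(\mathcal J(F_n)\to\inf\mathcal J\), and \(\Area(F_n)\leq\Delta\). The area bound is either the stated proviso or, in the truncated version, part of the constraint. Since the \(F_n\) are essential, the family is anchored by \Cref{lem:anchoring}, and \Cref{lem:compactness-thick-surfaces} gives a subsequence converging locally graphically in \(C^1\), uniformly up to the boundary, to a proper surface \(F_\infty\) with \(\Thi(F_\infty)\geq\tau\) and \(\Area(F_\infty)\leq\Delta\).

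Next I identify the class. For large \(n\), \(F_n\) and \(F_\infty\) satisfy the closeness hypotheses of the fixed-core case of \Cref{lem:pair-stability}, including the boundary clause, with arbitrarily small positional and angular errors. The lemma then makes them ambient isotopic in \(E(\gamma)\), exactly as in the proof of \Cref{cor:unconditional-ideal-attainment}. Hence \(F_\infty\in[F]\) is essential and \(\tau\)-admissible, so it competes in the infimum. In the truncated problem it also satisfies \(\Area\leq\Delta\).

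The remaining step is lower semicontinuity of \(\mathcal J\) along the sequence, and this is where I expect the main difficulty. Area is lower semicontinuous, \(\Area(F_\infty)\leq\liminf\Area(F_n)\), as recorded at the end of the proof of \Cref{lem:compactness-thick-surfaces}. Moreover, \(C^1\) graphical convergence with uniformly Lipschitz first derivatives makes the area converge, so I may use equality if needed. By hypothesis \(D\) is continuous under this convergence, so \(D(F_n)\to D(F_\infty)>0\), and \(\rho^{\mathrm{surf}}_D\) passes to the limit. The trouble is \(\Thi\), which enters the denominators of \(\Area/\Thi^2\) and \(\CRad^{\mathrm{surf}}_D\). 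Reach is only upper semicontinuous in the sense of \Cref{lem:reach-hausdorff-closed}: passing to a subsequence with \(\Thi(F_n)\to t\geq\tau\), I get \(\Thi(F_\infty)\geq t\), because the five clauses pass to the limit at every scale \(r<t\) by the argument of Step~4 of that lemma and are closed as \(r\nearrow t\). This inequality points the right way, since a larger thickness only decreases both functionals. It gives
\[
  \frac{\Area(F_\infty)}{\Thi(F_\infty)^2}\leq\liminf_n\frac{\Area(F_n)}{t^2}
  =\liminf_n\frac{\Area(F_n)}{\Thi(F_n)^2},
\]
and likewise \(\CRad^{\mathrm{surf}}_D(F_\infty)\leq\lim_n D(F_n)/\Thi(F_n)\). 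Therefore \(\mathcal J(F_\infty)\leq\inf\mathcal J\), and \(F_\infty\) is a minimizer.

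The equivalence claimed in (i) is immediate from the two directions. If the infimum is attained at some \(F_0\), then any truncation with \(\Delta\geq\Area(F_0)\) contains \(F_0\), so it both attains the infimum and provides the constant minimizing sequence, which is area-bounded. Conversely, the argument above shows that every truncated problem is attained. For the closing remark I would sketch the finger example: attach to a fixed thick representative a \(\tau\)-thick tube of length \(D\), with area roughly \(2\pi\tau D\). Then \(\Area/D^2\to0\) while the area diverges, so the compactness lemma cannot be applied to that minimizing sequence.
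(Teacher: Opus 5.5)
Your proposal is correct and follows essentially the same route as the paper's proof. Both arguments take an area-bounded minimizing sequence, anchor it by \Cref{lem:anchoring}, extract a limit with the compactness lemma (\Cref{lem:compactness-thick-surfaces}), keep the limit in \([F]\) by pair stability, and pass the functional to the limit by semicontinuity. Your extra step \(\Thi(F_\infty)\geq t=\lim_n\Thi(F_n)\) is more careful than the paper, which records only \(\Thi(F_\infty)\geq\tau\) even though \(\Thi\) appears in the denominators of \(\Area/\Thi^2\) and \(\CRad^{\mathrm{surf}}_D\). You should also state that \(t<\infty\): each component of a surface with \(\Thi\geq r\) has area of order at least \(r^2\) (\Cref{lem:area-lower-bound}), so \(\Area\leq\Delta\) bounds the thickness.
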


\begin{proof}
For either clause, take a minimizing sequence \(F_n'\in[F]\) with
\(\Thi(F_n')\geq\tau\) and \(\Area(F_n')\leq\Delta\) for some fixed
\(\Delta\), as provided by the hypothesis or by the area truncation.  The
surfaces are
essential, hence anchored by \Cref{lem:anchoring}, so by
\Cref{lem:compactness-thick-surfaces} a subsequence converges in \(C^1\) to a
\(\tau\)-thick limit \(F_\infty'\), isotopic to the tail and hence in \([F]\)
and essential.  Since \(D\) is \(C^1\)-continuous, area is lower
semicontinuous, and the thickness constraint passes to the limit in the
favorable direction (\(\Thi(F_\infty')\geq\tau\) by
\Cref{lem:reach-hausdorff-closed} and the chart limits), \(F_\infty'\)
realizes the infimum.
\end{proof}

\begin{remark}[Surface analogue of polygonal approximation]
\label{rem:surface-vs-polygonal}
\Cref{prop:surface-attainment} is the surface counterpart of the polygonal
approximation theorem for compression radii \cite{OzawaDensityCompression},
which on the curve side holds only under approximation, compactness, and
lower-semicontinuity hypotheses established there for \(D=\operatorname{diam}\)
and \(D=R_{\min}\).  On the surface side the compactness and
lower-semicontinuity hypotheses are subsumed by the
single positive-reach compactness of \Cref{lem:compactness-thick-surfaces}, so
the area-truncated surface density and compression ideals exist for every
\(C^1\)-continuous size functional; only the uniform area bound along a
minimizing sequence remains a genuine hypothesis for the untruncated ratio
functionals, as the finger construction in \Cref{prop:surface-attainment}
shows.  This is otherwise the same phenomenon as in the recognition theory:
reach makes the two-dimensional object directly compact.
\end{remark}

\subsection{Coupled profiles and the trade-off frontier}
\label{subsec:coupled-profiles}

A pair \((\gamma,F)\) carries two density--compression profiles at once, the
curve profile \((\rho_D(\gamma),\CRad_D(\gamma))\) and the surface profile
\((\rho_D^{\mathrm{surf}}(F),\CRad_D^{\mathrm{surf}}(F))\), and the normalized
budget splits as
\[
  \frac{\Len(\gamma)}{\Thi(\gamma)}+\frac{\Area(F)}{\Thi(F)^2}
  =\rho_D(\gamma)\CRad_D(\gamma)
  +\rho_D^{\mathrm{surf}}(F)\bigl(\CRad_D^{\mathrm{surf}}(F)\bigr)^2
  \leq\Lambda+\frac{\Delta}{\tau^2}.
\]
The knot--surface trade-off frontier \(\Delta_{\min}^{\tau}(\Lambda;[F])\) of
\Cref{def:trade-off-frontier} is the Pareto frontier of this coupled
optimization: it records how the available room is divided between curve
density, curve compression, surface density, and surface compression, and sharp
changes in it reflect changes in these four profiles, as anticipated in
\Cref{rem:frontier-density-compression}.

\subsection{Compatibility with finite-resolution bounds}
\label{subsec:dc-finite-resolution}

\begin{proposition}[Degree of freedom through density--compression]
\label{prop:dof-density-compression-surface}
Let \((\gamma,F)\) be a thick pair with \(\Thi(\gamma)=1\) and
\(\Thi(F)\geq\tau\).  For \(0<\varepsilon\leq c\min\{1,\tau\}\),
\[
  \DoF_\varepsilon(\gamma,F)
  \leq
  C\left(
  \frac{\rho_D(\gamma)\CRad_D(\gamma)}{\varepsilon}
  +
  \frac{\rho_D^{\mathrm{surf}}(F)
  \left(\CRad_D^{\mathrm{surf}}(F)\right)^2\Thi(F)^2}{\varepsilon^2}
  \right).
\]
In particular, if \((\gamma,F)\in\calZ_{\Lambda,\Delta,\tau}(K)\), then
\[
  \DoF_\varepsilon(\gamma,F)
  \leq
  C\left(
  \frac{\Lambda}{\varepsilon}+\frac{\Delta}{\varepsilon^2}
  \right).
\]
\end{proposition}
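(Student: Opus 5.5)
The plan is to reduce the statement to \Cref{prop:basic-dof} and rewrite its two terms with the representative-level factorizations of \Cref{subsec:curve-density,subsec:surface-density}. No new geometry is needed. \Cref{prop:basic-dof} already gives
\[
  \DoF_\varepsilon(\gamma,F)\leq C\Bigl(\frac{\Len(\gamma)}{\varepsilon}+\frac{\Area(F)}{\varepsilon^2}\Bigr)
\]
for every unit-thickness curve and every surface with \(\Thi(F)\geq\tau\), in the range \(0<\varepsilon\leq c\min\{1,\tau\}\), with a universal \(C\) (\Cref{rem:dimensional-homogeneity}). I will first restate this un-substituted form explicitly, since the proof of \Cref{prop:basic-dof} only inserts \(\Len\leq\Lambda\) and \(\Area\leq\Delta\) at its last line.

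Next I will rewrite each term. On the curve side, \(\Thi(\gamma)=1\) gives
\[
  \Len(\gamma)=\Len(\gamma)/\Thi(\gamma)=\rho_D(\gamma)\CRad_D(\gamma),
\]
which is the exact identity \(\Rop(\gamma)=\rho_D\CRad_D\). On the surface side, the identity \(\Area(F)/\Thi(F)^2=\rho^{\mathrm{surf}}_D(F)(\CRad^{\mathrm{surf}}_D(F))^2\) becomes, after multiplying by \(\Thi(F)^2\),
\[
  \Area(F)=\rho^{\mathrm{surf}}_D(F)\bigl(\CRad^{\mathrm{surf}}_D(F)\bigr)^2\Thi(F)^2 .
\]
Both are equalities at the level of the representative, so substituting them gives the first displayed bound with the same constant \(C\).

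For the ``in particular'' clause, if \((\gamma,F)\in\calZ_{\Lambda,\Delta,\tau}(K)\), then the first numerator equals \(\Len(\gamma)\leq\Lambda\) and the second equals \(\Area(F)\leq\Delta\). The bound therefore reduces to the one in \Cref{prop:basic-dof} and \Cref{thm:finite-resolution-finiteness-filtered-pairs}.

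The only point needing care is that \(D\) is positive and finite, so the factorizations are well-defined. This holds because \(D\) is a positive size functional, and \(\Thi(F)\geq\tau>0\) makes \(\CRad^{\mathrm{surf}}_D\) finite. I also want to note that \(\Thi(F)\) may exceed \(\tau\). This causes no trouble: the factor \(\Thi(F)^2\) is kept explicitly, so no inequality between \(\Thi(F)\) and \(\tau\) is used beyond the admissibility range of \(\varepsilon\).
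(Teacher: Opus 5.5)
Your proposal is correct and matches the paper's proof. Both start from the unsubstituted estimate \(\DoF_\varepsilon(\gamma,F)\leq C\bigl(\Len(\gamma)/\varepsilon+\Area(F)/\varepsilon^2\bigr)\) of \Cref{prop:basic-dof}, substitute the two representative-level density--compression identities (with \(\Thi(\gamma)=1\) on the curve side and the factor \(\Thi(F)^2\) kept explicitly on the surface side), and then obtain the ``in particular'' clause from \(\Len(\gamma)\leq\Lambda\) and \(\Area(F)\leq\Delta\).
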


\begin{proof}
Use the finite-resolution estimate of \Cref{sec:dof},
\[
  \DoF_\varepsilon(\gamma,F)
  \leq C\left(\frac{\Len(\gamma)}{\varepsilon}
  +\frac{\Area(F)}{\varepsilon^2}\right),
\]
and substitute the two density--compression factorizations.  The final bound
follows from \(\Len(\gamma)\leq\Lambda\) and \(\Area(F)\leq\Delta\).
\end{proof}

\begin{remark}[Packing interpretation and scope]
\label{rem:dc-scope}
The packing ratios \(\operatorname{Pack}_D=\Thi/D\) and
\(\operatorname{Pack}_D^{\mathrm{surf}}=\Thi/D\) record how efficiently a tube or
a thick sheet is stored in its enclosing scale; larger packing means smaller
compression.  As on the curve side, the surface factorization proves no new area
lower bound by itself: it isolates factors, and a genuine bound requires
independent control of surface density or surface compression.  A natural
benchmark question, parallel to the crossing-number benchmarks of
\cite{OzawaDensityCompression}, is which topological complexity of \([F]\) ---
genus, number of boundary components, or normal-coordinate size
(\Cref{prop:conditional-normal-coefficient-bound}) --- forces large surface
density versus large surface compression.  The density--compression viewpoint
does not enter the hypotheses of the core theorems, which use only
\((\Lambda,\Delta,\tau,\varepsilon)\).
\end{remark}

\section{Finite recognition and essential surface data}
\label{sec:finite-recognition}

The second version of \cite{OzawaFiniteRecognition} changes the logical
background of this section.  Finite recognition of knot types is no longer
conditional on coherent liftability or projection--Cerf tameness.  The key is
to separate two objects: a lifted multigraph, which records projection-fiber
components and geometric movies, and a monotone diagram image, in which finite
recognition certificates are detected.  We adapt that distinction to
knot--surface pairs.  The resulting two-layer recognition scheme is shown in
\Cref{fig:two-witnesses}.

\begin{figure}[t]
\centering
\begin{tikzpicture}[x=1cm,y=1cm,
  box/.style={draw, rounded corners=2pt, align=center, inner sep=4pt,
              text width=46mm, font=\scriptsize},
  src/.style={draw, rounded corners=2pt, align=center, inner sep=5pt,
              text width=52mm, font=\small},
  result/.style={draw, double, rounded corners=2pt, align=center,
                 inner sep=4pt, text width=40mm, font=\small},
  lbl/.style={font=\scriptsize\itshape, text=black!60, align=center},
  arrow/.style={-{Latex[length=2.2mm]}, thick}]
  \node[src] (pair) at (6.6,5.2)
    {sampled essential pair $(\gamma,F)$ at level
     $\mathbf p=(\Lambda,\Delta,\tau,\varepsilon)$};
  \node[box] (diag) at (3.3,3.4)
    {regular $u$-projection diagram $D$ of $\gamma$};
  \node[box] (cert) at (3.3,2.0)
    {Reidemeister certificate $\mathbb B_R(D)$ in the monotone image
     $\calH_{\Lambda,u}(K)$};
  \node[result] (knot) at (3.3,0.5) {knot type $K$};
  \node[box] (code) at (9.9,3.4)
    {canonical layered pair code $c_\varepsilon(\gamma,F)$};
  \node[box] (char) at (9.9,2.0)
    {characteristic fine code, faithful by
     \Cref{thm:faithfulness,thm:surface-recognition}};
  \node[result] (surf) at (9.9,0.5)
    {essential-surface type $\mathfrak S$\\[-0.5mm]
     {\scriptsize(exact; orientation-preserving convention)}};
  \draw[arrow] (pair.south) -- ++(0,-0.25) -| (diag.north);
  \draw[arrow] (pair.south) -- ++(0,-0.25) -| (code.north);
  \draw[arrow] (diag) -- (cert);
  \draw[arrow] (cert) -- (knot);
  \draw[arrow] (code) -- (char);
  \draw[arrow] (char) -- (surf);
  \node[lbl] at (6.6,1.22) {two independent\\finite witnesses};
\end{tikzpicture}
\caption{The two unconditional recognition layers of
\Cref{thm:two-unconditional-layers}.  From one sampled bounded-geometry pair,
the projection diagram yields a finite Reidemeister certificate that
recognizes the knot type in the monotone diagram image, while the layered
pair code is characteristic for the carried essential-surface type.  The two
witnesses are logically independent: the first lives in diagram
combinatorics, the second in the faithful finite-resolution geometry of the
pair.  Lifted multigraphs, which retain projection- and code-fiber
components, refine both layers but are not needed for recognition.}
\label{fig:two-witnesses}
\end{figure}

\subsection{Classical verification layers}
\label{subsec:classical-verification-layers}

The recognition theorem uses the sampled geometric pair itself as the
certificate.  After that pair has been reconstructed, classical algorithms
can be applied as independent verification layers.  A controlled subdivision
gives a triangulated exterior in which the surface may be normalized;
Jaco--Oertel type procedures can then test incompressibility and
boundary-incompressibility, while efficient triangulations and crushing can
simplify the ambient input \cite{JacoOertel,JacoRubinstein0Efficient}.  Dually,
the same finite cell data determine a decorated spine, so a classical
spine-move certificate is a possible alternative discrete witness
\cite{MatveevBook}.

These procedures are complementary to, rather than ingredients of,
\Cref{thm:faithfulness}.  Positive-reach reconstruction proves that equal fine
codes determine the same embedded pair even while the knot exterior moves.
Normal-surface and spine procedures begin only after a discrete ambient
structure has been chosen and address topological verification of the
reconstructed object.

\subsection{Projection-framed normalization for pairs}

Fix a direction \(u\in S^2\).  Let \(\Sim_u^+(\R^3)\) be the group of
orientation-preserving similarities whose rotational part fixes \(u\).
Projection to \(u^\perp\) is well defined on the quotient by this group, unlike
on a quotient by all rotations.  Following \cite{OzawaFiniteRecognition}, set
\[
  \calX_{\Lambda,u}(K)
  =
  \{\gamma\text{ of type }K:\Rop(\gamma)\leq\Lambda\}/\Sim_u^+(\R^3),
\]
and let \(\calX^<_{\Lambda,u}(K)\) denote the strict sublevel
\(\Rop(\gamma)<\Lambda\).  Normalizing \(\Thi(\gamma)=1\) identifies these
spaces with unit-thickness representatives modulo translations and rotations
about the \(u\)-axis.

For an unnormalized pair define the scale-invariant surface quantities
\[
  \widehat{\Area}_\gamma(F)
  =\frac{\Area(F)}{\Thi(\gamma)^2},
  \qquad
  \widehat{\Thi}_\gamma(F)
  =\frac{\Thi(F)}{\Thi(\gamma)}.
\]
They agree with \(\Area(F)\) and \(\Thi(F)\) on the unit-thickness slice.

\begin{definition}[Projection-framed filtered pair space]
For fixed \(u\), define
\[
  \calZ^{\mathrm{ess}}_{\Lambda,\Delta,\tau,u}(K)
  =
  \left\{
  (\gamma,F)\ \middle|\
  \begin{array}{l}
  \gamma\text{ represents }K,\quad \Rop(\gamma)\leq\Lambda,\\
  F\subset E(\gamma)\text{ is essential},\\
  \widehat{\Area}_\gamma(F)\leq\Delta,\quad
  \widehat{\Thi}_\gamma(F)\geq\tau
  \end{array}
  \right\}/\Sim_u^+(\R^3).
\]
The fully strict space
\(\calZ^{\mathrm{ess},<}_{\Lambda,\Delta,\tau,u}(K)\) is obtained by replacing
all three geometric inequalities by strict inequalities.  Forgetting the
projection frame maps this space to
\(\calZ^{\mathrm{ess}}_{\Lambda,\Delta,\tau}(K)\).
\end{definition}

\begin{remark}[Why strict levels are retained]
A path on the boundary of a closed geometric sublevel need not admit a generic
perturbation within exactly the same closed level.  On a fully strict level, a
compact admissible family has uniform slack in ropelength, area, and relative
thickness.  Relative smoothing and transversality may therefore be performed
without leaving the level.  Closed-level information is recovered by
arbitrarily small right relaxation.  This is the pair-space counterpart of the
strict-sublevel method used in \cite{OzawaFiniteRecognition}; no point-set
claim at an individual critical closed level is needed below.
\end{remark}

\subsection{The lifted graph and the monotone diagram--code image}

Let \(G_S(K)\) be the classical \(S^2\)-Reidemeister multigraph.  Parallel
edges are retained and each edge is decorated by its Reidemeister type.  A
Barbensi--Celoria certificate is a complete rooted radius ball
\(\mathbb B_R(D)\), including move types, multiplicities, and the ambient
valence of every vertex \cite{BarbensiCeloria,OzawaFiniteRecognition}.
Certificates are compared as abstract decorated
rooted multigraphs, with the diagram labels forgotten.

For the knot-only projection-framed space, \cite{OzawaFiniteRecognition}
defines the lifted multigraph
\(\calG^{\mathrm{lift}}_{\Lambda,u}(K)\), whose vertices remember connected
components of regular-projection fibers, and its diagram image
\(\calH_{\Lambda,u}(K)\subset G_S(K)\).  The lifted graphs need not be monotone
because fiber components can merge, whereas
\[
  \Lambda\leq\Lambda'
  \quad\Longrightarrow\quad
  \calH_{\Lambda,u}(K)\subset\calH_{\Lambda',u}(K).
\]
Recognition is therefore formulated in \(\calH\), while component persistence
is retained in \(\calG^{\mathrm{lift}}\).

Fix a concrete pair encoding at resolution \(\varepsilon\), write
\(c_\varepsilon(\gamma,F)\) for its layered code, and abbreviate
\[
  \mathbf p=(\Lambda,\Delta,\tau,\varepsilon).
\]

\begin{definition}[Lifted surface-decorated multigraph]
The lifted surface-decorated multigraph
\[
  \calG^{\mathrm{surf},\mathrm{lift}}_{\mathbf p,u}(K)
\]
has vertices \((D,c,C)\), where \(D\) is the regular \(u\)-projection of
\(\gamma\), \(c=c_\varepsilon(\gamma,F)\), and \(C\) is a connected component
of the corresponding projection--code fiber in
\(\calZ^{\mathrm{ess}}_{\Lambda,\Delta,\tau,u}(K)\).  Edges are represented by
admissible pair movies which are projection-generic on the knot coordinate and
which realize one specified Reidemeister wall crossing or one elementary
surface-code transition.
\end{definition}

\begin{definition}[Monotone surface diagram--code image]
The graph
\[
  \calH^{\mathrm{surf}}_{\mathbf p,u}(K)
\]
is obtained from
\(\calG^{\mathrm{surf},\mathrm{lift}}_{\mathbf p,u}(K)\)
by forgetting the fiber component \(C\) and retaining every realized decorated
vertex \((D,c)\) and every realized edge.  It is an increasing graph-valued
filtration in \(\Lambda\) and \(\Delta\), and a decreasing filtration in the
lower thickness cutoff \(\tau\).
\end{definition}

The finiteness of the vertex set of
\(\calH^{\mathrm{surf}}_{\mathbf p,u}(K)\) requires care.  The surface-code
coordinate is finite by
\Cref{thm:finite-resolution-finiteness-filtered-pairs}, but the diagram
coordinate is not automatically finite for a \emph{fixed} projection
direction: the Buck--Simon estimate bounds the \emph{average} crossing number
over all directions \cite{BuckSimon}, and an average bound does not control
one fixed direction.  Indeed, at fixed thickness and length the crossing
number in a fixed direction is genuinely unbounded; see
\Cref{rem:fixed-direction-unbounded}.  We therefore impose a quantitative
transversality margin on the projection.

\begin{definition}[Transversality margin]
\label{def:transversality-margin}
Fix \(\theta_0\in(0,\pi/2]\).  A direction \(u\in S^2\) is
\emph{\(\theta_0\)-transverse} for a unit-thickness representative \(\gamma\)
if
\begin{enumerate}[label=(\roman*),leftmargin=2em]
\item \(|\langle\gamma'(s),u\rangle|\leq\cos\theta_0\) for almost every \(s\)
(no near-\(u\) tangencies), and
\item the projection \(\pi_u(\gamma)\) is regular and every double point of
\(\pi_u(\gamma)\) has crossing angle at least \(\theta_0\).
\end{enumerate}
\end{definition}

\begin{lemma}[Fixed-direction crossing bound under a margin]
\label{lem:fixed-direction-crossing}
There is a universal constant \(C>0\) such that, for every unit-thickness
representative \(\gamma\) with \(\Len(\gamma)\leq\Lambda\) and every
\(\theta_0\)-transverse direction \(u\), the diagram \(D=\pi_u(\gamma)\)
satisfies
\[
  \operatorname{Cr}(D)\ \leq\ C\,\frac{\Lambda^2}{\theta_0^6}.
\]
\end{lemma}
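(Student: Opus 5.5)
We bound the number of crossings by a chopping-and-pairing count: cut $\gamma$ into short arcs, bound how many crossings a pair of arcs can produce, and bound how many pairs of arcs can produce a crossing at all.

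\emph{Subdivision.} Cut $\gamma$ into $N=\lceil \Lambda/\ell\rceil$ consecutive arcs of length $\ell$, with $\ell$ a small multiple of $\theta_0$ (say $\ell=c\,\theta_0$). Unit thickness gives curvature at most $1$, so along one arc the tangent turns by at most $\ell$. Together with clause (i), $|\langle\gamma',u\rangle|\le\cos\theta_0$, this keeps the projected tangent of every arc of length at least $\sin\theta_0\gtrsim\theta_0$. Its direction also turns by at most $O(\ell/\theta_0)$, which is less than $\pi/2$ when $c$ is small.

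\emph{Crossings within one pair of arcs.} Each projected arc is then a graph over a line in $u^\perp$ with controlled slope, so it is embedded. Two such arcs meet in boundedly many points: between consecutive intersection points, one projected arc must turn relative to the other by at least the crossing angle $\theta_0$ from clause (ii). The total relative turning available is $O(\ell/\theta_0)$, so the number of intersections is $O(1+\ell/\theta_0^2)$. This is $O(1/\theta_0)$, and that is enough for our purposes.

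\emph{Which pairs can cross.} A crossing of arcs $a,b$ requires points $x\in a$, $y\in b$ with $y-x$ parallel to $u$. Fix an arc $a$ and count the arcs $b$ whose projections meet $\pi_u(a)$. Every such $b$ meets the cylinder $\pi_u^{-1}(\pi_u(a))$, which has cross-section of diameter $\le\ell$ in $u^\perp$ and, after the barycenter normalization, height $\le\Lambda$. Unit thickness makes disjoint strands $2$-separated in space. The transversality of $\gamma$ to $u$ means each strand passing through the cylinder uses length $\gtrsim\theta_0\cdot(\text{width})$ inside it, and these strands are packed at separation $\gtrsim 1$ inside a tube of length $\Lambda$. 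A volume or length packing count then bounds the number of arcs $b$ by $O(\Lambda/\theta_0^{2})$, with the worst case taken over the placement of $a$.

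\emph{Total, and the main obstacle.} Summing over the $N=O(\Lambda/\theta_0)$ arcs $a$, each meeting $O(\Lambda/\theta_0^2)$ arcs $b$, each pair contributing $O(1/\theta_0)$ crossings, the count is $\operatorname{Cr}(D)\le C\Lambda^2/\theta_0^{4}$. This is within the stated $C\Lambda^2/\theta_0^6$, and the slack absorbs the cruder packing and turning constants. I expect the packing step to be the main obstacle. There one must make sure that near-parallel strands stacked along $u$ are genuinely excluded, or at least counted, with only polynomial loss in $\theta_0$. If the direct cylinder-packing count is too loose, the fallback is to bound the number of arcs $b$ trivially by $N=O(\Lambda/\theta_0)$. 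That gives $N^2\cdot O(1/\theta_0)=O(\Lambda^2/\theta_0^3)$ directly, again within the stated bound. In fact this trivial pairing already suffices, which makes the lemma robust.
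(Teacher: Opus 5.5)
Your argument is correct if you discard the cylinder-packing step and keep what you call the fallback. It shares the paper's chop-and-pair skeleton, but uses a different per-pair lemma and gives a stronger bound.

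\emph{The paper's route.} It measures turning per unit \emph{planar} arclength, where the projected curvature is only bounded by $\sin^{-2}\theta_0$, and requires each arc to turn by at most $\theta_0/8$. This forces arcs of planar length of order $\theta_0\sin^2\theta_0$, hence $M\sim\Lambda\theta_0^{-3}$ arcs. A two-line normal-component estimate then shows that any two arcs cross at most once, and $\binom{M}{2}$ gives $\Lambda^2\theta_0^{-6}$.

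\emph{Your route.} You measure turning per unit spatial arclength, at rate at most $1/\sin\theta_0$. You take arcs of length $c\theta_0$ that turn by $\epsilon=O(c)$, and allow $O(1/\theta_0)$ crossings per pair by charging each crossing $\theta_0$ of relative turning. This yields $O(\Lambda^2\theta_0^{-3})$, strictly better than the stated exponent.

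\emph{Details you still owe.} The per-pair count needs steps that the paper's one-crossing cone argument avoids.
\begin{itemize}
\item If two projected arcs meet twice, the chord between two intersection points lies in the $\epsilon$-cones of both tangent lines. So every crossing angle of that pair is at most $2\epsilon$, and both arcs are graphs $f,g$ over one common line.
\item The zeros of $f-g$ then occur in the same order along both arcs.
\item Rolle's theorem on each gap between consecutive zeros shows that each gap uses at least $\theta_0$ of the combined turning budget $2\epsilon$, on disjoint intervals. Hence a pair has at most $1+2\epsilon/\theta_0$ crossings.
\item Adjacent arcs meet only at their shared endpoint, which is not a double point, because their union still projects to a graph.
\end{itemize}

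\emph{The packing paragraph} should be deleted. Its claimed $O(\Lambda/\theta_0^{2})$ bound on partner arcs is weaker than the trivial bound $N=O(\Lambda/\theta_0)$, and its justification is only heuristic.
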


\begin{proof}
Write \(p=\pi_u\circ\gamma\).  By margin clause (i) the planar speed satisfies
\(|p'(s)|\geq\sin\theta_0\), and since \(\Thi(\gamma)=1\) gives
\(|\gamma''|\leq1\) almost everywhere, the planar curve \(p\), reparametrized
by its arclength, has curvature at most
\(\kappa_0=\sin^{-2}\theta_0\) almost everywhere; its total length is at most
\(\Lambda\).

Set \(r_0=\theta_0/(4\kappa_0)\) and subdivide \(p\) into
\(M\leq\lceil2\Lambda/r_0\rceil\) consecutive arcs of planar length at most
\(r_0/2\).  We claim two such arcs \(\alpha,\beta\) can produce at most one
crossing of \(D\).  Suppose \(q_1,q_2\) are two double points on
\(\alpha\cap\beta\).  Parametrize both arcs by signed arclength from
\(q_1\); along
each arc the tangent direction varies by at most
\(\kappa_0\cdot r_0/2\leq\theta_0/8\).  Let \(n\) be a unit normal to the
tangent of \(\alpha\) at \(q_1\).  Then, for signed parameters \(s,t\) with
\(|s|,|t|\leq r_0\),
\[
  \bigl|\langle\alpha(s)-q_1,\ n\rangle\bigr|\leq \tfrac{\theta_0}{8}\,|s|,
  \qquad
  \bigl|\langle\beta(t)-q_1,\ n\rangle\bigr|
  \geq |t|\sin\theta_0-\tfrac{\theta_0}{8}\,|t|
  \geq \tfrac{\theta_0}{4}\,|t|,
\]
using the crossing-angle bound \(\geq\theta_0\) of margin clause (ii) at
\(q_1\) and \(\sin\theta_0\geq\theta_0/2\) on \((0,\pi/2]\).  Evaluating at
\(q_2=\alpha(s^*)=\beta(t^*)\) gives
\(\tfrac{\theta_0}{4}|t^*|\leq\tfrac{\theta_0}{8}|s^*|\), so
\(|t^*|\leq|s^*|/2\);
exchanging the roles of \(\alpha\) and \(\beta\) gives \(|s^*|\leq|t^*|/2\).
Hence \(s^*=t^*=0\) and \(q_2=q_1\), proving the claim.

Moreover a single arc cannot cross itself at all: its tangent direction varies
by at most \(\theta_0/8<\theta_0\), while a self-crossing would require two
branches meeting at angle at least \(\theta_0\).  Consequently
\[
  \operatorname{Cr}(D)\leq\binom{M}{2}\leq M^2
  \leq\Bigl(\frac{4\Lambda}{r_0}+1\Bigr)^{\!2}
  =\Bigl(16\Lambda\,\frac{\kappa_0}{\theta_0}+1\Bigr)^{\!2}
  \leq C\,\frac{\Lambda^2}{\theta_0^6},
\]
where \(\kappa_0/\theta_0=\theta_0^{-1}
\sin^{-2}\theta_0\leq C'\theta_0^{-3}\) on \((0,\pi/2]\).  No sharpness is
claimed.
\end{proof}

\begin{proposition}[Finite vertex set at fixed budget and margin]
\label{prop:finite-vertex-margin}
Fix \((\Lambda,\Delta,\tau,\varepsilon)\) and \(\theta_0\in(0,\pi/2]\), and
restrict the vertex set of \(\calH^{\mathrm{surf}}_{\mathbf p,u}(K)\) to
decorated vertices \((D,c)\) whose knot coordinate is represented by some
\(\gamma\) for which \(u\) is \(\theta_0\)-transverse.  Then this restricted
graph \(\calH^{\mathrm{surf}}_{\mathbf p,u,\theta_0}(K)\) has only finitely
many vertices, provided the finite-resolution encoding scheme is one of the
bounded schemes in \Cref{sec:dof}.
\end{proposition}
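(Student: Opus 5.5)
The plan is to bound the two coordinates of a decorated vertex \((D,c)\) separately and multiply the two counts.

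\emph{Code coordinate.} Every such vertex is realized by a pair in \(\calZ^{\mathrm{ess}}_{\Lambda,\Delta,\tau,u}(K)\). On the unit-thickness slice this pair lies in \(\calZ^{\mathrm{ess}}_{\Lambda,\Delta,\tau}(K)\), because \(\widehat{\Area}_\gamma\) and \(\widehat{\Thi}_\gamma\) agree there with \(\Area\) and \(\Thi\). Since the code is built from the bounded scheme of \Cref{con:concrete-encoding}, \Cref{thm:finite-resolution-finiteness-filtered-pairs} and \Cref{cor:explicit-encoded-bound} give at most \((CN)^{BN^2}\) possible values of \(c\), with \(N=N_{\Lambda,\Delta,\tau,\varepsilon}\). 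One point needs checking: the quotient here is by \(\Sim_u^+\), not by \(\Isom^+\). The canonical code is already a minimum over all rigid normalizations, so it is constant on \(\Sim_u^+\)-orbits after the thickness normalization. The code set is therefore unaffected by this change of quotient.

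\emph{Diagram coordinate.} Let \(D=\pi_u(\gamma)\) for a representative \(\gamma\) for which \(u\) is \(\theta_0\)-transverse. By \Cref{lem:fixed-direction-crossing}, \(\operatorname{Cr}(D)\leq C\Lambda^2/\theta_0^6=:M\). The vertices of \(G_S(K)\) are diagrams up to planar isotopy on \(S^2\). There are only finitely many such diagrams with at most \(M\) crossings: each is a connected 4-valent planar map with at most \(M\) vertices, decorated with over/under data (plus the crossingless circle), and there are finitely many such maps up to \(S^2\)-isotopy. So the diagram coordinate ranges over a finite set whose size is bounded in terms of \(M\). The margin hypothesis is indispensable at exactly this step; without it the fixed-direction crossing number is unbounded, as noted in \Cref{rem:fixed-direction-unbounded}.

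\emph{Conclusion.} The restricted vertex set injects into the product of these two finite sets, and hence is finite.

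The main obstacle is the bookkeeping of which representative witnesses each coordinate. A vertex \((D,c)\) is realized by one pair \((\gamma,F)\), and the restriction requires \(u\) to be \(\theta_0\)-transverse for some \(\gamma\) representing the knot coordinate. I would read the definition as requiring this of the same realizing \(\gamma\), so that \(D=\pi_u(\gamma)\) inherits the crossing bound directly.

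A second point to verify is that the strict/closed-level conventions and the \(\Sim_u^+\) normalization do not enlarge the set of realized diagrams beyond those of unit-thickness curves of length at most \(\Lambda\). This holds because rescaling to \(\Thi(\gamma)=1\) preserves both the combinatorial type of \(\pi_u(\gamma)\) and the transversality margin, since both are scale invariant.
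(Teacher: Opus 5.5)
Your proposal is correct and follows essentially the same route as the paper. The code coordinate is finite by \Cref{thm:finite-resolution-finiteness-filtered-pairs}. The diagram coordinate is finite by \Cref{lem:fixed-direction-crossing} together with the finiteness of spherical diagram types of bounded crossing number, so the vertex set injects into a finite product. Your extra checks are sound elaborations that the paper's short proof leaves implicit: the \(\Sim_u^+\) versus \(\Isom^+\) quotient, the scale invariance of the margin, and which representative witnesses transversality.
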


\begin{proof}
The pair codes form a finite set by
\Cref{thm:finite-resolution-finiteness-filtered-pairs}.  By
\Cref{lem:fixed-direction-crossing}, every diagram represented by a
\(\theta_0\)-transverse projection at the fixed ropelength level has crossing
number at most \(C\Lambda^2/\theta_0^6\).  There
are only finitely many spherical diagram types with bounded crossing number.
Thus only finitely many decorated pairs \((D,c)\) can occur.
\end{proof}

\begin{remark}[Why the margin cannot be dropped]
\label{rem:fixed-direction-unbounded}
Without a transversality margin, the fixed-direction vertex set can be
infinite even at a fixed budget.  The obstruction is geometric: two locally
parallel unit-thickness strands at vertical distance \(2\), one straight and
one oscillating transversally to \(u\) with amplitude \(\eta\) and wavelength
of order \(\sqrt\eta\), keep curvature at most one and add only \(O(\eta)\)
length per oscillation, yet each oscillation adds a fixed number of crossings
in the direction \(u\); letting \(\eta\to0\) produces representatives of
bounded length and thickness whose fixed-direction crossing numbers diverge,
while the crossing angles degenerate like \(\sqrt\eta\).  The average
crossing number stays bounded --- consistent with Buck--Simon
\cite{BuckSimon} --- because the directions in which the oscillation is
visible form a small solid angle.  For the same reason, a Markov-inequality
argument shows that for each representative a positive-measure set of
directions has crossing number at most twice the average, so finiteness in a
\emph{well-chosen} direction is automatic; what fails is uniformity over
representatives in one \emph{fixed} direction, and that is exactly what the
margin restores.  Structurally, the restriction to
\(\theta_0\)-transverse vertices is a filtration by an additional geometric
genericity parameter: \(\calH^{\mathrm{surf}}_{\mathbf p,u,\theta_0}(K)\)
increases as \(\theta_0\) decreases, and its union over \(\theta_0>0\) is the
full fixed-direction graph, which need not have a finite vertex set at any
fixed budget.
\end{remark}

\begin{remark}[Division of labor]
The lifted decorated graph is the appropriate object for strict-level
component and merge questions.  The monotone diagram--code image is the
appropriate object for births, visibility, and finite certificates.  Using a
single graph for both purposes obscures monotonicity in exactly the same way as
in the knot-only theory.
\end{remark}

\subsection{Unconditional knot certificates at finite ropelength}

We record the v2 theorem that supplies the knot-level input.

\begin{theorem}[Finite visibility and finite knot recognition
\cite{OzawaFiniteRecognition}]
\label{thm:knot-finite-visibility-v2}
Fix a knot type \(K\) and a projection direction \(u\).
\begin{enumerate}[label=(\roman*),leftmargin=2em]
\item Every finite specified submultigraph \(Q\subset G_S(K)\) is contained in
\(\calH_{\Lambda_Q,u}(K)\) for some finite \(\Lambda_Q\).
\item There is a universal computable nondecreasing function \(A(n)\) such
that a rooted radius-\(R\) ball centered at a \(c\)-crossing diagram is visible
by level \(A(c+2R)\).
\item Define
\[
  L_{\mathrm{char},u}(K)
  =\inf_Q\inf\{\Lambda:Q\subset\calH_{\Lambda,u}(K)\},
\]
where \(Q\) ranges over saturated Barbensi--Celoria characteristic balls.
Then \(L_{\mathrm{char},u}(K)<\infty\).  A saturated occurrence of such a
certificate in another knot's filtered diagram graph forces that knot to be
\(K\) or its mirror.
\end{enumerate}
\end{theorem}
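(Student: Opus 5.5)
The plan is to treat the three clauses in order. Each reduces to constructing explicit thick realizations, followed by the monotonicity \(\calH_{\Lambda,u}(K)\subset\calH_{\Lambda',u}(K)\) for \(\Lambda\leq\Lambda'\). The recognition content of (iii) is imported from Barbensi--Celoria \cite{BarbensiCeloria}.

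For (i), I would first realize every vertex of \(Q\). Given a spherical diagram \(D\) of \(K\), I would place it in the plane \(u^\perp\) and choose a smooth planar immersion with transverse double points. I would then lift it to a space curve whose \(u\)-height is constant away from the crossings and separated by a fixed gap near each crossing. Any such embedded smooth curve has positive thickness, and rescaling to unit thickness gives a finite ropelength \(\Lambda_D\). By construction its regular \(u\)-projection is \(D\) modulo \(\Sim^+_u(\R^3)\).

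I would then realize each edge of \(Q\). For a Reidemeister move \(D\to D'\) of a specified type, I would write down a compact one-parameter family of embedded smooth curves. The family is fixed outside a disk in \(u^\perp\), and inside the disk it performs the standard local move, crossing exactly one projection wall generically. Thickness is continuous and positive on this compact family, so after normalization the ropelength is bounded by some \(\Lambda_e<\infty\).

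Parallel edges need separate care. Distinct parallel edges are distinguished by where and how the move is applied, so each one receives its own local movie. Because \(Q\) is finite, I would take \(\Lambda_Q=\max\{\Lambda_D,\Lambda_e\}\); every vertex and edge of \(Q\) then lies in \(\calH_{\Lambda_Q,u}(K)\).

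For (ii), the same construction is made quantitative and uniform. I would use three facts.
\begin{itemize}
\item A \(c\)-crossing spherical diagram is a \(4\)-valent planar graph. It therefore has a grid embedding of side \(O(c)\), computable by standard orthogonal drawing algorithms, with crossings at grid points.
\item Lifting heights at the crossings and rounding corners at a fixed radius gives a unit-thickness curve. Its length is bounded by a computable polynomial in \(c\).
\item Each Reidemeister move changes the crossing number by at most \(2\). Hence every diagram in a radius-\(R\) ball around a \(c\)-crossing diagram has at most \(c+2R\) crossings.
\end{itemize}
Every move in the ball is local. I would realize it inside a refined grid cell of the common drawing, at bounded additional length, and then set \(A(n)\) to be the resulting polynomial bound taken as a running maximum, which makes it nondecreasing. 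The step that needs care is the uniformity: one drawing must accommodate every diagram and move of the ball, not just one at a time. I would handle this by refining the grid by a fixed factor and performing the moves within that refined grid. Computability then follows because every step of the construction is explicit.

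For (iii), I would use the Barbensi--Celoria theorem \cite{BarbensiCeloria}. It states that for every knot \(K\) some saturated rooted ball \(\mathbb B_R(D)\) in \(G_S(K)\) is characteristic: any knot whose Reidemeister graph contains an isomorphic saturated ball is \(K\) or its mirror. Here a saturated ball records the move types, multiplicities and full ambient valence at every vertex. I would fix such a ball \(Q\). Clause (ii) then gives \(Q\subset\calH_{A(c+2R),u}(K)\), so \(L_{\mathrm{char},u}(K)\leq A(c+2R)<\infty\).

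The main obstacle is saturation. It requires that the ball has finitely many edges, so that all of them become visible at one finite level. I would justify this by noting that, up to spherical isotopy, a given diagram admits only finitely many combinatorially distinct Reidemeister moves, since each is determined by a face or an adjacent pair of edges or crossings. With that local finiteness in hand, (ii) already covers every edge of the ball. The final uniqueness assertion is then exactly the Barbensi--Celoria statement, because \(\calH_{\Lambda,u}(K')\subset G_S(K')\). As in \cite{OzawaFiniteRecognition}, the certificate is formulated in the monotone image \(\calH\) rather than in \(\calG^{\mathrm{lift}}\), so that visibility persists as \(\Lambda\) increases.
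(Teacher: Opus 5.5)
The paper does not prove this theorem. It quotes it from \cite{OzawaFiniteRecognition}, and the only internal guidance is the distinction between the lifted multigraph \(\calG^{\mathrm{lift}}_{\Lambda,u}(K)\) and its monotone image \(\calH_{\Lambda,u}(K)\), plus the remark that coherent lifting needs a face-consistent planar labeling. Against that background, your overall route is the natural one: thick realizations of diagrams, local movies across single walls, the bound \(c+2R\) on crossing numbers in the ball, local finiteness, and Barbensi--Celoria for a characteristic saturated ball. Your sketches of (i) and of the finiteness in (iii) are sound.

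The weak point is the step you single out as the crux of (ii). Requiring that ``one drawing must accommodate every diagram and move of the ball'' is requiring a coherent lift. That means one planar, indeed one spatial, representative per vertex, with every incident edge movie starting or ending there. This is a statement about \(\calG^{\mathrm{lift}}_{\Lambda,u}(K)\), not about \(\calH_{\Lambda,u}(K)\), and it cannot be carried out in general.

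The obstruction is this. Vertices of \(G_S(K)\) are spherical diagrams, and a sequence of planar moves can return to the same spherical diagram with a different unbounded face. No path through regular projections changes the unbounded face, because such a path induces a planar isotopy of the projection. So around such cycles no single planar drawing serves all edges. This is exactly the face-consistent labeling condition that the paper's remark on visibility versus coherent lifting identifies as a genuine extra condition, automatic only for special patterns such as trees and separated cube systems.

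Your proposed fix also has two further problems. A single refinement by a fixed factor does not obviously accommodate chains of up to \(R\) moves performed in the same region. And an R2 or R3 move whose disk contains a face that is long in the drawing does not fit ``inside a refined grid cell'' at bounded cost.

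The step is, however, unnecessary. Since \(\calH_{\Lambda,u}(K)\) forgets fiber components, you may realize each vertex and each edge of the ball separately. For an edge \(D_1\to D_2\), both diagrams have at most \(n=c+2R\) crossings. Take a fresh grid drawing of a planar representative of \(D_1\) whose unbounded face avoids the move disk, lift it as in your first two bullets, and run the move as a local movie. Use a curl for R1, a finger across a face for R2, and a strand slid across a triangle for R3, keeping height separation and curvature bounded. The added length is at most a constant times the size of the drawing. Hence the ropelength along each movie is bounded by an explicit polynomial in \(n\), and the running maximum of that bound is an admissible \(A(n)\).

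A smaller point in (iii) concerns the uniqueness claim. An occurrence of the certificate in \(\calH_{\Lambda',u}(K')\) is also an occurrence in \(G_S(K')\), but saturation does not transfer along this inclusion, since \(\calH_{\Lambda',u}(K')\) may omit edges. The recorded ambient valences must therefore be checked in \(G_S(K')\) before Barbensi--Celoria applies.
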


\begin{remark}[Visibility versus coherent lifting]
Finite visibility in the monotone graph \(\calH\) requires no endpoint-coherent
choice of one spatial lift at each shared diagram.  If a coherent lift in
\(\calG^{\mathrm{lift}}\) is desired, v2 identifies the exact remaining
condition: the finite pattern must admit a face-consistent planar labeling.
This condition is automatic for trees and separated cube systems, but it plays
no role in the unconditional finiteness of
\(L_{\mathrm{char},u}(K)\).
\end{remark}

\begin{definition}[Finite knot--surface theory at a bounded scale]
At scale \((\Lambda,\Delta,\tau,\varepsilon,u)\), the available finite data
consist of
\begin{enumerate}[label=(\roman*),leftmargin=2em]
\item saturated Barbensi--Celoria rooted-ball certificates contained in
\(\calH_{\Lambda,u}(K)\); and
\item realized fine pair codes, together with their diagram--code incidences in
\(\calH^{\mathrm{surf}}_{\mathbf p,u}(K)\).
\end{enumerate}
This terminology asserts finiteness of the witnesses used for recognition, not
finiteness of either complete graph.
\end{definition}

The surface data may additionally record boundary slopes, JSJ incidence,
cutting pieces, taut Seifert surfaces, Kakimizu adjacency, position invariants,
and merge-scale information.  The novelty is that these labels are organized
inside one geometric filtration and can be attached to a knot certificate
whose finite visibility is already guaranteed by
\Cref{thm:knot-finite-visibility-v2}.

\subsection{Unconditional finite recognition of essential-surface types}
\label{subsec:surface-finite-recognition}

The knot and surface mechanisms are different.  Knot recognition uses a
sufficiently large saturated rooted ball in a complete Reidemeister multigraph.
Surface recognition uses direct geometric reconstruction: positive reach makes
a sufficiently fine pair code a separating certificate for the sampled ambient
pair.  Both conclusions are unconditional.

\begin{definition}[Essential-surface type]
\label{def:surface-type}
An \emph{essential-surface type} is an equivalence class of pairs \((K,F)\),
where \(K\subset S^3\) is a knot and \(F\subset E(K)\) is a properly embedded
essential surface or finite essential surface system, under ambient isotopy of
\(S^3\): two pairs \((K,F)\) and \((K',F')\) have the same type if some ambient
isotopy of \(S^3\) carries \(K\) to \(K'\) and \(F\) to \(F'\).  Allowing
orientation-reversing homeomorphisms as well gives the type up to mirror.  We
write \(\type(\gamma,F)\) for the essential-surface type of a geometric pair
\((\gamma,F)\).
\end{definition}

The faithfulness theorem was stated for two pairs of the same knot type, but its
proof never used that hypothesis: equality of codes gives controlled closeness
of both curves and both surfaces and hence an ambient isotopy of pairs.

\begin{corollary}[Cross-type faithfulness]
\label{cor:cross-type-faithfulness}
Fix \(\Delta,\tau>0\) and \(0<\varepsilon\leq c_1\min\{1,\tau\}\), and use the concrete
encoding scheme of \Cref{con:concrete-encoding} with
\(\delta_{\gamma}\leq\varepsilon\) and
\(\delta_F\leq\delta_{\mathrm{st}}/(8C_*)\), where
\(C_*:=\max\{4,2C_{\mathrm{enc}}\}\) and \(C_{\mathrm{enc}}\) is the constant
in \Cref{lem:code-proximity}.  Let
\((\gamma,F)\in\calZ^{\mathrm{ess}}_{\Lambda,\Delta,\tau}(K)\) and
\((\gamma',F')\in\calZ^{\mathrm{ess}}_{\Lambda',\Delta,\tau}(K')\) be any two
pairs, of possibly different knot types, with the same encoded
\(\varepsilon\)-type.  Then there is an ambient isotopy of \(S^3\) carrying
\((\gamma,F)\) to \((\gamma',F')\); in particular \(K'=K\) and
\(\type(\gamma,F)=\type(\gamma',F')\); with the canonical-code convention
of \Cref{rem:code-well-defined} the code is itself rigid-motion invariant, so
no residual normalization ambiguity remains.
\end{corollary}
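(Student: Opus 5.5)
The plan is to rerun the proof of \Cref{thm:faithfulness} and check that it never uses the hypothesis that the two pairs share a knot type. By the canonicalization convention of \Cref{rem:code-well-defined}, equality of the canonical codes means that each pair has an admissible normalization (an orientation-preserving rigid motion) and a discretization whose raw words coincide letter by letter. Since orientation-preserving rigid motions are isotopic to the identity, we may replace both pairs by these normalized copies without changing their ambient isotopy classes.

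The first step is to apply \Cref{lem:code-proximity} to the normalized pairs. That lemma was stated inside one slice \(Y_\Lambda(K)\), but its proof only uses four things: the sample points lie in the same global lattice cubes, the quantized directions agree, \(\Thi(\gamma)=\Thi(\gamma')=1\), and \(\Thi(F),\Thi(F')\geq\tau\). The lattice \(\mathcal Q_\varepsilon\) is global, so codes from different budgets \(\Lambda\neq\Lambda'\) are compared literally (\Cref{rem:code-well-defined}). Nothing in the argument refers to the knot type. Hence we obtain positional error at most \(4\varepsilon\), knot-tangent error at most \(C_{\mathrm{enc}}(\delta_\gamma+\varepsilon)\), and surface and boundary angular error at most \(C_{\mathrm{enc}}(\delta_F+\varepsilon/\tau)\).

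The second step is to check the hypotheses of \Cref{lem:pair-stability}. With the same choices \(\eta=C_*\varepsilon\) and \(c_1\) satisfying \(C_*c_1\leq\min\{c_{\mathrm{st}},\delta_{\mathrm{st}}/4\}\), the errors lie below \(c_{\mathrm{st}}\min\{1,\tau\}\) and \(\delta_{\mathrm{st}}\). These constants are universal and independent of \(\Lambda,\Lambda'\). \Cref{lem:pair-stability} only requires \(\Thi(\gamma_i)\geq1\), \(\Thi(F_i)\geq\tau\), and clauses (a)--(c); it never requires a common knot type. It therefore gives an ambient isotopy of \(S^3\) carrying \((\gamma,F)\) to \((\gamma',F')\). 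An ambient isotopy carrying \(\gamma\) to \(\gamma'\) shows \(K'=K\), and by \Cref{def:surface-type} the two pairs then have the same type.

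The main point to verify carefully is Step 1 of \Cref{lem:pair-stability}. It reparametrizes \(\gamma'\) over the parameter circle of \(\gamma\) by \Cref{lem:periodic-reparametrization}, then joins them by the linear homotopy, which stays thick by \Cref{lem:thickness-stability}. I would confirm that these lemmas use only \(C^1\) closeness and the thickness bound, and never a prior isotopy between the curves. In fact the conclusion that the two knots are isotopic is an output of this step, not an input, so no circularity arises.

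Finally, the last remark of the statement follows because the canonical code is the least word over all orientation-preserving normalizations. It is therefore invariant under orientation-preserving rigid motions, and no further normalization choice has to be fixed.
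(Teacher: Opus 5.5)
Your proposal is correct and follows essentially the same route as the paper. You choose raw realizations of the common canonical word as in the proof of \Cref{thm:faithfulness}, use the global lattice to apply \Cref{lem:code-proximity} across the two budgets, and feed the resulting errors into \Cref{lem:pair-stability}, whose constants involve neither the knot type nor the length budgets, so that \(K'=K\) is an output rather than an input. Your added check that Step~1 of \Cref{lem:pair-stability} uses only \(C^1\)-closeness and thickness is a reasonable explicit verification of a point the paper leaves implicit.
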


\begin{proof}
Equal codes force
\[
  d_H(\gamma,\gamma')\leq C\varepsilon,
  \qquad
  d_H(F,F')\leq C\varepsilon,
\]
with matching quantized tangents and tangent planes, by
\Cref{lem:code-proximity}, after choosing raw realizations of the common canonical word exactly as in the
proof of \Cref{thm:faithfulness}.  The global lattice makes this comparison
literal even though the budgets differ.  With the same choices of \(c_1\), \(\delta_{\gamma}\), and \(\delta_F\),
these are the positional and angular
hypotheses of \Cref{lem:pair-stability}; its constants involve neither length
budget, so \(\Lambda\) and \(\Lambda'\) are irrelevant.  The lemma produces an ambient
isotopy of
\(S^3\) taking \((\gamma,F)\) to \((\gamma',F')\) through proper pairs.
Neither step used a common
knot type; the equality \(K'=K\) is an output, not an input.
\end{proof}

\begin{definition}[Characteristic surface code]
\label{def:characteristic-surface-code}
Fix a window \((\Delta,\tau,\varepsilon)\) with
\(\varepsilon\leq c_1\min\{1,\tau\}\).  An encoded \(\varepsilon\)-type \(c\) is
\emph{characteristic} for an essential-surface type \(T\) if every essential
geometric pair with \(\Thi(F)\geq\tau\), \(\Area(F)\leq\Delta\),
\(\Thi(\gamma)=1\), and encoded \(\varepsilon\)-type \(c\) has
\(\type(\gamma,F)=T\).  If one additionally identifies codes under ambient
reflections, the corresponding statement is recognition up to mirror.  By the
canonical-code convention of \Cref{rem:code-well-defined}, no normalization or
discretization ambiguity remains.
\end{definition}

Unlike the knot case, where the finite witness is a complete saturated
Reidemeister ball, here every realized sufficiently fine code is already a
complete witness.

\begin{theorem}[Every fine code is characteristic: unconditional surface recognition]
\label{thm:surface-recognition}
Fix \(\Delta,\tau>0\) and \(0<\varepsilon\leq c_1\min\{1,\tau\}\).  Then every encoded
\(\varepsilon\)-type realized by a pair in some
\(\calZ^{\mathrm{ess}}_{\Lambda,\Delta,\tau}(K)\) is characteristic for the
essential-surface type of that pair.  Consequently, within the window
\((\Delta,\tau,\varepsilon)\), distinct essential-surface types never share a
code, and each visible type is recognized by any one of its realized codes.
\end{theorem}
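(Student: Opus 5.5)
The plan is to reduce the theorem directly to \Cref{cor:cross-type-faithfulness}, unwinding \Cref{def:characteristic-surface-code}. Let \(c\) be an encoded \(\varepsilon\)-type realized by some pair \((\gamma_0,F_0)\in\calZ^{\mathrm{ess}}_{\Lambda_0,\Delta,\tau}(K_0)\), and put \(T=\type(\gamma_0,F_0)\). To show that \(c\) is characteristic for \(T\), take an arbitrary essential geometric pair \((\gamma,F)\) with \(\Thi(\gamma)=1\), \(\Thi(F)\geq\tau\), \(\Area(F)\leq\Delta\) and code \(c\). Let \(K\) be the knot type of \(\gamma\) and \(\Lambda=\Len(\gamma)\); then \((\gamma,F)\in\calZ^{\mathrm{ess}}_{\Lambda,\Delta,\tau}(K)\), so it lies in a slice of the kind treated by the corollary.

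Next I apply \Cref{cor:cross-type-faithfulness} to the two pairs \((\gamma_0,F_0)\) and \((\gamma,F)\). Their knot types and length budgets may differ, but they share \(\Delta,\tau,\varepsilon\) and the code. The corollary gives an ambient isotopy of \(S^3\) carrying \((\gamma_0,F_0)\) onto \((\gamma,F)\). By \Cref{def:surface-type}, this is exactly the statement \(\type(\gamma,F)=T\), so \(c\) is characteristic.

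The two consequences then follow formally. If two distinct types \(T\neq T'\) shared a realized code \(c\), then characteristicity of \(c\) for \(T\) would force every realizer of \(c\) to have type \(T\), including the realizer of type \(T'\), which is a contradiction. Recognition of a visible type by any one of its realized codes is a restatement of characteristicity. For the version up to mirror, one identifies codes under reflections and composes the isotopy with the reflection.

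The main thing to check is that the corollary really applies uniformly across unbounded length budgets. Here I rely on two facts. First, the constants \(c_1,\delta_{\mathrm{st}},c_{\mathrm{st}}\) in \Cref{lem:pair-stability} and \Cref{thm:faithfulness} are independent of \(\Lambda\). Second, the code lives on the fixed global lattice with the canonical least-word convention of \Cref{rem:code-well-defined}. Together these make equality of codes a literal comparison, so \Cref{lem:code-proximity} applies with no dependence on \(\Lambda\).

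A subtle point is that \Cref{lem:code-proximity} is stated for \(\gamma_j\in Y_\Lambda(K)\) with one common \(\Lambda\). I would handle this by taking \(\Lambda'' = \max\{\Lambda_0,\Lambda\}\) and viewing both pairs as lying in slices at budget \(\Lambda''\). The proof of \Cref{lem:code-proximity} never uses the knot type, so passing to this common budget is harmless.
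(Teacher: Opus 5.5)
Your proposal is correct and matches the paper's proof: the paper likewise takes a realizer of the code \(c\) and applies \Cref{cor:cross-type-faithfulness} to any other pair in the window with the same code. It concludes that the two pairs are ambient isotopic, hence of the same type, so distinct types have disjoint code sets. Your extra checks are consistent with what the paper says in the corollary's proof, but citing that corollary already covers them: the constants do not depend on \(\Lambda\), the global lattice makes codes literally comparable, and the common-budget device works because the knot type is never used.
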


\begin{proof}
Let \(c\) be realized by \((\gamma,F)\), of type \(T\).  If
\((\gamma',F')\) is any pair in the window with the same code \(c\), then
\Cref{cor:cross-type-faithfulness} makes it ambient isotopic to
\((\gamma,F)\), so it also has type \(T\).  Distinct visible types therefore
have disjoint code sets.
\end{proof}

\begin{definition}[Surface finite recognition length]
\label{def:surface-recognition-length}
For an essential-surface type \(T\) with underlying knot type \(K\), and a
window \((\Delta,\tau,\varepsilon)\) with
\(\varepsilon\leq c_1\min\{1,\tau\}\), define
\[
  L^{\mathrm{surf}}_{\mathrm{char}}(T;\Delta,\tau)
  =
  \inf\bigl\{\Lambda\geq\Rop(K)\mid
  \text{some pair of type }T\text{ lies in }
  \calZ^{\mathrm{ess}}_{\Lambda,\Delta,\tau}(K)\bigr\}.
\]
This is the first ropelength threshold at which a bounded-geometry
representative of the surface type, and hence a characteristic code for it,
becomes visible.
\end{definition}

\begin{proposition}[Recognition length and compactified attainment]
\label{prop:surface-recognition-length}
Let \(T\) be an essential-surface type with underlying knot \(K\), and let
\(\mathfrak S_T\) be the corresponding filtered surface type.  Then
\[
  L^{\mathrm{surf}}_{\mathrm{char}}(T;\Delta,\tau)
  =\beta_{\mathfrak S_T}(K;\Delta,\tau).
\]
It is finite exactly when \(T\) has a representative satisfying the specified
area and relative-thickness bounds; when finite, the associated compactified
level \(\overline\beta_{\mathfrak S_T}\leq\beta_{\mathfrak S_T}\) is attained
in the sense of \Cref{cor:attainment-visibility}, with exact-slice attainment
in the situations listed there.
The type is recognized at every larger ropelength level.
\end{proposition}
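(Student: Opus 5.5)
The plan is to prove the three assertions in order: the equality of the two levels, the finiteness criterion together with attainment, and recognition at all larger levels. Each reduces to a definition or to an earlier result.

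\emph{The equality.} Compare \Cref{def:surface-recognition-length} with \Cref{def:surface-visibility-level} for the filtered type \(\mathfrak S_T\). The set \(\calZ^{\mathfrak S_T}_{\Lambda,\Delta,\tau}(K)\) is nonempty exactly when some pair of type \(T\) lies in \(\calZ^{\mathrm{ess}}_{\Lambda,\Delta,\tau}(K)\). Hence the two infima run over the same set of \(\Lambda\), except that the recognition length also imposes \(\Lambda\geq\Rop(K)\). This extra constraint is vacuous. Any pair in \(\calZ_{\Lambda,\Delta,\tau}(K)\) has \(\gamma\in Y_\Lambda(K)\), so \(\Rop(K)\leq\Len(\gamma)\leq\Lambda\), and that set is already contained in \([\Rop(K),\infty)\). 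Both infima are \(\infty\) on the empty set, so the equality holds in all cases.

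\emph{Finiteness and attainment.} Suppose \(T\) has a representative \((\gamma,F)\) with \(\Thi(\gamma)=1\), \(\Area(F)\leq\Delta\) and \(\Thi(F)\geq\tau\). Then \(\Lambda=\Len(\gamma)\) is admissible, so the level is finite. Conversely, finiteness gives such a pair at some \(\Lambda\). For attainment, check the hypotheses of \Cref{cor:attainment-visibility}. The type \(\mathfrak S_T\), a fixed pair-isotopy class, is closed under pair isotopy. It is also stable under \(C^1\)-limits: in the proof of that corollary the limit pair is pair-isotopic to the tail of the sequence by \Cref{lem:pair-stability}, so it has type \(T\). This is the one point that needs care, and I expect it to be the main obstacle. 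Since \(\beta_{\mathfrak S_T}<\infty\), the corollary then gives \(\overline\beta_{\mathfrak S_T}\leq\beta_{\mathfrak S_T}\), attainment of the compactified level, and exact-slice attainment in cases (ii) and (iii).

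\emph{Recognition at larger levels.} Let \(\Lambda\) lie strictly above the level. By the infimum there is a \(\Lambda''\leq\Lambda\) with a pair of type \(T\) in \(\calZ^{\mathrm{ess}}_{\Lambda'',\Delta,\tau}(K)\). The filtration inclusions place this pair in \(\calZ^{\mathrm{ess}}_{\Lambda,\Delta,\tau}(K)\). Its code is characteristic for \(T\) by \Cref{thm:surface-recognition}, so \(T\) is recognized at level \(\Lambda\).

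The level itself needs a separate step. If the infimum is attained in the exact slice, the same argument applies there. Otherwise, the statement is to be read as holding for every strictly larger \(\Lambda\), with the compactified minimizer covering the level. I would state this caveat explicitly in the proof.
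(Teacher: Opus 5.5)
Your proposal is correct and is essentially the paper's proof. The two infima run over the same levels; your observation that the constraint \(\Lambda\geq\Rop(K)\) is automatic, because \(\Len(\gamma)\geq\Rop(K)\) for unit-thickness \(\gamma\), is a detail the paper leaves implicit. Attainment is \Cref{cor:attainment-visibility} applied to a fixed pair-isotopy class, which the corollary already lists as an example, so no separate obstacle arises there. Recognition at larger levels is \Cref{thm:surface-recognition} plus the filtration inclusions.

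Drop the phrase about the compactified minimizer ``covering the level''. Characteristic codes are defined only for pairs with \(\Thi(\gamma)=1\), and the compactified minimizer may have \(\Thi(\gamma)>1\). So recognition is asserted only for strictly larger \(\Lambda\), as in \Cref{thm:two-unconditional-layers}, or at the level itself when it is attained in the exact slice.
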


\begin{proof}
The two infima are over the same levels.  Attainment is
\Cref{cor:attainment-visibility}; recognition at all larger levels follows
from \Cref{thm:surface-recognition} and monotonicity of visibility.
\end{proof}

\begin{definition}[Joint knot--surface recognition threshold]
For a surface type \(T=(K,F)\), define
\[
  L^{\mathrm{joint}}_{\mathrm{char},u}(T;\Delta,\tau)
  =
  \max\left\{
  L_{\mathrm{char},u}(K),
  L^{\mathrm{surf}}_{\mathrm{char}}(T;\Delta,\tau)
  \right\}.
\]
The first term measures the visibility of a finite saturated diagrammatic
certificate for \(K\); the second measures the visibility of a direct
positive-reach code for the pair type.
\end{definition}

\begin{theorem}[Two unconditional recognition layers]
\label{thm:two-unconditional-layers}
Let \(T=(K,F)\) be an essential-surface type satisfying the bounded-geometry
window \((\Delta,\tau)\).  Then
\(L^{\mathrm{joint}}_{\mathrm{char},u}(T;\Delta,\tau)<\infty\).  For every
\(\Lambda>L^{\mathrm{joint}}_{\mathrm{char},u}(T;\Delta,\tau)\), the scale
\((\Lambda,\Delta,\tau,\varepsilon,u)\), with
\(0<\varepsilon\leq c_1\min\{1,\tau\}\), contains both
\begin{enumerate}[label=(\roman*),leftmargin=2em]
\item a saturated Barbensi--Celoria certificate recognizing \(K\) up to
mirror; and
\item a characteristic pair code recognizing the exact type \(T\), with no
normalization or mirror ambiguity under the orientation-preserving
minimal-code convention of
\Cref{rem:code-well-defined}; recognition is up to mirror only if codes are
additionally identified under ambient reflections
(\Cref{def:characteristic-surface-code}).
\end{enumerate}
No geometric liftability or projection--Cerf tameness hypothesis is required
for either conclusion.
\end{theorem}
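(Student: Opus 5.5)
The plan is to prove the three assertions separately and then combine them through the maximum defining \(L^{\mathrm{joint}}_{\mathrm{char},u}\).

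\emph{Finiteness of the joint threshold.} The first term \(L_{\mathrm{char},u}(K)\) is finite by \Cref{thm:knot-finite-visibility-v2}(iii). For the second term, the hypothesis that \(T\) satisfies the window \((\Delta,\tau)\) gives a representative \((\gamma_0,F_0)\) of type \(T\) with \(\Area(F_0)\leq\Delta\) and \(\Thi(F_0)\geq\tau\). After rescaling so that \(\Thi(\gamma_0)=1\) (part (iv) of \Cref{cor:attainment-visibility} records the degraded parameters if rescaling is needed), this pair lies in \(\calZ^{\mathrm{ess}}_{\Lambda_0,\Delta,\tau}(K)\) with \(\Lambda_0=\Len(\gamma_0)\). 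By \Cref{prop:surface-recognition-length},
\[
  L^{\mathrm{surf}}_{\mathrm{char}}(T;\Delta,\tau)=\beta_{\mathfrak S_T}(K;\Delta,\tau)\leq\Lambda_0<\infty .
\]
Hence the maximum is finite.

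\emph{Item (i).} Fix \(\Lambda>L^{\mathrm{joint}}\). Then \(\Lambda>L_{\mathrm{char},u}(K)\), so by the definition of the infimum there is a saturated Barbensi--Celoria ball \(Q\) with \(Q\subset\calH_{\Lambda',u}(K)\) for some \(\Lambda'<\Lambda\). The monotonicity \(\calH_{\Lambda',u}\subset\calH_{\Lambda,u}\) places \(Q\) in \(\calH_{\Lambda,u}(K)\). The recognition clause of \Cref{thm:knot-finite-visibility-v2}(iii) then identifies \(K\) up to mirror.

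\emph{Item (ii).} Since \(\Lambda>\beta_{\mathfrak S_T}\), some pair of type \(T\) lies in \(\calZ^{\mathrm{ess}}_{\Lambda'',\Delta,\tau}(K)\) for some \(\Lambda''<\Lambda\). By the filtration inclusions, that pair also lies in \(\calZ^{\mathrm{ess}}_{\Lambda,\Delta,\tau}(K)\). Its canonical code \(c_\varepsilon\) is realized, and \Cref{thm:surface-recognition} (via \Cref{cor:cross-type-faithfulness}) makes \(c_\varepsilon\) characteristic for \(T\) across all knot types and budgets.

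The absence of mirror and normalization ambiguity comes from the orientation-preserving minimal-code convention of \Cref{rem:code-well-defined}. Cross-type faithfulness yields an ambient isotopy of \(S^3\), which is orientation-preserving, so exact type is recovered. Identifying codes under reflections instead yields only recognition up to mirror, as in \Cref{def:characteristic-surface-code}.

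\emph{No extra hypotheses.} Neither argument invokes liftability or Cerf tameness. Item (i) uses only the monotone image \(\calH\). Item (ii) uses only compactness and \Cref{lem:pair-stability}.

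\emph{Main obstacle.} The delicate point is the existence of a representative inside the exact unit-thickness slice at parameters \((\Delta,\tau)\). If "satisfying the window" is read as already meaning membership in some \(\calZ^{\mathrm{ess}}_{\Lambda_0,\Delta,\tau}(K)\), the argument is immediate. If it is read in the compactified sense, I would first try to reinterpret the hypothesis in the exact slice, and otherwise fall back on \Cref{cor:attainment-visibility}(iv) with the degraded parameters.

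A second point requiring care is that the faithfulness constants \(c_1\), \(\delta_\gamma\), and \(\delta_F\) are independent of \(\Lambda\). This independence is exactly what lets a code realized at level \(\Lambda''\) remain characteristic at every larger level.
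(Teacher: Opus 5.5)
Your proposal is correct and follows the paper's proof essentially verbatim: both terms of the maximum are finite by \Cref{thm:knot-finite-visibility-v2} and \Cref{prop:surface-recognition-length}; above the maximum, monotonicity places a saturated certificate in \(\calH_{\Lambda,u}(K)\) and a visible pair of type \(T\) at level \(\Lambda\); and recognition then comes from \Cref{thm:knot-finite-visibility-v2} and \Cref{thm:surface-recognition}. The only superfluous part is your rescaling detour. The paper reads the hypothesis directly as exact-slice visibility, meaning that some pair of type \(T\) lies in some \(\calZ^{\mathrm{ess}}_{\Lambda_0,\Delta,\tau}(K)\), which is exactly the finiteness criterion in \Cref{prop:surface-recognition-length}. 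Your fallback to \Cref{cor:attainment-visibility}(iv) would not help in any case, because it yields visibility only at the degraded thickness \(\tau/\Thi(\gamma_0)\), not at \(\tau\).
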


\begin{proof}
The knot term is finite by
\Cref{thm:knot-finite-visibility-v2}.  The surface term is finite by the
hypothesis and \Cref{prop:surface-recognition-length}.  Above their maximum,
monotonicity supplies the knot certificate and the visible pair; the two
recognition conclusions are respectively
\Cref{thm:knot-finite-visibility-v2} and
\Cref{thm:surface-recognition}.
\end{proof}

\begin{remark}[Two unconditional mechanisms]
The Barbensi--Celoria theorem supplies a finite local certificate inside a
complete combinatorial invariant, while finite visibility supplies a bounded
ropelength realization of that certificate.  Surface recognition instead
samples the geometric pair directly and invokes positive-reach reconstruction.
Thus the surface theorem is not ``more unconditional'' than the knot theorem;
it uses a smaller witness because its code already contains the embedded
object rather than only its projected move neighbourhood.
\end{remark}

\begin{remark}[Recognition up to symmetry and complete witnesses]
By \Cref{cor:cross-type-faithfulness}, a shared fine pair code forces
equality of essential-surface types \emph{exactly}: the canonical code uses
orientation-preserving normalizations only, so no normalization or mirror
ambiguity remains.  Recognition is up to mirror precisely when codes are, in
addition, deliberately identified under ambient reflections, as in
\Cref{def:characteristic-surface-code}.  On the knot side, the analogous conclusion requires a
\emph{saturated} complete rooted ball: arbitrary subgraph occurrence is too
weak, and suppressing parallel Reidemeister edges loses reconstruction data.
This is the precise distinction between the two finite witnesses.
\end{remark}

\begin{remark}[The finite surface principle as a sliced theorem]
Once \((\Delta,\tau,\varepsilon)\) is fixed with
\(\varepsilon\leq c_1\min\{1,\tau\}\), an essential-surface type visible in the window
is determined by any one realized code.  Its boundary slopes, JSJ incidence,
taut and Kakimizu adjacency, and cutting decomposition are therefore functions
of that finite code.  The residual infinity is exactly the one isolated
throughout the paper: letting \(\Delta\to\infty\) or \(\tau\to0\) restores
infinitely many types, one finite slice at a time.
\end{remark}


\section{Outlook and open problems}
\label{sec:further}

We record the main problems left open by the present finite-resolution
framework.  They are grouped here to avoid interrupting the proof-oriented
parts of the paper.  Some material which is useful but not central to the main
line--for example bridge-sphere cutting and low-bridge evidence--is included
here rather than as a separate section.  \Cref{fig:program-map} locates the
principal open directions around the established core.

\begin{figure}[t]
\centering
\begin{tikzpicture}[x=1cm,y=1cm,
  core/.style={draw, double, rounded corners=2pt, align=center, inner sep=5pt,
               text width=46mm, font=\small},
  open/.style={draw, rounded corners=2pt, align=center, inner sep=4pt,
               text width=38mm, font=\scriptsize},
  darrow/.style={-{Latex[length=2mm]}, thick, dashed}]
  \node[core] (core) at (6.6,2.5)
    {\textbf{established core}\\[0.5mm]
     bounded-geometry pair spaces:\\ smooth finiteness, faithful codes,\\
     visible complexes, writhe window};
  \node[open] (transverse) at (2.1,5.1)
    {stratified transverse systems and\\ Gordon--Luecke intersection graphs};
  \node[open] (sutured) at (6.6,5.1)
    {geometric sutured hierarchies:\\ total area and hierarchy length};
  \node[open] (ghs) at (11.1,5.1)
    {generalized Heegaard splittings\\ and Hempel-distance bounds};
  \node[open] (carrying) at (1.65,-0.1)
    {quantitative branched-surface\\ carrying and normal bounds};
  \node[open] (graphics) at (6.6,-0.1)
    {bounded-geometry sweepouts and\\ Rubinstein--Scharlemann graphics};
  \node[open] (algo) at (11.55,-0.1)
    {finite-resolution algorithms;\\ rigidity and minimal decoration};
  \draw[darrow] (core) -- (transverse);
  \draw[darrow] (core) -- (sutured);
  \draw[darrow] (core) -- (ghs);
  \draw[darrow] (core) -- (carrying);
  \draw[darrow] (core) -- (graphics);
  \draw[darrow] (core) -- (algo);
\end{tikzpicture}
\caption{The program map of \Cref{sec:further}.  The established core is the
bounded-geometry theory proved in this paper.  Dashed arrows indicate the
open extensions listed below: enlarging the pair space to transverse systems,
hierarchies, generalized splittings, or sweepouts; making carrying and
normal-coordinate interfaces quantitative; and turning the finite search
spaces into algorithms with a resolved rigidity theory.  Each extension
requires new compactness input beyond disjoint positive-reach systems.}
\label{fig:program-map}
\end{figure}

\begin{remark}[Low-bridge evidence]
If \(K\) is in \(n\)-bridge position with bridge sphere \(P\), then
\(P\cap E(K)\) is a \(2n\)-punctured sphere and cutting along it decomposes the
exterior into two trivial tangle exteriors.  For 2-bridge knots
\cite{Schubert}, the Hatcher--Thurston classification
\cite{HatcherThurston} implies that the closed essential layer is
empty.  For 3-bridge knots and links, the author's classification of
genus-two closed incompressible surfaces \cite{Ozawa3bridge} gives finite
low-bridge evidence for
the framework: the closed genus-two essential layer is controlled by finitely
many surface-piece types in the two 3-string trivial tangle exteriors.  Thus
low bridge number supplies useful test cases for the filtered theory, but it
is not part of the core construction.
\end{remark}

\begin{enumerate}[label=(\roman*)]
\item \emph{Sharp ropelength--area--thickness inequalities.}  The packing
estimates above are deliberately coarse.  A central problem is to find sharp
hypotheses under which \(\Len(\gamma)\), \(\Area(F)\), and \(\Thi(F)\) impose
nontrivial constraints on one another.

\item \emph{Normal coefficient bounds.}  The thickness interpretation of Haken
sums suggests explicit estimates for normal coordinates under area and
thickness constraints.  Such estimates would make the bridge between normal
surface theory and finite-resolution geometry quantitative.

\item \emph{Compression scales and birth levels.}  Compressing length,
compressing area, and area-decreasing compressions should be related to the
birth function on the compression poset.  In particular, one would like
effective criteria for when a compression strictly lowers the
\((\Lambda,\Delta,\tau)\)-level.

\item \emph{Boundary torus metric control.}  Boundary length estimates follow
from collar hypotheses, but slope-height estimates require control of the
meridian--longitude lattice on \(\partial E(\gamma)\).  The writhe window
(\Cref{thm:writhe-window}) closes this unconditionally for numerical slopes;
the intersection-number refinement, which requires a stable longitudinal-systole
lower bound (\Cref{thm:conditional-slope-height}), remains conditional.

\item \emph{Boundary twisting versus ropelength.}  The boundary twisting
diameter \(\operatorname{Tw}_{\partial}(K)\) measures the largest intersection
distance between boundary slopes of essential surfaces.  Its finite-length
refinements \(\Theta^{\partial}_{p,\ell}\), \(\nu_{\lambda,\ell}\), and
\(\operatorname{Tw}^{\mathrm{rate}}_{\partial}(K)\) ask where this twisting is
forced to occur on the boundary of a thick tube.
\Cref{cor:unconditional-slope-height} gives the unconditional \(3/4\)-power
lower bound \(\Lambda\gtrsim(|r|-C(\tau)\Delta)^{3/4}\) for numerical slope
heights.  The remaining quantitative problems are to determine the sharp
exponent, to prove intersection-number versions in controlled families such as
two-bridge, Montesinos, alternating, or adequate knots, and to decide whether
the \(4/3\) in the writhe bound can be improved for essential-surface-carrying
representatives.

\item \emph{Kakimizu and admissible-component persistence.}  The quantities
\(h_{\Delta,\tau}\), \(\Lambda_{\mathrm{first}}\),
\(\Lambda_{\mathrm{conn}}\), \(\Lambda_{\mathrm{MS}}\), and the corresponding
merge scales should be computed for standard families such as torus knots,
low-bridge knots, and satellite knots.

\item \emph{Least-area representatives and prescribed thickness.}  Least-area
theory gives small-\(\tau\) ideal surfaces once a compact embedded representative
has positive relative thickness.  A key problem is to estimate this thickness
from the geometry of the knot exterior and the surface class.

\item \emph{Filtered rigidity and stabilization.}  Compute the image and kernel
of the action on
\(\mathbf{ES}^{\mathrm{geom},\mathfrak D}(E,\mu)\) for standard knot families,
and determine whether finite decorated stages eventually eliminate all
nongeometric automorphisms.  Even when no single level is canonical, one may
ask whether the cofinal action stabilizes and whether its kernel agrees with
the kernel of the action on the full essential-surface complex.

\item \emph{Frontier-crossing compatibility and minimal decoration.}  Determine
which finite data on JSJ support, frontier slopes, and annular or toral twist
coordinates are necessary and sufficient to glue piecewise realizations.
Identify the weakest natural decoration for which the characteristic frontier
and peripheral meridian are intrinsically reconstructible.

\item \emph{Finite-resolution algorithms.}  The DoF estimate suggests a finite
search space at fixed \((\Lambda,\Delta,\tau,\varepsilon)\).  Turning this into
an implementable algorithm requires explicit finite encodings, transition rules
for admissible deformations, computable normal-coordinate bounds, and finite
procedures for constructing the visible complexes and their automorphism
groups.
\item \emph{Geometric branched-surface carrying.}
\label{prob:geometric-carrying}
Let \(B\) be an incompressible branched surface in a fixed knot exterior.
Determine whether fixed bounds \(\Area(F)\leq\Delta\) and
\(\Thi(F)\geq\tau\) restrict the integral branch weights realized by carried
surfaces to a finite, effectively bounded set.  This would convert the
finite-carrier theorem of Floyd--Oertel into a quantitative geometric
carrying theorem.

\item \emph{Stratified thickness and Gordon--Luecke graphs.}
Develop a compactness theory for transverse systems
\(F_1\cup\cdots\cup F_m\) in which each surface has positive reach, all
intersection angles have a uniform positive lower bound, and double curves,
vertices, and peripheral strata have controlled separation.  Does bounded
total area then imply finitely many stratified pair-isotopy types and finitely
many intersection graphs in each bounded window?

\item \emph{Geometric sutured hierarchies.}
For a taut hierarchy
\[
 (M_0,\gamma_0)\xrightarrow{F_1}\cdots
 \xrightarrow{F_n}(M_n,\gamma_n),
\]
study the total area \(\sum_i\Area(F_i)\), the minimum thickness
\(\min_i\Thi(F_i)\), and the hierarchy length.  Under fixed bounds, are there
only finitely many hierarchy types, and is a least geometric hierarchy cost
attained in a prescribed Thurston-norm class?

\item \emph{Generalized Heegaard splittings and Hempel distance.}
Enlarge the finite pair space to include all thin and thick levels together
with compression-body incidence and disk-set data.  Does a bounded
length--area--thickness window contain only finitely many generalized
Heegaard-splitting types?  For fixed genus, can the window give an effective
upper bound on the Hempel distances of the thick levels?

\item \emph{Geometric graphics and decorated spines.}
Determine whether a generic bounded-geometry two-parameter family of
sweepouts has a finite encoding whose discriminant recovers the
Rubinstein--Scharlemann graphic.  In parallel, determine whether layered pair
codes admit functorial decorated-spine duals for which admissible deformations
correspond to uniformly controlled Matveev--Piergallini move sequences.

\end{enumerate}

The most direct algebraic-geometric connection is through Culler--Shalen
surfaces and boundary slopes arising from character varieties
\cite{CullerShalen,CooperCullerGilletLongShalen}.  Comparing such
surfaces with the ideal and near-ideal surface layers introduced here is a
natural direction, but no algebraic-geometric machinery is used in the present
paper.

\section{Conclusion: bounded geometry and finite topology}

The results of this paper establish three complementary facts about essential
surfaces in knot exteriors.  First, geometry constrains topology.  Once
ropelength, area, and relative thickness are controlled, the possible
pair-isotopy types form a finite set, and peripheral geometric data impose
explicit restrictions on boundary slopes.  Second, finite geometry determines
topology.  At a resolution fine relative to the thickness scales, layered
finite codes distinguish every pair-isotopy type in the bounded slice.  Third,
finite geometry organizes rigidity.  In a fixed exterior, the bounded slices
form finite visible essential-surface complexes which exhaust the full complex
and carry the exterior symmetry action with controlled parameter change.
\Cref{fig:conclusion-triad} assembles these three passages from geometry to
topology in one picture.

\begin{figure}[t]
\centering
\begin{tikzpicture}[x=1cm,y=1cm,
  hub/.style={draw, rounded corners=2pt, align=center, inner sep=5pt,
              text width=34mm, font=\small},
  outc/.style={draw, double, rounded corners=2pt, align=center, inner sep=5pt,
              text width=36mm, font=\small},
  mech/.style={font=\scriptsize\itshape, text=black!60, align=center},
  arrow/.style={-{Latex[length=2.2mm]}, thick}]
  \node[hub] (win) at (6.6,2.1)
    {bounded geometric window\\[0.5mm]
     $\Len\leq\Lambda$, $\Area\leq\Delta$, $\Thi\geq\tau$};
  \node[outc] (top) at (6.6,4.9)
    {finite topology\\{\scriptsize finitely many pair-isotopy types}};
  \node[outc] (left) at (1.95,-0.4)
    {finite description\\{\scriptsize faithful layered codes}};
  \node[outc] (right) at (11.25,-0.4)
    {finite symmetry stage\\{\scriptsize visible complexes
      $\ES_{\Delta,\tau}(E)$}};
  \draw[arrow] (win) -- node[mech, right=1mm] {compactness} (top);
  \draw[arrow] (win) -- node[mech, above left=0mm and -3mm] {reconstruction} (left);
  \draw[arrow] (win) -- node[mech, above right=0mm and -3mm] {functoriality} (right);
  \draw[arrow, dashed] (win.south) -- (6.6,0.42);
  \node[mech, fill=white, inner sep=1pt] at (6.6,0.05)
    {writhe window:\\quantitative peripheral rigidity};
\end{tikzpicture}
\caption{The passage from geometry to topology, in summary.  A single bounded
geometric window yields a finite set of pair-isotopy types through
positive-reach compactness
(\Cref{prin:geometry-constrains-topology}), a faithful finite description
through layered codes and reconstruction
(\Cref{prin:finite-geometry-determines-topology}), and a finite filtered
stage for exterior symmetries through the visible complexes
(\Cref{prin:finite-geometry-organizes-rigidity}).  The writhe window is the
quantitative form of the same passage on the peripheral torus.}
\label{fig:conclusion-triad}
\end{figure}

The first statement is not a disguised normal-surface argument.  Its mechanism
is compactness of anchored positive-reach surfaces, including control of the
boundary stratum while the ambient knot exterior moves.  The second statement
is not merely a counting result.  Its mechanism is a reconstruction theorem:
equal sufficiently fine codes force ambient pair-isotopy.  The third statement
does not identify every combinatorial automorphism with a homeomorphism; it
places that separate rigidity problem into a directed system of finite
objects.  Together, these results turn a bounded geometric window into a
finite topological world, a finite description of that world, and a finite
stage on which its symmetries can be tested.

The writhe window makes the same philosophy quantitative.  Writhe is a
geometric quantity attached to a representative, whereas numerical boundary
slope is topological data carried by an essential surface.  The window shows
that area and relative thickness control the discrepancy between them, and
the Buck--Simon estimate then converts this into an unconditional
ropelength--area--slope inequality.  Thus compactness, reconstruction, and
peripheral rigidity are three forms of the same passage from geometry to
topology.

The relation with the classical theories is therefore complementary rather
than competitive.  Normal surfaces, efficient triangulations, branched
surfaces, and spines provide combinatorial coordinates and carriers; sutured
manifolds and generalized Heegaard splittings organize decompositions and
thin--thick structure; Hempel distance, intersection graphs, and
Rubinstein--Scharlemann graphics measure separation or singular behaviour in
curve complexes and parameter spaces.  The contribution of the present
framework is to place geometric visibility, finite recovery, and quantitative
birth levels across these kinds of objects.  Only the interfaces explicitly
proved above belong to the established core; the transverse, hierarchy, and
sweepout extensions remain open.

The framework is deliberately bounded.  It does not claim that a knot exterior
has only finitely many essential surfaces, that finite-resolution quotient
persistence coincides automatically with smooth admissible-component
persistence, that every automorphism of a visible complex is geometric, or
that the resulting finite search spaces already come with optimal algorithms.
Rather, it identifies the hypotheses under which infinite-dimensional smooth
data become topologically finite and exactly recoverable, and it isolates the
additional image--kernel--reconstruction questions required for rigidity.

This suggests a broader question.  For which classes of embedded or immersed
objects does bounded geometry make topology finite, when can that finite
topology be reconstructed from finite geometric data, and when do the
resulting finite stages recover the symmetries of the ambient object?  The
present work answers the finiteness and reconstruction questions for
bounded-geometry essential surfaces carried by thick knot exteriors and
provides the filtered setting for the symmetry question.  Extending the same
three principles to wider classes of three-manifolds, spatial graphs,
higher-dimensional embeddings, or other geometric variational problems is a
natural direction for future work.

\section*{Acknowledgements}

\paragraph{Use of generative AI.}
The author used ChatGPT (OpenAI) and Claude Fable 5 (Anthropic) as interactive
aids during the preparation of this manuscript, including for mathematical
discussion, consideration of alternative formulations and possible proof
strategies, preliminary consistency checks, and improvement of the exposition.
No AI system is an author of, or bears responsibility for, any result in this
paper.  Every definition, statement, proof, computation, and bibliographic
reference was independently checked and verified by the author, who takes full
responsibility for the originality, correctness, and content of the manuscript.


\end{document}